\DeclareMathOperator{\Ann}{Ann}
\DeclareMathOperator{\identity}{id}
\newcommand{\Z}{\mathbb{Z}}
\newcommand{\N}{\mathbb{N}}
\newcommand{\F}{\mathbb{F}}
\newcommand{\R}{\mathbb{R}}
\newcommand{\C}{\mathbb{C}}
\title[Prime group graded rings]{Prime group graded rings with applications to \\ partial crossed products and Leavitt path algebras}
\newtheorem{thm}{Theorem}[section]
\newtheorem{prop}[thm]{Proposition}
\newtheorem{lem}[thm]{Lemma}
\newtheorem{cor}[thm]{Corollary}
\theoremstyle{definition}
\newtheorem{defi}[thm]{Definition}
\newtheorem{exa}[thm]{Example}
\newtheorem{rem}[thm]{Remark}
\DeclareMathOperator{\Supp}{Supp}
\DeclareMathOperator{\Aut}{Aut}
\DeclareMathOperator{\Stab}{Stab}
\author{Daniel L\"{a}nnstr\"{o}m}
\address{Department of Mathematics and Natural Sciences,
Blekinge Institute of Technology,
SE-37179 Karlskrona, Sweden}
\author{Patrik Lundstr\"{o}m}
\address{Department of Engineering Science,
University West, SE-46186 Trollh\"{a}ttan, Sweden}
\author{Johan \"{O}inert}
\address{Department of Mathematics and Natural Sciences,
Blekinge Institute of Technology,
SE-37179 Karlskrona, Sweden}
\author{Stefan Wagner}
\address{Department of Mathematics and Natural Sciences,
Blekinge Institute of Technology,
SE-37179 Karlskrona, Sweden}
\email{daniel.lannstrom@bth.se}
\email{patrik.lundstrom@hv.se}
\email{johan.oinert@bth.se}
\email{stefan.wagner@bth.se}
\subjclass[2020]{16W50, 16N60, 16S88, 16S35}
\keywords{group graded ring, strongly graded ring, nearly epsilon-strongly graded ring, prime ring, $s$-unitality, Leavitt path algebra, partial skew group ring, unital partial crossed product}
\begin{document}

\begin{abstract}
In this article we generalize a classical
result by Passman on primeness of unital strongly group graded rings to the class of nearly epsilon-strongly group graded rings
which are not necessarily unital.
Using this result, we obtain 
(i) 
a characterization of 
prime $s$-unital strongly group graded rings, and,
in particular, of infinite matrix rings and of group rings
over $s$-unital rings, thereby generalizing a well-known result by Connell;
(ii) characterizations of prime $s$-unital partial skew group rings and of prime unital partial
crossed products;
(iii) a generali\-zation of the well-known characterizations of prime Leavitt path algebras, by Larki and by Abrams-Bell-Rangaswamy.
\end{abstract}

\maketitle


\tableofcontents

\section{Introduction}
\label{Sec:Intro}

Let $S$ be a ring. By this we mean that $S$ is associative 
but not necessarily unital. 
Unless otherwise stated, ideals of $S$ are assumed to
be two-sided.
Recall that a proper ideal $P$ of $S$ is called {\it prime}
if for all ideals $A$ and $B$ of $S$,
$A \subseteq P$ or $B \subseteq P$ holds
whenever $AB \subseteq P$.
The ring $S$ is called \emph{prime} if $\{ 0 \}$ is a prime ideal of $S$. 
The class of prime rings contains many well-known 
constructions, for instance left or right primitive rings,
simple rings and matrix rings over integral domains.
Prime rings also generalize integral domains to a non-commutative setting. Indeed, a commutative ring is prime if and only if it is an integral domain. 

Throughout this article, $G$ denotes a multiplicatively written group with 
neutral element $e$.
Recall that $S$ is called {\it $G$-graded},
if for each $x \in G$ there is an additive subgroup $S_x$
of $S$ such that $S = \bigoplus_{x \in G} S_x$, as
additive groups, and for all $x,y \in G$, the inclusion
$S_x S_y \subseteq S_{xy}$ holds.
If in addition, $S_x S_y = S_{xy}$ holds for all $x,y\in G$,
then $S$ is said to be {\it strongly $G$-graded}.
An interesting problem, studied for the past 50 years,
concerns finding necessary and sufficient conditions
for different classes of group graded rings to be prime,
see
\cite{AbHa93,
cohen1983group,
connell1963group,
nastasescu1982graded,
nastasescu2004methods,
passman1962nil,
passman1970radicals,
passman2011algebraic,
passman1983semiprime,
PassmanCancellative,
passman1984infinite}.
In the case when $S$ is unital and strongly $G$-graded,
Passman has
completely solved this problem by proving the following
rather involved result:
\begin{thm}[{Passman \cite[Thm.~1.3]{passman1984infinite}}]\label{maintheorem}
Suppose that $S$ is a unital and strongly $G$-graded ring.
Then $S$ is not prime if and only if there exist:
\begin{enumerate}[{\rm (i)}]
    \item subgroups $N \lhd H \subseteq G$ with $N$ finite,
    \item an $H$-invariant ideal $I$ of $S_e$ such that
    $I^x I = \{ 0 \}$ for every $x \in G \setminus H$, and
    \item nonzero $H$-invariant 
    ideals $\tilde{A}, \tilde{B}$ of $S_N$ such that
    $\tilde{A},\tilde{B} \subseteq I S_N$ and
    $\tilde{A}\tilde{B} = \{ 0 \}$.
\end{enumerate}
\end{thm}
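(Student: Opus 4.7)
The plan is to prove the two implications separately. The reverse implication is essentially a construction, while the forward implication requires a delicate support analysis in the spirit of Passman's $\Delta$-method for group algebras.

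\emph{Sufficiency.} Given the data (i)--(iii), I would let $A$ and $B$ be the two-sided ideals of $S$ generated by $\tilde A$ and $\tilde B$, respectively. Nontriviality is immediate because $\tilde A, \tilde B$ are nonzero. To establish $AB = \{0\}$, I expand a generic product and analyze it degree-by-degree. Using strong gradedness ($S_x S_{x^{-1}} = S_e$ for every $x \in G$), every term in $AB$ can be rewritten, after conjugating homogeneous factors past $\tilde A$ and $\tilde B$, as a product of a translate of $\tilde A$ by a conjugation action, a translate of $\tilde B$ by a conjugation action, and a homogeneous sandwich. When the connecting degree lies in $H$, the $H$-invariance of $\tilde A, \tilde B$ reduces the product to $\tilde A \tilde B = \{0\}$; when it lies outside $H$, condition (ii) gives $I^x I = \{0\}$, which kills the product because $\tilde A, \tilde B \subseteq I S_N$.

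\emph{Necessity.} Assume $S$ is not prime and fix nonzero ideals $A, B \lhd S$ with $AB = \{0\}$. Pick nonzero $a \in A$, $b \in B$ and analyze their supports $\Supp(a), \Supp(b) \subseteq G$. Using strong gradedness, I would translate the supports so that $e \in \Supp(a) \cap \Supp(b)$, and then multiply on the left and right by homogeneous elements chosen from various $S_x, S_{x^{-1}}$ in order to shrink $\Supp(a), \Supp(b)$ to a minimal common configuration. The candidate for $H$ is then the subgroup of $G$ stabilizing these minimal supports under the natural action on cosets, with $N \lhd H$ the finite normal subgroup that records the ``periodic'' part of the supports. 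The natural candidate for $I$ is the $S_e$-ideal generated by the degree-$e$ components obtained by conjugating $A$ into degree $e$ via all $S_x$. The $H$-invariance of $I, \tilde A, \tilde B$ then follows from two-sidedness of $A, B$ combined with conjugation by homogeneous elements of degree $h \in H$, and the relation $I^x I = \{0\}$ for $x \notin H$ is obtained by projecting $AB = \{0\}$ onto the homogeneous component where $x$ shuffles the minimal supports nontrivially.

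\emph{Main obstacle.} The hard part is to guarantee that the extracted normal subgroup $N$ is \emph{finite}. In the classical ungraded setting, this is the content of FC-subgroup analysis: $\Delta(G)$ consists of elements with finitely many conjugates, and the relevant finite normal subgroup sits inside $\Delta^+(G)$, the torsion part of $\Delta(G)$. In the graded setting one has to replicate this by showing that, after sufficient refinement, the support of $a$ meets only finitely many cosets of some normal subgroup, and that this subgroup is torsion. This is where the combinatorics becomes delicate, since each refinement step risks re-enlarging supports, and the argument must be arranged so that a well-chosen quantity (support size, conjugation orbit length, or an invariant of the ideal lattice) strictly decreases. I expect this finiteness step, together with the simultaneous verification of $H$-invariance and the precise annihilation $I^x I = \{0\}$, to be the principal technical hurdle, as is typical in Passman-style arguments.
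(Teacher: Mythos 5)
Your sufficiency argument is essentially the paper's route and is sound: $H$-invariance of $\tilde A$ together with $S_e=S_hS_{h^{-1}}$ upgrades $\tilde A\tilde B=\{0\}$ to $\tilde A S_H\tilde B=\{0\}$ (this is Remark~\ref{rem:NP3NP4}), while for $x\in G\setminus H$ the product $\tilde A S_x\tilde B$ vanishes because $IS_{nx}I=\{0\}$ for $nx\notin H$, which follows from $I^{nx}I=\{0\}$ and non-degeneracy (Lemma~\ref{lem:1}); then $S\tilde AS$ and $S\tilde BS$ are nonzero ideals with zero product (Proposition~\ref{prop:easy_dir}).

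The necessity direction, however, is only an outline, and it stops exactly where the proof begins. You correctly locate the crux (finiteness of $N$), but supply none of the machinery that delivers it, and your proposed constructions of $H$, $N$ and $I$ are not the ones that work. Concretely: (1) $H$ does not arise as the stabilizer of minimal supports of single elements $a,b$; it is produced by an iterated replacement argument on pairs $(J,M)$ with $M$ a finite union of cosets (Section~\ref{Sec:PassmanPairs}), driven by a descending induction on the \emph{size} of a Passman form, i.e.\ the number of $D_G(H)$-cosets meeting $\Supp(\beta)$ (Section~\ref{Sec:PassmanForms}); the $\Delta$-method enters precisely to show that minimality of the size forces $I\pi_{\Delta(H)}(A)\cdot I\beta=\{0\}$. (2) The passage to ideals of $S_N$ is a three-step descent $S\rightsquigarrow S_{\Delta(H)}\rightsquigarrow S_W\rightsquigarrow S_N$, where $W\subseteq\Delta(H)$ is a finitely generated normal subgroup of $H$; one then needs the group-theoretic fact that such a $W$ has a \emph{finite characteristic} torsion subgroup $N$ with $W/N$ torsion-free abelian (Proposition~\ref{prop:8_2}), and the unique-product-group property of $W/N$ to push the annihilation down to ideals of $S_N$ (Lemma~\ref{lem:passman2}). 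Without these steps, the claims that the supports shrink to a configuration with \emph{finite} $N$ and that $I^xI=\{0\}$ holds off $H$ are unsubstantiated. Note finally that the paper does not prove Theorem~\ref{maintheorem} from scratch at all: it deduces it in two lines from the equivalence (a)$\Leftrightarrow$(d) of Theorem~\ref{thm:mainNew} together with Remark~\ref{rem:NP3NP4}, the hard content being the implication (a)$\Rightarrow$(e) established in Sections~\ref{Sec:PassmanPairs}--\ref{Sec:HardRight}.
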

Let us briefly explain the notation used in the formulation of
this result as well as some technical aspects of
Passman's proof of it.
Suppose that $I$ is an ideal of the subring $S_e$.
If $x \in G$, then $I^x$ denotes the $S_e$-ideal
$S_{x^{-1}} I S_x$.
Let $H,N$ be subgroups of $G$.
The ideal $I$ is called \emph{$H$-invariant} if
$I^x \subseteq I$
holds
for every
$x \in H$;
$S_N$ denotes $\bigoplus_{x \in N} S_x$, which is clearly a subring of $S$.
In \cite{passman1984infinite} Passman provided a
``combinatorial'' proof of Theorem~\ref{maintheorem}
by combining two main ideas. 
First, a coset counting
method, also known as the ``$\Delta$-method'', 
developed by Passman~\cite{passman1962nil} and Connell~\cite{connell1963group}, secondly, the
``bookkeeping procedure'' introduced by 
Passman in \cite{passman1983semiprime} which involves a careful
study of the action of the group $G$ on the 
lattice of ideals of $S_e$.
In \cite{passman1984infinite} 
Passman also showed that analogous criteria
exist for {\it semiprimeness} of strongly group graded rings.
In this article, however, only the concept of primeness
will be studied.

In a subsequent article \cite{PassmanCancellative} Passman
obtained an analogue of Theorem~\ref{maintheorem}  
for the slightly larger class of unital $G$-graded rings which are
{\it cancellative}, that is, rings $S$ having the property that 
for all $x,y \in G$ and all homogeneous subsets 
$U,V \subseteq S$, the implication 
$U S_x S_y V = \{ 0 \} \Rightarrow U S_{xy} V = \{ 0 \}$ holds.
It is clear that strongly $G$-graded rings are cancellative.
However, not all cancellative $G$-graded rings are 
strongly graded. For instance, {\it the first Weyl algebra} 
$A_1(\F)$, over a field $\F$ of characteristic zero,
is a $\mathbb{Z}$-graded ring which is not strongly graded
but still cancellative, since it is a {\it domain}
(see e.\,g. \cite[Chap.~2]{goodearl2004}).

The motivation for the present article is the observation
that many important examples of group graded rings are {\it not}
cancellative, but may still be prime. 
Indeed, suppose that $R$ is a unital ring
and let $S := M_n(R)$ denote the ring of $n \times n$-matrices
with entries in $R$.
Then it is easy to see that $S$ is prime if and 
only if $R$ is prime. On the other hand, one can
construct group gradings on $S$ that are not 
cancellative. Consider the case $n=2$. 
Let $e_{ij}$ denote the matrix with $1$ in position $ij$
and zeros elsewhere.
If $G=\mathbb{Z}$ and we put
$S_0 := R e_{11} + R e_{22}$, $S_1 := R e_{12}$,
$S_{-1} := R e_{21}$, and $S_x := \{ 0 \}$, for
$x \in \mathbb{Z} \setminus \{ 0,1,-1 \}$, then
this defines a $G$-grading on $S$ satisfying
$S_1 \cdot S_1 S_{-1} \cdot S_0 = \{ 0 \}$ but 
$S_1 \cdot S_0 \cdot S_0 = R e_{12} \neq \{ 0 \}$.
This shows that the grading is not cancellative.
In a similar fashion, one may define non-cancellative
$\mathbb{Z}$-gradings on $M_n(R)$, for every $n \geq 2$.

This phenomenon is not confined to rings of matrices.
In fact, these structures can be considered as special
cases of so-called {\it Leavitt path algebras} $L_R(E)$,
over a unital ring $R$, defined by directed graphs $E$ (for the details, 
see Section~\ref{Sec:LPA}).
All Leavitt path algebras carry a canonical 
$\mathbb{Z}$-grading. 
One can show that with this grading, a Leavitt path
algebra defined by a finite graph $E$, is cancellative 
if and only if it is strongly graded
(see Proposition~\ref{prop:EpsCancellativeStrong} and
Proposition~\ref{prop:LPAepsilon}).
However, it is easy to give examples of Leavitt path algebras
which are not strongly $\mathbb{Z}$-graded.
Nevertheless, the question of primeness
of such structures has been completely resolved
in the case when $R$ is commutative,
for any directed graph $E$, by Larki~\cite{larki2015ideal} 
building upon previous work by Abrams, Bell and Rangaswamy 
\cite[Thm. 1.4]{abrams2014prime}.
Their results involve a certain ``connectedness''
property on the set $E^0$ of vertices of $E$. 
Namely, a directed graph $E$
is said to satisfy  \emph{condition (MT-3)} if for all $u,v \in E^0$,
there exist $w \in E^0$ and paths from $u$ to $w$
and from $v$ to $w$.

\begin{thm}[{Larki \cite[Prop. 4.5]{larki2015ideal}}]\label{larki}
Suppose that $E$ is a directed graph and that $R$ is a unital commutative ring.
Then $L_R(E)$ is prime if and only if 
$R$ is an integral domain and $E$ satisfies condition (MT-3).
\end{thm}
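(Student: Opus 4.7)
The plan is to deduce this from the generalized primeness criterion for nearly epsilon-strongly graded rings established earlier in the paper (the extension of Theorem~\ref{maintheorem}), applied to the canonical $\mathbb{Z}$-grading on $L_R(E)$. The two ingredients that make this approach work are, first, that $L_R(E)$ is nearly epsilon-strongly $\mathbb{Z}$-graded (recorded in Proposition~\ref{prop:LPAepsilon}), and second, the crucial simplification that $\mathbb{Z}$ is torsion-free, so the only finite subgroup $N$ appearing in the criterion is $\{0\}$.

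For the ``only if'' direction I would argue by contraposition. If $R$ is not an integral domain, pick nonzero $a,b\in R$ with $ab=0$; since $R$ is central in $L_R(E)$, both $aL_R(E)$ and $bL_R(E)$ are two-sided ideals, both are nonzero (as $av\neq 0 \neq bv$ for any vertex $v\in E^0$), and their product vanishes. If $E$ fails condition (MT-3), choose $u,v\in E^0$ admitting no common descendant; every nonzero term of $uL_R(E)v$ is of the form $r\alpha\beta^*$ with $\alpha$ a path from $u$ to some $w$ and $\beta$ a path from $v$ to the same $w$, which is impossible, so $uL_R(E)v=\{0\}$. Consequently the nonzero two-sided ideals generated by $u$ and by $v$ have zero product.

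For the ``if'' direction, assume that $R$ is a domain and $E$ satisfies (MT-3), and suppose for contradiction that $L_R(E)$ is not prime. The generalized criterion yields subgroups $N\lhd H\subseteq\mathbb{Z}$ with $N$ finite, an $H$-invariant ideal $I$ of the degree-zero subalgebra, and nonzero $H$-invariant ideals $\tilde A,\tilde B$ of $S_N$ with $\tilde A\tilde B=\{0\}$. Since $\mathbb{Z}$ is torsion-free we must have $N=\{0\}$, so $\tilde A,\tilde B$ are nonzero ideals of $(L_R(E))_0$ whose product is zero. The task is then to rule this out using the hypotheses on $R$ and $E$: invoking the known description of $(L_R(E))_0$ as an ultramatricial $R$-algebra built from paths of bounded length between vertices, the existence of such an annihilating pair must produce either a genuine zero-divisor in $R$ or a pair of vertices whose descendant sets never meet, contradicting one of the hypotheses.

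The main obstacle is this last translation step: converting the abstract ring-theoretic data $(H,I,\tilde A,\tilde B)$ furnished by the generalized Passman criterion into combinatorial information about $E$ and $R$. This requires a careful analysis of the ideals of $(L_R(E))_0$ via hereditary saturated subsets of $E^0$ and level-wise truncation of paths, together with the $H$-invariance condition, in order to localize the failure of primeness to a single vertex or a single pair of disconnected vertices.
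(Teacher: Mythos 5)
Your easy direction is fine and essentially matches the paper's (the paper proves the non-commutative generalization, Theorem~\ref{thm:lpa}, of which the stated result is the commutative special case, where ``$R$ prime'' becomes ``$R$ is an integral domain''). The genuine gap is in the hard direction, and you have flagged it yourself: the ``translation step'' from the ring-theoretic data produced by Theorem~\ref{thm:mainNew} to a contradiction with the hypotheses on $R$ and $E$ is exactly the content that needs to be proved, and your plan does not supply it. The route you sketch --- analysing ideals of $(L_R(E))_0$ via its ultramatricial structure and hereditary saturated subsets --- is problematic: that classification describes \emph{graded} ideals of $L_K(E)$ over a field (equivalently, the $\mathbb{Z}$-invariant ideals of the zero component, cf.\ Example~\ref{ex:lpa_invariant}), whereas the ideals $\tilde A,\tilde B$ you obtain are arbitrary ideals of $(L_R(E))_0$ over a general commutative ring, invariant only under some subgroup $H\subseteq\mathbb{Z}$; there is no such clean classification to invoke.

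The paper's mechanism for closing this gap is different and elementary: Proposition~\ref{tomforde}, a corner-reduction lemma stating that any nonzero $a\in (L_R(E))_0$ satisfies $\alpha^* a\beta = tv$ for suitable paths $\alpha,\beta$, a vertex $v$, and a nonzero $t\in R$. Applying this to nonzero elements of $\tilde A$ and $\tilde B$ and using the \emph{balanced} conclusion $\tilde A\, S\, \tilde B=\{0\}$ (note: your weaker statement $\tilde A\tilde B=\{0\}$ inside $S_0$ would not suffice here, since the compression argument inserts elements of $S$ of nonzero degree between the two factors, as in $(\alpha^* a\beta)(\gamma^* b\delta)=\alpha^* a(\beta\gamma^*)b\delta$), one gets $S\,rv\,S\,sw\,S=\{0\}$ for nonzero $r,s\in R$ and vertices $v,w$; Proposition~\ref{twoideals}(b) then converts this, using primeness of $R$, directly into the failure of condition~(MT-3) by exhibiting a nonzero element $\sum p_iq_i\,\alpha\beta^*$ along a hypothetical pair of paths to a common descendant. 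Without an argument of this kind your proof is a plan, not a proof.
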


The purpose of the present article is to prove a 
primeness result (see Theorem~\ref{thm:mainNew}) 
that holds for a class of group graded rings that contains all unital strongly group graded rings
as well as many types of group graded rings that are not
cancellative.
The rings that we consider are the 
nearly epsilon-strongly group graded rings
introduced by Nystedt and \"{O}inert in
\cite{nystedt2017epsilon}.
Recall that a, not necessarily unital, 
$G$-graded ring $S$ is called 
\emph{nearly epsilon-strongly $G$-graded}
if for every $x \in G$ and every $s \in S_x$,
there exist $\epsilon_x(s) \in S_x S_{x^{-1}}$ and
$\epsilon_x'(s) \in S_{x^{-1}} S_x$
such that the equalities 
$\epsilon_x(s) s = s = s \epsilon_{x}'(s)$ hold.
Note that
every nearly epsilon-strongly $G$-graded ring $S$ is necessarily \emph{non-degenerately $G$-graded}, i.\,e.
for every $x \in G$ and every 
nonzero $s \in S_x$,
we have $s S_{x^{-1}} \ne \{0\}$ and $S_{x^{-1}} s \ne \{0\}$.
In loc. cit. it is shown 
that every Leavitt path algebra,
equipped with its canonical $\mathbb{Z}$-grading,
is nearly epsilon-strongly graded.
In addition, $s$-unital partial skew group rings and unital partial crossed products
are nearly epsilon-strongly graded
(see Section~\ref{Sec:Partial}).

Here is the main result of this article:

\begin{thm}\label{thm:mainNew}
Suppose that $G$ is a group and that
$S$ is a 
$G$-graded ring.
Consider the following five assertions:
\begin{enumerate}[{\rm (a)}]
	\item $S$ is not prime.
	
	\item There exist:
	\begin{enumerate}[{\rm (i)}]
\item subgroups $N \lhd H \subseteq G$,
\item an
$H$-invariant ideal $I$ of $S_e$ such that 
$I^x I = \{0\}$ for every $x \in G \setminus H$, and
\item nonzero ideals $\tilde{A}, \tilde{B}$ of $S_N$ such that
$\tilde{A}, \tilde{B} \subseteq I S_N$ and
$\tilde A S_H \tilde B = \{0\}$.
\end{enumerate}

	\item There exist:
	\begin{enumerate}[{\rm (i)}]
\item subgroups $N \lhd H \subseteq G$ with $N$ finite,
\item an
$H$-invariant ideal $I$ of $S_e$ such that 
$I^x I = \{0\}$ for every $x \in G \setminus H$, and
\item nonzero ideals $\tilde{A}, \tilde{B}$ of $S_N$ such that
$\tilde{A}, \tilde{B} \subseteq I S_N$ and
$\tilde A S_H \tilde B = \{0\}$.
\end{enumerate}

\item There exist:
	\begin{enumerate}[{\rm (i)}]
\item subgroups $N \lhd H \subseteq G$ with $N$ finite,
\item an
$H$-invariant ideal $I$ of $S_e$ such that 
$I^x I = \{0\}$ for every $x \in G \setminus H$, and
\item nonzero $H$-invariant ideals $\tilde{A}, \tilde{B}$ of $S_N$ such that
$\tilde{A}, \tilde{B} \subseteq I S_N$ and
$\tilde A S_H \tilde B = \{0\}$.
\end{enumerate}
	
	\item There exist:
	\begin{enumerate}[{\rm (i)}]
\item subgroups $N \lhd H \subseteq G$ with $N$ finite,
\item an
$H$-invariant ideal $I$ of $S_e$ such that 
$I^x I = \{0\}$ for every $x \in G \setminus H$, and
\item nonzero $H/N$-invariant ideals $\tilde{A}, \tilde{B}$ of $S_N$ such that
$\tilde{A}, \tilde{B} \subseteq I S_N$ and
$\tilde A \tilde B = \{0\}$.
\end{enumerate}

\end{enumerate}
The following assertions hold:
\begin{enumerate}[{\rm (1)}]
    \item If $S$ is non-degenerately $G$-graded,  
    then  
{\rm (e)}$\Longrightarrow${\rm (d)}$\Longrightarrow${\rm (c)}$\Longrightarrow${\rm (b)}$\Longrightarrow${\rm (a)}.

\item If $S$ is nearly epsilon-strongly $G$-graded, then
{\rm (a)}$\Longleftrightarrow${\rm (b)}$\Longleftrightarrow${\rm (c)}$\Longleftrightarrow${\rm (d)}$\Longleftrightarrow${\rm (e)}.
\end{enumerate}
\end{thm}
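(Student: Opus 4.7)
I split the argument according to the two statements. For Part~(1), the implications (d) $\Rightarrow$ (c) and (c) $\Rightarrow$ (b) are immediate: they amount to dropping the $H$-invariance of $\tilde A,\tilde B$ and the finiteness of $N$, respectively. For (e) $\Rightarrow$ (d) I first observe that every ideal of $S_N$ is automatically $N$-invariant (since $S_{n^{-1}} \tilde A S_n \subseteq S_N \tilde A S_N \subseteq \tilde A$ for $n \in N$); because $N \lhd H$, the $H$-action on ideals of $S_N$ therefore factors through $H/N$, so $H$-invariance and $H/N$-invariance coincide for the ideals appearing in (e). It then remains to upgrade $\tilde A \tilde B = \{0\}$ to $\tilde A S_H \tilde B = \{0\}$. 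For $x \in H$, the $H$-invariance of $\tilde B$ gives $S_x \tilde B S_{x^{-1}} \subseteq \tilde B$, whence $\tilde A S_x \tilde B S_{x^{-1}} \subseteq \tilde A \tilde B = \{0\}$; symmetrically $S_{x^{-1}} \tilde A S_x \tilde B = \{0\}$. Since $\tilde A S_x \tilde B \subseteq S_{NxN} = S_{xN}$ and is stable under left/right multiplication by $S_N$, decomposing along the grading and invoking non-degeneracy forces $\tilde A S_x \tilde B = \{0\}$.

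For (b) $\Rightarrow$ (a) the plan has three steps. Step~1: upgrade (ii) to $I S_x I = \{0\}$ for every $x \in G \setminus H$; indeed $I S_x I \subseteq S_x$ and $S_{x^{-1}}(I S_x I) = I^x I = \{0\}$, so non-degeneracy forces $I S_x I = \{0\}$. Step~2: show $\tilde A S_y \tilde B = \{0\}$ for every $y \in G$; for $y \in H$ this is (iii), and for $y \notin H$ one uses $\tilde A,\tilde B \subseteq I S_N$ to write $\tilde A S_y \tilde B \subseteq \sum_{n_1, n_2 \in N} I \, S_{n_1 y} \, I \, S_{n_2}$ and applies Step~1 to each factor $I S_{n_1 y} I$ (noting that $n_1 y \notin H$). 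Step~3: set $A' := \tilde A S + S \tilde A S$ and $B' := S \tilde B + S \tilde B S$; a direct check shows each is a two-sided ideal of $S$; non-degeneracy applied to a nonzero homogeneous component of any element of $\tilde A$ (respectively $\tilde B$) produces a nonzero element of $\tilde A S$ (respectively $S \tilde B$), so $A', B' \neq \{0\}$; and $A' B' \subseteq S \tilde A S \tilde B S = \{0\}$ by Step~2, so $S$ is not prime.

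For Part~(2), since nearly epsilon-strong $G$-grading is non-degenerate (as noted after the definition), the chain from Part~(1) yields (e) $\Rightarrow$ (d) $\Rightarrow$ (c) $\Rightarrow$ (b) $\Rightarrow$ (a) automatically, so only (a) $\Rightarrow$ (e) remains. Here I would follow Passman's strategy from Theorem~\ref{maintheorem}, combining the $\Delta$-method (coset counting) with a bookkeeping procedure on the lattice of ideals of $S_e$, but with the role of the global identity played by the local units $\epsilon_x(s) \in S_x S_{x^{-1}}$ and $\epsilon_x'(s) \in S_{x^{-1}} S_x$ supplied by the nearly epsilon-strong structure. Starting from nonzero ideals $A, B \lhd S$ with $AB = \{0\}$, one picks a nonzero element of $A$ of minimal support, uses the $\epsilon$-elements to keep the standard combinatorial manipulations valid, and successively extracts the finite normal subgroup $N$ (from the support), the enveloping subgroup $H$, the $H$-invariant ideal $I \lhd S_e$ with $I^x I = \{0\}$ for $x \notin H$, and finally the $H/N$-invariant ideals $\tilde A, \tilde B \lhd S_N$ satisfying $\tilde A \tilde B = \{0\}$.

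The principal obstacle is the direction (a) $\Rightarrow$ (e). Passman's proof in \cite{passman1984infinite} leans on the global identity of $S$ at several points, for instance to ensure that projected or restricted ideals remain nonzero and that test products faithfully reflect the $G$-action on $S_e$. In the nearly epsilon-strongly graded setting each such step must be rerouted through the $\epsilon$-elements, and one has to verify that neither the $\Delta$-method nor the bookkeeping breaks down under the weaker axiom $S_x = S_x S_{x^{-1}} S_x$, which, unlike strong grading, does not yield $S_x S_y = S_{xy}$; care is needed at each stage to confirm that the elements produced are still supported in the predicted subgroups.
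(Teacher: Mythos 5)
Your handling of the routine parts of assertion (1) is correct: (d)$\Rightarrow$(c)$\Rightarrow$(b) are indeed trivial weakenings, and your (b)$\Rightarrow$(a) reproduces the paper's argument (Lemma~\ref{lem:1}, Lemma~\ref{lem:NonDegIdeals}, Proposition~\ref{prop:easy_dir}); the ideals $\tilde A S + S\tilde A S$ and $S\tilde B + S\tilde B S$ work just as well as the paper's $S\tilde A S$ and $S\tilde B S$. However, there are two genuine gaps.

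First, your (e)$\Rightarrow$(d) does not close. After downgrading the hypothesis to $H$-invariance you only have $(\tilde A S_x\tilde B)S_{x^{-1}}=\{0\}$ and $S_{x^{-1}}(\tilde A S_x\tilde B)=\{0\}$. A nonzero $c\in\tilde A S_x\tilde B\subseteq S_{xN}$ has homogeneous components $c_{xn}$, and non-degeneracy of the $G$-grading only guarantees $c_{xn}S_{(xn)^{-1}}\ne\{0\}$, i.e.\ non-vanishing against $S_{n^{-1}x^{-1}}$; your hypothesis controls multiplication by $S_{x^{-1}}$ only, and $S_{n^{-1}}S_{x^{-1}}$ can be a proper subset of $S_{n^{-1}x^{-1}}$ since the grading need not be strong. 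So ``decomposing along the grading'' does not force $c=0$. (Your side remark that $H$-invariance and $H/N$-invariance \emph{coincide} for ideals of $S_N$ is also unjustified; only the implication $H/N$-invariant $\Rightarrow$ $H$-invariant is available in this generality, cf.\ Lemma~\ref{lem:induced_grading_ideals}.) The paper's proof keeps the full $H/N$-invariance, which yields $S_{x^{-1}N}\tilde A S_{xN}\tilde B\subseteq\tilde A\tilde B=\{0\}$ for the whole coset, and combines it with the fact that the induced $H/N$-grading is itself non-degenerate (Proposition~\ref{prop:QuotientNonDeg}); that is exactly the missing ingredient, and the step is salvageable only by restoring it.

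Second, and decisively, the implication (a)$\Rightarrow$(e) --- the substantive content of assertion (2) --- is not proved. What you offer is a plan (``follow Passman's strategy, rerouting through the $\epsilon$-elements'') followed by an honest list of the obstacles, none of which is resolved. In the paper this direction occupies Sections~\ref{Sec:PassmanPairs}--\ref{Sec:HardRight}, culminating in Proposition~\ref{prop:suff1}: one needs the replacement argument for Passman pairs (Propositions~\ref{prop:coset1} and~\ref{prop:coset2}), the theory of Passman forms of minimal size and the $\Delta$-method (Proposition~\ref{prop:main1}), the descent from $S$ to $S_{\Delta(H)}$, then to $S_W$ for a finitely generated normal $W\subseteq\Delta(H)$, the extraction of a finite characteristic $N\lhd W$ with $W/N$ torsion-free abelian (Proposition~\ref{prop:8_2}), and the unique-product argument of Lemma~\ref{lem:passman2} to land in $S_N$. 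Several of these steps (e.g.\ Lemma~\ref{lem:annhilators}, Lemma~\ref{lem:18}, Lemma~\ref{lem:66}) require genuinely new arguments precisely because $S_xS_y\subsetneq S_{xy}$ is possible, and the finite normal subgroup is not read off the support of an element as your sketch suggests. As it stands, the proposal establishes assertion (1) modulo the (e)$\Rightarrow$(d) defect, but not assertion (2).
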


Let us make four 
remarks on Theorem~\ref{thm:mainNew}.
First of all, this result is applicable to 
rings which are {\it not necessarily unital}.
Secondly, 
unital strongly $G$-graded rings (see Lemma~\ref{lem:strongSunital}) and cancellatively $G$-graded rings (see \cite[Lem.~1.2]{PassmanCancellative}) satisfy $r.\Ann_S(S_x)=\{0\}$, and $l.\Ann_S(S_x)=\{0\}$, for every $x\in G$.
However, many important classes of group graded rings rarely satisfy such annihilator conditions, for instance Leavitt path algebras
\cite{
abrams2017leavitt,
abrams2005leavitt,
abrams2014prime,
ara2007nonstable,
larki2015ideal,
tomforde2009leavitt}
and partial crossed products \cite{dokuchaev2011partial,dokuchaev2005associativity,dokuchaev2008crossed}.
Thus, Theorem~\ref{thm:mainNew} allows us to consider classes of rings which are unreachable by the results of \cite{PassmanCancellative,passman1984infinite}. 
Thirdly, we would like to motivate why assertions (b), (c) and (d) appear in Theorem~\ref{thm:mainNew}.
By allowing $N$ to be infinite, assertion (b) creates more flexibility when attempting to prove that $S$ is non-prime.
Assertion (c) is identical to the assertion in \cite[Thm.~2.3]{PassmanCancellative},
and assertion (d) is essentially identical to the assertion in \cite[Thm.~1.3]{passman1984infinite}.
Finally, it might be possible to generalize assertion (2) of Theorem~\ref{thm:mainNew} beyond the class of nearly epsilon-strongly graded rings (see Remark \ref{rem:12_8}).

Here is a detailed outline of this article.

In Section~\ref{Sec:Prel}, we state our conventions on groups, rings
and modules. We also provide preliminary 
results on different types of graded rings such as
epsilon-strongly graded rings, nearly epsilon-strongly
graded rings and cancellatively graded rings.
In Section~\ref{Sec:Invariant}, we
consider $H$-invariant ideals and record some of their basic properties.
In Section~\ref{Sec:GradedPrime}, we obtain a one-to-one correspondence between graded ideals of $S$ and
$G$-invariant ideals of the principal component $S_e$.
We also give a characterization of prime nearly epsilon-strongly $G$-graded rings in the case when $G$ is an ordered group.
In Section~\ref{Sec:Easy}, 
we prove the implication (b)$\Rightarrow$(a) of Theorem~\ref{thm:mainNew} for non-degenerately $G$-graded rings.
In Section~\ref{Sec:PassmanPairs},
we obtain some technical results that will be necessary in
Section~\ref{Sec:PassmanForms},
where we provide the bulk of results needed to establish Theorem~\ref{thm:mainNew}. 
Our approach is very much influenced by Passman \cite{passman1984infinite}. 
In particular, we utilize a version of the 
$\Delta$-method. 
In Section~\ref{Sec:HardRight},
we prove the implication (a)$\Rightarrow$(e) of Theorem~\ref{thm:mainNew} for nearly epsilon-strongly graded rings.
In Section~\ref{Sec:Proofs}, the proof of Theorem~\ref{thm:mainNew} is finalized. We also show that Theorem~\ref{maintheorem} can be recovered from Theorem~\ref{thm:mainNew}.
In Section~\ref{Sec:Sufficient}, we use Theorem~\ref{thm:mainNew} 
to obtain the following generalization of a result by Passman (see~\cite[Cor.~4.6]{passman1984infinite}):

\begin{thm}\label{thm:NearlyTorsion}
Suppose that $G$ is torsion-free and that $S$ is nearly epsilon-strongly $G$-graded.
Then $S$ is prime if and only if $S_e$ is $G$-prime.
\end{thm}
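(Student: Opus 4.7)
The strategy is to apply Theorem~\ref{thm:mainNew} in both directions, using the torsion-freeness of $G$ to collapse the finite normal subgroup $N$ appearing in conditions (c)--(e) to the trivial subgroup. Throughout I will use that nearly epsilon-strongly $G$-graded rings are non-degenerately graded and have $s$-unital principal component $S_e$.

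For the direction ``$S$ prime $\Longrightarrow$ $S_e$ is $G$-prime'', I argue contrapositively. Suppose $A, B$ are nonzero $G$-invariant ideals of $S_e$ with $AB = \{0\}$. Set $N := \{e\}$, $H := G$, $I := S_e$, $\tilde A := A$, $\tilde B := B$ and verify assertion (e) of Theorem~\ref{thm:mainNew}: the $H$-invariance of $I$ is trivial, the condition $I^x I = \{0\}$ for $x \in G \setminus H = \emptyset$ is vacuous, $\tilde A, \tilde B$ are $H/N = G$-invariant by hypothesis, the inclusions $\tilde A, \tilde B \subseteq I S_N = S_e \cdot S_e$ follow from $s$-unitality, and $\tilde A \tilde B = \{0\}$. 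Theorem~\ref{thm:mainNew}(1) then yields that $S$ is not prime.

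For the converse, I again argue contrapositively. Assuming $S$ is not prime, Theorem~\ref{thm:mainNew}(2) furnishes, via assertion (c), subgroups $N \lhd H \subseteq G$ with $N$ finite, an $H$-invariant ideal $I$ of $S_e$ with $I^x I = \{0\}$ for every $x \in G \setminus H$, and nonzero ideals $\tilde A, \tilde B$ of $S_N$ with $\tilde A, \tilde B \subseteq I S_N$ and $\tilde A S_H \tilde B = \{0\}$. Since $G$ is torsion-free and $N$ is finite, $N = \{e\}$; hence $S_N = S_e$ and $\tilde A, \tilde B$ become nonzero ideals of $S_e$ contained in $I$ (using $s$-unitality to get $I S_e = I$). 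I then form the $G$-saturations $\hat A := \sum_{x \in G} \tilde A^x$ and $\hat B := \sum_{x \in G} \tilde B^x$, which are easily seen to be nonzero $G$-invariant ideals of $S_e$.

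The crux of the proof — and the main obstacle — is showing $\hat A \hat B = \{0\}$ by verifying that every summand $\tilde A^x \tilde B^y = S_{x^{-1}} \tilde A S_x S_{y^{-1}} \tilde B S_y$ vanishes. Setting $z := xy^{-1}$ and using $S_x S_{y^{-1}} \subseteq S_z$, this summand sits inside $S_{x^{-1}} (\tilde A S_z \tilde B) S_y$. The case $z \in H$ is immediate: $\tilde A S_z \tilde B \subseteq \tilde A S_H \tilde B = \{0\}$. The delicate case is $z \notin H$, where the hypothesis $I^z I = \{0\}$ only provides $S_{z^{-1}} I S_z I = \{0\}$, not the desired $I S_z I = \{0\}$. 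I upgrade this via a non-degeneracy argument: for any $t \in I S_z I \subseteq S_z$, one has $S_{z^{-1}} t \subseteq S_{z^{-1}} I S_z I = I^z I = \{0\}$, which by non-degeneracy of the grading forces $t = 0$. Hence $I S_z I = \{0\}$, the summand vanishes, $\hat A \hat B = \{0\}$, and this contradicts the $G$-primeness of $S_e$.
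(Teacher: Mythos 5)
Your proposal is correct and follows essentially the same route as the paper: the forward direction is the paper's Corollary~\ref{cor:PrimeNecessary} (verifying a trivial datum with $H=G$, $N=\{e\}$, $I=S_e$), and the converse uses torsion-freeness to force $N=\{e\}$, passes to the $G$-saturations $\tilde{A}^G,\tilde{B}^G$ via Lemma~\ref{lem:power}, and kills the cross terms by splitting into $z\in H$ (condition (NP4)) and $z\notin H$ (where your non-degeneracy upgrade of $I^zI=\{0\}$ to $IS_zI=\{0\}$ is exactly Lemma~\ref{lem:1}(a)). No gaps.
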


The remaining sections are devoted to 
applications of our findings.
In Section~\ref{Sec:Strongly}, we obtain
an $s$-unital analogue of Passman's Theorem~\ref{maintheorem} (see Corollary~\ref{cor:PassmanSunital})
and consider $\Z$-graded 
Morita context algebras
and
$\Z$-graded infinite matrix rings.
In Section \ref{Sec:GroupRing}, we apply Theorem~\ref{thm:mainNew} to group rings. Notably, we obtain the following non-unital generalization of Connell's \cite{connell1963group} classical characterization:

\begin{thm}\label{thm:Connel}
Suppose that $R$ is an $s$-unital ring and that $G$ is a group. Then the group ring $R[G]$ is prime if and only if $R$ is prime and $G$ has no non-trivial finite normal subgroup.
\end{thm}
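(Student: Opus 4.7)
The plan is to deduce Theorem~\ref{thm:Connel} from Theorem~\ref{thm:mainNew}. First I would observe that when $R$ is $s$-unital, the group ring $R[G]$ is strongly $G$-graded via $R[G]_x = Rx$ and is itself $s$-unital, since any $s$-unital local unit from $R$, placed at the identity component, serves as a local unit in $R[G]$. Hence $R[G]$ is non-degenerately and nearly epsilon-strongly $G$-graded with $R[G]_e = R$, so both parts of Theorem~\ref{thm:mainNew} are available. A computation that drives the whole argument is that for any ideal $I$ of $R$ and any $x \in G$ one has $I^x = R[G]_{x^{-1}}\, I\, R[G]_x = RIR$, and $RIR = I$ by $s$-unitality; thus every ideal of $R$ is automatically $G$-invariant and $I^x = I$ unconditionally.

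For the direction ``$R[G]$ prime implies $R$ prime and $G$ has no non-trivial finite normal subgroup'' I would argue contrapositively and apply the implication (a)$\Rightarrow$(e) from Theorem~\ref{thm:mainNew}(2). Assuming $R[G]$ is not prime, this yields subgroups $N \lhd H \subseteq G$ with $N$ finite, an $H$-invariant ideal $I$ of $R$ with $I^x I = \{0\}$ for every $x \in G \setminus H$, and nonzero $H/N$-invariant ideals $\tilde A, \tilde B$ of $R[N]$ with $\tilde A, \tilde B \subseteq I R[N]$ and $\tilde A\tilde B = \{0\}$. If $H \ne G$, picking $x \in G\setminus H$ and using $I^x = I$ gives $I^2 = \{0\}$, while $\tilde A \ne \{0\}$ combined with $\tilde A \subseteq IR[N]$ forces $I \ne \{0\}$, so $R$ has a nonzero ideal of square zero and is not prime. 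If $H = G$ and $N = \{e\}$, then $\tilde A$ and $\tilde B$ are themselves nonzero ideals of $R$ with $\tilde A \tilde B = \{0\}$, so $R$ is not prime. Finally, if $H = G$ and $N \ne \{e\}$, then $N$ itself is a non-trivial finite normal subgroup of $G$, and the contrapositive is complete.

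For the reverse direction I would again argue contrapositively. If $R$ is not prime, choose nonzero ideals $A, B$ of $R$ with $AB = \{0\}$; then $A[G]$ and $B[G]$ are nonzero ideals of $R[G]$ with $A[G]\cdot B[G] = (AB)[G] = \{0\}$, so $R[G]$ is not prime. If instead $G$ admits a non-trivial finite normal subgroup $N$, I would verify assertion (e) of Theorem~\ref{thm:mainNew} with $H = G$ and $I = R$, and invoke (e)$\Rightarrow$(a) from part~(1). Inside $R[N]$, take $\tilde A$ to be the set of ``norm'' elements $\sum_{n \in N} rn$ with $r \in R$, and $\tilde B$ to be the augmentation ideal $\{\sum_n r_n n : \sum_n r_n = 0\}$. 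Reindexings of the sum over the finite group $N$ show $\tilde A$ is a two-sided ideal of $R[N]$, and $\tilde B$ is the kernel of the augmentation homomorphism $R[N]\to R$. Normality of $N$ in $G$ yields $G$-invariance (hence $G/N$-invariance) of both $\tilde A$ and $\tilde B$, and they are nonzero since $R \ne \{0\}$ and $|N| \ge 2$. The telescoping identity
\[
\Bigl(\sum_{n \in N} r n \Bigr)\Bigl(\sum_{m \in N} s_m m \Bigr) \;=\; \sum_{k \in N} \Bigl( r\sum_{m \in N} s_m \Bigr) k \;=\; 0,
\]
valid because $\sum_m s_m = 0$ for any element of $\tilde B$, furnishes $\tilde A\tilde B = \{0\}$.

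The main obstacle I anticipate is purely bookkeeping. The absence of a global identity in $R$ forbids writing the classical norm element $\sum_{n \in N} n \in R[G]$, so the usual Connell construction must be replaced by the parametrised family $\{\sum_n rn : r \in R\}$, whose two-sidedness as an ideal and whose $G$-invariance both draw on $s$-unitality together with normality of $N$. The other crucial structural simplification is the collapse $I^x = I$ for ideals of $R$ inside $R[G]$, which turns the potentially intricate condition $I^x I = \{0\}$ for $x \notin H$ arising in Theorem~\ref{thm:mainNew} into the clean single identity $I^2 = \{0\}$, so that the case $H \ne G$ immediately forces non-primeness of $R$ rather than some more exotic alternative.
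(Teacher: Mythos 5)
Your proof is correct and follows essentially the same route as the paper's: both reduce to Theorem~\ref{thm:mainNew} via the observation that $I^x=I$ for every ideal $I$ of $R$, split the forward direction into the cases $H\neq G$ and $H=G$ (with the subcases $N=\{e\}$ and $N\neq\{e\}$), and in the converse pit the norm elements $\sum_{n\in N}rn$ against the augmentation ideal of $R[N]$. The only cosmetic difference is that you enter through assertion (e), verifying $G/N$-invariance and $\tilde A\tilde B=\{0\}$, whereas the paper uses the balanced condition $\tilde A\, S_G\, \tilde B=\{0\}$ of assertion (c); both verifications go through.
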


In Section~\ref{Sec:Partial}, we apply our results to $s$-unital partial skew group rings (see Theorem~\ref{thm:primepartialskewgroupring} and Theorem~\ref{thm:partialSGR}) and to unital partial crossed products (see Theorem~\ref{thm:primepartialcrossedproduct} and Theorem~\ref{thm:partial}). 
In Section~\ref{Sec:LPA}, we use Theorem~\ref{thm:mainNew} to
obtain a characterization of prime 
Leavitt path algebras, thereby generalizing
Theorem~\ref{larki}
by allowing the coefficient ring $R$ to be non-commutative:

\begin{thm}\label{thm:LPA}
Suppose that $E$ is a directed graph and that $R$ is a unital ring.
Then the Leavitt path algebra $L_R(E)$ is prime if and only if 
$R$ is prime and $E$ satisfies condition (MT-3).
\end{thm}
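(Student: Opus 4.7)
The plan is to apply Theorem~\ref{thm:NearlyTorsion} to $L_R(E)$ equipped with its canonical $\mathbb{Z}$-grading. As recorded in the introduction, this grading makes $L_R(E)$ nearly epsilon-strongly graded; since $\mathbb{Z}$ is torsion-free, Theorem~\ref{thm:NearlyTorsion} reduces the claim to showing that $L_R(E)_0$ is $\mathbb{Z}$-prime if and only if $R$ is prime and $E$ satisfies condition (MT-3).

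For the necessity direction, I would argue the contrapositive at the level of $L_R(E)$ itself. Failure of primeness of $R$ produces nonzero ideals $J_1, J_2 \trianglelefteq R$ with $J_1 J_2 = \{0\}$; the two-sided ideals $L_R(E) J_i L_R(E)$ are then nonzero (each contains $v j_i$ for any vertex $v$ and any $0 \ne j_i \in J_i$) and their product lies in $L_R(E) J_1 J_2 L_R(E) = \{0\}$, contradicting primeness. Failure of (MT-3) yields vertices $u,v \in E^0$ with no common descendant; the two-sided ideals generated by $u$ and $v$ are both nonzero, but any product between them reduces, via the Cuntz-Krieger relations, to sums indexed by pairs of paths from $u$ and $v$ meeting at a common vertex---an impossibility---so the product vanishes.

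For the sufficiency direction, assume $R$ is prime and (MT-3) holds, and let $A, B$ be nonzero $\mathbb{Z}$-invariant ideals of $L_R(E)_0$. The algebra $L_R(E)_0$ is the ascending union of subalgebras spanned by the matched monomials $\alpha r \beta^*$ with $|\alpha|=|\beta| \le n$, each decomposing as a direct sum over terminal vertices $w$ of matrix algebras over $R$. Combining this filtration with $\mathbb{Z}$-invariance (i.e.\ $L_R(E)_{-n} A L_R(E)_n \subseteq A$ for all $n$), one shows that $A$ and $B$ each possess a nonzero \emph{vertex corner}, say $uAu \ne \{0\}$ and $vBv \ne \{0\}$ for some $u,v \in E^0$. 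Applying (MT-3), choose $w \in E^0$ with paths $\alpha\colon u \to w$ and $\beta\colon v \to w$; conjugating nonzero elements of $uAu$ and $vBv$ by $\alpha^*, \alpha$ and $\beta^*, \beta$ (staying inside $A$ and $B$ by $\mathbb{Z}$-invariance) deposits them inside the corner $w L_R(E)_0 w$, which contains a matrix algebra over $R$. Primeness of $R$, hence of this matrix algebra, forces the product to be nonzero, and we conclude $AB \ne \{0\}$.

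The main obstacle will be the corner-extraction step in the sufficiency proof: showing that every nonzero $\mathbb{Z}$-invariant ideal of $L_R(E)_0$ has a nonzero corner $uAu$ at some $u \in E^0$. This is essentially an $R$-coefficient refinement of the hereditary-and-saturated-subset bookkeeping underlying Larki's proof of Theorem~\ref{larki}, and requires a careful interplay between the filtration of $L_R(E)_0$ by match-length, the $\mathbb{Z}$-twisting operation $I \mapsto L_R(E)_{-n} I L_R(E)_n$, and the Cuntz-Krieger relations defining $L_R(E)$.
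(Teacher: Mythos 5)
Your overall architecture is viable and genuinely different from the paper's: the paper does not pass through Theorem~\ref{thm:NearlyTorsion}, but instead applies Theorem~\ref{thm:mainNew} directly to a non-prime $L_R(E)$ to produce nonzero ideals $\tilde A,\tilde B$ of $(L_R(E))_0$ with the \emph{stronger} property $\tilde A\, L_R(E)\, \tilde B=\{0\}$ (no $\Z$-invariance required), and then reduces elements $a\in\tilde A$, $b\in\tilde B$ to scalar multiples of vertices. Your reduction to $\Z$-primeness of $(L_R(E))_0$ is a legitimate alternative, and your necessity direction (both halves argued contrapositively at the level of $L_R(E)$ itself) matches the paper's Proposition~\ref{twoideals}.

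There is, however, a genuine gap in your sufficiency direction, precisely at the step you flag as the main obstacle --- and the problem is not only that the step is unproven, but that you have stated it in a form too weak to support what follows. Knowing only that $uAu\neq\{0\}$ and $vBv\neq\{0\}$ does not let you ``conjugate nonzero elements into the corner $w L_R(E)_0 w$'': for a general nonzero $x\in uAu$ and a path $\alpha$ from $u$ to $w$, the element $\alpha^{*}x\alpha$ can perfectly well vanish (take $x=ff^{*}$ for an edge $f$ with $s(f)=u$ and let $\alpha$ begin with a different edge). Likewise, ``primeness of $R$, hence of this matrix algebra, forces the product to be nonzero'' is not an argument: a product of two nonzero elements of a prime ring is frequently zero. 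What you actually need is that every nonzero $\Z$-invariant ideal $A$ of $(L_R(E))_0$ contains an element of the exact form $ru$ with $0\neq r\in R$ and $u\in E^0$; for such elements conjugation along $\alpha\colon u\to w$ gives $\alpha^{*}(ru)\alpha=rw$, which is again nonzero and lies in $A$ by $\Z$-invariance, and then $RrRw\subseteq A$, $RsRw\subseteq B$ together with primeness of $R$ and faithfulness of the coefficients ($tw=0\Rightarrow t=0$) yield $AB\neq\{0\}$. This missing ingredient is exactly the paper's Proposition~\ref{tomforde} (Tomforde's corner-reduction lemma, proved there for arbitrary unital coefficient rings, and applicable here since degree considerations force $|\alpha|=|\beta|$). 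With that lemma in place your plan closes up; the $\Z$-invariance hypothesis then does the work that the paper instead extracts from the condition $\tilde A\, L_R(E)\,\tilde B=\{0\}$.
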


\section{Preliminaries}
\label{Sec:Prel}

In this section, we recall some useful notions and conventions on
groups, rings and modules.
We also provide some
preliminary results
on different types of graded rings
such as epsilon-strongly graded rings,
nearly epsilon-strongly graded rings and
cancellatively graded rings.
These results will be utilized in subsequent sections.

\subsection{Groups}\label{sec:groups}
For the entirety of this article, $G$ denotes a 
multiplicatively written group with neutral element $e$.
Let $H$ be a subgroup of $G$. The
\emph{index} of $H$ in $G$ is denoted by $[G:H]$. Take $g \in G$.
The \emph{order} of $g$ is denoted by $\mbox{ord}(g)$.
The {\it centralizer} of $g$ in $G$
is defined to be the subgroup 
$C_G(g):=\{ x \in G \mid xg = gx \}$ of $G$. 
Recall that the \emph{finite conjugate center} of $G$ is the subgroup
$\Delta(G) := \{ g \in G \mid [ G : C_G(g) ] < \infty \}$ of $G$. 
The \emph{almost centralizer of $H$ in $G$}
is the subgroup
$D_G(H) := \{ x \in G \mid [ H : C_H(x) ] < \infty \}$ of $G$.
Note that $D_G(H) \cap H = \Delta(H).$
By the orbit-stabilizer theorem, $\Delta(G)$ can 
equivalently be described as the set of elements of $G$ 
with only finitely many conjugates in $G$. 
If $G$ is equipped with a total order relation $\leq$ 
such that for all $a,b,x,y \in G$ 
the inequality $a \leq b$ implies the inequality
$xay \leq xby$,
then $G$ is called an {\it ordered group}.

\subsection{Rings and modules}
Throughout this article, 
all rings are assumed to be associative but not necessarily unital. Let $R$ be a ring.
If $U$ and $V$ are subsets of $R$, then 
$UV$ denotes the set of finite sums of elements of 
the form $uv$ where $u \in U$ and $v \in V$.
We say that $R$ is \emph{unital} if it has a nonzero multiplicative identity element. 
In this article, we will also consider 
the following weaker notion
of unitality.  
The ring $R$ is called \emph{$s$-unital} if 
for every $r \in R$ the inclusion
$r \in rR \cap Rr$ holds.
For future reference,
we recall the following:

\begin{prop}[Tominaga {\cite[Prop.~12]{nystedt2018unital}, \cite{tominaga1976s}}]\label{prop:tominaga}
 A ring $R$ is $s$-unital if and only if for any finite
 subset $V$ of $R$ there is $u \in R$ such that
 for every $v \in V$ the equalities
 $uv = vu = v$ hold.
\end{prop}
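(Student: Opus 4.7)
The ``if'' direction follows immediately by applying the hypothesis to $V = \{r\}$ for an arbitrary $r \in R$: the resulting $u$ witnesses $r \in rR \cap Rr$. For the ``only if'' direction, I plan to proceed in three stages: handle a single element by hand, upgrade to a one-sided (left) local identity for an arbitrary finite $V$ by induction on $|V|$, and finally symmetrise to a two-sided local identity.

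For the single-element case, given $v \in R$, $s$-unitality supplies $a, b \in R$ with $va = v = bv$, and I claim that $u := a + b - ab$ satisfies $uv = vu = v$: indeed $uv = av + bv - abv = av + v - av = v$, where the cancellation uses $bv = v$, and $vu = v$ follows symmetrically from $va = v$.

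Next, by induction on $|V|$, I would establish a common left local identity $e_L \in R$ for any finite $V \subseteq R$, that is, $e_L v = v$ for every $v \in V$; the base $|V| = 1$ is the $s$-unitality condition itself. For the inductive step from $V' = \{v_1, \dots, v_n\}$ to $V' \cup \{v_{n+1}\}$, let $e_0$ be a common left local identity for $V'$ and apply the single-element case to the \emph{defect} $v_{n+1} - e_0 v_{n+1}$ to obtain $c \in R$ with $c(v_{n+1} - e_0 v_{n+1}) = v_{n+1} - e_0 v_{n+1}$. Setting $e_L := e_0 + c - c e_0$, one verifies $e_L v_i = v_i$ for $i \leq n$ (the $c$-contributions cancel via $e_0 v_i = v_i$) and $e_L v_{n+1} = v_{n+1}$ (by design of $c$). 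The dual argument in $R^{\op}$ yields $e_R \in R$ with $v e_R = v$ for all $v \in V$, and then $u := e_L + e_R - e_R e_L$ is the desired common two-sided local identity: $uv = e_L v + e_R v - e_R e_L v = v + e_R v - e_R v = v$ and $vu = v e_L + v e_R - v e_R e_L = v e_L + v - v e_L = v$.

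The main obstacle is the extension step in the inductive construction of $e_L$: naive candidates such as $e_0 + c$ or $e_0 + c - e_0 c$, with $c$ a base-case local identity for $v_{n+1}$ itself, generally fail to preserve the relations $e_L v_i = v_i$ on the previously handled elements. Replacing $v_{n+1}$ by its defect $v_{n+1} - e_0 v_{n+1}$ and using the asymmetric combination $e_0 + c - c e_0 = e_0 + c(1 - e_0)$ (viewed in the formal unitisation) is precisely what ensures that the correction term acts trivially on all $v_i \in V'$.
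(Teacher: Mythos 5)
Your argument is correct, and every computation checks out: the single-element combination $u=a+b-ab$ with $va=v=bv$ does give $uv=vu=v$; the inductive step via the defect $v_{n+1}-e_0v_{n+1}$ and the element $e_0+c-ce_0$ correctly preserves $e_Lv_i=v_i$ for $i\le n$ while absorbing the new element; and the final symmetrisation $u=e_L+e_R-e_Re_L$ is verified exactly as you state. Note, however, that the paper does not prove this proposition at all --- it is quoted as a known result of Tominaga (with a reference to \cite[Prop.~12]{nystedt2018unital}) --- so there is no in-paper proof to compare against; your write-up is essentially the standard textbook argument for Tominaga's lemma, made self-contained. One tiny simplification: in the inductive step you only need a \emph{left} local unit for the defect (which $s$-unitality gives directly via $d\in Rd$), so invoking the full two-sided single-element case there is slightly more than required, though of course harmless.
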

If $M$ is a left $R$-module and $U$ is a subset
of $M$, then  
the \emph{left annihilator of $U$} is defined 
to be the set 
$l.\Ann_R(U) := \{ r \in R \mid r \cdot u = 0, \, \, \forall u \in U \}.$ 
If $N$ is a right $R$-module and $V$ is a subset
of $N$, then
the right annihilator
$r.\Ann_R(V)$
is defined analogously.

\subsection{Group graded rings}\label{sectioncharacterization}
For the rest of this article $S$ denotes a nonzero $G$-graded ring.
Note that the \emph{principal component} $S_e$ is a subring of $S$ and every $x \in G$, the set 
$S_x S_{x^{-1}}$ is an ideal of $S_e$.
The \emph{support of $S$}, denoted by
$\Supp(S)$,
is the set of $x \in G$ with 
$S_x \neq \{ 0 \}$.
In general, $\Supp(S)$ need not be a subgroup of $G$ (see \cite[Rmk.~46]{nystedt2016epsilon}). 
Take $s \in S$. Then $s = \sum_{x \in G} s_x$,
for unique $s_x \in S_x$, such that $s_x = 0$ for 
all but finitely many $x \in G$.
The \emph{support of $s$}, denoted by $\Supp(s)$, is the set of $x \in G$ with $s_x \neq 0$.

\begin{prop}\label{prop:stefan}
The ring $S$ is strongly $G$-graded if and only if for every $x \in G$ the equalities $S_x S_e = S_e S_x = S_x$ and $S_x S_{x^{-1}} = S_e$ hold.
\end{prop}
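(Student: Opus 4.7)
The forward direction is immediate from the definition: if $S_x S_y = S_{xy}$ holds for all $x,y \in G$, then specializing to $y = e$, to $x = e$, and to $y = x^{-1}$ yields the three stated equalities.

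For the converse, suppose the three conditions hold. I plan to show $S_{xy} \subseteq S_x S_y$ for arbitrary $x,y \in G$; combined with the defining inclusion $S_x S_y \subseteq S_{xy}$ of a $G$-grading, this gives equality. The key manoeuvre is to insert a factorization of the identity component into $S_{xy}$. Concretely, using $S_{xy} S_e = S_{xy}$ and then $S_e = S_{y^{-1}} S_y$, one obtains
\[
S_{xy} \;=\; S_{xy} S_e \;=\; S_{xy} S_{y^{-1}} S_y \;\subseteq\; S_x S_y,
\]
where the final inclusion uses $S_{xy} S_{y^{-1}} \subseteq S_{xy \cdot y^{-1}} = S_x$, which is just the ambient $G$-grading property. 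This completes the argument.

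There is no real obstacle here: the only thing to be careful about is that the three hypotheses are used in the correct order, namely first the right-unitality $S_{xy} S_e = S_{xy}$ and then the factorization $S_e = S_{y^{-1}} S_y$ (symmetrically, one could instead start from $S_e S_{xy} = S_{xy}$ and use $S_e = S_x S_{x^{-1}}$ to derive the same conclusion).
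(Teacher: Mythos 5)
Your proof is correct and follows essentially the same route as the paper: both use the chain $S_{xy} = S_{xy}S_e = S_{xy}S_{y^{-1}}S_y \subseteq S_{xy y^{-1}}S_y = S_xS_y \subseteq S_{xy}$, with the hypothesis $S_{y^{-1}}S_y = S_e$ (the case $x = y^{-1}$ of the third condition) inserted after the right-unitality step. No issues.
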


\begin{proof}
Suppose that for every $x \in G$ the equalities $S_x S_e = S_e S_x = S_x$ and $S_x S_{x^{-1}} = S_e$ hold. Take $x,y \in G$. Then 
$S_{xy} = S_{xy} S_e = S_{xy} S_{y^{-1}} S_y \subseteq S_{xy y^{-1}} S_y = S_x S_y \subseteq S_{xy}$.
Thus $S_x S_y = S_{xy}$.
The converse statement is trivial.
\end{proof}

\begin{rem}\label{rem:strongly_are_fully_supported}
Suppose that $S$ is unital strongly $G$-graded.
Then, for every $x \in G$, the relations
$0 \ne 1_S \in S_e = S_x S_{x^{-1}}$ hold
(see e.\,g. \cite[Prop.~1.1.1]{nastasescu2004methods}).
Therefore, $\Supp(S) = G$.
\end{rem}

The following notion 
was first introduced by Clark, Exel and Pardo in the context of Steinberg algebras
{\cite[Def.~4.5]{clark2018generalized}}:

\begin{defi}
The ring $S$ is said to be \emph{symmetrically $G$-graded} 
if for every $x \in G$, the equality $S_x S_{x^{-1}} S_x = S_x$ holds.
\end{defi}

\begin{rem}\label{rem:symmetrical}
If $S$ is symmetrically $G$-graded, then
 $\Supp(S)^{-1} = \Supp(S)$.
\end{rem}

Note that strongly $G$-graded rings are symmetrically $G$-graded.
As the following example shows, a grading which is not strong may fail to be symmetrical:

\begin{exa}
Let $R$ be a unital ring and consider the standard $\mathbb{Z}$-grading on the polynomial ring $R[x] = \bigoplus_{i \in \mathbb{Z}} S_i$ where $S_i := R x^i$ for $i \geq 0$, and $S_i := \{ 0 \}$ for $i < 0$. 
Clearly, $\Supp(S)^{-1} \neq \Supp(S)$
and thus, by Remark~\ref{rem:symmetrical},
it follows that the grading is not symmetrical.
\end{exa}

Next, we will consider another special type of grading. Passman appears to have been the first to give the following definition (see also \cite{cohen1983group,oinert2012ideal}):

\begin{defi}[{\cite[p.~32]{passman1989infinite}}]
The ring $S$ is said to be \emph{non-degenerately $G$-graded} if for every $x \in G$ and every 
nonzero $s \in S_x$,
we have $s S_{x^{-1}} \ne \{0\}$ and $S_{x^{-1}} s \ne \{0\}$.
\end{defi}

Clearly, every unital strongly $G$-graded ring is  
non-degenerately $G$-graded.

\subsection{Epsilon-strongly graded rings}

Now, we consider a generalization of unital strongly graded rings,
introduced by Nystedt, \"{O}inert and Pinedo
{\cite[Def.~4, Prop.~7]{nystedt2016epsilon}}.

\begin{defi}
The ring $S$ is called {\it epsilon-strongly $G$-graded} if for every $x \in G$ there exists $\epsilon_x \in S_x S_{x^{-1}}$ such that for all $s \in S_x$
the equalities $\epsilon_x s = s = s \epsilon_{x^{-1}}$
hold.
\end{defi}

\begin{exa}\label{ex:matrix1}
Let $R$ be a unital ring and consider the following $\mathbb{Z}$-grading on the ring $M_2(R)$ of 
$2\times2$-matrices with entries in $R$:
\begin{equation*}
(M_2(R))_0 := \begin{pmatrix}
R & 0 \\
0 & R 
\end{pmatrix}, \quad (M_2(R))_{1} := \begin{pmatrix}
0 & 0 \\
R & 0
\end{pmatrix}, \quad
(M_2(R))_{-1} := \begin{pmatrix}
0 & R \\
0 & 0
\end{pmatrix},
\end{equation*}
and $(M_2(R))_i$ zero 
if $|i| > 1$. Clearly, this grading is not strong, but epsilon-strong with 
\begin{equation*}
\epsilon_1 = \begin{pmatrix}
1_R & 0 \\
0 & 0 
\end{pmatrix} \quad \mbox{and} \quad 
\epsilon_{-1} = \begin{pmatrix}
0 & 0 \\
0 & 1_R
\end{pmatrix}.
\end{equation*}
Suppose that $R$ is prime.
Then $M_2(R)$ is also prime, but
$(M_2(R))_0$
is not prime. This is an example of a prime epsilon-strongly $\Z$-graded ring whose principal component is not prime.
\end{exa}

Moreover, unital partial crossed products (see \cite{nystedt2016epsilon}), Leavitt path algebras of finite graphs (see \cite{nystedt2017epsilon}), and certain Cuntz-Pimsner rings (see \cite{lannstrom2019graded}) are classes of graded rings that are epsilon-strongly graded.
A further generalization was introduced by Nystedt and \"{O}inert:

\begin{defi}[{\cite[Def.~10]{nystedt2017epsilon}}]
The ring $S$ is called {\it nearly epsilon-strongly $G$-graded}
if for every $x \in G$ and every $s \in S_x$ there exist $\epsilon_x(s) \in S_x S_{x^{-1}}$ and $\epsilon_x(s)' \in S_{x^{-1}} S_x$ such that the equalities $\epsilon_x(s) s = s = s \epsilon_x(s)'$ hold.
\end{defi}

Notably, every Leavitt path algebra with its natural $\mathbb{Z}$-grading is nearly epsilon-strongly $\mathbb{Z}$-graded whereas only Leavitt path algebras of finite graphs are epsilon-strongly $\Z$-graded (cf. \cite[Thm.~28, Thm.~30]{nystedt2017epsilon}).

\begin{prop}[{\cite[Prop.~11]{nystedt2017epsilon}}]\label{prop:6}
The ring $S$ is nearly epsilon-strongly $G$-graded 
if and only if 
$S$ is symmetrically $G$-graded and  
for every $x \in G$ the ring
$S_x S_{x^{-1}}$ is  
$s$-unital.
\end{prop}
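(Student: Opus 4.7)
The plan is to prove the two implications separately. The forward direction will require an inductive construction of common local units for finite subsets of $S_x$; the backward direction will follow fairly directly from symmetry of the grading combined with $s$-unitality and Proposition~\ref{prop:tominaga}.

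For the forward direction, suppose $S$ is nearly epsilon-strongly $G$-graded. Symmetry is immediate: for $s \in S_x$, the identity $s = \epsilon_x(s)\, s$ displays $s$ as an element of $S_x S_{x^{-1}} S_x$, so $S_x S_{x^{-1}} S_x = S_x$. To obtain $s$-unitality of $S_x S_{x^{-1}}$, I would first prove the auxiliary claim: for any finite subset $\{a_1,\ldots,a_n\} \subseteq S_x$ there exists $u \in S_x S_{x^{-1}}$ with $u a_i = a_i$ for every $i$. The proof is by induction on $n$; the base case $n=1$ uses $u := \epsilon_x(a_1)$. For the inductive step, given $u$ that works for $a_1,\ldots,a_n$, note $a_{n+1} - u a_{n+1} \in S_e S_x \subseteq S_x$, set $v := \epsilon_x(a_{n+1} - u a_{n+1})$ and form $w := u + v - vu$. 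Since $S_x S_{x^{-1}}$ is an ideal of $S_e$, it is closed under multiplication, so $w \in S_x S_{x^{-1}}$; a short computation shows $w a_i = a_i$ for $i \leq n$ and $w a_{n+1} = u a_{n+1} + v(a_{n+1} - u a_{n+1}) = a_{n+1}$. Given the claim, any $t = \sum_{i=1}^n a_i b_i \in S_x S_{x^{-1}}$ admits a left local unit: choose $u$ with $u a_i = a_i$ for every $i$, so $u t = \sum_i (u a_i) b_i = t$. The mirror argument, applied to $\{b_1,\ldots,b_n\} \subseteq S_{x^{-1}}$ using the elements $\epsilon_{x^{-1}}(b_i)' \in S_x S_{x^{-1}}$, provides a right local unit. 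Thus $t \in (S_x S_{x^{-1}}) t \cap t (S_x S_{x^{-1}})$.

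For the backward direction, suppose $S$ is symmetrically $G$-graded and each $S_x S_{x^{-1}}$ is $s$-unital. Take $x \in G$ and $s \in S_x$. Since $S_x = S_x S_{x^{-1}} S_x$, write $s = \sum_{i=1}^n a_i b_i c_i$ with $a_i \in S_x$, $b_i \in S_{x^{-1}}$, $c_i \in S_x$. The finitely many elements $a_i b_i$ lie in $S_x S_{x^{-1}}$, so by Proposition~\ref{prop:tominaga} there exists $u \in S_x S_{x^{-1}}$ with $u\, (a_i b_i) = a_i b_i$ for every $i$; then $us = \sum_i u(a_i b_i) c_i = s$, and $\epsilon_x(s) := u$ works. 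Symmetrically, the elements $b_i c_i$ lie in $S_{x^{-1}} S_x = S_{x^{-1}} S_{(x^{-1})^{-1}}$, which is $s$-unital by the hypothesis applied to $x^{-1}$. A common right local unit $u' \in S_{x^{-1}} S_x$ for $\{b_i c_i\}$ yields $su' = \sum_i a_i (b_i c_i) u' = s$, so we set $\epsilon_x(s)' := u'$.

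The main obstacle is the inductive step in the forward direction: the combination $u + v - vu$ is the standard trick for amalgamating local units in a non-unital setting, but one has to verify that (a) the element $a_{n+1} - u a_{n+1}$ really is homogeneous of degree $x$ so that $\epsilon_x$ applies, (b) the new candidate $w$ stays inside the ideal $S_x S_{x^{-1}}$, and (c) $w$ acts as the identity on each of $a_1,\ldots,a_{n+1}$ simultaneously. Once this claim is in place, the remainder is a clean bookkeeping exercise using the grading, symmetry, and Tominaga's characterization.
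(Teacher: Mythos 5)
The paper does not prove this proposition; it is quoted verbatim from \cite[Prop.~11]{nystedt2017epsilon}, so there is no internal proof to compare against. Your argument is correct and self-contained: the amalgamation $w = u + v - vu$ does stay in $S_x S_{x^{-1}}$ (which is an ideal of $S_e$, hence closed under addition and products), $a_{n+1}-ua_{n+1}$ is indeed homogeneous of degree $x$ since $u \in S_e$, and the backward direction correctly invokes Tominaga's characterization applied to both $S_xS_{x^{-1}}$ and $S_{x^{-1}}S_x$; this is essentially the standard proof from the cited source.
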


\begin{rem}\label{rem:1}
The following implications hold for all
$G$-graded rings:
\begin{align*}
\text{unital strong} &\implies \text{epsilon-strong}  \implies \text{nearly epsilon-strong} \implies \text{symmetrical} 
\end{align*}
\end{rem}

\begin{prop}\label{prop:s-unital}
Suppose that $S$ is nearly epsilon-strongly $G$-graded.
Then, $s \in s S_e \cap S_e s$ for every $s \in S$. 
In particular, $S$ is $s$-unital and $S_e$ is an $s$-unital subring of $S$.
\end{prop}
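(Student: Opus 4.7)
The plan is to prove, for every $s \in S$, the existence of elements $u, u' \in S_e$ with $su = s$ and $u's = s$; the two ``in particular'' statements then follow at once, since $sS_e \subseteq sS$ and $S_e s \subseteq Ss$, and since the first statement, applied to elements of $S_e$ viewed as elements of $S$, yields $s$-unitality of $S_e$.

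The key auxiliary fact I would establish first is the following: \emph{if $T$ is an $s$-unital ring and $M$ is a right $T$-module with the property that $m \in mT$ for every $m \in M$, then every finite subset $F \subseteq M$ admits a common right local unit in $T$, i.e., an element $u \in T$ with $f \cdot u = f$ for all $f \in F$.} The proof is a short induction on $|F|$: the singleton case is direct from the assumption, and in the inductive step an inductively obtained right local unit $r' \in T$ for $F \setminus \{m\}$ is combined with a right local unit $t \in T$ for $\{m\}$ via Tominaga's characterization (Proposition~\ref{prop:tominaga}), applied to the finite subset $\{r', t\} \subseteq T$, to produce $u \in T$ with $ur' = r'u = r'$ and $ut = tu = t$. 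Then $f \cdot u = (f \cdot r') \cdot u = f \cdot (r'u) = f \cdot r' = f$ for $f \in F \setminus \{m\}$, and similarly $m \cdot u = m$.

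I would then prove the main claim by induction on the cardinality of the support of $s = \sum_{x \in X} s_x$. The base case $|X| = 1$ is precisely the definition of near epsilon-strong grading, with $u := \epsilon_x(s_x)' \in S_{x^{-1}}S_x \subseteq S_e$. For the inductive step, fix $x \in X$, split $s = s' + s_x$ with $s' = \sum_{y \in X \setminus \{x\}} s_y$, and let $u' \in S_e$ be an element, furnished by the inductive hypothesis, with $s_y u' = s_y$ for every $y \in X \setminus \{x\}$. By Proposition~\ref{prop:6}, the ring $T := S_{x^{-1}} S_x$ is $s$-unital, and by the definition of near epsilon-strong grading $S_x$ is a right $T$-module in which every element has a right local unit. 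Apply the auxiliary fact to the finite subset $\{s_x,\, s_x u'\} \subseteq S_x$ to obtain $v \in T \subseteq S_e$ satisfying $s_x v = s_x$ and $s_x u' v = s_x u'$. Setting $u := u' + v - u'v \in S_e$, one checks $s_y u = s_y u' + s_y v - s_y u' v = s_y + s_y v - s_y v = s_y$ for $y \ne x$, and $s_x u = s_x u' + s_x v - s_x u' v = s_x u' + s_x - s_x u' = s_x$, whence $su = s$. The dual statement $u's = s$ is obtained by the entirely symmetric argument, using that $S_x S_{x^{-1}}$ is $s$-unital and acts on $S_x$ from the left with each element having a left local unit.

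The principal obstacle is really just the bookkeeping in the inductive step, where $v$ has to be a simultaneous right local unit for both $s_x$ and $s_x u'$ (not merely for $s_x$); the auxiliary fact is precisely what makes this transparent and allows the combination $u' + v - u'v$ to absorb every component of $s$ at once.
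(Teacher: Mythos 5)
Your proof is correct, but it takes a noticeably different route from the paper's. The paper first deduces from Proposition~\ref{prop:6} (applied at $x=e$) that $S_e=S_e^2$ is $s$-unital; then, for each $x\in\Supp(s)$, it uses the symmetric decompositions $s_x=\sum_i a_i s_i=\sum_j s_j' b_j$ with $a_i\in S_xS_{x^{-1}}$ and $b_j\in S_{x^{-1}}S_x$ together with Tominaga's theorem in $S_e$ to produce a single $e_x\in S_e$ satisfying $e_xs_x=s_x=s_xe_x$; a final application of Tominaga in $S_e$ merges the finitely many $e_x$ into one element $e_s$ that is a \emph{two-sided} local unit for all of $s$. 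You instead induct on $|\Supp(s)|$, take your local units inside the corner rings $S_{x^{-1}}S_x$ rather than in $S_e$, and merge them one component at a time via the identity $u=u'+v-u'v$, with your auxiliary module-theoretic lemma guaranteeing that $v$ can be chosen to fix both $s_x$ and $s_xu'$ simultaneously. Your route never needs the intermediate fact that $S_e$ is $s$-unital (you recover it at the end as a special case), and the auxiliary lemma is a clean reusable statement; the paper's route is shorter, treats the support elements in parallel rather than sequentially, and yields the marginally stronger conclusion that one element of $S_e$ serves as a left and right local unit for $s$ at once. One small point worth making explicit in your inductive step: the hypothesis gives $s'u'=s'$, and the componentwise identities $s_yu'=s_y$ then follow by comparing homogeneous degrees, since $u'\in S_e$.
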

\begin{proof}
Proposition~\ref{prop:6} yields, in particular, that (i) $S_e = S_e S_e S_e$ and (ii) $S_e S_e = S_e^2$ is an $s$-unital ring. But (i) gives that $S_e = S_e^3 \subseteq S_e^2 \subseteq S_e$. Thus, $S_e = S_e^2$ is $s$-unital.

Let $s = \sum_{y \in G} s_y \in S$ with $s_y \in S_y$.
Fix $x \in \Supp(s)$. 
By Proposition~\ref{prop:6}, there are finitely many elements $a_i \in S_x S_{x^{-1}} \subseteq S_e$, $b_j \in S_{x^{-1}} S_x \subseteq S_e$ and $s_i, s_j' \in S_x$ such that $s_x = \sum_i a_i s_i = \sum_j s_j' b_j$. 
Now, since $S_e$ is $s$-unital, there is some $e_x \in S_e$ such that $e_x a_i = a_i$ and $b_j e_x = b_j$ for all $i,j$ (see Proposition~\ref{prop:tominaga}). 
Then, $e_x s_x = s_x = s_x e_x$.
Hence, we can find such an $e_x \in S_e$ for every $x \in \Supp(s)$. Since $\Supp(s)$ is a finite set, it follows from Proposition~\ref{prop:tominaga} that there is some $e_s \in S_e$ such that $e_s e_x = e_x = e_x e_s$ for every $x \in \Supp(s)$. 
Then $e_s s = \sum e_s s_x = \sum e_s (e_x s_x) = \sum (e_s e_x) s_x = \sum e_x s_x = \sum s_x = s$,
where the sum runs over $\Supp(s)$.
Similarly, $s e_s = s$.
\end{proof}

Not every symmetrically $G$-graded ring is nearly epsilon-strongly $G$-graded:

\begin{exa}
Let $R$ be an idempotent ring that is not $s$-unital (see e.\,g. \cite[Expl.~2.5]{nystedt2018unital}). 
Consider the $G$-graded ring $S$ defined by $S_e := R$ and 
$S_x := \{ 0 \}$ if $x \in G \setminus \{ e \}$. 
Clearly, $S$ is symmetrically $G$-graded, but
by Proposition~\ref{prop:s-unital} $S$ is not nearly epsilon-strongly $G$-graded.
\end{exa}

\begin{prop}[{\cite[Prop.~3.4]{nystedt2017epsilon}}]\label{prop:nearly_implies_nondegenerate}
If $S$ is nearly epsilon-strongly $G$-graded,
then $S$ is non-degenerately $G$-graded.
\end{prop}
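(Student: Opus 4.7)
The plan is to unpack the defining identities of a nearly epsilon-strongly graded ring and observe that the right-hand absorbing factor $\epsilon_x'(s)$ of a nonzero $s \in S_x$ already lies in $S_{x^{-1}} S_x$, which forces some product $s\cdot a$ with $a \in S_{x^{-1}}$ to be nonzero.

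More precisely, fix $x \in G$ and a nonzero $s \in S_x$. By the definition of a nearly epsilon-strongly $G$-graded ring, there exists $\epsilon_x'(s) \in S_{x^{-1}} S_x$ such that $s \,\epsilon_x'(s) = s$. Since $\epsilon_x'(s) \in S_{x^{-1}} S_x$, I can write it as a finite sum $\epsilon_x'(s) = \sum_{i=1}^n a_i b_i$ with $a_i \in S_{x^{-1}}$ and $b_i \in S_x$. Then
\[
\sum_{i=1}^n (s a_i) b_i \;=\; s\,\epsilon_x'(s) \;=\; s \;\neq\; 0,
\]
so at least one of the products $s a_i$ must be nonzero. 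In particular, $s\,S_{x^{-1}} \neq \{0\}$.

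The argument for $S_{x^{-1}} s \neq \{0\}$ is entirely symmetric: I invoke the existence of $\epsilon_x(s) \in S_x S_{x^{-1}}$ with $\epsilon_x(s)\, s = s$, write $\epsilon_x(s) = \sum_{j=1}^m c_j d_j$ with $c_j \in S_x$ and $d_j \in S_{x^{-1}}$, and deduce from $\sum_j c_j (d_j s) = s \neq 0$ that some $d_j s$ is nonzero, so $S_{x^{-1}} s \neq \{0\}$.

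There is no real obstacle here; the statement is essentially a direct reading of the definitions, and the only thing to be careful about is that the factors $a_i, b_i$ (respectively $c_j, d_j$) obtained from the decomposition of $\epsilon_x'(s)$ (resp.\ $\epsilon_x(s)$) lie in the correct homogeneous components, which they do by construction. Hence both annihilation conditions in the definition of non-degenerate grading are satisfied for every nonzero homogeneous element, completing the proof.
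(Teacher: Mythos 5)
Your proof is correct, and it is the standard direct argument: the paper itself states this result with only a citation to \cite[Prop.~3.4]{nystedt2017epsilon}, where essentially the same reasoning (decompose $\epsilon_x(s)$ and $\epsilon_x'(s)$ into finite sums of homogeneous products and note that a nonzero sum must have a nonzero term) is used. Nothing further is needed.
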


\begin{lem}\label{lem:strongSunital}
If $S$ is $s$-unital strongly $G$-graded, then
the following assertions hold:
\begin{enumerate}[{\rm (a)}]
	\item $S$ is nearly epsilon-strongly $G$-graded.
	\item $s \in s S_e \cap S_e s$ for every $s \in S$.
	\item $r.\Ann_S(S_x)=\{0\}$ for every $x\in G$.
\end{enumerate}
\end{lem}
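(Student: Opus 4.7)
The plan is to handle the three assertions in order, leveraging Proposition~\ref{prop:stefan} (the characterization of strong gradation via $S_xS_{x^{-1}} = S_e$) and Proposition~\ref{prop:s-unital} (which transfers $s$-unitality through nearly epsilon-strongly graded rings).

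For (a), I would fix $x \in G$ and $s \in S_x$. Strong gradation and Proposition~\ref{prop:stefan} give the equalities $S_x S_{x^{-1}} = S_e = S_{x^{-1}} S_x$, so the definition of nearly epsilon-strongly graded reduces to finding $\epsilon, \epsilon' \in S_e$ with $\epsilon s = s = s\epsilon'$. Now $s$-unitality of $S$ applied to $s$ yields some $u \in S$ with $us = s = su$. Decomposing $u = \sum_{g \in G} u_g$ into homogeneous components and comparing graded pieces of $us = s$ inside $S_x$ forces $u_e s = s$ and $u_g s = 0$ for all $g \neq e$; the element $\epsilon := u_e \in S_e = S_xS_{x^{-1}}$ works on the left, and the analogous argument with $su = s$ produces the desired right identifier.

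For (b), it suffices to observe that (a) places us in the hypothesis of Proposition~\ref{prop:s-unital}, which exactly yields $s \in sS_e \cap S_e s$ for every $s \in S$; no further work is needed.

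For (c), let $t \in r.\Ann_S(S_x)$ and write $t = \sum_g t_g$ with $t_g \in S_g$. Since $S_x t_g \subseteq S_{xg}$ and the subspaces $S_{xg}$ are independent for distinct $g$, the equality $S_x t = \{0\}$ forces $S_x t_g = \{0\}$ for every $g$, so one may assume $t \in S_g$ for some $g$. Multiplying $S_x t = \{0\}$ on the left by $S_{x^{-1}}$ and using strong gradation yields $S_e t = (S_{x^{-1}}S_x)t = \{0\}$. But by (b), $t \in S_e t = \{0\}$, so $t = 0$.

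I do not anticipate a real obstacle here; the only slightly subtle point is part (a), where one must recognize that the homogeneous decomposition of an $s$-unit for $s \in S_x$ concentrates in degree $e$ because of graded independence, which is precisely what lets the global $s$-unitality be promoted to the required identifier inside $S_e = S_xS_{x^{-1}}$. Everything else is a direct application of the structural propositions already available.
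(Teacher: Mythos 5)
Your proof is correct. Parts (b) and (c) match the paper's argument exactly: (b) is an immediate application of Proposition~\ref{prop:s-unital} once (a) is in hand, and (c) is the computation $S_e t = S_{x^{-1}}S_x t = \{0\}$ followed by $t \in S_e t$ (your preliminary reduction to homogeneous $t$ is harmless but unnecessary). The only genuine divergence is in (a): the paper routes through Proposition~\ref{prop:6}, i.e.\ it checks that the grading is symmetrical and that $S_xS_{x^{-1}} = S_e$ is $s$-unital, citing an external lemma for the latter fact; you instead verify the definition of nearly epsilon-strong directly, by taking an $s$-unit $u$ for a homogeneous element $s \in S_x$ and observing that graded independence of the components $u_g s \in S_{gx}$ forces $u_e s = s = s u_e$, so $u_e \in S_e = S_xS_{x^{-1}} = S_{x^{-1}}S_x$ serves as both $\epsilon_x(s)$ and $\epsilon_x'(s)$. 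Your route is more elementary and self-contained (it also reproves, in passing, that $S_e$ is $s$-unital), while the paper's route is shorter given the machinery of Proposition~\ref{prop:6} already developed. Both are valid.
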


\begin{proof}
(a): Clearly, $S$ is symmetrically $G$-graded.
By \cite[Lem.~6.8]{lannstrom2019structure}, $S_x S_{x^{-1}} = S_e$ is $s$-unital for every $x \in G$.
The desired conclusion now follows from Proposition~\ref{prop:6}.

(b): This follows from (a) and Proposition~\ref{prop:s-unital}.

(c): Take $x \in G$ and $s\in r.\Ann_S(S_x)$.
Then
$\{0\} = S_{x^{-1}} S_x s = S_e s$.
Thus, $s=0$ by (b).
\end{proof}

\begin{rem}
If $S$ is strongly $G$-graded, then $S$ is $s$-unital if and only if $S_e$ is $s$-unital.
\end{rem}

In the rest of this article, we will freely use
the fact that nearly epsilon-strongly graded rings are symmetrically graded, non-degenerately graded, and $s$-unital without further comment. For additional characterizations of (nearly) epsilon-strongly graded rings, we refer to \cite{MarPinSol20}.

\subsection{Induced gradings}\label{sec:indgrad}
Now, we recall two important functorial constructions. For more details, we refer the reader to \cite{lannstrom2018induced}. The first construction assigns a subring of $S$ with an inherited grading. 
Let $H$ be a subgroup of $G$ and put $S_H := \bigoplus_{x \in H} S_x$. Note that $S_H$ is an $H$-graded ring that is also a subring of $S$. Consider the map $\pi_H \colon S \to S_H$ defined by
$$ \pi_H \left(\sum_{x \in G} s_x\right) = \sum_{x \in H} s_x.$$

The following result is well-known (see  e.\,g.~\cite[Lem.~2.4]{oinert2019units}):

\begin{lem}\label{lem:bimodule_homo}
The map $\pi_H \colon S \to S_H$ is an $S_H$-bimodule homomorphism.
\end{lem}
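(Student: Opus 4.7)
The plan is to check the three defining properties of a bimodule homomorphism directly from the graded structure, reducing to homogeneous elements.

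Additivity of $\pi_H$ is essentially a tautology: since $S = S_H \oplus \bigoplus_{x \in G \setminus H} S_x$ as additive groups, the map $\pi_H$ is precisely the projection onto the first summand, hence additive.

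For the left $S_H$-linearity, I would argue that by additivity it suffices to verify $\pi_H(as) = a\,\pi_H(s)$ when $a = a_h \in S_h$ for some $h \in H$ and $s = s_x \in S_x$ for some $x \in G$. The key observation is that $a_h s_x \in S_{hx}$, together with the subgroup property: since $h \in H$, we have $hx \in H$ if and only if $x \in H$. Hence if $x \in H$, both sides equal $a_h s_x$, while if $x \notin H$, then $hx \notin H$, so $\pi_H(a_h s_x) = 0$ and also $a_h \pi_H(s_x) = a_h \cdot 0 = 0$. The right $S_H$-linearity is established by the symmetric argument, using that $xh \in H$ iff $x \in H$ for $h \in H$.

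There is essentially no obstacle here; the whole lemma rests on the elementary subgroup fact that $hx \in H \iff x \in H$ whenever $h \in H$, which is what makes the projection onto $S_H$ interact well with multiplication by elements of $S_H$ on either side. The only care needed is to extend from homogeneous elements to arbitrary $a \in S_H$ and $s \in S$ by the distributivity of the product and the additivity of $\pi_H$.
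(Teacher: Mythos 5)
Your proof is correct. The paper does not actually prove this lemma---it is cited as well-known with a reference to \cite[Lem.~2.4]{oinert2019units}---and your argument (reduce to homogeneous elements via additivity, then use $S_hS_x\subseteq S_{hx}$ together with the subgroup fact that $hx\in H\iff x\in H$ for $h\in H$) is exactly the standard verification that the omitted proof would consist of.
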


We can ``map down'' nonzero ideals when the ring is non-degenerately $G$-graded:

\begin{lem}\label{lem:pi_non-degenerate}
Suppose that $H$ is a subgroup of $G$.
If $A$ is a 
left (resp. right)
ideal of $S$, then $\pi_H(A)$ is a 
left (resp. right)
ideal of $S_H$. 
If, in addition, $S$ is non-degenerately $G$-graded and $A$ is nonzero, then $\pi_H(A)$ is nonzero.
\end{lem}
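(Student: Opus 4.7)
The plan is to handle the two assertions separately, reserving non-degeneracy for the second one. For the first assertion, I will deduce it formally from Lemma~\ref{lem:bimodule_homo}: if $A$ is a left ideal of $S$, then for any $s \in S_H$ and $a \in A$ we have $sa \in A$, and the left $S_H$-module homomorphism property gives $\pi_H(sa) = s \, \pi_H(a)$. Hence $S_H \cdot \pi_H(A) \subseteq \pi_H(A)$, which combined with additivity shows that $\pi_H(A)$ is a left ideal of $S_H$. The right-ideal case is completely symmetric, using the right $S_H$-module homomorphism property.

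For the second assertion, I will argue by contradiction. Assume $S$ is non-degenerately $G$-graded, $A$ is a nonzero left ideal of $S$, and $\pi_H(A) = \{0\}$; then every $a \in A$ satisfies $\Supp(a) \cap H = \emptyset$. Pick a nonzero $a \in A$ and choose some $y \in \Supp(a)$, so $a_y \neq 0$ and $y \notin H$. I then apply non-degeneracy not to $a$ itself but to the single homogeneous component $a_y \in S_y$: this yields some $c \in S_{y^{-1}}$ with $c\, a_y \neq 0$. Since $A$ is a left ideal, $ca \in A$, and the proof reduces to computing its $e$-component.

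The key observation is a short degree count: $c\, a_x \in S_{y^{-1}} S_x \subseteq S_{y^{-1} x}$, and $y^{-1} x = e$ if and only if $x = y$. Hence the $e$-component of $ca$ is exactly $c\, a_y$, which is nonzero. Since $e \in H$, this forces $\pi_H(ca) \neq 0$, contradicting the assumption $\pi_H(A) = \{0\}$. The right-ideal case will be handled symmetrically, using non-degeneracy in the form $a_y S_{y^{-1}} \neq \{0\}$ to produce $c \in S_{y^{-1}}$ with $a_y\, c \neq 0$, and then inspecting $ac \in A$. I expect the only subtle point to be the decision to invoke non-degeneracy on the single homogeneous piece $a_y$ rather than on the whole element $a$; everything else is routine bookkeeping on degrees.
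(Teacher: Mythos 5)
Your proof is correct and follows essentially the same route as the paper: the first assertion is read off from Lemma~\ref{lem:bimodule_homo}, and the second applies non-degeneracy to a single homogeneous component $a_y$, multiplies by a suitable element of $S_{y^{-1}}$, and notes that the resulting element of $A$ has nonzero $e$-component, hence nonzero image under $\pi_H$. The paper phrases this directly rather than by contradiction, but the argument is the same.
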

\begin{proof}
The first statement immediately follows from Lemma~\ref{lem:bimodule_homo}.
For the second statement suppose that $S$ is non-degenerately $G$-graded and that $A$ is a nonzero left $S$-ideal. 
Pick a nonzero $a \in A$ and $x \in \Supp(a)$.
Then, since $S$ is non-degenerately $G$-graded, 
$\{0\} \ne S_{x^{-1}} a_x= \pi_{\{ e \}}(S_{x^{-1}} a) \subseteq \pi_{\{ e \}}(A) \subseteq \pi_H(A).$
The case when $A$ is a right ideal is proved similarly.
\end{proof}

We now describe the second construction: Given a 
normal subgroup $N$ of $G$, we define 
\emph{the induced $G/N$-grading}
on $S$ in the following way. 
For every $C \in G/N$, put $ S_C := \bigoplus_{x \in C} S_x.$ This yields a $G/N$-grading on $S$. The following non-trivial
result, proved by L\"{a}nnstr\"o{m}, will be essential later on in this article:

\begin{prop}[{\cite[Prop.~5.8]{lannstrom2018induced}}]\label{prop:lannstrom1}
Suppose that $S$ is nearly epsilon-strongly $G$-graded and that $N$ is a normal subgroup of $G$. Then
the induced $G/N$-grading on $S$ is nearly epsilon-strong.
\end{prop}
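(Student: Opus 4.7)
The plan is to verify the characterization in Proposition~\ref{prop:6} for the induced $G/N$-grading, namely that (i) it is symmetrical and (ii) $S_C S_{C^{-1}}$ is $s$-unital for every coset $C \in G/N$. Fix $C = xN \in G/N$. Normality of $N$ gives $CC^{-1} = xN \cdot Nx^{-1} = xNx^{-1} = N$, so $S_C S_{C^{-1}} \subseteq S_N$; decomposing with respect to the $N$-grading that $S_N$ inherits, the degree-$e$ component is precisely $\sum_{y \in C} S_y S_{y^{-1}} \subseteq S_e$. This observation is what allows local identities to be produced inside $S_C S_{C^{-1}}$ itself rather than merely inside $S_e$.

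Condition (i) is immediate. Given $s \in S_C$, write $s = \sum_{y \in C} s_y$ as a finite sum with $s_y \in S_y$. Since near epsilon-strongness implies symmetry (Remark~\ref{rem:1} and Proposition~\ref{prop:6}), each $s_y \in S_y S_{y^{-1}} S_y \subseteq S_C S_{C^{-1}} S_C$, hence $s \in S_C S_{C^{-1}} S_C$. Condition (ii) is the substantive part. Given $t \in S_C S_{C^{-1}}$, write $t = \sum_{i=1}^n p_i q_i$ with $p_i \in S_{y_i}$, $y_i \in C$, and $q_i \in S_{z_i}$, $z_i \in C^{-1}$. Near epsilon-strongness of the $G$-grading yields
\[
e_i := \epsilon_{y_i}(p_i) \in S_{y_i} S_{y_i^{-1}}, \qquad f_i := \epsilon'_{z_i}(q_i) \in S_{z_i^{-1}} S_{z_i},
\]
with $e_i p_i = p_i$ and $q_i f_i = q_i$. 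Because $y_i, z_i^{-1} \in C$, all $e_i$ and $f_i$ lie in
\[
K := \sum_{y \in H} S_y S_{y^{-1}}, \qquad H := \{y_1,\ldots,y_n, z_1^{-1},\ldots,z_n^{-1}\} \subseteq C,
\]
which is a finite sum of $s$-unital ideals of $S_e$ (each summand being $s$-unital by Proposition~\ref{prop:6} applied to the original $G$-grading), and $K \subseteq S_C S_{C^{-1}}$.

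The final step is to choose a $u \in K$ that is a two-sided $s$-unit for the finite set $\{e_1,\ldots,e_n,f_1,\ldots,f_n\}$; since $u \in K \subseteq S_C S_{C^{-1}}$, the computation
\[
ut = \sum_{i} (u e_i) p_i q_i = \sum_i e_i p_i q_i = t, \qquad tu = \sum_{i} p_i q_i (f_i u) = \sum_i p_i q_i f_i = t
\]
shows $t \in t\, S_C S_{C^{-1}} \cap S_C S_{C^{-1}}\, t$, and hence $S_C S_{C^{-1}}$ is $s$-unital; Proposition~\ref{prop:6} then yields that the induced $G/N$-grading is nearly epsilon-strong. The main obstacle is the auxiliary fact that a finite sum of $s$-unital ideals in a ring is itself $s$-unital. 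One-sided local units are produced straightforwardly by polynomial combinations such as $e + f - ef$ followed by an application of Tominaga's theorem (Proposition~\ref{prop:tominaga}); but to obtain a \emph{single} element acting as both left and right $s$-unit on a prescribed finite set, one must stage the choices of local identities in the two ideals carefully (first selecting $s$-units for $\{v_1\}\subseteq I$ and $\{v_2\}\subseteq J$, then refining using $s$-units for cross-terms like $e v_2, v_2 e, e v_2 e \in J$, and finally combining through a symmetric expression such as $u = e + f - ef - fe + efe$) to ensure all residual terms cancel.
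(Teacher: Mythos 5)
The paper itself offers no proof of this proposition --- it is imported wholesale from \cite[Prop.~5.8]{lannstrom2018induced} --- so there is no internal argument to compare against; what you have written is a self-contained derivation from Proposition~\ref{prop:6}, and it is correct. Symmetry of the induced grading follows exactly as you say from $CC^{-1}C=C$ together with $S_y=S_yS_{y^{-1}}S_y$ for each homogeneous component $y\in C$, and the $s$-unitality of $S_CS_{C^{-1}}$ is correctly reduced to producing a single two-sided local unit in $K=\sum_{y\in H}S_yS_{y^{-1}}\subseteq S_CS_{C^{-1}}$ for the finite set $\{e_1,\dots,e_n,f_1,\dots,f_n\}$. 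The one step you flag as the main obstacle --- that a finite sum of $s$-unital ideals of $S_e$ is $s$-unital --- is indeed true, but the ``careful staging'' you describe is unnecessary: for $v=a+b\in I+J$ choose $e\in I$ with $ea=a$, observe that $v-ev=b-eb\in J$, choose $f\in J$ with $f(v-ev)=v-ev$, and check that $u:=e+f-fe\in I+J$ satisfies $uv=v$; this gives $v\in(I+J)v$, the right-handed statement is symmetric, so $I+J$ is $s$-unital in the sense of the definition, and Proposition~\ref{prop:tominaga} then automatically upgrades this to a single two-sided unit for any prescribed finite subset. With that lemma in place (and extended to finitely many summands by induction), your computation $ut=t=tu$ closes the argument.
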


We will also need the following result:

\begin{prop}\label{prop:QuotientNonDeg}
Suppose that $S$ is non-degenerately $G$-graded
and that $N$ is a normal subgroup of $G$.
Then the induced $G/N$-grading on $S$ is non-degenerate.
\end{prop}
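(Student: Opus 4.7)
The plan is to directly verify the defining condition of non-degeneracy for the $G/N$-grading by reducing to the non-degeneracy of the original $G$-grading on a single homogeneous component.

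Fix $C \in G/N$ and a nonzero element $s \in S_C$. Since $S_C = \bigoplus_{x \in C} S_x$ as a subgroup of $S$, we may write $s = \sum_{x \in C} s_x$ with $s_x \in S_x$ and only finitely many $s_x$ nonzero. Pick some $x_0 \in C$ with $s_{x_0} \neq 0$. Because $N$ is normal in $G$, the coset $C^{-1} \in G/N$ contains $x_0^{-1}$, and therefore $S_{x_0^{-1}} \subseteq S_{C^{-1}}$.

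By the assumption that $S$ is non-degenerately $G$-graded, there exists $t \in S_{x_0^{-1}}$ with $t s_{x_0} \neq 0$. Consider the product
\[
ts = \sum_{x \in C} t s_x,
\]
and observe that $t s_x \in S_{x_0^{-1} x}$, with the group elements $x_0^{-1} x$ being pairwise distinct as $x$ varies over $C$. Since $S = \bigoplus_{g \in G} S_g$ as an additive group, this decomposition is direct, so $ts = 0$ would force each homogeneous piece $t s_x$ to vanish, contradicting $t s_{x_0} \neq 0$. Hence $ts \neq 0$, and since $t \in S_{C^{-1}}$, we conclude $S_{C^{-1}} s \neq \{0\}$. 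The argument that $s S_{C^{-1}} \neq \{0\}$ is completely symmetric, using the other half of the non-degeneracy hypothesis to produce $t' \in S_{x_0^{-1}}$ with $s_{x_0} t' \neq 0$.

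There is essentially no obstacle here: the key observation is simply that when we multiply $s$ on the left by a homogeneous element $t \in S_{x_0^{-1}}$, the distinct $G$-homogeneous components $t s_x$ cannot cancel one another, so non-degeneracy transfers from one distinguished component $s_{x_0}$ to the whole sum. The only subtle point worth double-checking is the use of normality of $N$ to ensure $x_0^{-1} \in C^{-1}$ as a $G/N$-coset, which is immediate.
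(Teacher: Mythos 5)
Your proof is correct and follows essentially the same route as the paper: pick a nonzero homogeneous component $s_{x_0}$, multiply by a $G$-homogeneous element from $S_{x_0^{-1}} \subseteq S_{C^{-1}}$ that pairs nontrivially with it, and use the direct sum decomposition over $G$ to see the product cannot vanish (the paper phrases this last step as applying $\pi_{\{e\}}$, which singles out exactly your degree-$e$ component $t s_{x_0}$).
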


\begin{proof}
Take $x\in G$ and a nonzero $a \in S_{xN}$.
Write $a = a_{x n_1} + a_{x n_2} + \ldots + a_{x n_k}$
where $n_1,\ldots,n_k \in N$ are all distinct and $a_{x n_i} \neq 0$ for every $i$.
By non-degeneracy of the $G$-grading there is some
$c_{n_1^{-1} x^{-1}} \in S_{Nx^{-1}} = S_{x^{-1}N}$ 
such that
$c_{n_1^{-1} x^{-1}} a_{x n_1} \neq 0$.
Note that
\begin{align*}
\pi_{\{e\} }(c_{n_1^{-1}x^{-1}} a)
= c_{n_1^{-1}x^{-1}} a_{x n_1} \neq 0.
\end{align*}
Hence,
$c_{n_1^{-1}x^{-1}} a \neq 0$.
This shows that $S_{x^{-1}N} a \neq \{0\}$.
Similarly, 
$a S_{x^{-1}N} \neq \{0\}$.
Thus, the induced $G/N$-grading on $S$ is non-degenerate.
\end{proof}

\subsection{Cancellatively graded rings}\label{sec:cancellative}
We now briefly discuss Passman's notion of cancellatively graded rings. 
We will, however, not work with this class of rings outside of this section.

In \cite{PassmanCancellative} Passman extended his results from \cite{passman1984infinite}
to the class of \emph{cancellatively group graded rings}
which generalizes the class of unital strongly group graded rings.
To avoid any confusion, we wish to point out that 
Passman's \cite{passman1984infinite} 
notion of \emph{$H$-stability} is used interchangeably with our notion of $H$-invariance.
Recall from the introduction that a unital $G$-graded ring $S$ is called \emph{cancellative}
if for all $x,y \in G$ and all homogeneous
subsets $U,V \subseteq S$, the implication
$U S_x S_y V = \{0\}
\Rightarrow
U S_{xy} V = \{0\}$
holds. 
Clearly, all strongly graded rings are cancellative.
However, e.\,g. canonical $\Z$-gradings on
Leavitt path algebras (see Section~ \ref{Sec:LPA}) need not be  cancellative.

\begin{rem}\label{rem:central}
    L\"{a}nnstr\"{o}m has observed that
    if $S$ is epsilon-strongly $G$-graded, then $S$ must be unital (see \cite[Prop.~3.8]{lannstrom2018induced}). Moreover, $\epsilon_x$ is central in $S_e$ for every $x \in G$ (see \cite{nystedt2016epsilon}).
\end{rem}

\begin{lem}\label{lem:AnnSym}
The following assertions hold for each $x\in G$:
\begin{enumerate}[{\rm (a)}]

\item If $S$ is symmetrically $G$-graded, 
then $r.\Ann_S(S_x) = r.\Ann_S(S_{x^{-1}} S_x)$.

\item If $S$ is epsilon-strongly $G$-graded, then 
$r.\Ann_S(S_x) = r.\Ann_S(S_{x^{-1}} S_x) = r.\Ann_S(\epsilon_{x^{-1}} )$.

\end{enumerate}
\end{lem}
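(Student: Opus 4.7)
The plan is to prove the two parts separately, observing that (b) essentially reduces to (a) plus one additional equality. Throughout, I will use the standard convention that for subsets $U, V \subseteq S$, the set $UV$ denotes finite sums of products $uv$ with $u \in U, v \in V$, together with associativity of this operation.

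For part (a), the inclusion $r.\Ann_S(S_x) \subseteq r.\Ann_S(S_{x^{-1}} S_x)$ is immediate: if $s \in S$ satisfies $S_x s = \{0\}$, then for any $a \in S_{x^{-1}}$ and $b \in S_x$, we have $(ab) s = a(bs) = 0$, hence $(S_{x^{-1}} S_x) s = \{0\}$. For the reverse inclusion, I would exploit the symmetry hypothesis $S_x = S_x S_{x^{-1}} S_x$. Given $s \in r.\Ann_S(S_{x^{-1}} S_x)$, any element $t \in S_x$ can be written as a finite sum $t = \sum_i a_i b_i c_i$ with $a_i \in S_x$, $b_i \in S_{x^{-1}}$, $c_i \in S_x$, so that $ts = \sum_i a_i (b_i c_i s) = 0$, since each $b_i c_i \in S_{x^{-1}} S_x$.

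For part (b), since epsilon-strongly graded rings are symmetrically graded (Remark~\ref{rem:1}), the first equality $r.\Ann_S(S_x) = r.\Ann_S(S_{x^{-1}} S_x)$ is supplied by (a). It remains to show $r.\Ann_S(S_{x^{-1}} S_x) = r.\Ann_S(\epsilon_{x^{-1}})$. The inclusion $\subseteq$ is immediate from $\epsilon_{x^{-1}} \in S_{x^{-1}} S_x$ (by the definition of epsilon-strong grading with $y = x^{-1}$).

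The one point requiring a short argument is the reverse inclusion $r.\Ann_S(\epsilon_{x^{-1}}) \subseteq r.\Ann_S(S_{x^{-1}} S_x)$. The key observation is that $\epsilon_{x^{-1}}$ acts as a right identity on $S_{x^{-1}} S_x$: indeed, for any $a \in S_{x^{-1}}$ and $b \in S_x$, the defining property gives $b \epsilon_{x^{-1}} = b$, so $(ab)\epsilon_{x^{-1}} = ab$, and thus $t \epsilon_{x^{-1}} = t$ for every $t \in S_{x^{-1}} S_x$. Consequently, if $\epsilon_{x^{-1}} s = 0$, then $ts = t \epsilon_{x^{-1}} s = 0$ for every such $t$, finishing the proof. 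No step here poses a genuine obstacle; the only subtlety is keeping track of which factor $\epsilon_{x^{-1}}$ belongs to and on which side it acts as an identity.
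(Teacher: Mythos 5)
Your proof is correct. Part (a) is the same argument as the paper's: the forward inclusion by associativity, the reverse inclusion by writing $S_x = S_x S_{x^{-1}} S_x$. For the second equality in (b), however, you take a slightly different route. The paper invokes the identity $S_{x^{-1}}S_x = \epsilon_{x^{-1}} S_e = S_e \epsilon_{x^{-1}}$ together with the fact that epsilon-strongly graded rings are unital with $1_S = 1_{S_e}$ (so that $r.\Ann_S(S_e \epsilon_{x^{-1}}) = r.\Ann_S(\epsilon_{x^{-1}})$), whereas you argue directly that $\epsilon_{x^{-1}}$ is a right identity on $S_{x^{-1}}S_x$ — since $b\epsilon_{x^{-1}} = b$ for $b \in S_x$ forces $t\epsilon_{x^{-1}} = t$ for every $t \in S_{x^{-1}}S_x$ — and deduce both inclusions from $\epsilon_{x^{-1}} \in S_{x^{-1}}S_x$ and this identity property. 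Your version is marginally more self-contained, as it needs neither the unitality of epsilon-strongly graded rings nor the description of $S_{x^{-1}}S_x$ as a principal ideal generated by a central idempotent; the paper's version is shorter given that those facts are already recorded elsewhere in the text. Both arguments are sound.
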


\begin{proof}
(a): 
Suppose that $S$ is symmetrically $G$-graded.
If $s \in r.\Ann_S(S_x)$,
then $S_{x^{-1}} S_x s = \{0\}$, which implies that $r.\Ann_S(S_x) \subseteq r.\Ann_S(S_{x^{-1}} S_x)$.
If, conversely, $s \in{} r.\Ann_S(S_{x^{-1}} S_x)$, then
$S_x s = S_x S_{x^{-1}} S_x s = \{0\}$.
Thus,
$r.\Ann_S(S_{x^{-1}} S_x) \subseteq r.\Ann_S(S_x)$.

(b): Suppose that $S$ is epsilon-strongly $G$-graded.
Then $S_{x^{-1}}S_x  = \epsilon_{x^{-1}} S_e = S_e \epsilon_{x^{-1}}$,
which entails that
$r.\Ann_S( S_{x^{-1}} S_x) = r.\Ann_S(S_e \epsilon_{x^{-1}}) = r.\Ann_S(\epsilon_{x^{-1}})$, where the last equality follows from the fact that $1_S = 1_{S_{e}}$.
\end{proof}

\begin{prop}\label{prop:EpsCancellativeStrong}
Suppose that $S$ is 
epsilon-strongly $G$-graded. Then 
the following assertions are equivalent:
\begin{enumerate}[{\rm (a)}]
    \item the grading on $S$ is strong;
    \item for every $x \in G$, the equality
    $r.\Ann_S(S_x) = \{0 \}$ holds;
    \item the grading on $S$ is cancellative.
\end{enumerate}
\end{prop}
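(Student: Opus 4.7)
The plan is to prove the three equivalences via the cycle (a)$\Rightarrow$(c)$\Rightarrow$(b)$\Rightarrow$(a), since two of the three implications are essentially immediate and the real content lies in (b)$\Rightarrow$(a).

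First I would dispatch (a)$\Rightarrow$(c): strongly $G$-graded immediately gives $US_xS_yV = US_{xy}V$ for any homogeneous $U,V$ and any $x,y\in G$, so the cancellativity implication $US_xS_yV = \{0\} \Rightarrow US_{xy}V = \{0\}$ is trivial (this is already noted in the introduction). Next, (c)$\Rightarrow$(b) follows from the remark on page \pageref{thm:mainNew} citing \cite[Lem.~1.2]{PassmanCancellative}, namely that cancellatively graded rings automatically satisfy $r.\Ann_S(S_x) = \{0\}$ for every $x\in G$.

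The main step is (b)$\Rightarrow$(a). Assume $r.\Ann_S(S_x) = \{0\}$ for every $x \in G$. By Lemma~\ref{lem:AnnSym}(b), this gives $r.\Ann_S(\epsilon_{x^{-1}}) = \{0\}$ for every $x \in G$, equivalently $r.\Ann_S(\epsilon_{y}) = \{0\}$ for every $y \in G$. By Remark~\ref{rem:central}, $S$ is unital, so we can form $1_S - \epsilon_y \in S_e$. Since $\epsilon_y \in S_y S_{y^{-1}}$ and $\epsilon_y$ acts as a left identity on $S_y S_{y^{-1}}$ (writing $\epsilon_y = \sum_i a_i b_i$ with $a_i \in S_y$, $b_i \in S_{y^{-1}}$ and using $\epsilon_y a_i = a_i$), one checks $\epsilon_y^2 = \epsilon_y$. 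Therefore $\epsilon_y(1_S - \epsilon_y) = 0$, and since $r.\Ann_S(\epsilon_y) = \{0\}$, we conclude $\epsilon_y = 1_S$ for every $y \in G$.

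Consequently $1_S \in S_y S_{y^{-1}}$ for every $y \in G$, and multiplying by arbitrary $s \in S_e$ yields $S_e \subseteq S_y S_{y^{-1}}$. Combined with the inclusion $S_y S_{y^{-1}} \subseteq S_e$, this gives $S_y S_{y^{-1}} = S_e$ for every $y \in G$. The identities $S_y S_e = S_y = S_e S_y$ are automatic since $1_S \in S_e$, so Proposition~\ref{prop:stefan} yields that $S$ is strongly $G$-graded, establishing (a). The only conceptual step that could cause trouble is recognizing that the annihilator hypothesis in (b), combined with Lemma~\ref{lem:AnnSym}(b), forces each idempotent $\epsilon_y$ to coincide with $1_S$; once that observation is made, strong gradedness follows immediately from Proposition~\ref{prop:stefan}.
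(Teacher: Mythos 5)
Your proof is correct, but it routes the cycle differently from the paper: you prove (a)$\Rightarrow$(c)$\Rightarrow$(b)$\Rightarrow$(a), whereas the paper proves (a)$\Rightarrow$(b)$\Rightarrow$(c)$\Rightarrow$(a). The substantive difference is where the work is done. The paper's only nontrivial steps are (b)$\Rightarrow$(c), where it verifies via \cite[Lem.~1.2]{PassmanCancellative} that the annihilator condition forces each $S_xS_{x^{-1}}$ to be middle cancellable, and (c)$\Rightarrow$(a), argued by contraposition: if the grading is not strong then some $\epsilon_x \neq 1_S$ and $U=V=\{1_S-\epsilon_x\}$ witnesses the failure of middle cancellability. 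You instead make (a)$\Rightarrow$(c) and (c)$\Rightarrow$(b) immediate (the latter by the same citation of Passman's Lem.~1.2, used in the paper's introduction), and concentrate the content in (b)$\Rightarrow$(a): Lemma~\ref{lem:AnnSym}(b) converts the hypothesis into $r.\Ann_S(\epsilon_y)=\{0\}$, the idempotency of $\epsilon_y$ gives $\epsilon_y(1_S-\epsilon_y)=0$, hence $\epsilon_y=1_S$ for all $y$, and Proposition~\ref{prop:stefan} finishes. Both arguments ultimately hinge on the same observation — that strongness of an epsilon-strong grading is equivalent to all the central idempotents $\epsilon_y$ equalling $1_S$ — but your version uses it positively and avoids any discussion of middle cancellability beyond the citation, which makes the logical bottleneck (annihilators kill the complementary idempotents $1_S-\epsilon_y$) more visible. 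Your argument is complete: the idempotency computation $\epsilon_y^2=\sum_i(\epsilon_y a_i)b_i=\epsilon_y$ is valid, and $1_S-\epsilon_y$ lies in $S_e$ since $1_S=1_{S_e}$ by Remark~\ref{rem:central}.
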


\begin{proof}
(a)$\Rightarrow$(b):
Take $x \in G$. For $s\in S$, we note that
\begin{displaymath}
    S_x s = \{0\}
    \ \Longrightarrow \ S_{x^{-1}} S_x  s = \{0\}
    \ \Longrightarrow \ S_e s = \{0\}
    \ \Longrightarrow \ 1_S \cdot s= 0.
\end{displaymath}
Hence, $r.\Ann_S(S_x) = \{0 \}$.

(b)$\Rightarrow$(c):
 By \cite[Lem.~1.2]{PassmanCancellative}, $S$ is cancellative if and only if, for every $x \in G$, (i) $S_x S_{x^{-1}}$ is a so-called \emph{middle cancellable ideal} of $S_e$ and (ii) $r.\Ann_S(S_x) = \{ 0 \}$. In the special case of epsilon-strongly graded rings, (ii) actually implies (i). Let $x \in G$ and recall that $S_x S_{x^{-1}}$ being middle cancellable means that $U S_x S_{x^{-1}} V = \{0\}$ implies that $UV = \{0\}$ for all subsets $U, V \subseteq S_e$. Moreover, note that $S_x S_{x^{-1}} = \epsilon_x S_e$ for some central element $\epsilon_x \in S_e$ and $$U S_x S_{x^{-1}} V = \{0\} \iff U \epsilon_x S_e V = \{0\} \iff \epsilon_x U S_e V = \{0\} \implies \epsilon_x UV = \{0\}.$$
Now, note that, using Lemma~\ref{lem:AnnSym}, we get
$$ \{0\} = r.\Ann_S(S_{x^{-1}}) = r.\Ann_S(S_x S_{x^{-1}}) = r.\Ann_S(\epsilon_x S_e) \supseteq r.\Ann_S(\epsilon_{x}).$$
Hence, $UV=\{0\}$ whenever
$U S_x S_{x^{-1}} V = \{0\}$. Thus, $S_x S_{x^{-1}}$ is middle cancellable for every $x \in G$. In other words, (ii) implies (i).

(c)$\Rightarrow$(a):
Suppose that the grading on $S$ is not strong.
There is some $x\in G$ such that $\epsilon_x \neq 1_S$.
Put $U=V:=\{ 1_S-\epsilon_x \}$ and note that
$1-\epsilon_x$ is an idempotent.
Clearly, $UV=\{1_S-\epsilon_x\} \neq \{0\}$, since $\epsilon_x \neq 1_S$.
However, we also have that
$U S_x S_{x^{-1}} V = U \epsilon_x S_e V = \{0\}$
which shows that $S_x S_{x^{-1}}$ is not a middle cancellable ideal of $S_e$.
By \cite[Lem.~1.2]{PassmanCancellative} (see also the above proof of (b)$\Rightarrow$(c)), the grading is not  cancellative.
\end{proof}

\begin{prop}\label{prop:cancellative1}
If $S$ is unital and 
cancellatively $G$-graded, then $\Supp(S) = G$. 
\end{prop}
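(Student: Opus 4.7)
The plan is to argue by contradiction: assume there is some $x \in G$ with $x \notin \Supp(S)$, i.e., $S_x = \{0\}$, and use the cancellative condition with the multiplicative identity as a ``probe'' to force $S_e = \{0\}$, which contradicts $1_S \neq 0$.

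First I would recall the standard fact that for any unital $G$-graded ring, the identity element lies in the principal component: writing $1_S = \sum_{g \in G} 1_g$ with $1_g \in S_g$, and comparing homogeneous components in the identity $s_y = s_y \cdot 1_S = 1_S \cdot s_y$ for $s_y \in S_y$, one sees that $1_g = 0$ for $g \neq e$, so $1_S \in S_e$. In particular, $\{1_S\} \subseteq S_e$ is a homogeneous subset of $S$, and $e \in \Supp(S)$.

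Now suppose, for contradiction, that $S_x = \{0\}$ for some $x \in G$. Choose the homogeneous subsets $U := \{1_S\}$ and $V := \{1_S\}$, and apply the cancellative hypothesis with the group elements $x$ and $y := x^{-1}$. Since $S_x = \{0\}$, trivially $S_x S_{x^{-1}} = \{0\}$, and therefore $U\,S_x\,S_{x^{-1}}\,V = \{0\}$. By cancellativity, this forces $U\,S_{xx^{-1}}\,V = U\,S_e\,V = \{0\}$, i.e., $1_S \cdot S_e \cdot 1_S = S_e = \{0\}$. But $1_S \in S_e$ and $1_S \neq 0$, a contradiction. Hence $S_x \neq \{0\}$ for every $x \in G$, and $\Supp(S) = G$.

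There is essentially no obstacle here beyond noticing the correct substitution $(U,V,x,y) = (\{1_S\}, \{1_S\}, x, x^{-1})$ in the defining implication; all the work is done by the fact that $1_S$ lives in $S_e$ and the cancellative condition can be applied to any homogeneous pair, in particular the singleton $\{1_S\}$.
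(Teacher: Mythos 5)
Your proof is correct and is essentially the paper's own argument: the paper applies the cancellativity implication (in contrapositive form) with $U=V=S_e$ and the pair $(x,x^{-1})$, noting $S_e S_{xx^{-1}} S_e = S_e \neq \{0\}$ by unitality, whereas you use $U=V=\{1_S\}$ and argue by contradiction. The substitution and the role of the identity are the same in both versions.
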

\begin{proof}
Take $x \in G$. Since $S$ is unital, we get that
$S_e S_{x x^{-1}} S_e = S_e S_e S_e = S_e \neq \{ 0 \}$.
Thus, by cancellativity, we get
$S_e S_x S_{x^{-1}} S_e \neq \{ 0 \}$.
Hence, $S_x \neq \{ 0 \}$.
\end{proof}

Recall that a unital strongly $G$-graded ring $S$ also satisfies $\Supp(S) = G$ (see Remark~\ref{rem:strongly_are_fully_supported}). However, $\Supp(S)=G$ need not hold, in general, for nearly epsilon-strongly graded rings. 

\begin{rem}
Proposition~\ref{prop:EpsCancellativeStrong} demonstrates that
epsilon-strongly graded rings which can be reached by Passman's ``cancellative results'' \cite{PassmanCancellative} are, in fact, unital strongly graded. 
Thus, that case has already been treated by Passman in \cite{passman1984infinite}.
\end{rem}

\section{Invariant ideals}
\label{Sec:Invariant}

Recall that $S$ is a $G$-graded ring.
If $S$ is strongly $G$-graded, then there is an action of $G$ on the lattice of ideals of $S_N$ for any normal subgroup $N$ of $G$ (see \cite[Sec.~5.2]{passman1984infinite}). The purpose of this section is to investigate this construction 
for more general classes of $G$-graded rings.

\begin{defi}
If $I$ is a subset of $S$ and $x \in G$, then
we define $I^x := S_{x^{-1}} I S_x$.
\end{defi}

\begin{lem}
\label{lem:IxIdeal}
If $x \in G$ and
$I$ is an ideal of $S_e$, 
then $I^x$ is an ideal of $S_e$.
\end{lem}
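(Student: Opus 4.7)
The proof should be a direct verification using only the grading axioms, so I would keep it short.

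The plan is to check the three conditions making $I^x$ an ideal of $S_e$: namely, that $I^x$ is an additive subgroup of $S_e$, that $S_e I^x \subseteq I^x$, and that $I^x S_e \subseteq I^x$. By the very definition $I^x = S_{x^{-1}} I S_x$ (interpreted as the set of finite sums of products $ab c$ with $a\in S_{x^{-1}}$, $b\in I$, $c\in S_x$), it is clear that $I^x$ is an additive subgroup, so nothing has to be said there.

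To see that $I^x \subseteq S_e$, I would note that $I \subseteq S_e$, so by the grading inclusions $S_y S_z \subseteq S_{yz}$ we have
\[
I^x \;=\; S_{x^{-1}} I S_x \;\subseteq\; S_{x^{-1}} S_e S_x \;\subseteq\; S_{x^{-1} e} S_x \;\subseteq\; S_{x^{-1} e x} \;=\; S_e.
\]

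For the closure under multiplication by $S_e$, I would just apply the grading inclusions on the left and right. For any $a \in S_e$, since $S_e S_{x^{-1}} \subseteq S_{x^{-1}}$, we get
\[
a \cdot S_{x^{-1}} I S_x \;\subseteq\; (S_e S_{x^{-1}}) I S_x \;\subseteq\; S_{x^{-1}} I S_x \;=\; I^x,
\]
and similarly, using $S_x S_e \subseteq S_x$,
\[
S_{x^{-1}} I S_x \cdot a \;\subseteq\; S_{x^{-1}} I (S_x S_e) \;\subseteq\; S_{x^{-1}} I S_x \;=\; I^x.
\]

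There is really no obstacle here: the only subtle point worth flagging is that the hypothesis that $I$ is an ideal of $S_e$ is not actually needed to close $I^x$ under left and right multiplication by $S_e$; it is used only to guarantee that $I \subseteq S_e$, which in turn places $I^x$ inside $S_e$. I would therefore write the proof as essentially a one-line chain of inclusions for each of the three verifications.
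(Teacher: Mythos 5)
Your proof is correct and follows essentially the same route as the paper, which simply invokes that $S_{x^{-1}}$ and $S_x$ are $S_e$-bimodules where you unwind the grading inclusions $S_yS_z\subseteq S_{yz}$ explicitly. No gaps; the extra remark about where the ideal hypothesis on $I$ is actually used is accurate.
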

\begin{proof}
Clearly, $I^x$ is an additive subgroup of $S_e$. Since $S_{x^{-1}}$ and $S_x$ are $S_e$-bimodules, it follows that $S_e I^x = S_e S_{x^{-1}} I S_x \subseteq S_{x^{-1}} I S_x = I^x.$ Similarly, $I^x S_e \subseteq I^x$. 
\end{proof}

Recall that if $H,K$ are subsets of $G$, then $K$ is said to be \emph{normalized by $H$} if $Kx=xK$ for every $x\in H$. 

\begin{defi}[{cf.~\cite[p.~406]{PassmanCancellative}}]\label{def:weakly_invariant}
Suppose that $H$ is a subgroup of $G$ and that $I$ is a 
subset of $S$.
Then $I$ is called 
\emph{$H$-invariant} if $I^x \subseteq I$ for every $x \in H$.
Furthermore, if $K$ is a subset of $G$ which is normalized by $H$, then we say that $I$ is \emph{$H/K$-invariant}
if $S_{x^{-1}K} I S_{xK} \subseteq I$ for every $x\in H$.
\end{defi}

In the special case of $s$-unital (and in particular unital) strongly $G$-graded rings, our 
definition coincides with Passman's notion of invariance used in \cite{passman1984infinite}:

\begin{lem}\label{lem:WeakStrongInvariance}
Suppose that $H$ is a subgroup of $G$ and that $S$ is $s$-unital strongly $G$-graded.
Then a subset $I$ of $S$ is $H$-invariant if and only if
$I^x = I$
for every $x \in H$.
\end{lem}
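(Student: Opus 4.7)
The plan is as follows. The ``if'' direction is immediate since $I^x = I$ trivially implies $I^x \subseteq I$, so the whole content is in the ``only if'' direction.

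Assume that $I^y \subseteq I$ for every $y \in H$, and fix $x \in H$. Since $H$ is a subgroup, $x^{-1} \in H$, so the hypothesis applied at $x^{-1}$ gives
\[
S_x I S_{x^{-1}} = I^{x^{-1}} \subseteq I.
\]
To obtain the reverse inclusion $I \subseteq I^x$, I would first absorb $I$ into a ``two-sided $S_e$-sandwich'' by using $s$-unitality, and then expand each $S_e$ as $S_{x^{-1}} S_x$ via the strong grading. Concretely, for each $a \in I$, Lemma~\ref{lem:strongSunital}(b) gives $e_1, e_2 \in S_e$ with $a = e_2 a$ and $a = a e_1$, so that $a = e_2 a e_1 \in S_e\, I\, S_e$. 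Hence
\[
I \;\subseteq\; S_e\, I\, S_e.
\]
Now the strong grading hypothesis gives $S_e = S_{x^{-1}} S_x$, and therefore
\[
I \;\subseteq\; S_e\, I\, S_e \;=\; S_{x^{-1}} S_x \, I \, S_{x^{-1}} S_x \;=\; S_{x^{-1}}\bigl(S_x I S_{x^{-1}}\bigr)S_x \;=\; S_{x^{-1}}\, I^{x^{-1}}\, S_x \;\subseteq\; S_{x^{-1}} I S_x \;=\; I^x,
\]
where the last inclusion uses $I^{x^{-1}} \subseteq I$ obtained above. Combining $I \subseteq I^x$ with the hypothesis $I^x \subseteq I$ yields $I = I^x$, as desired.

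The only non-trivial step is the inclusion $I \subseteq S_e I S_e$, which is precisely where both hypotheses (strong grading and $s$-unitality) are needed: strong grading alone only supplies the factorization $S_e = S_{x^{-1}}S_x$, whereas $s$-unitality is what lets us multiply an arbitrary element of $I$ on both sides by elements of $S_e$ without losing it. Once this is in place the rest is a one-line manipulation, and the argument makes no use of $I$ being an ideal, so it works for arbitrary subsets $I \subseteq S$ as stated.
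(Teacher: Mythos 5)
Your proof is correct and follows essentially the same route as the paper: both arguments absorb $I$ into $S_e I S_e$ via $s$-unitality (Lemma~\ref{lem:strongSunital}(b)), expand $S_e = S_{x^{-1}}S_x$ using the strong grading, and then apply $H$-invariance at $x^{-1}$ to conclude $I \subseteq I^x$. The paper compresses this into a single chain of inclusions, but the content is identical.
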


\begin{proof}
Suppose that $I$ is $H$-invariant.
Take $x \in H$.
By Lemma~\ref{lem:strongSunital}(b) we have
$$I \subseteq S_e I S_e = (S_{x^{-1}} S_x) I (S_{x^{-1}} S_x) = S_{x^{-1}} (S_x I S_{x^{-1}}) S_x \subseteq  S_{x^{-1}} I S_x = I^{x} \subseteq I.$$
This shows that $I^x = I$.
The converse statement is trivial.
\end{proof}

\begin{exa}\label{ex:matrix2}
	Let us again look at Example~\ref{ex:matrix1}. Let $J,J'$ be nonzero $R$-ideals and consider the following ideals of $(M_2(R))_0$:
	\begin{align*}
		I = 
		\begin{pmatrix}
			J & 0 \\
			0 & J
		\end{pmatrix}
		\qquad
		\text{and}
		\qquad
		I' = 
		\begin{pmatrix}
			J & 0 \\
			0 & J'
		\end{pmatrix}.
	\end{align*}
	It is easily checked that $I$ is $\Z$-invariant but $I^x = I$ does not hold for every $x\in \Z$.
	Moreover, if $J \not \subseteq J'$, then a quick verification shows that $I'$ is not $\Z$-invariant. However, $I'$ is invariant with respect to any proper non-trivial subgroup of $\Z$.
\end{exa}
More examples of invariant ideals may be found in Example~\ref{exp:par.free,grp}
and
Example~\ref{ex:lpa_invariant}. 
The following result is essential and will often be used implicitly in the rest of this article:

\begin{lem}[{cf.~\cite[Lem.~5.7]{passman1984infinite}}]\label{lem:2}
Suppose that $I$ and $J$ are subsets of $S$. Then the following assertions hold for all $x,y \in G$:
\begin{enumerate}[{\rm (a)}]
\begin{item}
$(I^x)^y \subseteq I^{xy}$
\end{item}
\begin{item}
$I^x J^x \subseteq (IJ)^x$ if $I$ or $J$
is an ideal of $S_e$.
\end{item}
\begin{item}
If $I \subseteq J$, then $I^x \subseteq J^x$.
\end{item}
\end{enumerate}
\end{lem}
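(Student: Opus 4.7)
All three parts follow directly from the defining grading inclusion $S_a S_b \subseteq S_{ab}$ together with bookkeeping of the $S_e$-bimodule structure; there is no real obstacle, and the hypothesis that $I$ or $J$ be an $S_e$-ideal is used only in part (b). The plan is as follows.

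For part (a), I would simply unfold the definition:
\[
(I^x)^y \;=\; S_{y^{-1}} I^x S_y \;=\; S_{y^{-1}} S_{x^{-1}} I S_x S_y.
\]
Then I would invoke $S_{y^{-1}} S_{x^{-1}} \subseteq S_{y^{-1}x^{-1}} = S_{(xy)^{-1}}$ and $S_x S_y \subseteq S_{xy}$ to conclude $(I^x)^y \subseteq S_{(xy)^{-1}} I S_{xy} = I^{xy}$.

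For part (b), the key move is to bring the ``middle'' factor $S_x S_{x^{-1}}$ into $S_e$. Writing
\[
I^x J^x \;=\; S_{x^{-1}} I \, S_x S_{x^{-1}} \, J S_x \;\subseteq\; S_{x^{-1}} \, I \, S_e \, J \, S_x,
\]
I would then use the ideal hypothesis: if $I$ is a right $S_e$-ideal then $I S_e \subseteq I$, so the middle collapses to $IJ$; if instead $J$ is a left $S_e$-ideal then $S_e J \subseteq J$ does the same job. Either way $I^x J^x \subseteq S_{x^{-1}} IJ S_x = (IJ)^x$. It is worth pointing out in the write-up that without the ideal hypothesis on $I$ or $J$, the factor $S_x S_{x^{-1}}$ (a proper subset of $S_e$ in general) need not be absorbed, so the hypothesis is genuinely used.

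Part (c) is immediate from monotonicity of the set product: if $I \subseteq J$, then $S_{x^{-1}} I S_x \subseteq S_{x^{-1}} J S_x$, i.e.\ $I^x \subseteq J^x$. I would state this in one line.
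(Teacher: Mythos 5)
Your proposal is correct and follows essentially the same route as the paper's proof: unfold the definition and use $S_aS_b\subseteq S_{ab}$ for (a) and (c), and in (b) pass from the middle factor $S_xS_{x^{-1}}$ to $S_e$ and then absorb it using the ideal hypothesis on $I$ or $J$. Your added remark on why the ideal hypothesis is genuinely needed in (b) is accurate but not part of the paper's argument.
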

\begin{proof}

(a): 
$(I^x)^y = S_{y^{-1}} (S_{x^{-1}} I S_x) S_y = (S_{y^{-1}}S_{x^{-1}}) I (S_x S_y) \subseteq S_{(xy)^{-1}} I S_{xy} = I^{xy}.$

(b): 
$I^x J^x = S_{x^{-1}} I (S_x S_{x^{-1}}) J S_x \subseteq S_{x^{-1}} I S_e J S_x \subseteq S_{x^{-1}} IJ S_x = (IJ)^x$. 

(c): 
$I^x = S_{x^{-1}} I S_x \subseteq S_{x^{-1}} J S_x = J^x$. 
\end{proof}

For unital strongly $G$-graded rings, the inclusions in (a) and (b) of Lemma~\ref{lem:2} are, in fact, equalities (see \cite[Lem.~5.7]{passman1984infinite}). However, Example~\ref{ex:2} below shows that the inclusion in Lemma~\ref{lem:2}(a) can be strict
for some 
nearly epsilon-strongly graded rings. We now prove that the inclusion in Lemma~\ref{lem:2}(b) is actually an equality for nearly epsilon-strongly graded rings. 

\begin{defi}
If $I$ is a subset of $S$, then we say that
$I$ is \emph{$\epsilon$-invariant} if for every
$x \in G$, the equality 
$S_x S_{x^{-1}} I = I S_x S_{x^{-1}}$ holds.
\end{defi}

\begin{rem}
If $S$ is epsilon-strongly $G$-graded, 
$H$ is a subgroup of $G$ and $I$
is an ideal of $S_H$,
then the 
statement
\begin{equation}\label{firstcondition}
S_x S_{x^{-1}} I = I S_x S_{x^{-1}}, \qquad \forall x \in G  
\end{equation}
is equivalent to the statement
\begin{equation}\label{secondcondition}
\epsilon_x I = I \epsilon_x, \qquad \forall x \in G.
\end{equation}
Note that if
$H=\{e\}$,
then \eqref{secondcondition} (and hence also \eqref{firstcondition}) is true 
since the elements $\epsilon_x$, for $x \in G$, 
are central idempotents in $S_e$ (see Remark~\ref{rem:central}). This 
justifies our usage of the term ``$\epsilon$-invariant''.
\end{rem}

\begin{lem}\label{lem:ideal_central}
If $S$ is nearly epsilon-strongly $G$-graded,
then every ideal of $S_e$ is
$\epsilon$-invariant.
\end{lem}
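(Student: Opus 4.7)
The plan is to show the stronger statement that both $S_x S_{x^{-1}} I$ and $I S_x S_{x^{-1}}$ coincide with the intersection $I \cap S_x S_{x^{-1}}$, from which $\epsilon$-invariance is immediate.

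First I would fix $x \in G$ and recall that $S_x S_{x^{-1}}$ is an ideal of $S_e$, and that, by Proposition \ref{prop:6}, the ring $S_x S_{x^{-1}}$ is $s$-unital. Since $I$ is an ideal of $S_e$ and $S_x S_{x^{-1}} \subseteq S_e$, the inclusions
\begin{equation*}
S_x S_{x^{-1}} I \subseteq I \cap S_x S_{x^{-1}} \qquad \text{and} \qquad I S_x S_{x^{-1}} \subseteq I \cap S_x S_{x^{-1}}
\end{equation*}
are clear, because the left-hand sides lie in $I$ (as $I$ absorbs multiplication by elements of $S_e$) and in $S_x S_{x^{-1}}$ (as $S_x S_{x^{-1}}$ is an ideal of $S_e$).

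For the reverse inclusions, take $y \in I \cap S_x S_{x^{-1}}$. By Proposition \ref{prop:tominaga} applied to the $s$-unital ring $S_x S_{x^{-1}}$ and the finite set $\{y\}$, there exists $u \in S_x S_{x^{-1}}$ such that $uy = yu = y$. Since $y \in I$, the element $u$ serves simultaneously as a left factor in $S_x S_{x^{-1}}$ and as a right factor, giving $y = uy \in S_x S_{x^{-1}} I$ and $y = yu \in I S_x S_{x^{-1}}$. Combining this with the first step yields
\begin{equation*}
S_x S_{x^{-1}} I = I \cap S_x S_{x^{-1}} = I S_x S_{x^{-1}},
\end{equation*}
which is precisely $\epsilon$-invariance of $I$. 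Since $x \in G$ was arbitrary, the proof is complete.

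I do not anticipate any real obstacle here; the only mildly delicate point is remembering that although $I$ need not itself be $s$-unital, the relevant $s$-unital witness $u$ can be chosen from $S_x S_{x^{-1}}$ (not from $I$), and this is exactly where the hypothesis that $S$ is nearly epsilon-strongly graded is used via Proposition \ref{prop:6}.
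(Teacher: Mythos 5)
Your proof is correct. It does, however, take a somewhat different route from the paper's. The paper argues element-wise on generators $s_x s_{x^{-1}} a$ of $S_x S_{x^{-1}} I$: it applies the defining property of a nearly epsilon-strong grading to the homogeneous element $s_{x^{-1}} a \in S_{x^{-1}}$ to produce a right local unit $\epsilon_{x^{-1}}'(s_{x^{-1}}a) \in S_x S_{x^{-1}}$, and then uses it to push $s_x s_{x^{-1}} a$ into $I S_x S_{x^{-1}}$ (the reverse inclusion being symmetric). You instead work entirely inside $S_e$, using only that $S_x S_{x^{-1}}$ is an ideal of $S_e$ and that it is $s$-unital (one half of the characterization in Proposition~\ref{prop:6}), and you identify both products with $I \cap S_x S_{x^{-1}}$. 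This buys you two things: a slightly stronger and more symmetric conclusion, namely $S_x S_{x^{-1}} I = I \cap S_x S_{x^{-1}} = I S_x S_{x^{-1}}$, and a marginally more general hypothesis, since symmetry of the grading is never used. The only cosmetic remark is that invoking Tominaga's Proposition~\ref{prop:tominaga} for the singleton $\{y\}$ is slightly more than you need (plain $s$-unitality of $S_x S_{x^{-1}}$ already gives $y \in y(S_xS_{x^{-1}}) \cap (S_xS_{x^{-1}})y$, and the two witnesses need not coincide), but this is harmless.
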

\begin{proof}
Take $x \in G$ and let $I$ be an ideal of $S_e$. 
We prove that $S_x S_{x^{-1}} I \subseteq I S_x S_{x^{-1}}$. The reversed inclusion can be shown
in an analogous fashion and is therefore 
left to the reader. 
Take $s_x \in S_x$, $s_{x^{-1}} \in S_{x^{-1}}$ 
and $a \in I$. 
Since $s_{x^{-1}} a \in S_{x^{-1}}$,  
there is $\epsilon_{x^{-1}}' (s_{x^{-1}} a) \in S_x S_{x^{-1}}$ such that $s_{x^{-1}} a =  s_{x^{-1}} a \cdot \epsilon_{x^{-1}}'(s_{x^{-1}} a)$. 
Using that $s_x s_{x^{-1}} a \subseteq I$, it follows that $
    s_x s_{x^{-1}} a = (s_x s_{x^{-1}} a) \cdot \epsilon_{x^{-1}}'(s_{x^{-1}} a) \in I S_x S_{x^{-1}}. $
\end{proof}

\begin{prop}
\label{prop:epsilon-invariant}
Suppose that $S$ is symmetrically $G$-graded, 
$N$ is a normal subgroup of $G$,
and that $I,J$ are ideals of $S_N$.
If 
$I$ or $J$ is $\epsilon$-invariant, 
then $(IJ)^x = I^x J^x$ for every $x \in G$.
\end{prop}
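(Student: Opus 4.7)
My plan is the following.

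By Lemma~\ref{lem:2}(b), the inclusion $I^x J^x \subseteq (IJ)^x$ is automatic (it only uses that $I$ or $J$ is closed on both sides under $S_e$-multiplication, which holds since they are $S_N$-ideals and $e\in N$). So the real content is to establish the reverse inclusion $(IJ)^x \subseteq I^x J^x$, and for this I would combine the hypothesis of symmetric grading with $\epsilon$-invariance of one of the factors.

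Assume first that $I$ is $\epsilon$-invariant. Starting from $(IJ)^x = S_{x^{-1}} I J S_x$, I would use symmetry of the grading, $S_{x^{-1}} = S_{x^{-1}} S_x S_{x^{-1}}$, to rewrite
\[
(IJ)^x = S_{x^{-1}} \bigl(S_x S_{x^{-1}} I\bigr) J S_x.
\]
Then the equality $S_x S_{x^{-1}} I = I S_x S_{x^{-1}}$ (which is $\epsilon$-invariance of $I$ applied to the element $x$) lets me push $S_x S_{x^{-1}}$ past $I$:
\[
(IJ)^x = S_{x^{-1}} I \bigl(S_x S_{x^{-1}}\bigr) J S_x = (S_{x^{-1}} I S_x)(S_{x^{-1}} J S_x) = I^x J^x,
\]
after regrouping. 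If instead $J$ is $\epsilon$-invariant, the argument is symmetric: I would insert the identity $S_x = S_x S_{x^{-1}} S_x$ on the right of $(IJ)^x$, yielding $S_{x^{-1}} I (J S_x S_{x^{-1}}) S_x$, then apply the $\epsilon$-invariance relation for $J$ with the element $x^{-1}$, namely $J S_x S_{x^{-1}} = S_x S_{x^{-1}} J$ (equivalently $S_x S_{x^{-1}} J = J S_x S_{x^{-1}}$), and regroup the resulting product to recognize $I^x J^x$.

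No step seems to pose a real obstacle; the only thing to be careful about is the bookkeeping of which instance of symmetry (on the left or on the right) pairs with which side of $\epsilon$-invariance. Notably, I do not expect to use normality of $N$ in the argument itself — it plays no role beyond ensuring that $S_N$ is an $S_e$-stable subring so that talking about ideals $I, J \trianglelefteq S_N$ is natural. I would not need $s$-unitality or any stronger grading assumption either, since symmetry of the grading is already sufficient to produce the needed ``approximate identities'' $S_x S_{x^{-1}}$ and $S_{x^{-1}} S_x$ on the factors of $S$.
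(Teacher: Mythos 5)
Your proof is correct and is essentially the paper's argument read in the opposite direction: the paper computes $I^x J^x = S_{x^{-1}} I S_x S_{x^{-1}} J S_x = S_{x^{-1}} S_x S_{x^{-1}} IJ S_x = (IJ)^x$ as a single chain of equalities using exactly your two ingredients (symmetry to absorb $S_{x^{-1}}S_xS_{x^{-1}}$, and $\epsilon$-invariance to commute $S_xS_{x^{-1}}$ past one factor), so your separate appeal to Lemma~\ref{lem:2}(b) for one inclusion is redundant. One tiny bookkeeping note: in the case where $J$ is $\epsilon$-invariant, the relation you need, $J S_x S_{x^{-1}} = S_x S_{x^{-1}} J$, is the $\epsilon$-invariance condition at the element $x$ rather than $x^{-1}$ --- but the identity you actually wrote down is the correct one, so nothing breaks.
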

\begin{proof}
Suppose that $I$ is $\epsilon$-invariant. Then, 
since $S$ is symmetrically $G$-graded, we get
$$ I^x J^x = S_{x^{-1}} I S_x S_{x^{-1}} J S_x =
S_{x^{-1}} S_x S_{x^{-1}} IJ S_x =
S_{x^{-1}} IJ S_x = (IJ)^x$$ 
for every $x \in G$.
The case when $J$ is $\epsilon$-invariant can be treated  analogously.
\end{proof}

Combining Lemma~\ref{lem:ideal_central} and Proposition~\ref{prop:epsilon-invariant} we obtain the following result:

\begin{cor}\label{cor:multi1}
If $S$ is nearly epsilon-strongly $G$-graded and
$I,J$ are ideals of $S_e$,
then $(IJ)^x = I^x J^x$ for every $x \in G$.
\end{cor}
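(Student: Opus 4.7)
The plan is to observe that this corollary is a straightforward consequence of the two preceding results, applied with the trivial normal subgroup $N = \{e\}$. In that case $S_N = S_e$, so the ideals $I$ and $J$ of $S_e$ are exactly ideals of $S_N$.

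First, I would note that by Remark~\ref{rem:1}, any nearly epsilon-strongly $G$-graded ring is in particular symmetrically $G$-graded. This gives the symmetry hypothesis needed to apply Proposition~\ref{prop:epsilon-invariant}. Next, Lemma~\ref{lem:ideal_central} guarantees that every ideal of $S_e$ is $\epsilon$-invariant; in particular, $I$ (and likewise $J$) is $\epsilon$-invariant. With $N = \{e\}$ (which is trivially a normal subgroup of $G$), all the hypotheses of Proposition~\ref{prop:epsilon-invariant} are satisfied, and the desired equality $(IJ)^x = I^x J^x$ follows immediately for every $x \in G$.

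There is essentially no obstacle here: the corollary is a packaged application of the lemma plus proposition just proved. The only thing to verify explicitly is that the trivial subgroup $\{e\}$ qualifies as the $N$ in Proposition~\ref{prop:epsilon-invariant}, which is immediate. Hence the proof reduces to a one-line citation chain.
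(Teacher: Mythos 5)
Your proposal is correct and matches the paper's own derivation exactly: the corollary is stated as the combination of Lemma~\ref{lem:ideal_central} (every ideal of $S_e$ is $\epsilon$-invariant) with Proposition~\ref{prop:epsilon-invariant} applied with $N=\{e\}$, using that nearly epsilon-strong implies symmetrically graded. No gaps.
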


\begin{prop}
\label{prop:switch1}
If $S$ is nearly epsilon-strongly $G$-graded and
$I$ is an ideal of $S_e$,
then the following assertions hold:
    \begin{enumerate}[{\rm (a)}]
        \item 
            $ I S_y = S_y I^y$ and $S_{y^{-1}} I = I^y S_{y^{-1}}$ for every $y \in G$.
        \item
            If $H$ is a subgroup of $G$
            and
            $I$ is  
    $H$-invariant, 
    then $I S_y = S_y I$ for every $y \in H$.
    \end{enumerate}
\end{prop}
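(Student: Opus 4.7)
The plan is to establish (a) first, from which (b) will follow quickly by applying (a) to both $y$ and $y^{-1}$.

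For the first equality in (a), namely $IS_y = S_y I^y$, the inclusion $S_y I^y \subseteq IS_y$ is immediate: since $I^y = S_{y^{-1}} I S_y$, we have $S_y I^y = S_y S_{y^{-1}} I S_y$, and because $S_y S_{y^{-1}} \subseteq S_e$ with $I$ an ideal of $S_e$, the product $S_y S_{y^{-1}} I$ lies in $I$. The harder inclusion is $IS_y \subseteq S_y I^y$, and here I would exploit the nearly epsilon-strong hypothesis directly. Given $a \in I$ and $s \in S_y$, the element $as$ lies in $S_y$, so there exists $\epsilon_y(as) \in S_y S_{y^{-1}}$ with $\epsilon_y(as) \cdot as = as$. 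Writing $\epsilon_y(as) = \sum_i t_i u_i$ with $t_i \in S_y$ and $u_i \in S_{y^{-1}}$, one sees that
\[ as = \sum_i t_i (u_i a s) \in S_y \cdot S_{y^{-1}} I S_y = S_y I^y, \]
which gives the desired inclusion.

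The second equality $S_{y^{-1}} I = I^y S_{y^{-1}}$ is proved in parallel fashion: the inclusion $I^y S_{y^{-1}} \subseteq S_{y^{-1}} I$ is again trivial from $S_y S_{y^{-1}} \subseteq S_e$, while the reverse inclusion uses the ``right'' epsilon element: for $s \in S_{y^{-1}}$ and $a \in I$, the product $sa$ lies in $S_{y^{-1}}$, so there is $\epsilon_{y^{-1}}(sa)' \in S_y S_{y^{-1}}$ with $sa \cdot \epsilon_{y^{-1}}(sa)' = sa$, and decomposing $\epsilon_{y^{-1}}(sa)'$ as a sum of products in $S_y \cdot S_{y^{-1}}$ realizes $sa$ as an element of $I^y S_{y^{-1}}$.

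For (b), fix $y \in H$. Since $H$ is a subgroup, both $y$ and $y^{-1}$ belong to $H$, and $H$-invariance of $I$ gives $I^y \subseteq I$ and $I^{y^{-1}} \subseteq I$. Applying the first identity of (a) yields $IS_y = S_y I^y \subseteq S_y I$. Applying the second identity of (a) with $y$ replaced by $y^{-1}$ yields $S_y I = I^{y^{-1}} S_y \subseteq IS_y$. The two inclusions combine to give $IS_y = S_y I$.

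The main obstacle is the one genuinely non-trivial inclusion in (a); this is where the ``nearly'' part of nearly epsilon-strong is indispensable, because the choice of $\epsilon_y(as)$ depends on the specific element $as$ rather than being uniformly available, but the above argument shows that element-by-element suffices. Once (a) is in hand, (b) is essentially formal.
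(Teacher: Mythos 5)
Your proof is correct. Part (b) is exactly the paper's argument: $IS_y = S_yI^y \subseteq S_yI = I^{y^{-1}}S_y \subseteq IS_y$. For part (a) you take a mildly different route. The paper obtains both identities as single chains of set equalities, writing $IS_y = I(S_yS_{y^{-1}}S_y)$ via the symmetric-grading property and then commuting $I$ past $S_yS_{y^{-1}}$ using Lemma~\ref{lem:ideal_central} (every ideal of $S_e$ is $\epsilon$-invariant); this yields the equality in one pass without splitting into inclusions. You instead prove the easy inclusion $S_yI^y\subseteq IS_y$ from $S_yS_{y^{-1}}\subseteq S_e$ and handle the nontrivial inclusion $IS_y\subseteq S_yI^y$ by an element-wise local-unit argument with $\epsilon_y(as)$ --- which is, in effect, an inlined re-proof of the relevant half of Lemma~\ref{lem:ideal_central} specialized to the element $as$. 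Both arguments rest on the same mechanism (local units in $S_xS_{x^{-1}}$ supplied by near epsilon-strongness); yours is more self-contained and elementary, while the paper's is shorter because it leans on the previously established $\epsilon$-invariance and symmetry lemmas. No gaps.
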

\begin{proof}
(a):
Take $y \in G$. 
Since $S$ is symmetrically $G$-graded and $I$ is $\epsilon$-invariant by Lemma~\ref{lem:ideal_central}, we have
$I S_y = I (S_y S_{y^{-1}} S_y) = I (S_y S_{y^{-1}}) S_y = (S_y S_{y^{-1}}) I S_y = S_y (S_{y^{-1}} I S_y) = S_y I^y.$
Similarly,
$S_{y^{-1}} I = (S_{y^{-1}} S_y  S_{y^{-1}}) I = S_{y^{-1}} (S_y  S_{y^{-1}}) I  = S_{y^{-1}} I (S_y S_{y^{-1}})  =  (S_{y^{-1}} I S_y) S_{y^{-1}} = I^y S_{y^{-1}}.$ 

(b):
Take $y \in H$. By (a), 
we get
$I S_y = S_y I^y \subseteq S_y I = I^{y^{-1}} S_y \subseteq I S_y$.
Thus, $S_y I = I S_y$.
\end{proof}

In the following lemma we use the induced quotient grading described in Section~\ref{sec:indgrad}.

\begin{lem}\label{lem:induced_grading_ideals}
Suppose that
$N$ is a normal subgroup of $G$. 
If
$I$ is a 
$G/N$-invariant subset of $S_N$, then $I$ is 
$G$-invariant.
\end{lem}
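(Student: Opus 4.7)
The plan is to unpack the two definitions and observe that the conclusion is essentially immediate from a containment of homogeneous components. By Definition~\ref{def:weakly_invariant}, applied with $H = G$ and $K = N$ (where $N$ is normalized by $G$ since it is normal), the hypothesis that $I$ is $G/N$-invariant means that $S_{x^{-1}N} I S_{xN} \subseteq I$ for every $x \in G$. On the other hand, $G$-invariance of $I$ (in the sense that $I^x \subseteq I$ for all $x \in G$) means $S_{x^{-1}} I S_x \subseteq I$ for every $x \in G$.

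First, I would note that for every $x \in G$, the coset $xN$ contains $x$ itself (since $e \in N$), so by the definition of the induced $G/N$-grading we have the containment of additive subgroups $S_x \subseteq S_{xN} = \bigoplus_{y \in xN} S_y$. Symmetrically, $S_{x^{-1}} \subseteq S_{x^{-1}N}$.

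Then, for any fixed $x \in G$, combining these inclusions gives
\[
I^x = S_{x^{-1}} I S_x \;\subseteq\; S_{x^{-1}N} I S_{xN} \;\subseteq\; I,
\]
where the last inclusion is precisely the assumed $G/N$-invariance. Since $x \in G$ was arbitrary, this shows that $I$ is $G$-invariant, completing the proof.

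There is no real obstacle here: the statement is a straightforward compatibility between invariance under the original grading and invariance under the induced quotient grading, and the whole argument rests on the trivial inclusion $S_x \subseteq S_{xN}$. The only thing to be careful about is correctly reading Definition~\ref{def:weakly_invariant} (so that $S_{xN}$ rather than $S_{Nx}$ or some variant appears on the right), which is unambiguous given that $N \lhd G$ forces $xN = Nx$ anyway.
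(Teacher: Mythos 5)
Your proof is correct and matches the paper's own argument exactly: both rest on the inclusion $S_x \subseteq S_{xN}$, giving $I^x = S_{x^{-1}} I S_x \subseteq S_{x^{-1}N} I S_{xN} \subseteq I$. No further comment is needed.
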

\begin{proof}
Suppose that $I$ is 
$G/N$-invariant.
Take $x \in G$. 
Then $S_{x^{-1}} I S_x \subseteq S_{x^{-1}N} I S_{xN} \subseteq I$.
\end{proof}

\begin{rem}\label{rem:switch2}
In Passman's original setting of unital strongly $G$-graded rings an important property that is repeatedly used is that, for $y \in G$, $S_y I = I S_y$ if and only if $I^y = I$ for any ideal $I$ of $S_H$, where $H$ is a subgroup of $G$.  
In our generalized setting, we will have to make 
do with the result in Proposition~\ref{prop:switch1} which only holds for ideals of the principal component.
\end{rem}

The identity $(I^x)^y = I^{xy}$, for all $x,y \in G$, does not hold in general when working with nearly epsilon-strongly $G$-graded rings. 
Before giving an example for which this identity fails, 
note that if $x \not \in \Supp(S)$, then $I^x = \{0\}$ for every ideal $I$ of $S_e$.

\begin{exa}\label{ex:2}
	Let $R$ be an $s$-unital ring and let $G$ be a non-trivial group.
	Consider the
	nearly epsilon-strong $G$-graded ring $S$ defined by $S_e := R$ and $S_x := \{ 0 \}$ for $x \in G \setminus \{e\}$.
Now, consider the nonzero ideal $R$ of $R$ and let $x \in G \setminus\{e\}$. Then $\{0\} \ne R = R^{x x^{-1}} \neq (R^{x})^{x^{-1}} = \{0\}$,
  because $x \not \in \Supp(S)$.
\end{exa}

\begin{lem}\label{lem:misc1}
Suppose that $S$ is nearly epsilon-strongly $G$-graded, $K$ is a subgroup of $G$ and that $I$ and $J$
are ideals of $S_e$.
Then the following assertions hold:
\begin{enumerate}[{\rm (a)}]
\begin{item}
If $I,J$ are  
$K$-invariant, then 
$IJ$ is 
$K$-invariant.
\end{item}
\begin{item}
If $I$ is  
$K$-invariant, then $r.\Ann_{S_e}(I)$ is
$K$-invariant.
\end{item}
\end{enumerate}
\end{lem}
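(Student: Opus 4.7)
\emph{Plan.} Both parts reduce quickly to the compatibility results already established for nearly epsilon-strong gradings, namely Corollary~\ref{cor:multi1} and Proposition~\ref{prop:switch1}(b). There is essentially no hidden obstacle — the content is to plug these identities together correctly.

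For (a), fix $x \in K$. By Corollary~\ref{cor:multi1},
\[
(IJ)^x = I^x J^x.
\]
Since $I$ and $J$ are $K$-invariant, $I^x \subseteq I$ and $J^x \subseteq J$, so $(IJ)^x = I^x J^x \subseteq IJ$. This shows $IJ$ is $K$-invariant. No additional hypothesis on $S$ beyond what lets us invoke Corollary~\ref{cor:multi1} is needed.

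For (b), set $A := r.\Ann_{S_e}(I)$; by Lemma~\ref{lem:IxIdeal}, $A^x$ is an ideal of $S_e$ for every $x \in G$. To show $A^x \subseteq A$ for $x \in K$, it suffices to check $I \cdot A^x = \{0\}$. Now
\[
I \cdot A^x \;=\; I \, S_{x^{-1}} A \, S_x.
\]
Because $I$ is $K$-invariant and $x^{-1} \in K$, Proposition~\ref{prop:switch1}(b) gives $I\, S_{x^{-1}} = S_{x^{-1}} I$. Substituting and then using $IA = \{0\}$ (by definition of $A$) yields
\[
I \cdot A^x \;=\; S_{x^{-1}} (IA) S_x \;=\; \{0\}.
\]
Hence $A^x \subseteq r.\Ann_{S_e}(I) = A$, proving $A$ is $K$-invariant. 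The only delicate point is that Proposition~\ref{prop:switch1}(b) requires $I$ itself to be $K$-invariant, which is precisely our hypothesis, and that one is allowed to commute $I$ past a single homogeneous factor $S_{x^{-1}}$ — a fact specific to nearly epsilon-strong gradings via $\epsilon$-invariance (Lemma~\ref{lem:ideal_central}).
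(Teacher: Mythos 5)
Your proof is correct and follows essentially the same route as the paper: part (a) is Corollary~\ref{cor:multi1} plus the invariance of $I$ and $J$, and part (b) commutes $I$ past $S_{x^{-1}}$ via Proposition~\ref{prop:switch1} (using $x^{-1}\in K$) to reduce to $IA=\{0\}$. Your explicit remark that one must reduce to checking $I\cdot A^x=\{0\}$, and that $A^x$ is an $S_e$-ideal by Lemma~\ref{lem:IxIdeal}, is a slightly more careful write-up of the same argument.
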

\begin{proof}
(a): 
This follows from Corollary~\ref{cor:multi1}.

(b): Take $x \in G$. 
From Proposition~\ref{prop:switch1}, it follows that
\begin{equation*}
	I \cdot S_{x^{-1}} (r.\Ann_{S_e}(I)) S_x \subseteq S_{x^{-1}} I  (r.\Ann_{S_e}(I)) S_x = S_{x^{-1}} (I \cdot  r.\Ann_{S_e}(I)) S_x  = \{0\}. \qedhere
\end{equation*}
\end{proof}

\begin{lem}\label{lem:sums1}
If $x \in G$ and $F$ is a family of 
subsets of $S$, then
$(\sum_{I \in F} I)^x = \sum_{I \in F} I^x$. 
\end{lem}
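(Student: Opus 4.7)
The plan is straightforward: unpack the definition $I^x := S_{x^{-1}} I S_x$ and exploit the fact that the product of subsets distributes over (finite) sums of subsets.

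For the forward inclusion $(\sum_{I \in F} I)^x \subseteq \sum_{I \in F} I^x$, I would pick an arbitrary generator of the left-hand side, namely an element of the form $s_{x^{-1}} \cdot i \cdot s_x$ with $s_{x^{-1}} \in S_{x^{-1}}$, $s_x \in S_x$, and $i \in \sum_{I \in F} I$. By definition of the sum of subsets, $i$ can be written as a finite sum $i = \sum_{J \in F_0} i_J$ with $F_0 \subseteq F$ finite and $i_J \in J$. Distributivity of multiplication in $S$ then gives
\[
s_{x^{-1}} i s_x \;=\; \sum_{J \in F_0} s_{x^{-1}} i_J s_x \;\in\; \sum_{J \in F_0} S_{x^{-1}} J S_x \;=\; \sum_{J \in F_0} J^x \;\subseteq\; \sum_{I \in F} I^x.
\]
Since $(\sum_{I \in F} I)^x$ is generated (as an additive subgroup) by precisely such products, this settles one inclusion.

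For the reverse inclusion, I would simply invoke Lemma~\ref{lem:2}(c): for each $I \in F$ we have $I \subseteq \sum_{J \in F} J$, hence $I^x \subseteq (\sum_{J \in F} J)^x$. Summing over $I \in F$ yields $\sum_{I \in F} I^x \subseteq (\sum_{J \in F} J)^x$.

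I do not anticipate any real obstacle here. The statement is a purely formal consequence of the bilinearity of the product of subsets of $S$, and in particular requires none of the structural hypotheses discussed earlier (symmetry, non-degeneracy, or nearly epsilon-strongness); it holds for any $G$-graded ring $S$, any $x \in G$, and any family $F$ of subsets of $S$.
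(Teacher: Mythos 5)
Your proof is correct and rests on exactly the same observation as the paper's, namely that the product of subsets distributes over sums, so that $S_{x^{-1}}(\sum_{I\in F} I)S_x = \sum_{I\in F} S_{x^{-1}} I S_x$; the paper simply writes this as a one-line chain of equalities rather than splitting it into two inclusions. Your closing remark that no structural hypotheses on the grading are needed is also consistent with how the lemma is stated and used in the paper.
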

\begin{proof}
$(\sum_{I \in F} I)^x = S_{x^{-1}} (\sum_{I \in F} I) S_x = \sum_{I \in F} S_{x^{-1}} I S_x = \sum_{I \in F} I^x$. 
\end{proof}

\begin{defi}
For $H \subseteq G$ and $M \subseteq S$ we define
$M^H := \sum_{h \in H} S_{h^{-1}} M S_h$.
\end{defi}

\begin{lem}\label{lem:power}
With the above notation the following assertions hold:
\begin{enumerate}[{\rm(a)}]
\item If $H$ is a subgroup of $G$
and $M \subseteq S$, then 
$M^H$ is an $H$-invariant subset of $S$.

\item If $S_e$ is $s$-unital and $I$ is an ideal of $S_e$, 
then $I^G$
is the smallest  
$G$-invariant ideal of $S_e$ containing $I$.

\end{enumerate}
\end{lem}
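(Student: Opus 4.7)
The plan is to handle the two parts separately, with part (a) being an essentially formal computation and part (b) reducing to four small sub-claims.

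For part (a), I would take $x \in H$ and compute directly, using Lemma~\ref{lem:sums1} to distribute $S_{x^{-1}}(\cdot)S_x$ over the sum. This yields
\[
(M^H)^x \;=\; \sum_{h \in H} S_{x^{-1}} S_{h^{-1}} M S_h S_x \;\subseteq\; \sum_{h \in H} S_{(hx)^{-1}} M S_{hx},
\]
using only the gradation inclusions $S_{x^{-1}}S_{h^{-1}} \subseteq S_{(hx)^{-1}}$ and $S_h S_x \subseteq S_{hx}$. Since $H$ is closed under the map $h \mapsto hx$, every summand on the right is a summand of $M^H$, so $(M^H)^x \subseteq M^H$. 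No hypothesis on $S$ is needed.

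For part (b), I need to verify four things: (i) $I^G$ is an ideal of $S_e$; (ii) $I^G$ is $G$-invariant; (iii) $I \subseteq I^G$; (iv) $I^G$ is contained in every $G$-invariant ideal of $S_e$ that contains $I$. Point (i) follows because each summand $S_{g^{-1}}IS_g = I^g$ is an ideal of $S_e$ by Lemma~\ref{lem:IxIdeal}, and arbitrary sums of ideals are ideals. Point (ii) is immediate from part (a) applied with $H = G$. For point (iv), given any $G$-invariant ideal $J$ with $I \subseteq J$, Lemma~\ref{lem:2}(c) gives $I^g \subseteq J^g \subseteq J$ for every $g \in G$, and summing yields $I^G \subseteq J$.

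The one step where the hypothesis actually bites, and therefore the main (mild) obstacle, is point (iii): the containment $I \subseteq I^G$. Here I would use $s$-unitality of $S_e$ to see that $I \subseteq S_e I S_e = I^e$, which is the $g=e$ summand of $I^G$. Concretely, for $a \in I$, Proposition~\ref{prop:tominaga} applied to the singleton $\{a\} \subseteq S_e$ produces $u \in S_e$ with $ua = au = a$, and then $a = uau \in S_e I S_e$. Without $s$-unitality the inclusion $I \subseteq S_e I S_e$ can genuinely fail, so this is precisely where the standing assumption on $S_e$ is used; once it is in place the lemma is complete.
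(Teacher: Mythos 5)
Your proposal is correct and follows essentially the same route as the paper: part (a) is the computation $(M^H)^x = \sum_{h\in H}(M^h)^x \subseteq \sum_{h\in H} M^{hx} = M^H$ via Lemma~\ref{lem:sums1} and the gradation inclusions (which is exactly Lemma~\ref{lem:2}(a)), and part (b) uses (a) for invariance, $s$-unitality for $I = I^e \subseteq I^G$, and Lemma~\ref{lem:2}(c) for minimality. Your extra detail on why $I^G$ is an ideal and on the Tominaga argument for $I \subseteq S_e I S_e$ only spells out steps the paper leaves implicit.
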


\begin{proof}
(a): Take $x \in H$. Combining Lemma~\ref{lem:2}(a)
and Lemma~\ref{lem:sums1}, we deduce that $(M^H)^x = \left( \sum_{y \in H} M^y \right)^x =
\sum_{y \in H} (M^y)^x \subseteq \sum_{y \in H} M^{yx} = M^H$.

(b): From (a), it follows that $I^G$ is $G$-invariant.
Clearly, $I^G$ is an ideal of $S_e$ and $I = I^e \subseteq I^G$ by $s$-unitality of $S_e$.
Suppose now that $J$ is a  
$G$-invariant $S_e$-ideal such that $I \subseteq J$. Then, by Lemma~\ref{lem:2}(c), $I^x \subseteq J^x \subseteq J$ for every $x \in G$
and hence we get $I^G = \sum_{x \in G} I^x \subseteq J$.
\end{proof}

\begin{lem}\label{lem:12}
The following assertions hold:
\begin{enumerate}[{\rm (a)}]
\begin{item}
Suppose that $S$ is non-degenerately $G$-graded. Let $I$ be a subset of $S_e$ and let $x \in \Supp(S)$ be such that 
$I (S_x S_{x^{-1}}) = I$ or $(S_x S_{x^{-1}})I = I$. If $I \ne \{0\}$, then $I^x \ne \{0\}.$
\end{item}
\begin{item}
Suppose that $S$ is symmetrically $G$-graded. Then for every $S_e$-ideal $I$ and every $x \in G$, we have $I^x (S_{x^{-1}} S_x) = I^x.$
\end{item}
\end{enumerate}
\end{lem}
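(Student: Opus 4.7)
The plan is to dispatch part (b) by a short direct calculation, and then to prove part (a) by a contrapositive argument that combines the hypothesis on $I$ with the non-degeneracy of the grading.

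For part (b), symmetry means $S_x S_{x^{-1}} S_x = S_x$ for every $x \in G$, so by associativity one computes
\[
I^x (S_{x^{-1}} S_x) = S_{x^{-1}} I \bigl( S_x S_{x^{-1}} S_x \bigr) = S_{x^{-1}} I S_x = I^x.
\]
No further property of $I$ is used beyond its being a subset of $S_e$.

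For part (a), assume first that $I(S_x S_{x^{-1}}) = I$; the case $(S_x S_{x^{-1}}) I = I$ is handled symmetrically. The key preliminary observation is that $IS_x \ne \{0\}$: indeed, if $IS_x = \{0\}$, then by associativity $I = I(S_x S_{x^{-1}}) = (IS_x) S_{x^{-1}} = \{0\}$, contradicting $I \ne \{0\}$. Hence there is a nonzero element $b \in I S_x$, and since $I \subseteq S_e$ and $S_e S_x \subseteq S_x$, such a $b$ is a nonzero element of $S_x$. Now non-degeneracy of the grading at the component $S_x$ gives $S_{x^{-1}} b \ne \{0\}$, and trivially $S_{x^{-1}} b \subseteq S_{x^{-1}} I S_x = I^x$, so $I^x \ne \{0\}$ as required. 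The symmetric case proceeds identically: from $(S_x S_{x^{-1}}) I = I \ne \{0\}$ one deduces $S_{x^{-1}} I \ne \{0\}$, picks a nonzero $c \in S_{x^{-1}} I \subseteq S_{x^{-1}}$, and applies non-degeneracy to obtain $s \in S_x$ with $0 \ne cs \in S_{x^{-1}} I S_x = I^x$.

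I do not expect any serious obstacle here; the only point worth flagging is that one must first rule out the degenerate collapse $I S_x = \{0\}$ (respectively $S_{x^{-1}} I = \{0\}$) before invoking non-degeneracy, as non-degeneracy alone says nothing about an arbitrary element of $I$. The assumption $x \in \Supp(S)$ is implicit in the argument, since otherwise $S_x = \{0\}$ and the hypothesis $I(S_x S_{x^{-1}}) = I$ would itself force $I = \{0\}$.
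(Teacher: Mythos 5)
Your proposal is correct and follows essentially the same route as the paper: part (b) is the identical one-line computation using $S_xS_{x^{-1}}S_x=S_x$, and part (a) uses the same two ingredients (the hypothesis $I(S_xS_{x^{-1}})=I$ to rule out $IS_x=\{0\}$, then non-degeneracy applied to a nonzero homogeneous element of $IS_x\subseteq S_x$), merely written in the direct rather than the contrapositive direction. Your remark that $x\in\Supp(S)$ is redundant given the other hypotheses is accurate but inessential.
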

\begin{proof}
(a): Suppose that $I^x = \{0\}$. 
Since $S$ is non-degenerately $G$-graded, we have $I S_x = \{0\}$ or $S_{x^{-1}} I = \{0\}$. 
Hence, $\{0\} = I S_x S_{x^{-1}} = I$ or $\{0\} = S_x S_{x^{-1}} I = I$.

(b): For every $x \in G$, we have
$I^x (S_{x^{-1}} S_x) = S_{x^{-1}} I  S_x (S_{x^{-1}} S_x) = S_{x^{-1}} I S_x = I^x.$
\end{proof}

Later on, we need to consider ideals $I$ satisfying $I^x I = \{0\}$ for every $x \in G \setminus H$ for some subgroup $H$ of $G$.
The following result will allow us to replace $I$ with  
$I^H$.

\begin{prop}[{cf.~\cite[Lem.~5.5]{passman1984infinite}}]\label{prop:power}
Suppose that $S$ is nearly epsilon-strongly $G$-graded and that $H$ is a subgroup of $G$. Let $I$ be an ideal of $S_e$ such that $I^x I = \{0\}$ for every $x \in G \setminus H$. Then $(I^H)^x (I^H) = \{0\}$ for every $x \in G \setminus H$.
\end{prop}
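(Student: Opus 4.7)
The plan is to expand $(I^H)^x(I^H)$ as a sum using the definition $I^H = \sum_{h\in H} S_{h^{-1}} I S_h$, and then show that each individual summand vanishes by reducing it to an instance of the hypothesis $I^y I = \{0\}$ for some $y \in G \setminus H$.

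Concretely, fix $x \in G \setminus H$. By Lemma~\ref{lem:sums1} applied twice, the product $(I^H)^x (I^H)$ is the sum, over all pairs $(h_1, h_2) \in H \times H$, of the terms
\[
(I^{h_1})^x \, I^{h_2} \;=\; S_{x^{-1}} S_{h_1^{-1}} I S_{h_1} S_x S_{h_2^{-1}} I S_{h_2}.
\]
So it suffices to show that each such term is zero. The idea is to look at the ``core'' $I S_{h_1} S_x S_{h_2^{-1}} I$ sitting in the middle. Since $S_{h_1} S_x S_{h_2^{-1}} \subseteq S_{h_1 x h_2^{-1}}$, this core is contained in $I S_y I$, where $y := h_1 x h_2^{-1}$.

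The key observation is now purely group-theoretic: since $h_1, h_2 \in H$ and $x \notin H$, we have $y \notin H$ (otherwise $x = h_1^{-1} y h_2 \in H$). Thus the hypothesis gives $I^y I = \{0\}$. To convert this into the vanishing of $I S_y I$, I will invoke Proposition~\ref{prop:switch1}(a), which applies since $I$ is an ideal of $S_e$ and $S$ is nearly epsilon-strongly $G$-graded: this yields $I S_y = S_y I^y$, and hence
\[
I S_y I \;=\; S_y I^y I \;=\; S_y \cdot \{0\} \;=\; \{0\}.
\]
Combining these steps produces $(I^{h_1})^x I^{h_2} = \{0\}$ for every pair $(h_1, h_2)$, and summing over $H \times H$ completes the proof.

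The only mildly subtle point is that we cannot simply replace $(I^{h_1})^x$ by $I^{h_1 x}$, because $(I^a)^b \subseteq I^{ab}$ is only an inclusion in general (cf.\ Lemma~\ref{lem:2}(a) and Example~\ref{ex:2}); so the argument must be carried out at the level of the defining expression $S_{x^{-1}} S_{h_1^{-1}} I S_{h_1} S_x S_{h_2^{-1}} I S_{h_2}$ rather than after collapsing the exponents. Once this is recognized, the switching identity of Proposition~\ref{prop:switch1}(a) is precisely the tool needed to finish, and no further combinatorics on $H$ is required.
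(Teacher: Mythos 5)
Your proof is correct, and it takes a genuinely different route from the paper's. The paper argues by contraposition: assuming $(I^H)^x I^H \neq \{0\}$, it picks $h_1,h_2\in H$ with $(I^{h_1})^x I^{h_2}\neq\{0\}$, passes to the superset $I^{h_1 x} I^{h_2}$ via Lemma~\ref{lem:2}(a), and then invokes Lemma~\ref{lem:12} (which rests on non-degeneracy of the grading) to conjugate by $h_2^{-1}$ without annihilating the ideal, landing in $I^{h_1 x h_2^{-1}} I \neq \{0\}$ and hence $x\in H$. You instead argue directly: you expand $(I^H)^x(I^H)$ termwise via Lemma~\ref{lem:sums1}, keep the defining expression $S_{x^{-1}}S_{h_1^{-1}} I S_{h_1} S_x S_{h_2^{-1}} I S_{h_2}$, isolate the core $I S_y I$ with $y = h_1 x h_2^{-1}\notin H$, and kill it with the switching identity $IS_y = S_y I^y$ of Proposition~\ref{prop:switch1}(a) combined with the hypothesis $I^y I=\{0\}$. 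Both arguments use near-epsilon-strongness essentially, but through different tools: the paper through the non-degeneracy statement of Lemma~\ref{lem:12}(a), you through Proposition~\ref{prop:switch1}(a). Your version is somewhat more economical, avoiding the contrapositive and the step of ``undoing'' a conjugation. One small quibble with your closing remark: replacing $(I^{h_1})^x$ by the superset $I^{h_1 x}$ would actually be harmless for proving a product is zero; the real obstruction to a one-line collapse is that the resulting expression $I^{h_1x}I^{h_2}$ is not of the form $I^yI$ covered by the hypothesis, which is exactly the gap the paper bridges with Lemma~\ref{lem:12} and you bridge with the switching identity.
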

\begin{proof}
Take $x \in G$ such that
$(I^H)^x I^H \ne \{0\}$. There exist $h_1, h_2 \in H$ such that $\{0\} \ne (I^{h_1})^x I^{h_2} \subseteq I^{h_1 x} I^{h_2}$,  
by Lemma~\ref{lem:2}(a). 
By Lemma~\ref{lem:12}(b), we have $I^{h_1 x} \cdot (I^{h_2} (S_{{h_2}^{-1}} S_{h_2} )) =  I^{h_1 x} \cdot ( I^{h_2}) = I^{h_1 x} I^{h_2}$. 
Hence, Lemma~\ref{lem:12}(a) applies to the $S_e$-ideal $I^{h_1x} I^{h_2}$. Thus, $\{0\} \ne (I^{h_1 x} I^{h_2})^{{h_2}^{-1}} \subseteq I^{h_1 x{h_2}^{-1}} I$.
By assumption, $h_1 x{h_2}^{-1} \in H$ and hence $x\in H$.
\end{proof}

\begin{lem}\label{lem:18}
Suppose that $S$ is nearly epsilon-strongly $G$-graded and that $S_e$ is
$G$-semiprime. Furthermore, let
$H$ be a subgroup of $G$ and let $I$ be an
$H$-invariant ideal of $S_e$ such that $I^x I = \{0\}$ for every $x \in G \setminus H$. Then the following assertions hold:
\begin{enumerate}[{\rm (a)}]
\begin{item}
The ideal $I$ does not contain any nonzero nilpotent
$H$-invariant ideal. 
\end{item}
\begin{item}
Let $W$ be a subgroup of $H$ of finite index.
Then $I$ does not contain any nonzero nilpotent
$W$-invariant ideal.
\end{item}
\end{enumerate}
\end{lem}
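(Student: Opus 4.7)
For (a), assume towards contradiction that $J \subseteq I$ is a nonzero $H$-invariant ideal of $S_e$ with $J^n = \{0\}$. The plan is to build a nonzero nilpotent $G$-invariant ideal of $S_e$ from $J$, contradicting the $G$-semiprimality assumption. First I would consider $\tilde J := J^G = \sum_{g \in G} J^g$. By Lemma~\ref{lem:power}(a), $\tilde J$ is $G$-invariant, and $J = J^e \subseteq \tilde J$ by $s$-unitality of $S_e$ (Proposition~\ref{prop:s-unital}), so $\tilde J \ne \{0\}$. The key observation is the inclusion
\begin{equation*}
J^a J^b \subseteq (J^{a b^{-1}} J)^b,
\qquad a,b \in G,
\end{equation*}
which is obtained by absorbing $S_a S_{b^{-1}} \subseteq S_{a b^{-1}}$ and then applying the switching identity $J S_{a b^{-1}} = S_{a b^{-1}} J^{a b^{-1}}$ from Proposition~\ref{prop:switch1}(a). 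Iterating this, together with Corollary~\ref{cor:multi1} and Lemma~\ref{lem:2}(a), yields
\begin{equation*}
J^{g_1} \cdots J^{g_n} \subseteq \bigl(J^{g_1 g_n^{-1}} J^{g_2 g_n^{-1}} \cdots J^{g_{n-1} g_n^{-1}} J\bigr)^{g_n}.
\end{equation*}
Whenever some $g_i g_{i+1}^{-1} \in G \setminus H$, the hypothesis $I^x I = \{0\}$ applied to $J \subseteq I$ forces $J^{g_i} J^{g_{i+1}} = \{0\}$ and the entire product vanishes. Otherwise, all $g_i$ lie in the common right coset $H g_n$, so each exponent $g_i g_n^{-1}$ lies in $H$, and $H$-invariance of $J$ gives $J^{g_i g_n^{-1}} \subseteq J$; hence the inner product is contained in $J^n = \{0\}$. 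This proves $\tilde J^n = \{0\}$, which contradicts the $G$-semiprimality of $S_e$.

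For (b), the plan is to reduce to (a) by extracting from $J$ a nonzero nilpotent $H$-invariant subideal of $I$. Let $N := \bigcap_{h \in H} h W h^{-1}$ be the core of $W$ in $H$. Then $N \lhd H$ has finite index and $N \subseteq W$; since $J$ is $W$-invariant, it is automatically $N$-invariant, so we may assume $W = N$. Choose coset representatives $h_1, \ldots, h_k$ of $N$ in $H$ and set $\tilde J := \sum_{i=1}^{k} J^{h_i}$. Then $\tilde J \ne \{0\}$, $\tilde J \subseteq I$ by the $H$-invariance of $I$, and each $J^{h_i}$ is nilpotent because $(J^{h_i})^n = (J^n)^{h_i} = \{0\}$ by Corollary~\ref{cor:multi1}; since $\tilde J$ is a finite sum of nilpotent $S_e$-ideals, it is itself nilpotent. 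The remaining task is to verify that $\tilde J$ is $H$-invariant, after which (a) applies to $\tilde J$ and yields the contradiction. For $h \in H$, Lemmas~\ref{lem:sums1} and \ref{lem:2}(a) give $\tilde J^h \subseteq \sum_i J^{h_i h}$; writing $h_i h = h_{\sigma(i)} n_i$ with $n_i \in N$ and $\sigma$ a permutation reduces matters to the absorption $J^{h_{\sigma(i)} n_i} \subseteq J^{h_{\sigma(i)}}$, which I would attempt via the switching rule of Proposition~\ref{prop:switch1}, the $\epsilon$-invariance of $J$ recorded in Lemma~\ref{lem:ideal_central}, and the $N$-invariance of $J$.

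The main obstacle is precisely the absorption $J^{h n} \subseteq J^h$ for $h \in H$ and $n \in N$. In Passman's classical strongly graded setting this is immediate via the factorization $S_{(h n)^{-1}} = S_{n^{-1}} S_{h^{-1}}$ combined with the normality of $N$ in $H$, but for nearly epsilon-strongly graded rings such factorizations of homogeneous components need not hold, and Lemma~\ref{lem:2}(a) is only an inclusion which may be strict (see Example~\ref{ex:2}). Circumventing this will require exploiting the interplay between the $s$-unitality of the local ideals $S_{x^{-1}} S_x$ from Proposition~\ref{prop:6} and the commutation rule $I S_y = S_y I^y$ of Proposition~\ref{prop:switch1}(a), in order to decompose arbitrary elements of $S_{h n}$ into factored pieces on which the $N$-invariance of $J$ can be applied directly.
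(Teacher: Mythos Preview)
Your argument for (a) is correct and in fact slightly more general than the paper's: you handle an arbitrary nilpotency index $n$ directly, whereas the paper first reduces (implicitly, via passing to a suitable power of $J$) to the case $J^2=\{0\}$ and then shows $(J^G)^2=\{0\}$ using Lemma~\ref{lem:12}. Your iterated inclusion
\[
J^{g_1}\cdots J^{g_n}\subseteq \bigl(J^{g_1g_n^{-1}}\cdots J^{g_{n-1}g_n^{-1}}J\bigr)^{g_n}
\]
is valid, and the coset dichotomy you describe finishes the job.

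Part (b), however, has a genuine gap, and the tools you list at the end will not close it. The absorption $J^{h n}\subseteq J^{h}$ (equivalently $J^{n'h}\subseteq J^h$ after using normality of $N$) simply fails for nearly epsilon-strongly graded rings: take $S=R[t^3,t^{-3}]$ with the $\mathbb{Z}$-grading in which $S_i=\{0\}$ for $i\not\equiv 0\pmod 3$, let $N=2\mathbb{Z}$, and $J=S_0=R$; then $J$ is $N$-invariant, yet $J^{3}=R$ while $J^{1}=\{0\}$. So your $\tilde J=\sum_i J^{h_i}$, summed only over coset representatives, is not forced to be $H$-invariant by the route you propose. The passage to the normal core $N$ is also unnecessary and does not help.

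The paper sidesteps this entirely by summing over \emph{all} of $H$: it sets $J'=\sum_{h\in H}J^h=J^H$, which is $H$-invariant for free by Lemma~\ref{lem:power}(a). The real work then shifts to proving that $J^H$ is nilpotent despite being a possibly infinite sum. This is done by first reducing to $J^2=\{0\}$, then grouping $J^H=\sum_{i=1}^n J^{Wx_i}$ into finitely many right-coset sums $J^{Wx_i}:=\sum_{y\in W}J^{yx_i}$, and showing each $J^{Wx_i}$ squares to zero: for $y_1,y_2\in W$ one has
\[
J^{y_1x_i}J^{y_2x_i}\subseteq S_{(y_1x_i)^{-1}}\,J\,S_{y_1y_2^{-1}}\,J\,S_{y_2x_i},
\]
and the middle factor vanishes because $y_1y_2^{-1}\in W$ together with Proposition~\ref{prop:switch1}(b) gives $J S_{y_1y_2^{-1}}J=S_{y_1y_2^{-1}}J^2=\{0\}$. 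A finite sum of square-zero ideals is nilpotent, and then (a) applies. The key difference from your attempt is that $H$-invariance is obtained by enlarging the sum, and nilpotency is recovered by exploiting the \emph{finite} coset decomposition together with the switching rule inside each coset block.
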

\begin{proof}
(a): Seeking a contradiction, suppose that 
$J$ is a nonzero $H$-invariant ideal of $S_e$ such that 
$J^2 = \{ 0 \}$ and $J \subseteq I$.
First we show that $J^x J = \{0\}$ for every $x \in G$. 
Indeed, for $x \in H$ we have $J^x J \subseteq J^2 =~\{0\}$ while
for $x \in G \setminus H$ we have $J^x J \subseteq I^x I = \{0\}$ by Lemma~\ref{lem:2}(c).  
Next, note that $J^G = \sum_{x \in G} J^x$ is a nonzero $G$-invariant ideal of $S_e$ by Lemma~\ref{lem:power}. 
We claim that $J^G J^G = \{0\}$. 
If we assume that the claim holds, then
we get the desired contradiction, since $S_e$ 
is assumed to be $G$-semiprime.
Now, we prove the claim.
Seeking a contradiction, suppose that $J^G J^G \ne \{0\}$. 
Then $J^G J^G = \left( \sum_{x \in G} J^x \right) \left(\sum_{y \in G} J^y \right) = \sum_{x,y \in G} J^x J^y \ne~\{0\}.$
Hence there are $x, y \in G$ such that $J^x J^y \ne \{0\}$. 
By Lemma~\ref{lem:12}(b), we have $J^x J^y (S_{y^{-1}} S_y) = J^x J^y$, and therefore Lemma~\ref{lem:12}(a) implies that $\{0\} \ne (J^x J^y)^{y^{-1}}$. 
Moreover, by Corollary~\ref{cor:multi1}, we have $\{0\} \ne (J^x J^y)^{y^{-1}} = (J^x)^{y^{-1}} (J^y)^{y^{-1}} \subseteq J^{x y^{-1}} J = \{0\}$, which is a contradiction.

(b): Seeking a contradiction, suppose that $J \subseteq I$ is a nonzero 
$W$-invariant ideal of $S_e$ such that $J^2 = \{0\}$. Let $Wx_1, Wx_2, \dots, Wx_n$ be a set of representatives of the right cosets of $W$ in $H$ and, for every $i \in \{1, \dots, n \}$, let $J^{Wx_i} := \sum_{y \in W} J^{y x_i}$. 
We wish to prove that $J' := J^{Wx_1} + J^{W x_2} + \ldots + J^{W x_n}$ is a nonzero 
$H$-invariant nilpotent ideal contained in $I$. 

To begin with, note that for all $y_1, y_2 \in W$ and $i \in \{1, \dots, n\}$ we have $$J^{y_1 x_i} J^{y_2 x_i} = S_{(y_1 x_i)^{-1}} J S_{y_1 x_i} S_{(y_2 x_i)^{-1}} J S_{y_2 x_i }
\subseteq
S_{(y_1 x_i)^{-1}} J S_{y_1 y_2^{-1}} J S_{y_2 x_i }.
$$ 
Using that 
$y_1 y_2^{-1} \in W$
and that
$J$ is 
$W$-invariant,  Proposition~\ref{prop:switch1}(b) yields $ J S_{y_1 y_2^{-1}} J = S_{y_1 y_2^{-1}}  J J  = \{0\}$. 
Hence, $J^{y_1 x_i} J^{y_2 x_i} = \{0\}$, and therefore it follows that $$\left( J^{W x_i} \right)^2 = \left( \sum_{y_1 \in W} J^{y_1 x_i} \right) \left(\sum_{y_2 \in W} J^{y_2 x_i} \right) = \sum_{y_1, y_2 \in W} J^{y_1 x_i} J^{y_2 x_i} = \{0\}. $$ 
In other words, $J^{W x_i}$ is a nilpotent ideal for every $i \in \{1, \dots, n\}$. Since $J'$ is a finite sum of nilpotent ideals,
we conclude that $J'$ is also a nilpotent ideal.

Next, we prove that $J'$ is
$H$-invariant. 
For this we repeatedly use Lemma~\ref{lem:2}.
Note that for all $i \in \{1, \dots, n\}$ and $y \in H$, we have $(J^{W x_i})^y \subseteq J^{W x_i y} = J^{W x_j}$ for some
$j \in \{1,\ldots,n\}$ with $W x_i y = W x_j$. 
Now, by Lemma~\ref{lem:sums1},
$(J')^y = (J^{Wx_1})^y + \ldots + (J^{Wx_n})^y \subseteq J'$
and hence $J'$ is
$H$-invariant.
Finally, we show that $J' \subseteq I$.
Note that $J \subseteq I$ implies $J^{y x_i} \subseteq I^{y x_i}$ for every $y \in W$. 
In addition, we have $I^{y x_i} \subseteq I$, since $I$ is 
$H$-invariant.
It follows that $J^{W x_i} \subseteq I$ for every $i \in \{1, \dots, n\}$, which gives the inclusion $J' \subseteq I$. 

Summarizing, we have established that $J'$ is indeed an 
$H$-invariant nilpotent ideal contained in $I$, but by virtue of (a) we must have $J' = \{0\}$. 
However, writing $Wx_j$ for the right coset containing $e$, we 
get
$\{0\} \ne J = J^e \subseteq J^{Wx_j} \subseteq J'$. 
This contradiction proves the assertion.
\end{proof}

\section{Graded prime ideals}

Recall that $S$ is a $G$-graded ring.
\label{Sec:GradedPrime}
In this section, we
obtain a correspondence between graded prime ideals of $S$ and
$G$-prime ideals of $S_e$, in the case when $S$ is nearly epsilon-strongly $G$-graded.
Using that correspondence, we establish a primeness result in the case when $G$ is ordered (see Corollary~\ref{Cor:Ordered}).
That result will be generalized in Section~\ref{Sec:Sufficient}, using more elaborate methods.
We wish to emphasize that the
rest of this article does not depend on the results of this section.

\begin{defi}
An ideal $I$ of $S$ is called \emph{graded} if 
$I = \bigoplus_{x \in G} (I \cap S_x)$. 
\end{defi}

\begin{exa}
This example illustrates that a graded ring may have infinitely many ideals but only trivial graded ideals.
Indeed, consider the complex Laurent polynomial ring equipped with the standard $\mathbb{Z}$-grading, that is, $\C[t, t^{-1}] = \bigoplus_{i \in \mathbb{Z}} \C t^i$. 
This is clearly a strong $\mathbb{Z}$-grading and hence also nearly epsilon-strong. 
Every point of the circle gives rise to a maximal ideal of $\C[t, t^{-1}]$.
On the other hand, the only graded ideals are $\{0\}$ and $\C[t, t^{-1}]$.   
\end{exa}

Let $I$ be an ideal of $S$. Then $I_e := I \cap S_e$ is an $S_e$-ideal. Conversely, if $J$ is an $S_e$-ideal, then $SJS$ is a graded ideal of $S$.
For strongly graded rings we have the following bijection:

\begin{prop}[{\cite[Prop.~2.11.7]{nastasescu2004methods}}]\label{prop:strongly-bijection}
If $S$ is unital strongly $G$-graded, 
then the map $I \mapsto I_e$ is a bijection between 
the set of graded ideals of $S$ and the set of $G$-invariant ideals of $S_e$.
\end{prop}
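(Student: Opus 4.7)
The plan is to exhibit an explicit two-sided inverse of $\Phi \colon I \mapsto I_e$. Given a $G$-invariant ideal $J$ of $S_e$, I would define
\[
\Psi(J) \;:=\; \bigoplus_{x \in G} S_x J.
\]
Since $S_x J \subseteq S_x$, the sum is automatically direct inside $S = \bigoplus_{x \in G} S_x$, so $\Psi(J)$ is graded by construction. My task then splits into three steps: (1) check that $\Phi$ sends graded ideals to $G$-invariant ideals of $S_e$; (2) check that $\Psi(J)$ is actually a two-sided ideal of $S$; (3) verify $\Phi \circ \Psi = \mathrm{id}$ and $\Psi \circ \Phi = \mathrm{id}$.

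For (1), if $I$ is an ideal of $S$ and $x \in G$, then $(I_e)^x = S_{x^{-1}} I_e S_x \subseteq S_{x^{-1}} S_e S_x \cap I = S_e \cap I = I_e$, using the grading inclusion and the fact that $I$ is two-sided. The key intermediate result, crucial for step (3), is the identity $I_x = S_x I_e$ for every homogeneous component of a graded ideal $I$. The inclusion $S_x I_e \subseteq I_x$ is immediate from the ideal property. For the reverse inclusion, since the grading is strong and unital, $S_x S_{x^{-1}} = S_e$ contains $1_S$, so
\[
I_x \;=\; S_e I_x \;=\; S_x S_{x^{-1}} I_x \;\subseteq\; S_x \bigl(S_{x^{-1}} I_x\bigr) \;\subseteq\; S_x I_e,
\]
because $S_{x^{-1}} I_x \subseteq S_{x^{-1}} S_x \cap I = S_e \cap I = I_e$.

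For (2), I need $\Psi(J)$ closed under left and right multiplication by arbitrary homogeneous elements. Left closure $S_y \cdot S_x J \subseteq S_{yx} J$ is immediate. Right closure is the place where $G$-invariance of $J$ is used decisively: by Lemma~\ref{lem:WeakStrongInvariance}, $J^y = J$ for every $y \in G$ (since $S$ is in particular $s$-unital strongly graded), hence $J S_y = S_y S_{y^{-1}} J S_y = S_y J^y = S_y J$. Therefore $S_x J \cdot S_y \subseteq S_x S_y J \subseteq S_{xy} J$, proving that $\Psi(J)$ is a two-sided ideal.

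Finally, step (3) is a quick bookkeeping: $\Phi(\Psi(J)) = S_e J = J$ because $S_e$ is unital, and conversely, for a graded ideal $I$, the identity $I_x = S_x I_e$ proved above gives
\[
\Psi(\Phi(I)) \;=\; \bigoplus_{x \in G} S_x I_e \;=\; \bigoplus_{x \in G} I_x \;=\; I.
\]
The only point that requires genuine attention is the right-ideal check in step (2); everything else is a direct consequence of the strong-grading identity $S_x S_{x^{-1}} = S_e$ together with unitality.
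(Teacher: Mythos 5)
Your argument is correct, but it is worth noting that the paper itself does not prove Proposition~\ref{prop:strongly-bijection} at all --- it cites it and instead proves the generalization Theorem~\ref{thm:bijection1} for nearly epsilon-strongly graded rings --- so the natural comparison is with that proof. There the inverse map is taken to be $J \mapsto SJS$, which is a two-sided ideal for free; the work is then pushed into Lemma~\ref{lem:weak_char} (showing $(SJS)_e = J$ precisely when $J$ is $G$-invariant) and Lemma~\ref{lem:nearly1} (showing $SI_eS = I$ for graded $I$, via the local units $\epsilon_x(a_x)$). You instead take $\Psi(J) = \bigoplus_x S_xJ = SJ$, so gradedness and $\Phi(\Psi(J)) = S_eJ = J$ are immediate, and the $G$-invariance of $J$ is spent on the right-ideal check via $JS_y = S_yJ$; your identity $I_x = S_xI_e$ is exactly the unital strongly graded incarnation of Lemma~\ref{lem:nearly1}. (In the unital strongly graded setting the two inverses coincide, since $SJS = SJ$ once $JS_y = S_yJ$.) Your route is more elementary and self-contained for the proposition as stated, but it leans on $1_S \in S_e = S_xS_{x^{-1}}$ and on Lemma~\ref{lem:WeakStrongInvariance}, so unlike the paper's argument it does not extend to the nearly epsilon-strong case, where $S_xS_{x^{-1}}$ need not be all of $S_e$ and one genuinely needs the symmetric decomposition $SJS$. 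For the right-ideal step, note also that the one-sided inclusion $JS_y \subseteq S_yJ$ (which follows directly from $J = S_yS_{y^{-1}}J$ and $J^y \subseteq J$) already suffices; the full equality from Lemma~\ref{lem:WeakStrongInvariance} is not needed.
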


We now generalize Proposition~\ref{prop:strongly-bijection} to nearly epsilon-strongly graded rings
(see Theorem~\ref{thm:bijection1}).
To this end, we need three lemmas.

\begin{lem}[{cf.~\cite[Expl.~2.7.3]{passman1984infinite}}]\label{lem:weak_char}
If $S_e$ is $s$-unital and $I$ is an ideal of $S_e$,
then $I$ is 
$G$-invariant if and only if $(SIS)_e= I$.
\end{lem}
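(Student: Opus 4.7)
The plan is to reduce the statement to a direct computation of $(SIS)_e$ and then invoke Lemma~\ref{lem:power}. The key observation is that the graded ideal $SIS$ decomposes as
\[
SIS = \Bigl(\bigoplus_{x\in G} S_x\Bigr) I \Bigl(\bigoplus_{y\in G} S_y\Bigr) = \bigoplus_{z\in G} \Bigl(\sum_{xy=z} S_x I S_y\Bigr),
\]
so its homogeneous $e$-component is
\[
(SIS)_e = \sum_{x\in G} S_x I S_{x^{-1}} = \sum_{x\in G} I^{x^{-1}} = \sum_{y\in G} I^y = I^G.
\]
Thus the lemma reduces to the equivalence ``$I$ is $G$-invariant $\Longleftrightarrow I^G = I$''.

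For the forward direction, I would assume $I$ is $G$-invariant. Since $S_e$ is $s$-unital and $I$ is an $S_e$-ideal, Proposition~\ref{prop:tominaga} gives, for each $a \in I$, an element $u \in S_e$ with $ua = au = a$, hence $a = uau \in S_e I S_e = I^e$, so $I \subseteq I^e \subseteq I^G$. On the other hand, since $I$ is itself a $G$-invariant ideal of $S_e$ containing $I$, the minimality statement in Lemma~\ref{lem:power}(b) forces $I^G \subseteq I$. Hence $I^G = I$, that is, $(SIS)_e = I$.

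For the converse, I would assume $(SIS)_e = I$, i.e., $I^G = I$. By Lemma~\ref{lem:power}(a) (applied with $H = G$ and $M = I$), $I^G$ is automatically $G$-invariant. Therefore $I$ itself is $G$-invariant, concluding the proof.

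There is no real obstacle here: the proof is essentially bookkeeping, and the only subtle point is the use of $s$-unitality of $S_e$ to ensure $I \subseteq S_e I S_e$, which is needed so that $I$ actually sits inside $I^G$ (otherwise the equality $I^G = I$ could fail for trivial reasons even for a $G$-invariant $I$). Everything else follows formally from the already established Lemma~\ref{lem:power}.
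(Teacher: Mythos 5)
Your proof is correct and is essentially the paper's argument: both hinge on the graded decomposition $(SIS)_e=\sum_{x\in G}S_xIS_{x^{-1}}=\sum_{x\in G}I^{x^{-1}}$, use $s$-unitality of $S_e$ to get $I\subseteq S_eIS_e\subseteq (SIS)_e$, and read off $G$-invariance from the inclusion of each $S_{x^{-1}}IS_x$ in $(SIS)_e$. The only cosmetic difference is that you package the computation as $(SIS)_e=I^G$ and outsource the two inclusions to Lemma~\ref{lem:power}, whereas the paper verifies them inline.
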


\begin{proof}
Suppose that $I^x = S_{x^{-1}} I S_x \subseteq I$ for every $x \in G$. Then $(SIS)_e = SIS \cap S_e \subseteq I$. The reversed inclusion follows since $S_e$ is $s$-unital. 
Conversely, suppose that $(S I S)_e = I$. 
Then $S_{x^{-1}} I S_x \subseteq S I S \cap S_e = (SIS)_e = I$ for every $x \in G$. Thus, $I$ is
$G$-invariant.
\end{proof}

\begin{lem}\label{lem:weak1}
If $I$ is a graded ideal of $S$, then $I_e$ is a  
$G$-invariant ideal of $S_e$.
\end{lem}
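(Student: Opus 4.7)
The plan is a short direct computation using only the definition of a graded ideal and the grading inclusions $S_xS_yS_z \subseteq S_{xyz}$.

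First I would observe that $I_e = I \cap S_e$ is automatically an ideal of the subring $S_e$, since it is the intersection of the two-sided ideal $I$ of $S$ with the subring $S_e$. So the only content is $G$-invariance.

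Next, fix $x \in G$ and pick $s \in S_{x^{-1}}$, $a \in I_e$, and $t \in S_x$. Since $I$ is a two-sided ideal of $S$ and $a \in I_e \subseteq I$, the product $sat$ lies in $I$. On the other hand, by the grading, $sat \in S_{x^{-1}}S_eS_x \subseteq S_{x^{-1}ex} = S_e$. Hence $sat \in I \cap S_e = I_e$. Taking finite sums gives $S_{x^{-1}} I_e S_x \subseteq I_e$, that is $I_e^x \subseteq I_e$, which is exactly $G$-invariance as in Definition~\ref{def:weakly_invariant}. Note that the hypothesis ``$I$ is graded'' is not even required for this argument; it only says that $I_e$ is the ``correct'' component to compare $I$ against through the correspondence of Lemma~\ref{lem:weak_char}.

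There is essentially no obstacle here: the argument is a one-line unwinding of the definitions, and in particular makes no use of nearly epsilon-strongness, $s$-unitality, or non-degeneracy. The only reason to isolate this as a lemma is to pair it with Lemma~\ref{lem:weak_char} in order to set up the bijection in the forthcoming Theorem~\ref{thm:bijection1} between graded ideals of $S$ and $G$-invariant ideals of $S_e$.
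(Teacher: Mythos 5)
Your proof is correct and is essentially identical to the paper's, which performs the same one-line computation $S_{x^{-1}} I_e S_x \subseteq (S_{x^{-1}} I S_x) \cap (S_{x^{-1}} S_e S_x) \subseteq I \cap S_e = I_e$. Your side observation that gradedness of $I$ is not actually needed for this inclusion is also accurate.
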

\begin{proof}
Take $x \in G$. Then
 $S_{x^{-1}} I_e S_x \subseteq ( S_{x^{-1}} I S_x ) \cap
( S_{x^{-1}} S_e S_x )  \subseteq I \cap S_e = I_e$.
\end{proof}

\begin{lem}[{cf.~\cite[Prop.~1.1.34]{hazrat2016graded}}]\label{lem:nearly1}
Suppose that $S$ is nearly epsilon-strongly $G$-graded. If $I$ is a graded ideal of $S$, then $SI_e S = SI_e = I_e S = I$. 
\end{lem}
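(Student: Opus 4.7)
The plan is straightforward once we unpack what nearly epsilon-strong gradedness gives us at each homogeneous component of $I$. The inclusions $S I_e S \subseteq I$, $S I_e \subseteq I$, and $I_e S \subseteq I$ are immediate, since $I_e \subseteq I$ and $I$ is a two-sided ideal. Moreover, because $S_e$ is $s$-unital (Proposition~\ref{prop:s-unital}), we have $I_e = I_e S_e \subseteq I_e S$ and $I_e \subseteq S I_e$, so $S I_e \subseteq S I_e S$ and $I_e S \subseteq S I_e S$. Hence it suffices to show the single inclusion $I \subseteq S I_e \cap I_e S$; chaining everything then yields $I \subseteq S I_e \subseteq S I_e S \subseteq I$ and similarly for the right version.

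Since $I$ is graded, every element of $I$ is a finite sum of homogeneous elements lying in $I$. So fix $x \in G$ and a homogeneous element $a \in I \cap S_x$. By nearly epsilon-strong gradedness there exists $\epsilon_x(a) \in S_x S_{x^{-1}}$ with $\epsilon_x(a) a = a$. Write $\epsilon_x(a) = \sum_i s_i t_i$ with $s_i \in S_x$ and $t_i \in S_{x^{-1}}$. Then for each $i$, the element $t_i a$ lies in $S_{x^{-1}} S_x \subseteq S_e$, and it lies in $S \cdot I \subseteq I$; hence $t_i a \in I \cap S_e = I_e$. It follows that
\[
a = \epsilon_x(a) a = \sum_i s_i (t_i a) \in S I_e.
\]
Applying this to every homogeneous component of an arbitrary element of $I$ gives $I \subseteq S I_e$.

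The argument for $I \subseteq I_e S$ is symmetric: using $\epsilon_x'(a) \in S_{x^{-1}} S_x$ with $a \epsilon_x'(a) = a$ and writing $\epsilon_x'(a) = \sum_j u_j v_j$ with $u_j \in S_{x^{-1}}$, $v_j \in S_x$, one observes that $a u_j \in S_x S_{x^{-1}} \cap I \subseteq S_e \cap I = I_e$, whence $a = \sum_j (a u_j) v_j \in I_e S$.

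The only place where any care is required is the verification that $t_i a$ (and dually $a u_j$) actually lies in $I_e$; this relies both on the symmetry built into $\epsilon_x(a) \in S_x S_{x^{-1}}$ (giving the correct degree for $t_i a$) and on $I$ being an ideal of $S$ (giving $t_i a \in I$). Everything else is purely formal, combining the two resulting chains of inclusions to conclude $SI_e S = SI_e = I_e S = I$.
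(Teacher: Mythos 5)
Your proof is correct and follows essentially the same route as the paper's: reduce to homogeneous $a \in I \cap S_x$, use $\epsilon_x(a)a = a$ with $\epsilon_x(a) = \sum_i s_i t_i$ to see $t_i a \in S_{x^{-1}}S_x \cap I \subseteq I_e$, hence $a \in S I_e$, with the symmetric argument for $I_e S$ and $s$-unitality of $S_e$ supplying the remaining inclusions. No gaps.
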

\begin{proof}
Using that $S$ is $s$-unital, we get $I_e \subseteq I_e S$ and $I_e \subseteq S I_e$. 
Hence, $S I_e \subseteq S I_e S \subseteq I$ and similarly 
$I_e S \subseteq S I_e S \subseteq I$. 
Next, we prove that $I \subseteq S I_e$.  
Since $I$ is graded, it is enough to show that $a_x \in S I_e$ for every homogeneous $a_x \in I \cap S_x$.
Indeed, since $S$ is nearly epsilon-strongly $G$-graded, we have $a_x = \epsilon_x(a_x) \cdot a_x$ for some $\epsilon_x(a_x) \in S_x S_{x^{-1}}$. 
Write $\epsilon_x(a_x) = \sum_i c_i b_i$ for finitely many $c_i \in S_x$ and $b_i \in S_{x^{-1}}.$ 
Then $a_x = \sum_i c_i b_i a_x$. 
Note that for any $i$ we have $b_i a_x \in S_{x^{-1}} S_x \subseteq S_e$ and $b_i a_x \in I$ thus yielding $a_x \in I \cap S_e = I_e$. Hence, $a_x = \sum_i c_i b_i a_x \in S_x I_e \subseteq S I_e.$ 
By an analogous argument the inclusion
$I \subseteq I_e S$ follows.
We conclude that $I = S I_e = I_e S$.
Consequently, $I = S I_e \subseteq S I_e S \subseteq I$.
\end{proof}

\begin{thm}
\label{thm:bijection1}
Suppose that $S$ is nearly epsilon-strongly $G$-graded. 
The map $I \mapsto I_e$ is a bijection between the sets
$\{$graded ideals of $S \}$
and 
$\{ G$-invariant ideals of $S_e \}$.
The inverse map is given by $J \mapsto SJS.$
\end{thm}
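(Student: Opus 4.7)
\smallskip

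The plan is to verify that the two maps $\Phi \colon I \mapsto I_e$ and $\Psi \colon J \mapsto SJS$ are well-defined on the stated domains and mutually inverse. All the heavy lifting has already been done in the preceding three lemmas, so the proof amounts to an assembly argument.

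First, I would check well-definedness. If $I$ is a graded ideal of $S$, then $I_e$ is an ideal of $S_e$, and Lemma~\ref{lem:weak1} gives $G$-invariance. Conversely, if $J$ is a $G$-invariant ideal of $S_e$, then $SJS$ is clearly an ideal of $S$; it is graded because its generating set $J \subseteq S_e$ is homogeneous, so for $s_x \in S_x$, $j \in J$, $s_y \in S_y$ the product $s_x j s_y$ lies in $S_{xy}$, and $SJS$ decomposes as $\bigoplus_{z \in G}(SJS \cap S_z)$ with $(SJS)_z = \sum_{xy=z} S_x J S_y$.

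Next, I would verify $\Phi \circ \Psi = \mathrm{id}$. Since $S$ is nearly epsilon-strongly $G$-graded, Proposition~\ref{prop:s-unital} guarantees that $S_e$ is $s$-unital, so Lemma~\ref{lem:weak_char} is applicable and yields $(SJS)_e = J$ whenever $J$ is a $G$-invariant ideal of $S_e$.

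Finally, I would verify $\Psi \circ \Phi = \mathrm{id}$. If $I$ is a graded ideal of $S$, then by Lemma~\ref{lem:nearly1} (which uses nearly epsilon-strongness precisely to recover each homogeneous component $a_x \in I \cap S_x$ as a product of elements of $S$ and $I_e$) we have $SI_eS = I$. Combining these two identities gives mutual inverses, establishing the bijection.

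There is no real obstacle here: the nontrivial content sits in Lemma~\ref{lem:weak_char} (which packages the definition of $G$-invariance) and Lemma~\ref{lem:nearly1} (which uses the local idempotents $\epsilon_x(a_x)$ to push homogeneous components of a graded ideal into $S \cdot I_e$). If anything deserves a second look, it is confirming that $SJS$ is graded in the generality of rings without $1$ — but this is automatic because graded ideals are exactly those generated (as two-sided ideals) by homogeneous sets, and $J \subseteq S_e$ is homogeneous.
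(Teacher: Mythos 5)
Your proposal is correct and follows essentially the same route as the paper: well-definedness via Lemma~\ref{lem:weak1}, the identity $SI_eS=I$ via Lemma~\ref{lem:nearly1}, and $(SJS)_e=J$ via Lemma~\ref{lem:weak_char} (with $s$-unitality of $S_e$ from Proposition~\ref{prop:s-unital}). The only difference is cosmetic — you phrase it as verifying the two composites while the paper phrases it as injectivity plus surjectivity — and your explicit check that $SJS$ is graded is a harmless addition the paper handles in the remarks preceding the theorem.
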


\begin{proof}
Let $I$ be a graded ideal. By Lemma~\ref{lem:weak1}, $I_e$ is a 
$G$-invariant ideal of $S_e$. In other words, the map $I \mapsto I_e$ is well-defined.
Furthermore, by Lemma~\ref{lem:nearly1}, we have $S I_e S = I$ establishing that $I \mapsto I_e$ is injective.
Next, suppose that $J$ is a 
$G$-invariant ideal of $S_e$. By Lemma~\ref{lem:weak_char}, $(SJS)_e=J$ proving that $I \mapsto I_e$ is surjective.
\end{proof}

Later on we will apply Theorem~\ref{thm:bijection1} to Leavitt path algebras (see Section~\ref{Sec:LPA}).

\begin{defi}\label{def:g-prime}
A proper graded ideal $P$ of $S$ is called \emph{graded prime} if for all graded ideals $A,B$ of $S$,
	we have $A \subseteq P$ or $B \subseteq P$
	whenever
	$A B \subseteq P$.
		A proper $G$-invariant ideal $Q$ of $S_e$
		is called \emph{$G$-prime}
		if
		for all $G$-invariant ideals $A,B$ of $S_e$, we have $A \subseteq Q$ or $B \subseteq Q$
		whenever
		$A B \subseteq Q$.
	The ring $S_e$ is called \emph{$G$-prime}
if $\{ 0 \}$ is a $G$-prime ideal of $S_e$.
\end{defi}

For unital strongly $G$-graded rings, the bijection $I \mapsto I_e$ from Theorem~\ref{thm:bijection1} restricts to a bijection between graded prime ideals of $S$ and $G$-prime ideals of $S_e$ (see \cite[Prop.~2.11.7]{nastasescu2004methods}).
We proceed to show that the same holds for nearly epsilon-strongly $G$-graded rings.

\begin{lem}
\label{lem:Lemma8745}
Suppose that $S$ is nearly epsilon-strongly $G$-graded.
If $I$ is a
graded ideal of $S$ such that $I_e$ is 
a $G$-prime ideal of $S_e$, then $I$ is graded prime. 
\end{lem}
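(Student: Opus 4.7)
The plan is to exploit the bijection between graded ideals of $S$ and $G$-invariant ideals of $S_e$ established in Theorem~\ref{thm:bijection1}, together with the explicit reconstruction formula of Lemma~\ref{lem:nearly1}. The key observation is that intersecting graded ideals of $S$ with $S_e$ is compatible with products in a very clean way.

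First I would note that $I$ is proper: since $I_e$ is $G$-prime it is proper in $S_e$, and if $I = S$ we would have $I_e = S_e$. Now let $A$ and $B$ be graded ideals of $S$ with $AB \subseteq I$, and set $A_e := A \cap S_e$ and $B_e := B \cap S_e$. By Lemma~\ref{lem:weak1}, $A_e$ and $B_e$ are $G$-invariant ideals of $S_e$. Since $A_e \subseteq A$ and $B_e \subseteq B$, we have
\[
A_e B_e \subseteq AB \cap S_e \subseteq I \cap S_e = I_e.
\]
By $G$-primeness of $I_e$, either $A_e \subseteq I_e$ or $B_e \subseteq I_e$.

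Without loss of generality suppose $A_e \subseteq I_e$. Applying Lemma~\ref{lem:nearly1} to the graded ideals $A$ and $I$, we obtain $A = SA_eS$ and $I = SI_eS$, whence
\[
A = SA_eS \subseteq SI_eS = I.
\]
Thus $I$ is graded prime.

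The argument is essentially routine once one has the two ingredients from the previous section at hand, so I do not expect a serious obstacle; the only mild subtlety is checking that the relevant inclusions involving $A_e, B_e$ and $I_e$ are correctly set up, and that $A_e, B_e$ are honestly $G$-invariant so that $G$-primeness of $I_e$ can be invoked — both of which are immediate from Lemma~\ref{lem:weak1} and Lemma~\ref{lem:nearly1}.
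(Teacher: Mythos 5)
Your proof is correct and follows essentially the same route as the paper: intersect with $S_e$, use $G$-invariance of $A_e,B_e$ (the paper cites Theorem~\ref{thm:bijection1}, you cite the underlying Lemma~\ref{lem:weak1}), apply $G$-primeness of $I_e$, and recover $A\subseteq I$ via $A=SA_eS\subseteq SI_eS=I$ from Lemma~\ref{lem:nearly1}. The only difference is your explicit (and correct) remark that $I$ is proper, which the paper leaves implicit.
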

\begin{proof}
Suppose that $A,B$ are graded ideals of $S$ such that $A B \subseteq I$. 
Then $A_e B_e \subseteq AB \cap S_e \subseteq I \cap S_e = I_e$. By Theorem~\ref{thm:bijection1}, the $S_e$-ideals $A_e$, $B_e$ are 
$G$-invariant. 
Since $I_e$ is
$G$-prime, we have $A_e \subseteq I_e$ or $B_e \subseteq I_e$. 
Assume
w.l.o.g. 
that $A_e \subseteq I_e$. Then $A = S A_e S \subseteq S I_e S = I $ by Theorem~\ref{thm:bijection1}. 
Thus, $I$ is a graded prime ideal of $S$. 
\end{proof}

\begin{lem}
\label{lem:Lemma422}
Suppose that $S$ is nearly epsilon-strongly $G$-graded.
If $I$ is a graded prime ideal of $S$, then $I_e$ is a
$G$-prime ideal of $S_e$.
\end{lem}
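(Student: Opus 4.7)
The plan is to exploit the bijection from Theorem~\ref{thm:bijection1} between graded ideals of $S$ and $G$-invariant ideals of $S_e$. The whole argument boils down to transporting the graded primeness of $I$ through this correspondence. Concretely, I would like to take two $G$-invariant ideals $A, B$ of $S_e$ with $AB \subseteq I_e$, lift them to the graded ideals $SAS$ and $SBS$ of $S$, verify that their product lies inside $I$, and then apply graded primeness of $I$ to conclude that one of $SAS$ or $SBS$ is contained in $I$. Finally, intersecting with $S_e$ and using $(SAS)_e = A$, $(SBS)_e = B$ (by Theorem~\ref{thm:bijection1}) returns the desired inclusion $A \subseteq I_e$ or $B \subseteq I_e$.

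Before addressing the prime property, one should confirm that $I_e$ is a proper ideal of $S_e$. This is a short observation: if $I_e = S_e$, then by Lemma~\ref{lem:nearly1} we would have $I = SI_e = SS_e$, and by $s$-unitality of $S$ (Proposition~\ref{prop:s-unital}) this equals $S$, contradicting the fact that $I$ is proper.

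The key technical step is the inclusion $(SAS)(SBS) \subseteq I$. Applying Lemma~\ref{lem:nearly1} to the graded ideals $SAS$ and $SBS$ (whose $e$-components are precisely $A$ and $B$ by Lemma~\ref{lem:weak_char}) gives $SAS = AS$ and $SBS = SB$. Therefore
\[
(SAS)(SBS) = (AS)(SB) = A(SS)B \subseteq ASB = A(BS) = (AB)S \subseteq I_e S = I,
\]
where the last equality uses Lemma~\ref{lem:nearly1} once more, this time applied to the graded ideal $I$ itself. Graded primeness of $I$ then yields $SAS \subseteq I$ or $SBS \subseteq I$, and intersecting with $S_e$ gives $A \subseteq I_e$ or $B \subseteq I_e$.

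I do not expect any genuine obstacle here; the difficulty lies entirely in bookkeeping, namely in making sure the identities $SAS = AS$, $SBS = SB$, $SI_e = I$, and $(SJS)_e = J$ for $G$-invariant $J$ are correctly cited and applied in the right direction. All of these are direct consequences of the nearly epsilon-strong hypothesis via Lemma~\ref{lem:weak_char} and Lemma~\ref{lem:nearly1}, so the proof should be short and follow the template of the classical unital strongly graded case.
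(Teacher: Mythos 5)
Your proposal is correct and follows essentially the same route as the paper: both lift $A,B$ to the graded ideals $SAS, SBS$, use Lemma~\ref{lem:nearly1} together with Lemma~\ref{lem:weak_char} (equivalently Theorem~\ref{thm:bijection1}) to show $(SAS)(SBS)=S(AB)S\subseteq SI_eS=I$, and then apply graded primeness of $I$ before intersecting with $S_e$. Your additional check that $I_e$ is proper is a harmless (and slightly more careful) extra.
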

\begin{proof}
Clearly, $I_e$ is an ideal of $S_e$.
Suppose that $A, B$ are 
$G$-invariant ideals of $S_e$ such that $A B \subseteq I_e$. We need to show that $ A \subseteq I_e$ or $ B \subseteq I_e$. By Theorem~\ref{thm:bijection1}, $ A = (S A S)_e$, $ B = (S B S)_e$
and $S (A B) S \subseteq S I_e S = I$. 
Clearly, $S A S$ and $S B S$ are graded ideals of $S$.
By Lemma~\ref{lem:nearly1},
$SAS SBS = S (A B) S \subseteq I$. 
Since $I$ is graded prime, we 
have $SAS \subseteq I$ or $SBS \subseteq I$.
Assume
w.l.o.g. 
that $SAS \subseteq I$. Then $A \subseteq I_e$ and thus $I_e$ is 
$G$-prime. 
\end{proof}

By combining Lemma~\ref{lem:Lemma8745} and Lemma~\ref{lem:Lemma422} we get the desired bijection:

\begin{thm}\label{thm:graded_prime}
Suppose that $S$ is nearly epsilon-strongly $G$-graded.
The map $I \mapsto I_e$ restricts to a bijection
between
the sets
$\{$graded prime ideals of $S \}$
and 
$\{ G$-prime ideals of $S_e \}$.
\end{thm}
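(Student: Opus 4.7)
The plan is simply to package Lemma \ref{lem:Lemma8745} and Lemma \ref{lem:Lemma422} with the bijection supplied by Theorem \ref{thm:bijection1}. Theorem \ref{thm:bijection1} already tells us that $I \mapsto I_e$ is a bijection between graded ideals of $S$ and $G$-invariant ideals of $S_e$, with inverse $J \mapsto SJS$. Thus it only remains to verify that this bijection carries graded prime ideals of $S$ precisely to $G$-prime ideals of $S_e$.

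For the forward direction, I would argue that if $I$ is graded prime in $S$, then $I_e$ is $G$-prime in $S_e$: the fact that $I_e$ is $G$-prime as an ideal is exactly the content of Lemma \ref{lem:Lemma422}, so the only thing to check is properness. If we had $I_e = S_e$, then Lemma \ref{lem:nearly1} would give $I = SI_eS = SS_eS$, and $s$-unitality of $S$ (Proposition \ref{prop:s-unital}) yields $SS_eS = S$, contradicting the properness of $I$.

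For the reverse direction, let $J$ be a $G$-prime ideal of $S_e$ and set $I := SJS$. By Theorem \ref{thm:bijection1}, $I$ is a graded ideal of $S$ with $I_e = J$. Then Lemma \ref{lem:Lemma8745} applied to $I$ immediately shows that $I$ is graded prime as soon as $I$ is proper. Properness is again automatic: $I = S$ would force $J = I_e = S_e$, contradicting the properness of $J$.

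Putting these two steps together, $I \mapsto I_e$ restricts to the desired bijection. I do not expect a genuine obstacle: Theorem \ref{thm:bijection1} and Lemmas \ref{lem:Lemma8745}--\ref{lem:Lemma422} carry all of the substantive content, and the only slightly non-routine point is the bookkeeping to confirm that properness transfers correctly across the bijection, which is immediate from $s$-unitality.
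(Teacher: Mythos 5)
Your proposal is correct and follows exactly the paper's route: the paper proves Theorem~\ref{thm:graded_prime} by combining Lemma~\ref{lem:Lemma8745} and Lemma~\ref{lem:Lemma422} on top of the bijection from Theorem~\ref{thm:bijection1}. The only addition is your explicit check that properness transfers across the correspondence, which the paper leaves implicit but which is verified correctly via $S = SS_eS$ from $s$-unitality.
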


We now generalize a well-known result by N\u{a}st\u{a}sescu and Van Oystaeyen 
to the setting of $s$-unital group graded rings:

\begin{prop}[cf.~{\cite[Prop.~II.1.4]{nastasescu1982graded}}]\label{prop:graded_ideals2}
Suppose that $G$ is
an ordered group
and that $S$ is $s$-unital.
If $I$ is a graded ideal of $S$, then $I$ is graded prime if and only if $I$ is prime.
\end{prop}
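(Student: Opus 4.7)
The implication ``prime $\Rightarrow$ graded prime'' is immediate, since every graded ideal is in particular a two-sided ideal. For the converse, I plan to argue the contrapositive: assuming that $I$ is not prime, I will produce two nonzero graded ideals of $\bar{S} := S/I$ whose product is zero, which shows that $I$ is not graded prime in $S$.

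Since $I$ is a graded ideal of $S$, the quotient $\bar{S}$ inherits a natural $G$-grading, and it remains $s$-unital as a quotient of an $s$-unital ring. Suppose $A, B$ are ideals of $S$ with $AB \subseteq I$, $A \not\subseteq I$, and $B \not\subseteq I$. Their images $\bar{A}, \bar{B}$ in $\bar{S}$ are nonzero ideals with $\bar{A}\bar{S}\bar{B} \subseteq \bar{A}\bar{B} = 0$. I would then choose nonzero $\bar{a} \in \bar{A}$ and $\bar{b} \in \bar{B}$, and decompose them into their nonzero homogeneous components $\bar{a} = \bar{a}_{x_1} + \cdots + \bar{a}_{x_n}$ and $\bar{b} = \bar{b}_{y_1} + \cdots + \bar{b}_{y_m}$, where $x_1 < \cdots < x_n$ and $y_1 < \cdots < y_m$ in the ordered group $G$.

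The only step that uses the hypothesis that $G$ is ordered is the following extremal observation. For any homogeneous $\bar{s} \in \bar{S}_z$, the product $\bar{a}\,\bar{s}\,\bar{b}$ vanishes, and its homogeneous component of degree $x_1 z y_1$ equals $\sum_{(i,j):\, x_i z y_j = x_1 z y_1} \bar{a}_{x_i}\,\bar{s}\,\bar{b}_{y_j}$. Since $x_1 \leq x_i$ and $y_1 \leq y_j$, compatibility of the order with multiplication gives $x_1 z y_1 \leq x_1 z y_j \leq x_i z y_j$, so equality throughout forces $y_1 = y_j$ and $x_1 = x_i$, i.e.\ $(i,j) = (1,1)$. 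Hence this component reduces to $\bar{a}_{x_1}\,\bar{s}\,\bar{b}_{y_1}$, and consequently $\bar{a}_{x_1}\,\bar{s}\,\bar{b}_{y_1} = 0$ for every homogeneous $\bar{s}$, and hence for every $\bar{s} \in \bar{S}$ by additivity.

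Let $\bar{A}' := \langle \bar{a}_{x_1} \rangle$ and $\bar{B}' := \langle \bar{b}_{y_1} \rangle$ denote the ideals of $\bar{S}$ generated by these homogeneous elements. Being generated by homogeneous elements, $\bar{A}'$ and $\bar{B}'$ are graded, and by $s$-unitality they contain $\bar{a}_{x_1}$ and $\bar{b}_{y_1}$, so they are nonzero. Using $s$-unitality once more, every element of $\bar{A}'$ is a finite sum of terms of the form $\bar{s}\,\bar{a}_{x_1}\,\bar{t}$, and similarly for $\bar{B}'$; since
\[
(\bar{s}\,\bar{a}_{x_1}\,\bar{t})(\bar{u}\,\bar{b}_{y_1}\,\bar{v}) = \bar{s}\,\bar{a}_{x_1}\,(\bar{t}\bar{u})\,\bar{b}_{y_1}\,\bar{v} = 0,
\]
we obtain $\bar{A}'\bar{B}' = 0$. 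Pulling $\bar{A}'$ and $\bar{B}'$ back along the quotient map $\pi \colon S \to \bar{S}$ yields graded ideals of $S$ (preimages of graded ideals under graded homomorphisms are graded) that strictly contain $I$ and whose product is contained in $I$, showing that $I$ is not graded prime. The only non-routine step is the extremal observation, which is the content of the third paragraph and relies crucially on the ordered-group hypothesis; apart from that, the argument consists of standard manipulations that are available precisely because $S$, and therefore $\bar{S}$, is $s$-unital.
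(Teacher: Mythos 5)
Your proof is correct, but it is organized differently from the paper's. The paper proves the nontrivial direction (graded prime $\Rightarrow$ prime) head-on, by induction on $|\Supp(a)|+|\Supp(b)|$: it establishes the element-wise statement that $aSb\subseteq I$ forces $a\in I$ or $b\in I$, peeling off the \emph{maximal} homogeneous components $a_{x_m}$, $b_{y_n}$ at each step (the same order-compatibility observation you use), applying graded primeness to the graded ideals $Sa_{x_m}S$ and $Sb_{y_n}S$, and recursing on the remainders $a-a_{x_m}$, $b-b_{y_n}$. You instead argue the contrapositive in the quotient $S/I$, apply the extremal observation once to the \emph{minimal} components, and exhibit the violating pair of graded ideals $\pi^{-1}(\bar S\bar a_{x_1}\bar S)$, $\pi^{-1}(\bar S\bar b_{y_1}\bar S)$ directly. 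The key use of the ordered-group hypothesis is identical in both arguments; the trade-off is that the paper's induction buys the stronger ``$m$-system'' form of primeness ($aSb\subseteq I\Rightarrow a\in I$ or $b\in I$), while your route dispenses with the induction entirely because refuting graded primeness only requires one bad pair of graded ideals. All the auxiliary steps you invoke (the quotient inherits an $s$-unital $G$-grading, $\langle c\rangle = \bar S c\bar S$ by $s$-unitality, preimages of graded ideals under the quotient map are graded) are sound.
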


\begin{proof}
Suppose that $I$ is graded prime.
For every $k \geq 0$, let $P(k)$ be the following statement: 
$$
\mbox{$a,b \in S$ 
satisfy $a S b \subseteq I$ and $|\Supp(a)|+|\Supp(b)| \leq k$
$\Longrightarrow$ $a \in I$ or $b \in I$.}
$$
We proceed by induction to show that $P(k)$ holds for every $k \geq 0$.

Base case: $k=0$. If $|\Supp(a)|+|\Supp(b)| = 0$, then $a=b=0 \in I$.

Inductive step: Take $k \geq 0$ such that $P(k)$ holds.
Suppose that 
$a S b \subseteq I$ and $|\Supp(a)|+|\Supp(b)| = k+1$.
Put $m := |\Supp(a)|$ and $n := |\Supp(b)|$. Then we can write 
$a = \sum_{i=1}^m a_{x_i}$ and 
$b = \sum_{j=1}^n b_{y_j}$ where
$x_1,\ldots,x_m \in G$ and $y_1,\ldots,y_n \in G$ satisfy 
$x_1 < \dots < x_m$ and 
$y_1 < \dots < y_n$.
Take $z \in G$. 
For any $s_z\in S_z$ we have
$a s_z b \in I$.
Using that $G$ is an ordered group and that $I$ is a graded ideal, we get 
$a_{x_m} s_z b_{y_n} \in I$.
This shows that
$a_{x_m} S b_{y_n} \subseteq I$. 
By graded primeness of $I$, and $s$-unitality of $S$, we get $a_{x_m} \in S a_{x_m} S \subseteq I$ or $b_{y_n} \in S b_{y_n} S \subseteq I$.

\underline{Case 1: $a_{x_m} \in I$.} Put $a' = a-a_{x_m}$. Then $a' S b  = a S b - a_{x_m} S b \subseteq I - I = I$. 
Since $|\Supp(a')|+|\Supp(b)| < k+1$, the induction hypothesis yields that $a' \in I$ or $b \in I$, and hence that $a = a' + a_{x_m} \in I$ or $b \in I$.

\underline{Case 2: $b_{y_n} \in I$.} Put $b' = b-b_{y_n}$. Then $a S b' = a S b - a S b_{y_n} \subseteq I - I = I$.
Since $|\Supp(a)|+|\Supp(b')| < k+1$, the induction hypothesis yields that $a \in I$ or $b' \in I$, and hence that $a \in I$ or $b = b' + b_{y_n} \in I$.

Therefore, $P(k+1)$ holds.

Now, let $A,B$ be nonzero ideals of $S$ with 
$AB \subseteq I$.
Seeking a contradiction, suppose that there are $a \in A \setminus I$ and $b \in B \setminus I$.
Since $A$ and $B$ are ideals, it follows that $a S b \subseteq AB \subseteq I$. Since $P(k)$ holds for every $k \geq 0$, we get that $a \in I$ or $b \in I$, which is a contradiction.

The converse statement is trivial.
\end{proof}

By Proposition~\ref{prop:graded_ideals2}, we immediately obtain the following partial generalization of a result by Abrams and Haefner~\cite[Thm.~3.2]{AbHa93}:

\begin{cor}
Suppose that $G$ is
an ordered group
and that $S$ is $s$-unital. Then $S$ is graded prime if and only if $S$ is prime.
\end{cor}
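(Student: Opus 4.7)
The plan is to derive this corollary as an immediate application of Proposition~\ref{prop:graded_ideals2}. The key observation is that $S$ being ``graded prime'' (resp.\ ``prime'') is, by convention, the statement that the zero ideal $\{0\}$ is a graded prime (resp.\ prime) ideal of $S$. Since $\{0\} = \bigoplus_{x \in G} (\{0\} \cap S_x)$, the zero ideal is automatically a graded ideal of $S$, so it is a legitimate choice of $I$ in the hypothesis of Proposition~\ref{prop:graded_ideals2}.

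With this in mind, I would first recall the definition: $S$ is prime iff $\{0\}$ is a prime ideal of $S$, and by the natural extension of Definition~\ref{def:g-prime} to rings, $S$ is graded prime iff $\{0\}$ is a graded prime ideal of $S$. I would then invoke Proposition~\ref{prop:graded_ideals2} applied to the graded ideal $I := \{0\}$, which requires exactly the standing assumptions of the corollary (namely that $G$ is an ordered group and $S$ is $s$-unital). The proposition delivers the biconditional ``$\{0\}$ is graded prime $\Longleftrightarrow$ $\{0\}$ is prime,'' which is precisely the claim.

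There is essentially no obstacle here, since the content of the corollary is already fully contained in the proposition; the role of the corollary is just to repackage the statement in the standard language of prime/graded prime rings and to point out its connection to the result of Abrams and Haefner. In writing it up, I would simply note that $\{0\}$ is a graded ideal, apply Proposition~\ref{prop:graded_ideals2}, and translate back via the definitions.
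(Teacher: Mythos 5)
Your proposal is correct and matches the paper exactly: the corollary is stated as an immediate consequence of Proposition~\ref{prop:graded_ideals2}, obtained by applying it to the graded ideal $I=\{0\}$ and unwinding the definitions of prime and graded prime rings. No further comment is needed.
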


Combining the above result with Theorem~\ref{thm:graded_prime}, we
immediately get the following:

\begin{cor}\label{Cor:Ordered}
Suppose that $G$ is an ordered group and that $S$ is nearly epsilon-strongly $G$-graded.
Then $S$ is prime if and only if $S_e$ is 
$G$-prime.
\end{cor}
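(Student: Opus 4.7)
The plan is to derive this corollary as a direct consequence of the two main results just established in this section, namely Theorem~\ref{thm:graded_prime} and Proposition~\ref{prop:graded_ideals2}. Since $S$ is nearly epsilon-strongly $G$-graded, Proposition~\ref{prop:s-unital} guarantees that $S$ is $s$-unital, so the hypotheses of Proposition~\ref{prop:graded_ideals2} are met in our setting.

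First I would note that the zero ideal is trivially a graded ideal of $S$. Applying Proposition~\ref{prop:graded_ideals2} to $I = \{0\}$ therefore yields that $\{0\}$ is prime in $S$ if and only if $\{0\}$ is graded prime in $S$. In other words, $S$ is prime if and only if $S$ is graded prime.

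Next I would invoke Theorem~\ref{thm:graded_prime}, which, for nearly epsilon-strongly $G$-graded rings, provides a bijection $I \mapsto I_e$ between the set of graded prime ideals of $S$ and the set of $G$-prime ideals of $S_e$. Under this bijection, the graded prime ideal $\{0\}$ of $S$ corresponds to $\{0\} \cap S_e = \{0\}$ in $S_e$. Hence $S$ is graded prime if and only if $\{0\}$ is a $G$-prime ideal of $S_e$, which by definition means that $S_e$ is $G$-prime. Chaining the two equivalences together produces the corollary.

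There is no substantive obstacle here: the proof reduces to a two-step deduction once the preparatory results are at hand. The only minor point to keep track of is that one must invoke $s$-unitality (via Proposition~\ref{prop:s-unital}) in order to apply Proposition~\ref{prop:graded_ideals2}, and that the zero ideal of $S$ is what gets matched with the zero ideal of $S_e$ under the bijection of Theorem~\ref{thm:graded_prime}.
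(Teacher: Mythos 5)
Your proof is correct and follows exactly the paper's route: the paper derives this corollary by combining Proposition~\ref{prop:graded_ideals2} (applied to the zero ideal, giving that $S$ is prime iff graded prime) with the bijection of Theorem~\ref{thm:graded_prime}. Your explicit remark that $s$-unitality comes from Proposition~\ref{prop:s-unital} is a detail the paper handles implicitly, but there is no substantive difference.
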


\begin{exa}
Let $R$ be a unital ring.

(a)
Consider the Laurent polynomial ring 
$R[t, t^{-1}] = \bigoplus_{i \in \mathbb{Z}} R t^i$
equipped with its canonical strong $\mathbb{Z}$-grading.
Since $t$ is central in $R[t,t^{-1}]$, any ideal $I$ of $R$ satisfies $t^{-n} I t^n = I$. Thus, every ideal of $R$ is $\mathbb{Z}$-invariant. Hence, $R$ being 
$\mathbb{Z}$-prime is equivalent to $R$ being prime. Therefore, Corollary \ref{Cor:Ordered} implies that $R[t, t^{-1}]$ is prime if and only if $R$ is prime.

(b)
More generally,
let $G$ be an ordered group and consider the group ring $R[G]$. 
Note that for any ideal $I$ of $R=(R[G])_e$ we have $\delta_{x^{-1}} I \delta_x = \delta_{x^{-1}} \delta_x I = I$ for every $x \in G$. 
Thus, every ideal of $R$ is 
$G$-invariant. 
By Corollary \ref{Cor:Ordered}, it follows that $R[G]$ is prime if and only if $R$ is prime (see e.\,g. \cite[Thm.~6.29]{lam2001first}).
\end{exa}

\section{The ``easy'' direction}
\label{Sec:Easy}

Recall that $S$ is a $G$-graded ring.
In this section, 
we prove the implication (b)$\Rightarrow$(a) of Theorem~\ref{thm:mainNew}  for non-degenerately $G$-graded rings (see Proposition~\ref{prop:easy_dir}).

\begin{lem}[{cf.~\cite[Lem.~1.4]{passman1984infinite}}]\label{lem:1}
Suppose that $S$ is non-degenerately $G$-graded, that $H$ is a subgroup of $G$, and that $I$ is an ideal of $S_e$ which satisfies $I^x I = \{0\}$ for every $x \in G \setminus H$. Then the following two assertions hold:
\begin{enumerate}[{\rm (a)}]
\begin{item}
$IS_x I = \{0\}$ for every $ x \in G \setminus H$.
\end{item}
\begin{item}
$I S I \subseteq I S_H \subseteq S_H$.
\end{item}
\end{enumerate}
\end{lem}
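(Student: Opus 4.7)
My plan is to prove (a) first by a direct application of non-degeneracy, and then to deduce (b) as an almost immediate consequence by decomposing along the grading.

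For part (a), fix $x \in G \setminus H$ and set $J := I S_x I$. The key observation is that $J$ is a subset of the homogeneous component $S_x$, since $I \subseteq S_e$. Now I would compute
\begin{equation*}
S_{x^{-1}} J \;=\; S_{x^{-1}} I S_x I \;=\; (S_{x^{-1}} I S_x)\, I \;=\; I^x I,
\end{equation*}
which equals $\{0\}$ by hypothesis, since $x \in G \setminus H$. At this point the proof is finished: if any element $s \in J \subseteq S_x$ were nonzero, non-degeneracy of the $G$-grading would force $S_{x^{-1}} s \ne \{0\}$, contradicting $S_{x^{-1}} J = \{0\}$. Hence $J = \{0\}$, proving $I S_x I = \{0\}$ for every $x \in G \setminus H$.

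For part (b), the inclusion $I S_H \subseteq S_H$ is immediate because $I \subseteq S_e \subseteq S_H$ and $S_H$ is a subring of $S$. For the inclusion $I S I \subseteq I S_H$, I would simply decompose
\begin{equation*}
I S I \;=\; \sum_{x \in G} I S_x I \;=\; \sum_{x \in H} I S_x I \;+\; \sum_{x \in G \setminus H} I S_x I,
\end{equation*}
where the second summand vanishes by part (a). The first summand is contained in $\sum_{x \in H} I S_x = I S_H$, as desired.

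I do not foresee any serious obstacle here; the only subtle point is the use of non-degeneracy in part (a), and that step is tailor-made for the hypothesis $I^x I = \{0\}$. Everything else is formal manipulation of homogeneous components.
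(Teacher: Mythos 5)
Your proof is correct and follows essentially the same route as the paper: part (a) uses non-degeneracy applied to the homogeneous set $I S_x I \subseteq S_x$ after observing $S_{x^{-1}} I S_x I = I^x I = \{0\}$, and part (b) decomposes $ISI$ over the grading and discards the components outside $H$ via (a). The only (welcome) addition is that you make explicit the observation $I S_x I \subseteq S_x$, which the paper leaves implicit when invoking non-degeneracy.
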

\begin{proof}
(a):
Take $x\in G \setminus H$ and $s \in I S_x I$.
By assumption,
$S_{x^{-1}} I S_x I = I^x I = \{0\}$ and hence $S_{x^{-1}} s = \{0\}$.
Using that $S$ is non-degenerately $G$-graded, we 
get that
$s = 0$.

(b): Employing part (a), we get $I S I = \bigoplus_{x \in G} I S_x I = \bigoplus_{x \in H} I S_x I  \subseteq S_H$.
\end{proof}

\begin{lem}\label{lem:NonDegIdeals}
Suppose that $S$ is non-degenerately $G$-graded and that $N$ is a subgroup of $G$.
If $\tilde{A}$ is a nonzero subset of $S_N$, then $S\tilde{A}S$ is a nonzero ideal of $S$.
\end{lem}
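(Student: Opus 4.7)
The plan is to verify the ideal property essentially from the definition, and concentrate the argument on showing that $S\tilde{A}S \neq \{0\}$. That $S\tilde{A}S$ is an (two-sided) ideal of $S$ is immediate from the convention that $UV$ denotes the additive subgroup of finite sums $\sum_i u_i v_i$: we have $S \cdot (S\tilde{A}S) \subseteq S\tilde{A}S$ and $(S\tilde{A}S) \cdot S \subseteq S\tilde{A}S$, and the set is closed under addition by construction. So the whole question is non-vanishing.

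For non-vanishing I would pick a nonzero element $a \in \tilde{A}$ and write $a = \sum_{y \in N} a_y$ with $a_y \in S_y$, where only finitely many $a_y$ are nonzero. Choose $x \in \Supp(a)$, so $a_x \neq 0$. Since $S$ is non-degenerately $G$-graded, there exists $s \in S_{x^{-1}}$ with $s a_x \neq 0$. The element $s a_x$ lies in $S_e$ and is nonzero, so applying non-degeneracy once more (in the form $S_e \ni c \neq 0 \Rightarrow c S_e \neq \{0\}$, i.e.\ the case of the neutral element) there is $t \in S_e$ with $s a_x t \neq 0$.

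Now consider the product
\begin{equation*}
s a t \;=\; \sum_{y \in N} s\, a_y\, t \;\in\; S\tilde{A}S,
\end{equation*}
whose homogeneous component of degree $e = x^{-1} \cdot x \cdot e$ equals $s a_x t \neq 0$. Hence $s a t \neq 0$, so $S\tilde{A}S \neq \{0\}$.

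The main (mild) obstacle is that $S$ is not assumed $s$-unital, so one cannot simply write $a \in SaS \subseteq S\tilde{A}S$; instead, non-degeneracy must be invoked twice to manufacture a nonzero element of $S\tilde{A}S$ explicitly, flanking $a$ on the left by a suitable element of $S_{x^{-1}}$ and on the right by an element of $S_e$.
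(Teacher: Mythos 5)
Your proof is correct and follows essentially the same route as the paper: the paper likewise takes a nonzero $a\in\tilde{A}$, picks $n\in\Supp(a)$, and uses non-degeneracy (twice, once on each side) to produce $t_e\in S_e$ and $s_{n^{-1}}\in S_{n^{-1}}$ with $t_e a_n s_{n^{-1}}\neq 0$, concluding that $t_e a s_{n^{-1}}$ is a nonzero element of $S\tilde{A}S$ by inspecting its degree-$e$ component. The only difference is the mirror-image order in which the two flanking factors are produced, which is immaterial.
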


\begin{proof}
Clearly, $S\tilde{A}S$ is an ideal of $S$.
Choose a nonzero $a \in \tilde{A}$.
Let $n\in \Supp(a) \subseteq N$.
By non-degeneracy of the $G$-grading, 
there is some $s_{n^{-1}} \in S_{n^{-1}}$ and some $t_e \in S_{e}$ such that
$t_e a_n s_{n^{-1}} \neq 0$.
Therefore,
$t_e a s_{n^{-1}} \in S\tilde{A}S \setminus \{0\}$.
This shows that $S \tilde{A} S$ is nonzero.
\end{proof}

\begin{prop}[{cf.~\cite[Thm.~1.3]{passman1984infinite}}]
\label{prop:easy_dir}
Suppose that $S$ is
non-degenerately
$G$-graded
and that there exist
	\begin{enumerate}[{\rm (i)}]
\item subgroups $N \lhd H \subseteq G$,
\item an
$H$-invariant ideal $I$ of $S_e$ such that 
$I^x I = \{0\}$ for every $x \in G \setminus H$, and
\item nonzero ideals $\tilde{A}, \tilde{B}$ of $S_N$ such that
$\tilde{A}, \tilde{B} \subseteq I S_N$, and
$\tilde A S_H \tilde B = \{0\}$.
\end{enumerate}
Then
$S$ is not prime.
\end{prop}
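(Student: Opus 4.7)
The plan is to produce two nonzero ideals of $S$ whose product is zero, namely $A := S\tilde{A}S$ and $B := S\tilde{B}S$. Nonzeroness of $A$ and $B$ is immediate from Lemma~\ref{lem:NonDegIdeals} applied to the nonzero subsets $\tilde{A},\tilde{B} \subseteq S_N$ (using non-degeneracy of the $G$-grading). It therefore suffices to establish that $AB = \{0\}$, and since $AB \subseteq S\tilde{A}S\tilde{B}S$, it is enough to prove
\[
\tilde{A}\,S\,\tilde{B} \;=\; \{0\},
\]
which by decomposing $S = \bigoplus_{x \in G} S_x$ reduces to showing $\tilde{A}\,S_x\,\tilde{B} = \{0\}$ for every $x \in G$.

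The two cases split according to whether $x \in H$ or not. The case $x \in H$ is trivial: it follows directly from the hypothesis $\tilde{A}\,S_H\,\tilde{B} = \{0\}$ since $S_x \subseteq S_H$. The main step is the case $x \in G \setminus H$. Here the key is to exploit the containment $\tilde{A},\tilde{B} \subseteq IS_N$. I would take an arbitrary generator $\tilde{a}\,s_x\,\tilde{b}$ with $\tilde{a}\in \tilde{A}$, $s_x \in S_x$, $\tilde{b}\in\tilde{B}$, write $\tilde{a} = \sum_i \alpha_i \sigma_i$ and $\tilde{b} = \sum_j \beta_j \tau_j$ with $\alpha_i,\beta_j \in I$ and $\sigma_i,\tau_j \in S_N$, and further decompose $\sigma_i = \sum_n \sigma_{i,n}$ with $\sigma_{i,n}\in S_n$ and $\tau_j = \sum_{n'} \tau_{j,n'}$ with $\tau_{j,n'} \in S_{n'}$. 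A typical summand then lies in
\[
I \cdot S_n \cdot S_x \cdot I \cdot S_{n'} \;\subseteq\; I \cdot S_{nx} \cdot I \cdot S_{n'}.
\]

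Since $n \in N \subseteq H$ and $x \notin H$, we have $nx \notin H$, so Lemma~\ref{lem:1}(a) (which is available because $S$ is non-degenerately $G$-graded and $I^y I = \{0\}$ for all $y \in G\setminus H$) yields $I\,S_{nx}\,I = \{0\}$. Consequently each such summand vanishes, and hence $\tilde{A}\,S_x\,\tilde{B} = \{0\}$.

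The mildly subtle step is the one just described: it requires observing that the ``bad'' element $x \notin H$ combined with the $N$-support of $\tilde{A},\tilde{B}$ produces shifted group elements $nx$ that are still outside $H$ (this uses $N \subseteq H$), so that Lemma~\ref{lem:1}(a) is applicable to the middle slot. Once this is in place, combining both cases gives $\tilde{A}S\tilde{B} = \{0\}$, whence $AB = \{0\}$ and $S$ is not prime. Note that normality of $N$ in $H$ and finiteness of $N$ play no role in this direction; only $N \subseteq H$ and the containment $\tilde{A},\tilde{B}\subseteq IS_N$ are used.
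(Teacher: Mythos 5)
Your proof is correct and follows essentially the same route as the paper: both reduce to showing $\tilde{A}S_x\tilde{B}=\{0\}$ for each $x\in G$, handle $x\in H$ via the hypothesis, handle $x\in G\setminus H$ by pushing $\tilde{A},\tilde{B}$ into $IS_N$ and applying Lemma~\ref{lem:1}(a) to $IS_{nx}I$ with $nx\notin H$, and conclude with the ideals $S\tilde{A}S$, $S\tilde{B}S$ and Lemma~\ref{lem:NonDegIdeals}. Your closing observation that neither normality nor finiteness of $N$ is used also matches the remark the authors make immediately after the proposition.
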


\begin{proof}
If $x \in H$, then the second condition in (iii) implies that $\tilde A S_x \tilde B =\{0\}$. 

If $x \in G \setminus H$, then the first condition in (iii) implies that $\tilde{A} S_x \tilde{B} \subseteq (I S_N) S_x (I S_N)$. Since $S_N S_x = \bigoplus_{n \in N} S_n S_x \subseteq \bigoplus_{n \in N} S_{nx}$ and $nx \in G \setminus H$, it follows from Lemma~\ref{lem:1} that $$I S_N S_x I \subseteq \bigoplus_{n \in N} I S_{nx} I = \{0\}.$$ 
Hence, $\tilde{A} S_x \tilde{B} = \{0\}$ for every $x\in G$, and thus $\tilde{A} S \tilde{B} = \{0\}$.
Now, by (iii) and 
Lemma~\ref{lem:NonDegIdeals} 
it follows that $A := S \tilde{A} S$ and $B:= S \tilde{B} S$ are nonzero ideals of $S$ satisfying
$AB=(S \tilde A S)(S \tilde B S) \subseteq S (\tilde A  S \tilde B) S=\{0\}$.
This shows that $S$ is not prime.
\end{proof}

\begin{rem}
Note that $N$ is not required to be finite in Proposition~\ref{prop:easy_dir}.
\end{rem}

In an attempt to ease the technical notation, we now introduce the following notion.

\begin{defi}[NP-datum]
Let $S$ be a $G$-graded ring.
An \emph{NP-datum} for $S$ is a quintuple
$(H,N,I,\tilde{A},\tilde{B})$
with the following three properties:
\begin{itemize}
\item[(NP1)]
$H$ is a subgroup of $G$, and $N$ is a finite normal subgroup of $H$,
\item[(NP2)] $I$ is a nonzero $H$-invariant ideal of $S_e$ such that 
$I^x I = \{0\}$ for every $x \in G \setminus H$, and
\item[(NP3)] $\tilde{A}, \tilde{B}$ 
are nonzero ideals
of $S_N$ such that
$\tilde{A}, \tilde{B} \subseteq I S_N$, and
$\tilde{A} \tilde{B} = \{0\}$.
\end{itemize}
An NP-datum $(H,N,I,\tilde{A},\tilde{B})$
is said to be \emph{balanced} if it satisfies the following property:
\begin{itemize}
    \item[(NP4)] $\tilde{A}, \tilde{B}$ 
are nonzero ideals
of $S_N$ such that
$\tilde{A}, \tilde{B} \subseteq I S_N$, and
$\tilde{A} S_H \tilde{B} = \{0\}$.
\end{itemize}
\end{defi}

\begin{rem}\label{rem:NP3NP4}
(a) If $S$ is
nearly epsilon-strongly $G$-graded, 
then (NP4) implies (NP3).

(b) Suppose that $S$ is $s$-unital strongly $G$-graded.
An NP-datum  $(H,N,I,\tilde{A},\tilde{B})$ for $S$
is necessarily balanced whenever
$\tilde{A}$ or
$\tilde{B}$ is $H$-invariant.
Indeed, suppose that
$\tilde{A}$ is $H$-invariant.
For any $h \in H$, we get that 
$\tilde{A} S_h \tilde{B} =
S_e \tilde{A} S_h \tilde{B} = 
S_h S_{h^{-1}} \tilde{A} S_h \tilde{B} \subseteq S_h \tilde{A} \tilde{B} = \{ 0 \}$
by Lemma~\ref{lem:strongSunital}.
The proof of the case when $\tilde{B}$ is $H$-invariant is analogous.
\end{rem}

\begin{cor}\label{cor:easy_dir2}
Suppose that $S$ is non-degenerately $G$-graded.
If $S_e$ is not $G$-prime, then $S$ has a balanced NP-datum
$(H,N,I,\tilde{A},\tilde{B})$
for which $\tilde{A},\tilde{B}$ are $H/N$-invariant.
\end{cor}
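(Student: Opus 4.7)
The strategy is to extract a balanced NP-datum from the non-$G$-primeness of $S_e$ by taking $H=G$ and $N=\{e\}$, so that most axioms are either vacuous or trivial, with the only substantive step being a non-degeneracy argument controlling $\tilde A S_H\tilde B$.

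First, unfold Definition~\ref{def:g-prime}: since $S_e$ is not $G$-prime, there exist nonzero $G$-invariant ideals $A,B$ of $S_e$ with $AB=\{0\}$. The plan is then to verify that the quintuple
\[
 (H,N,I,\tilde A,\tilde B)\;:=\;(G,\{e\},A+B,A,B)
\]
is a balanced NP-datum whose $\tilde A,\tilde B$ are $H/N$-invariant. Condition (NP1) is immediate: $\{e\}$ is a finite normal subgroup of $G$. Condition (NP2) holds because $I=A+B$ is a nonzero $G$-invariant ideal of $S_e$ (sums of $G$-invariant ideals are $G$-invariant by Lemma~\ref{lem:sums1}), and the requirement $I^x I=\{0\}$ for $x\in G\setminus H$ is vacuous as $H=G$. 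The $H/N$-invariance of $\tilde A$ and $\tilde B$ coincides with their $G$-invariance because $N$ is trivial, and this we have by construction. For (NP3), the equality $\tilde A\tilde B=AB=\{0\}$ is immediate, while the containments $\tilde A,\tilde B\subseteq IS_N=(A+B)S_e$ reduce to $A,B\subseteq (A+B)S_e$; this is available in the intended applications (where $S_e$ is $s$-unital, e.g.\ via Proposition~\ref{prop:s-unital} in the nearly epsilon-strong setting), so that $AS_e=A$ and $BS_e=B$.

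The heart of the proof is (NP4): showing $\tilde A S_H\tilde B=ASB=\{0\}$. Fix $x\in G$ and note that $AS_xB\subseteq S_x$ because $A,B\subseteq S_e$. Using $G$-invariance of $A$ and the hypothesis $AB=\{0\}$,
\[
 S_{x^{-1}}(AS_xB)\;=\;(S_{x^{-1}}AS_x)B\;\subseteq\;AB\;=\;\{0\}.
\]
Thus every element of $AS_xB$ lies in $S_x$ and is left-annihilated by $S_{x^{-1}}$. Non-degeneracy of the $G$-grading then forces $AS_xB=\{0\}$, and summing over $x\in G$ yields $ASB=\{0\}$, i.e.\ (NP4). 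This single step is the only genuine use of the non-degeneracy hypothesis and is the main obstacle in the proof; every other verification amounts to routine bookkeeping from the definitions.
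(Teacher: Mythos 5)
Your proof is essentially the paper's: the paper likewise takes $H=G$, $N=\{e\}$, notes that $H/N$-invariance is just $G$-invariance, and verifies (NP4) by exactly your non-degeneracy argument ($\tilde{A}S_x\tilde{B}\subseteq S_x$ is left-annihilated by $S_{x^{-1}}$ because $S_{x^{-1}}\tilde{A}S_x\subseteq\tilde{A}$). The only divergence is your choice $I=A+B$ where the paper takes $I=S_e$ and declares (NP1)--(NP3) trivially satisfied; the containment $\tilde{A},\tilde{B}\subseteq IS_{\{e\}}$ that you honestly flag as needing $s$-unitality of $S_e$ is in fact no more automatic for the paper's choice (one still needs $\tilde{A}\subseteq S_eS_e$, which non-degeneracy alone does not give), so your caveat points at a genuine subtlety the paper glosses over rather than a gap you introduced.
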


\begin{proof}
If $S_e$ is not 
$G$-prime, then there are 
nonzero $G$-invariant ideals $\tilde A, \tilde B$ of $S_e$ such that $\tilde{A} \tilde{B} = \{0\}$.
We claim that $(G,\{e\},S_e,\tilde{A},\tilde{B})$ is a balanced NP-datum. 
Conditions~(NP1),~(NP2) and (NP3) are  trivially satisfied. 
We now check condition (NP4). 
Take $x\in G$.
Seeking a contradiction, suppose that $\tilde{A} S_x \tilde{B} \neq \{ 0 \}$. Note that $\tilde{A} S_x \tilde{B} \subseteq S_x$.
By non-degeneracy of the $G$-grading, $S_{x^{-1}} \cdot \tilde{A} S_x \tilde{B} \neq \{ 0 \}$.
Since $\tilde{A}$ is $G$-invariant, we get that $S_{x^{-1}} \tilde{A} S_x \tilde{B} \subseteq \tilde{A} \tilde{B} = \{ 0 \}$, which is a contradiction. 
Note that, trivially, $\tilde{A},\tilde{B}$ are both $G/\{e\}$-invariant.
\end{proof}

By combining the above results we get the following.

\begin{cor}\label{cor:PrimeNecessary}
Suppose that $S$ is non-degenerately
$G$-graded.
If $S$ is prime, then
$S_e$ is $G$-prime.
\end{cor}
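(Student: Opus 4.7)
The plan is to argue by contrapositive: I will assume that $S_e$ is not $G$-prime and use the two results immediately preceding the corollary to deduce that $S$ is not prime. The chain of implications to chase is: failure of $G$-primeness of $S_e$ $\Longrightarrow$ existence of a balanced NP-datum for $S$ $\Longrightarrow$ $S$ is not prime.

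First I would invoke Corollary~\ref{cor:easy_dir2}, which directly says that when $S$ is non-degenerately $G$-graded and $S_e$ is not $G$-prime, then $S$ admits a balanced NP-datum of the form $(G,\{e\},S_e,\tilde A,\tilde B)$, where $\tilde A,\tilde B$ are the two nonzero $G$-invariant ideals of $S_e$ with $\tilde A\tilde B=\{0\}$ witnessing the failure of $G$-primeness. In particular, this datum satisfies the hypotheses of Proposition~\ref{prop:easy_dir}: the subgroups $N=\{e\}\lhd H=G$, the ideal $I=S_e$ is trivially $H$-invariant with $I^xI=\{0\}$ vacuously for every $x\in G\setminus H=\emptyset$, and $\tilde A,\tilde B\subseteq I S_N=S_e$ are nonzero ideals of $S_N=S_e$ with $\tilde A S_H\tilde B=\tilde A S\tilde B=\{0\}$ by (NP4). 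Applying Proposition~\ref{prop:easy_dir} then yields that $S$ is not prime, which is the desired contrapositive.

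There is essentially no obstacle here, since the heavy lifting has already been done in Corollary~\ref{cor:easy_dir2} and Proposition~\ref{prop:easy_dir}. The only point to double-check is that the hypothesis of non-degeneracy of the $G$-grading is preserved (and indeed needed) in both invocations, which is fine because it is hypothesized for the corollary itself. Thus, the proof reduces to a one-line composition of the two preceding results.
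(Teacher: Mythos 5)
Your proof is correct and follows exactly the route the paper intends: the paper's "proof" of this corollary is literally the one-line remark that it follows by combining Corollary~\ref{cor:easy_dir2} with Proposition~\ref{prop:easy_dir}, which is precisely the contrapositive composition you carry out. Your explicit verification that the balanced NP-datum $(G,\{e\},S_e,\tilde A,\tilde B)$ satisfies the hypotheses of Proposition~\ref{prop:easy_dir} is a correct filling-in of the details the paper leaves implicit.
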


\section{Passman pairs and the Passman replacement argument}
\label{Sec:PassmanPairs}
In this section, we generalize a technical result by Passman \cite{passman1984infinite}. Recall that
$S$
is a  $G$-graded ring. 
We are interested in pairs $(J, M)$ where $J$ is a nonzero ideal of $S_e$ and $M \subseteq G$ is a subset such that $J^x J = \{0\}$ for every $x \in G \setminus M$. 
Given such a pair $(J,M)$, where $M$ is of a certain type, we will find another pair $(K,L)$ where $K \subseteq J$ is a nonzero ideal of $S_e$ and $L$ is a subgroup of $G$. Crucially, the new pair $(K,L)$ satisfies $K^x K = \{0\}$ for $x \in G \setminus L$. 
Passman's original proof relies on $S$ being unital and strongly $G$-graded, and provides a construction of the ideal $K$. 
As we will see, 
his main argument generalizes to our extended setting, although we do not get an explicit description of the ideals.

\begin{defi}
If $I$ is a nonzero ideal of $S_e$ and $M\subseteq G$ is such that $I^x I = \{0\}$ for every $x \in G \setminus M$, then we call $(I, M)$ a \emph{Passman pair}.
\end{defi}

\begin{prop}[{cf.~\cite[Lem.~2.1]{passman1984infinite}}]\label{prop:coset1}
Suppose that $S$ is nearly epsilon-strongly $G$-graded
and that $(J, M)$ is a Passman pair where $M = \bigcup_{k=1}^n g_k G_k$
for some subgroups
$G_1,\ldots,G_n$ of $G$
and $g_1,\ldots,g_n \in G$.
Then there exist a nonzero ideal $K \subseteq J$ of $S_e$ and a subgroup $L$ of $G$
such that $(K, L)$ is a Passman pair. 
In addition, $[ L : L \cap G_k ] < \infty$ 
for some $k\in \{1,\ldots,n\}$.
\end{prop}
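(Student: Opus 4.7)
The plan is to follow Passman's replacement strategy from \cite[Lem.~2.1]{passman1984infinite}, adapted to the nearly epsilon-strongly graded setting. We proceed by induction on $n$, the number of cosets needed to describe $M$. At each stage we will produce from the current Passman pair a new Passman pair whose ``$M$-part'' is a strictly smaller union of cosets of subgroups drawn from $\{G_1,\ldots,G_n\}$, and we iterate until only a single coset remains; at that point the associated subgroup serves as the desired $L$.

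For the base case $n=1$ we have $M=g_1 G_1$, and we aim to take $L := G_1$, so that $[L:L\cap G_1]=1<\infty$ automatically. If $g_1\in G_1$, then $M=G_1$ is itself a subgroup and we simply take $K:=J$. Otherwise $e\notin M$, which forces $J^{e}J = J^2=\{0\}$. In this case I would translate by $g_1^{-1}$: Proposition~\ref{prop:switch1} together with Lemma~\ref{lem:2}(a) lets us propagate the annihilation condition from $J$ to the ideal $K:=S_{g_1^{-1}}J S_{g_1}=J^{g_1^{-1}}$, which is nonzero by non-degeneracy (Proposition~\ref{prop:nearly_implies_nondegenerate}) combined with Lemma~\ref{lem:12}, and which satisfies $K^x K=\{0\}$ for $x\notin g_1^{-1}\cdot g_1 G_1=G_1$ using $(I^x)^y\subseteq I^{xy}$.

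For the inductive step, assume $n\geq 2$. The strategy is to pick a coset $g_{k_0}G_{k_0}$ that is not redundant, i.e.\ such that there exists $x_0\in g_{k_0}G_{k_0}\setminus\bigcup_{k\neq k_0}g_k G_k$. If no such $k_0$ exists, the representation of $M$ already has fewer cosets, and we are done by induction. Otherwise, I would form a candidate sub-ideal $J':=J\,S_{x_0}\,J$ (or more precisely an $S_e$-ideal generated by such products, adjusted by the local $\epsilon$-idempotents of Proposition~\ref{prop:6} so that $J'\subseteq J$ remains a nonzero ideal of $S_e$). Using Corollary~\ref{cor:multi1}, Lemma~\ref{lem:2}(a)--(c), and the $\epsilon$-invariance of $S_e$-ideals (Lemma~\ref{lem:ideal_central}), we verify that $(J')^x J'=\{0\}$ for every $x\notin M'$, where $M'$ is a proper sub-union of $M$ obtained by eliminating the contribution of $g_{k_0}G_{k_0}$. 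The induction hypothesis then applies to $(J',M')$, yielding $(K,L)$ with $L$ a subgroup satisfying $[L:L\cap G_k]<\infty$ for some $k$.

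The main obstacle is the absence of the identity $S_xS_{x^{-1}}=S_e$: in Passman's original proof one freely conjugates ideals using the $G$-action on ideals of $S_e$, which satisfies $(I^x)^y=I^{xy}$, whereas in our setting this fails in general (Example~\ref{ex:2}) and only the inclusion of Lemma~\ref{lem:2}(a) holds. Consequently, we must ensure at every replacement step that (i) the candidate sub-ideal $J'$ is nonzero, which will be controlled by non-degeneracy (Proposition~\ref{prop:nearly_implies_nondegenerate}) together with Lemma~\ref{lem:12}(a), and (ii) the annihilation condition passes to $J'$, which requires careful use of Proposition~\ref{prop:switch1}(a) to swap ideals past homogeneous components. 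Tracking both of these conditions through the induction, while keeping the final subgroup $L$ of finite index in some $L\cap G_k$, is the delicate part of the argument.
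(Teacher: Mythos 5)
Your proposal does not follow the paper's argument, and as written it has genuine gaps. The most concrete one is in the base case: when $e\notin M=g_1G_1$ you set $K:=J^{g_1^{-1}}=S_{g_1}JS_{g_1^{-1}}$ and claim nonzeroness ``by non-degeneracy combined with Lemma~\ref{lem:12}''. But Lemma~\ref{lem:12}(a) requires the hypothesis $I(S_xS_{x^{-1}})=I$ (or its left analogue), which holds for ideals of the form $I^x$ by part (b) but is not available for $J$ itself; and indeed $J^{g_1^{-1}}$ can vanish (e.g.\ take the trivial grading $S_e=R$, $S_x=\{0\}$ for $x\neq e$, with $J$ a nonzero ideal satisfying $J^2=\{0\}$: then $(J,g_1G_1)$ is a Passman pair for any $g_1\notin G_1$, yet $J^{g_1^{-1}}=\{0\}$). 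This is exactly the obstruction the paper flags: in the nearly epsilon-strong setting one cannot conjugate the base ideal and expect to keep a nonzero ideal, which is why the paper's proof is deliberately non-constructive about $K$.

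The inductive step is also not a proof. The object $JS_{x_0}J$ lies in $S_{x_0}$, not in $S_e$, and the parenthetical ``adjusted by the local $\epsilon$-idempotents'' does not specify an ideal for which $(J')^xJ'=\{0\}$ off a \emph{strictly smaller} union of cosets; no mechanism is given for why multiplying through $S_{x_0}$ would ``eliminate'' the coset $g_{k_0}G_{k_0}$. More fundamentally, an induction on the number $n$ of cosets of the \emph{original} subgroups cannot close: the natural replacement (as in the paper's Lemma~\ref{lem:coset1}) passes from $(K,N)$ to $(K^xK,\,N\cap x^{-1}N)$, and intersecting translated unions of cosets produces cosets of \emph{new} subgroups $G_i\cap G_j$. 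This is precisely why the paper first closes $\{G_1,\dots,G_n\}$ under intersections and inducts on the size of that family, picks a maximal member $A$, minimizes the number of $A$-cosets over all admissible pairs, and obtains $L$ not as one of the $G_k$ but as the \emph{stabilizer} of the union of $A$-cosets; the finite-index statement $[L:L\cap G_k]<\infty$ then falls out of the orbit--stabilizer theorem. None of these ingredients (intersection closure, minimality of the $A$-coset count, stabilizer subgroup, orbit--stabilizer) appear in your outline, and without them neither the existence of the subgroup $L$ nor the finite-index conclusion is established.
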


We now fix a group $G$ and 
a finite family $\{G_1,\ldots,G_n\}$ of 
subgroups of $G$.
To establish Proposition~\ref{prop:coset1}, we need the following:

\begin{lem}\label{lem:coset1}
Suppose that $S$ is nearly epsilon-strongly $G$-graded. 
Let $B = \{ A_1, A_2, \dots, A_l \}$ be a family of subgroups of $G$ such that for all $i,j\in \{1,\ldots,l\}$ there is some $k\in \{1,\ldots,n\}$ such that $A_j \subseteq G_k$, 
and $A_i \cap A_j \in B$. 
Let $(J,M)$ be a Passmain pair. Suppose that $M = \bigcup_{k=1}^t  g_k A_{n_k}$ where
$n_1,\ldots,n_t \in \{1,\ldots,l\}$ and $g_1,\ldots,g_t \in G$. 
Then there exist a nonzero ideal $K \subseteq J$ of $S_e$ and a subgroup $L$ of $G$ such that $(K, L)$ is a Passman pair. 
In addition, if $B$ is non-empty then $[L : L \cap A_j ] < \infty$ for some $j \in \{1,\ldots,l\}$. 
\end{lem}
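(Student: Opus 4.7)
The plan is to imitate Passman's proof of \cite[Lem.~2.1]{passman1984infinite}, substituting our Corollary~\ref{cor:multi1} and Lemma~\ref{lem:2} for the cleaner identities available only for unital strongly $G$-graded rings. I would proceed by induction on $l = |B|$. The base case $l = 0$ is trivial: $M$ is necessarily empty (as an empty union), so $J^x J = \{0\}$ for all $x \in G$, and the choice $K := J$, $L := \{e\}$ works. Note that since $B$ is empty there is nothing to verify for the additional finite-index clause.

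For the inductive step, first reduce to the case in which $J^{y_0} J \neq \{0\}$ for some $y_0 \in M$ (if no such $y_0$ exists, the same choice $K := J$, $L := \{e\}$ works, and trivially $[L : L \cap A_j] = 1$ for every $A_j \in B$). Then pick such a $y_0$, which lies in some coset $g_{k_0} A_{n_{k_0}}$, and form the nonzero sub-ideal $J_1 := J^{y_0} J \subseteq J$ of $S_e$. Using Corollary~\ref{cor:multi1} together with Lemma~\ref{lem:2}(a), one computes
\begin{equation*}
J_1^y J_1 \;=\; (J^{y_0})^y J^y J^{y_0} J \;\subseteq\; J^{y_0 y} J^y J^{y_0} J,
\end{equation*}
so that (tracking vanishing in the analogous forcing fashion as in Passman's argument) nonvanishing of $J_1^y J_1$ forces $y$ to lie in a controlled intersection of translates of cosets from $M$. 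By the closure-under-intersection hypothesis, every pairwise intersection $g_k A_{n_k} \cap y_0^{-1} g_{k'} A_{n_{k'}}$ is either empty or a single coset of $A_{n_k} \cap A_{n_{k'}} \in B$. Hence $(J_1, M_1)$ is a new Passman pair whose cover $M_1$ is again a union of cosets of subgroups from $B$, but involves strictly smaller subgroups (in the poset ordering of $B$ by inclusion) than the original cover.

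The main obstacle is to orchestrate this replacement so that it terminates in a controlled way while producing a subgroup $L$ with the required finite-index property. My plan is to introduce a well-founded complexity measure on covers, such as the multiset of underlying subgroups ordered lexicographically by reverse inclusion in the finite poset $B$, and verify that each replacement step strictly decreases it. Once the process terminates, either the cover becomes empty (giving $L := \{e\}$) or it reduces to cosets of a single minimal subgroup $A_{j^*} \in B$; in the latter case a final controlled shifting step, together with a possible secondary appeal to the inductive hypothesis applied to a proper sub-family of $B$, yields a subgroup $L$ with $[L : L \cap A_{j^*}] < \infty$. A minor technical subtlety is that Lemma~\ref{lem:2}(a) gives only an inclusion $(I^x)^y \subseteq I^{xy}$ rather than the equality available in Passman's strongly graded setting, but this inclusion is exactly what is needed for the forcing implications above, since we are tracking vanishing rather than non-vanishing of the relevant products.
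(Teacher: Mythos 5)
There is a genuine gap in your inductive step, and it lies exactly where the real work of this lemma happens. Your replacement step correctly produces a new Passman pair $(J_1,\,M\cap y_0^{-1}M)$, and the closure hypothesis does guarantee that $M\cap y_0^{-1}M$ is again a finite union of cosets of subgroups from $B$. But your claim that the new cover ``involves strictly smaller subgroups'' is false: when a coset $g_kA_{n_k}$ meets a translate $y_0^{-1}g_{k'}A_{n_{k'}}$ with $A_{n_k}=A_{n_{k'}}=:A$, the intersection is either empty or a \emph{full coset of $A$ itself}. Cosets of the largest subgroups need not disappear under this operation, so the lexicographic multiset measure you propose does not strictly decrease, and the iteration has no reason to terminate in the way you describe. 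This is not a minor technicality --- it is the reason the argument cannot be a straightforward descent on the ``sizes'' of the covering subgroups.

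The paper's proof (following Passman) handles this with a different mechanism that your proposal is missing. One fixes a \emph{maximal} element $A$ of $B$, considers the family $P$ of all Passman pairs $(K,N)$ with $\{0\}\neq K\subseteq J$ and $N$ a finite union of cosets from $B$, and chooses $(K,N)\in P$ minimizing the number $m$ of $A$-cosets appearing in $N$. If $m=0$ one passes to $B'=B\setminus\{A\}$ by induction. If $m>0$, the point is not that the $A$-cosets go away but that they get \emph{stabilized}: for $x$ with $K^xK\neq\{0\}$, the pair $(K^xK,\,N\cap x^{-1}N)$ lies in $P$, so by minimality it contains at least $m$ cosets of $A$; by maximality of $A$ these can only come from $\bigl(\bigcup_i z_iA\bigr)\cap\bigl(\bigcup_i x^{-1}z_iA\bigr)$, and since cosets are equal or disjoint this forces $x^{-1}\bigl(\bigcup_i z_iA\bigr)=\bigcup_i z_iA$. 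Thus $L$ is taken to be the set-stabilizer of $\bigcup_i z_iA$, which is a genuine subgroup, and $(K,L)$ is a Passman pair. The finite-index conclusion then comes from the orbit--stabilizer theorem applied to the action of $L$ on the finite set $\{z_1A,\dots,z_mA\}$, giving $[L:L\cap z_iAz_i^{-1}]<\infty$, followed by a case split on whether $L$ meets some $z_iA$ (if not, one intersects with $T$ and re-enters the induction on $B'$). Your ``final controlled shifting step'' gestures at this but supplies neither the stabilizer construction nor the orbit--stabilizer count, and without them the conclusion $[L:L\cap A_j]<\infty$ is not reached.
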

\begin{proof}
The proof proceeds by induction over $|B|$. If $|B| = 0$, then the assumption that $(J, \emptyset)$ is a Passman pair implies that $(J, \{e \})$ is a Passman pair. 
Next, suppose that $|B| \geq 1$. Let $A$ be a maximal element of $B$ ordered by inclusion and note that $B' = B \setminus \{ A \}$ is closed under intersections. 
We consider the following set of Passman pairs:
$$ P := \{ (K,N) \mid \{0\} \ne K \subseteq J, \,\, N = \bigcup_{j=1}^s g_j A_{k_j} \text{ for some } k_1,\ldots,k_s \in \{1,\ldots,l\}, \, g_1,\ldots,g_s \in G \} $$
Note that $P$ is non-empty since $(J,M) \in P$. 
For $(K,N) \in P$ with $N = \cup_{j=1}^s g_j A_{k_j}$, we let $\Supp(K,N)$ be the subset 
$\{A_{k_1}, \ldots, A_{k_{s}} \} \subseteq B$.
Let $\deg (K,N)$ be the number of times that $A = A_{k_j}$ in the expression of $N$.

Now, choose $(K,N) \in P$ of minimal degree. 
We consider two mutually exclusive cases:

\underline{Case 1: $\deg (K,N) = 0$.} In this case $\Supp(K,N) \subseteq B'$. Hence, the induction hypothesis applies and we conclude that there exists some Passman pair $(I, L)$ such that $\{0\} \ne I \subseteq K \subseteq J$ and $L$ is a subgroup of $G$.

\underline{Case 2: $\deg (K,N) = m > 0$.} Let $ N = z_1 A \cup z_2 A \cup \ldots \cup z_m A \cup T$ where $T$ is a finite union of cosets of groups in $B'$. Put $$ L := \Big \{ g \in G \mid g \Big ( \bigcup_{i=1}^m z_i A \Big ) = \bigcup_{i=1}^m z_i A \Big \}. $$
Our goal 
is to prove that $(K,L)$ is a Passman pair. 
Note that $L$ is the stabiliser of $\bigcup_{i=1}^m z_i A$. Thus, $L$ is in fact a subgroup of $G$.
Take $x\in G$ such that $K^x K \ne \{0\}$. We will show that $x \in L$. Indeed, if $h = x^{-1} h'$ for some $h' \in G \setminus N$, then
\begin{align*}(K^x K)^h (K^x K)
=
((K^x)^h K^h) (K^x K) \subseteq K^{xh} K^h K^x K 
\subseteq K^{xh} K = K^{x x^{-1} h'} K = K^{h'} K = \{0\}
\end{align*}
where we have used Lemma~\ref{lem:IxIdeal}, Lemma~\ref{lem:2} and Corollary~\ref{cor:multi1}.
Similarly, if $h \in G \setminus N$, then $(K^x K)^h (K^x K) \subseteq (K^{xh} K^{h}) (K^x K) \subseteq K^h K = \{0\}.$ 
In other words, $(K^x K)^h (K^x K) = \{0\}$ for every $ h \in (G \setminus N) \cup x^{-1}(G \setminus N) = G \setminus (N \cap x^{-1} N).$ Thus, $(K^x K, N \cap x^{-1} N)$ is a Passman pair. 
Since $K^x K \subseteq K \subseteq J$ and $N \cap x^{-1} N$ is a finite union of cosets in $B$, it follows that $(K^x K, N \cap x^{-1}N) \in P$. 
Let $m' := \deg (K^x K, N \cap x^{-1}N )$. 
By minimality of $m$, we have $m' \geq m > 0$. 
Note that $ x^{-1} N = x^{-1} z_1 A \cup x^{-1} z_2 A \cup \ldots \cup x^{-1} z_m A \cup x^{-1} T.$ 
Since $A$ is maximal, the $m'$ cosets of $A$ in $N \cap x^{-1}N$ must come from $( \cup_{i=1}^m z_i A  ) \cap ( \cup_{i=1}^m x^{-1} z_i A ).$ 
Moreover, cosets are either equal or disjoint, and hence $m' \leq m$. 
This shows that $m'=m$ and $ \cup_{i=1}^m z_i A = x^{-1} ( \cup_{i=1}^m z_i A )$ which in turn shows that $x \in L$. 
Summarizing, we have established that $K^x K= \{0\}$ for every $x \in G \setminus L$, i.\,e. $(K, L)$ is a Passman pair. 

Now, suppose that $B$ is non-empty. It remains to show that $[L : L \cap A_j ] < \infty$ for some $j \in \{1,\ldots,l\}$. 
Consider $G$ acting from the left on the left cosets of $A$,
i.\,e. $G \curvearrowright
 \{ g A \mid g \in G \}$ by $g_1 \cdot g_2 A = g_1 g_2A$ for all $g_1, g_2 \in G.$ 
Note that $L$ acts on the finite set of cosets $ D = \{ z_1 A, z_2 A, \dots, z_m A \}$.
Let $i \in \{1,\ldots,m\}$ be arbitrary.
A short computation shows that $\Stab_G(z_i A) =z_i A z_i^{-1}.$  Thus, $\Stab_L(z_iA)= z_iA z_i^{-1} \cap L$. 
Hence, by the orbit-stabilizer theorem we have $| L  \cdot z_i A | = [ L : L \cap z_i A z_i^{-1} ]$. 
Using that $D$ is a finite set, we conclude that the orbit of $z_iA$ is finite, i.\,e. $| L  \cdot z_i A | < \infty$.  Thus, we have $[ L : L \cap z_i A z_i^{-1} ] < \infty$ for every $i \in \{1,\ldots,m\}$.

We consider two mutually exclusive cases.

\underline{Case A: $L \cap z_i A = \emptyset$ for every $i$.} Note that $K^x K = \{0\}$ for every $x \in G \setminus N \cup G \setminus L = G \setminus (N \cap L).$ By the case assumption, we have $N \cap L = T \cap L.$ We see that $T \cap L$ is a finite union of cosets from the set $B'' = \{ A' \cap T \mid A' \in B' \}$.

Note that $| B'' | \leq |B'| < |B|$. By the induction hypothesis, it follows that there is a Passman pair $(I, L')$ satisfying the required properties. 

\underline{Case B: $L \cap z_i A \ne \emptyset$ for some $i$.} Let $a \in A$ be such that $z_i a \in L \cap z_i A$. 
Since $(z_ia)A=z_iA$, we may assume that $z_i \in L$ by choosing another representative of the coset. 
It follows that $L \cap A \cong L \cap z_i A z_i^{-1}$ via the map defined by $a \mapsto z_i a z_i^{-1}$ for every $a \in A \cap L$. 
As noted above we have $[L : L \cap z_i A z_i^{-1} ] < \infty$ and hence $[L : L \cap A] < \infty$. 
Consequently, $[L : L \cap A_j ]< \infty$ with $A_j := A$ as required.
\end{proof}

We are now ready to give a proof of Proposition~\ref{prop:coset1}:

\begin{proof}[Proof of Proposition~\ref{prop:coset1}]
Let $(J,M)$ be a Passman pair where $M = \bigcup_{k=1}^n g_k G_k$ for some subgroups $G_1, G_2,\dots,G_n$ of $G$. 
Furthermore, let $B$ 
denote the closure of $\{ G_1, G_2, \dots, G_n \}$ with respect to intersections. 
Then $M$ is a finite union of left cosets of subgroups of $B$, and we may apply Lemma~\ref{lem:coset1}. 
Hence, there is a Passman pair $(K, L)$ where $L$ is a subgroup of $G$ and $K \subseteq J$ is a nonzero ideal of $S_e$. 
In addition, using that $B$ is non-empty, we have $[ L : L \cap A_i ] < \infty $ for some $A_i \in B$. Since $A_i \subseteq G_k$ for some $k$, it follows that $L \cap A_i \subseteq L \cap G_k$. Consequently, $[L : L \cap G_k ] \leq [L : L \cap A_i ] < \infty$. 
\end{proof}

The following result is a stronger version of Proposition~\ref{prop:coset1}:

\begin{prop}[{cf. \cite[Lem.~2.2]{passman1984infinite}}]\label{prop:coset2}
Suppose that $S$ is nearly epsilon-strongly
$G$-graded and that $W$ is a subgroup of $G$ of finite index.
Let $J$ be a nonzero ideal of $S_e$ such that  $$J^x J = \{0\}, \qquad \forall x \in W \setminus \bigcup_{k=1}^n w_k H_k$$ where $H_1,\ldots,H_n$ are subgroups of $W$ and $w_1,\ldots,w_n \in W$.
Then there is a subgroup $L$ of $G$ and a nonzero ideal $I \subseteq J$ of $S_e$ such that $(I,L)$ is a Passman pair of $S$. In other words, $I^x I = \{0\}$ for every $x \in G \setminus L$. 
In addition, $[L : L \cap H_k ] < \infty$ 
for some $k \in \{1,\ldots,n\}$.
\end{prop}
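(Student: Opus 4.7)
The plan is to reduce to Proposition~\ref{prop:coset1} by absorbing the complement $G\setminus W$ into an enlarged set of cosets, and then to handle the residual case by recursion inside a graded subring. Since $[G:W]<\infty$, I may write $G=\bigsqcup_{i=1}^{m} g_i W$ with $g_1=e$, and set
\[
M^{\ast}\ :=\ \bigcup_{k=1}^{n} w_k H_k\ \cup\ \bigcup_{i=2}^{m} g_i W,
\]
which is a finite union of left cosets of subgroups from the family $\{H_1,\dots,H_n,W\}$ of $G$. Any $x\in G\setminus M^{\ast}$ lies in $W\setminus\bigcup_k w_k H_k$, so by hypothesis $J^x J=\{0\}$. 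Hence $(J,M^{\ast})$ is a Passman pair of $S$ of the form required by Proposition~\ref{prop:coset1}.

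Applying Proposition~\ref{prop:coset1} to $(J,M^{\ast})$ yields a Passman pair $(I,L)$ of $S$ with $I\subseteq J$ nonzero, $L$ a subgroup of $G$, and $[L:L\cap K]<\infty$ for some $K\in\{H_1,\dots,H_n,W\}$. If $K=H_k$ for some $k\in\{1,\dots,n\}$, the proof is complete. The only remaining possibility is $K=W$, in which case $[L:L\cap W]<\infty$.

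To handle the case $K=W$, I pass to the nearly epsilon-strongly $L$-graded subring $S_L=\bigoplus_{x\in L}S_x$. There, $I$ is a nonzero ideal of $(S_L)_e=S_e$, and the inclusion $I\subseteq J$ gives $I^x I\subseteq J^x J=\{0\}$ for every $x\in (L\cap W)\setminus\bigcup_k w_k H_k$. Writing $L_0:=L\cap W$ and using that $L\cap g_i W$ is either empty or a left coset of $L_0$ in $L$, while $L\cap w_k H_k$ is either empty or a left coset of $L\cap H_k$ in $L$, the pair $(I,M_L)$, with
\[
M_L\ :=\ (L\setminus W)\ \cup\ \bigcup_{k=1}^{n}(L\cap w_k H_k),
\]
is a Passman pair of $S_L$, and $M_L$ is a finite union of cosets of subgroups in $\{L\cap H_1,\dots,L\cap H_n,L_0\}$. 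A second application of Proposition~\ref{prop:coset1}, now inside $S_L$, then produces a Passman pair $(I^{\ast},L^{\ast})$ of $S_L$ with $L^{\ast}\leq L$, $I^{\ast}\subseteq I$ nonzero, and $[L^{\ast}:L^{\ast}\cap K']<\infty$ for some $K'$ in that family. Because $(I^{\ast})^x I^{\ast}\subseteq I^x I=\{0\}$ for $x\in G\setminus L$, this pair automatically lifts to a Passman pair of $S$. If $K'=L\cap H_k$ for some $k$, the proof is complete; but if $K'=L_0$, the bad case $[L^{\ast}:L^{\ast}\cap W]<\infty$ recurs with $L^{\ast}\subseteq L$. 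The hard part will be to ensure that iterating this reduction terminates, with the finiteness of $[G:W]$ and the strict refinements produced at each recursive step forcing the good subcase to eventually occur; this careful bookkeeping is the main technical obstacle of the proof.
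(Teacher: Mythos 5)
Your reduction of the hypothesis to a single application of Proposition~\ref{prop:coset1} via the enlarged set $M^{\ast}=\bigcup_k w_kH_k\cup\bigcup_{i\ge 2}g_iW$ is correct as far as it goes: $(J,M^{\ast})$ is indeed a Passman pair of the required form, and the first application of Proposition~\ref{prop:coset1} is sound. The genuine gap is exactly where you flag it: in the residual case $[L:L\cap W]<\infty$ your recursion has no termination argument, and none is available along these lines. Each pass through Proposition~\ref{prop:coset1} inside $S_{L}$ can again return the bad alternative $[L^{\ast}:L^{\ast}\cap W]<\infty$, and nothing strictly decreases --- the chain $L\supseteq L^{\ast}\supseteq\cdots$ need not be strict, descending chains of subgroups need not stabilize, and $[G:W]<\infty$ plays no role in your recursion after the very first step. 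So the proof is incomplete, and the missing piece is not mere bookkeeping but the central idea of the argument.

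The paper closes this gap by arranging, \emph{before} invoking Proposition~\ref{prop:coset1}, that the covering set contains no cosets of $W$ at all, so that the bad alternative can never arise. Concretely, one considers tuples $(h_1,\dots,h_s)$ with $J^{h_1}\cdots J^{h_s}\neq\{0\}$, with $e$ among the $h_i$ and the $h_i$ lying in pairwise distinct right cosets of $W$; since $[G:W]<\infty$ there is a tuple of maximal length $s$, and one sets $K:=J^{h_1}\cdots J^{h_s}\subseteq J$. Maximality of $s$ forces any $x$ with $K^xK\neq\{0\}$ to have $\{Wh_1x,\dots,Wh_sx\}=\{Wh_1,\dots,Wh_s\}$ (otherwise one could prepend $h_ix$ and contradict maximality, using Corollary~\ref{cor:multi1} and Lemma~\ref{lem:2}(a)), whence $h_ix\in W$ for some $i$ and then $h_ix\in\bigcup_k w_kH_k$ by hypothesis. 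Thus $K^xK=\{0\}$ for all $x$ outside $\bigcup_{i,k}h_i^{-1}w_kH_k$, a finite union of cosets of the $H_k$ \emph{only}, and a single application of Proposition~\ref{prop:coset1} finishes the proof with $[L:L\cap H_k]<\infty$ as required. To repair your argument you would need some device of this maximality type; the recursion as written cannot be salvaged by finiteness of $[G:W]$ alone.
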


\begin{proof}
For each positive integer $m$ we let
$A_m$ be the set consisting of all $(h_1, h_2, \ldots, h_m) \in G^m$
such that
\begin{itemize}
\item $J^{h_1} J^{h_2} \cdots J^{h_m} \ne \{0\}$,
\item $e=h_i \text{ for some } i \in \{1,\ldots,m\}$, and
\item $W h_j = W h_i$ if and only if $i=j$.
\end{itemize}
By Proposition~\ref{prop:s-unital}, $S_e$ is $s$-unital and hence $J=J^e \neq \{0\}$.
This shows that $e \in A_1$.
Now, by assumption $[G : W] < \infty$, and hence there is a greatest integer $s$ such that $A_s$ is non-empty.
Pick $\alpha = ( h_1, h_2, \dots, h_s ) \in A_s$ 
and put $K := J^{h_1} J^{h_2} \cdots J^{h_s}.$ 
Using that $\alpha \in A_s$ and that $J^e$ is an ideal of $S_e$, we get that $K \subseteq J^e = J$.
We will construct a set $M \subseteq G$ such that $(K, M)$ is a Passman pair of $S$ where $M$ has the required form for Proposition~\ref{prop:coset1}. 

Take $x \in G$ such that $K^x K \ne \{0\}$.
We
begin by showing 
that $\{ h_1 x, h_2x, \dots, h_s x \}$ represents the same set of right cosets of $W$ as
$\{h_1,h_2,\ldots,h_s\}$. 
Seeking a contradiction, suppose that
there is some $i\in \{1,\ldots,s\}$
such that
$W h_i x \neq W h_j$
for each $j\in \{1,\ldots,s\}$.
By Corollary~\ref{cor:multi1} and Lemma~\ref{lem:2}(a), we get that
\begin{equation}
    \{0\} \ne K^x K
    \subseteq
    (J^{h_1 x} J^{h_2 x} \cdots J^{h_s x}) (J^{h_1} J^{h_2} \cdots J^{h_s}) \subseteq J^{h_i x} J^{h_1} \cdots J^{h_s}.
    \label{eq:4}
\end{equation} 
Hence, $( h_i x, h_1, h_2, \dots, h_s ) \in A_{s+1}$
which contradicts the assumption on $s$. 
Thus,
\linebreak
$\{Wh_1, Wh_2, \ldots, Wh_s\} = \{Wh_1x, Wh_2x, \ldots, Wh_sx\}$.
In particular, 
$h_i x \in W$ for some 
$i \in \{1,\ldots,s\}$. 
By a computation similar to that in \eqref{eq:4}, we get that $\{0\} \ne K^x K \subseteq J^{h_i x} J$. 
Hence, by assumption we have $h_i x \in  \bigcup_{k=1}^n w_k H_k$. 
We have thus proved that $$ K^x K = \{0\}, \qquad \forall x \in G \setminus
\left( \bigcup_{i=1}^n
\bigcup_{k=1}^n h_i^{-1} w_k H_k \right). $$ 
By Proposition~\ref{prop:coset1}, there is a nonzero ideal $I \subseteq K \subseteq J$ of $S_e$ and a subgroup $L$ of $G$ such that $(I, L)$ is a Passman pair. 
Moreover, $[L : L \cap H_k ] < \infty$
for some $k \in \{1,\ldots,n\}$.
\end{proof}

\begin{rem}
Let $S$ be 
nearly epsilon-strongly $G$-graded 
and let $W$ be a subgroup of $G$. 
Then $S_W$ is a nearly epsilon-strongly $W$-graded ring and $(J, \bigcup_{k=1}^n w_k H_k)$ is a Passman pair of $S_W$. 
By Proposition~\ref{prop:coset1}, there is a subgroup $L$ of $W$ and a nonzero ideal $K \subseteq J$ of $S_e$ such that $(K, L)$ is a Passman pair of $S_W$. 
In other words, $K^x K = \{0\}$ for every $x \in W \setminus L$. 
In contrast, note that Proposition~\ref{prop:coset2} gives a Passman pair $(K,L)$ of the larger ring $S$, i.\,e. we have $K^x K = \{0\}$ for every $x \in G \setminus L$. 
\end{rem}

\section{Passman forms and the $\Delta$-method}
\label{Sec:PassmanForms}

Let $S$ be a $G$-graded ring. For nonzero graded ideals $A,B$ of $S$, we have that $A B = \{0\}$ implies $\pi_N(A) \pi_N(B) \subseteq A B = \{0\}$ for every normal subgroup $N$ of $G$. Moreover, if $S$ is non-degenerately $G$-graded, then $\pi_N(A) \ne \{0\}$ and $\pi_N(B) \ne \{0\}$  
by Lemma~\ref{lem:pi_non-degenerate}. In this section, we consider nonzero ideals $A,B$ of $S$ such that $A B = \{0\}$ and show that there exist a normal subgroup $N$ of $G$ and nonzero ideals $\tilde A, \tilde B$ of $S_N$ such that $\tilde A \tilde B = \{0\}$.

Recall that a ring is called \emph{semiprime} if it contains no nonzero nilpotent ideal. Analogously, we make the following definition:

\begin{defi}
If for every $G$-invariant ideal $I$ of $S_e$, $I^2=\{0\}$ implies $I=\{0\}$, then the ring $S_e$ is called \emph{$G$-semiprime}. \end{defi}

\begin{rem}\label{rem:22}
(a) $S_e$ is $G$-semiprime if and only if 
$S_e$ contains no nonzero nilpotent
$G$-invariant ideal.\\
(b) If $S_e$ is $G$-prime, then $S_e$ is $G$-semiprime.
\end{rem}

We record the following result which follows directly from Remark~\ref{rem:22}(b) and Corollary~\ref{cor:easy_dir2}:

\begin{cor}\label{cor:not_semiprime}
Suppose that $S$ is nearly epsilon-strongly $G$-graded.  
If $S_e$ is not 
$G$-semiprime,
then $S$ has a balanced NP-datum.
\end{cor}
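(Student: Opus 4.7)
The plan is to chain together two facts already established in the excerpt, with essentially no additional work. By Remark~\ref{rem:22}(b), every $G$-prime ring is $G$-semiprime, so taking the contrapositive, the hypothesis that $S_e$ is not $G$-semiprime immediately implies that $S_e$ is not $G$-prime. Separately, since $S$ is nearly epsilon-strongly $G$-graded, Proposition~\ref{prop:nearly_implies_nondegenerate} guarantees that $S$ is non-degenerately $G$-graded, which is precisely the standing hypothesis of Corollary~\ref{cor:easy_dir2}.

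Having made these two observations, I would simply invoke Corollary~\ref{cor:easy_dir2} to produce the desired balanced NP-datum (in fact with the additional feature that $\tilde{A},\tilde{B}$ may be chosen $H/N$-invariant, though this stronger conclusion is not needed here). There is no real obstacle; the corollary is a one-line specialization of the two results above, and one never has to unpack the internal structure of the NP-datum. The only conceptual point is that the contrapositive step is valid without exhibiting a concrete nilpotent $G$-invariant ideal, but this is just propositional logic.
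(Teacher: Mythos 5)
Your proposal is correct and is exactly the paper's own argument: the paper states that the corollary "follows directly from Remark~\ref{rem:22}(b) and Corollary~\ref{cor:easy_dir2}", which is precisely the contrapositive-plus-specialization chain you describe (with Proposition~\ref{prop:nearly_implies_nondegenerate} supplying the non-degeneracy hypothesis). Nothing further is needed.
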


Our main task for the remainder of this section is to establish Proposition~\ref{prop:main1} below.
Recall that, for a given group $H$,  $\Delta(H) := \{ h \in H \mid [ H : C_H(h) ] < \infty \}$ 
denotes
its finite conjugate center 
(cf. Section~\ref{Sec:Prel}).

\begin{prop}[{cf.~\cite[Prop.~3.1]{passman1984infinite}}]\label{prop:main1}
Suppose that $S$ is nearly epsilon-strongly $G$-graded  
and that $S_e$ is 
$G$-semiprime. Let $A, B$ be nonzero ideals of $S$ such that $AB=\{0\}$. Then there exist a subgroup $H$ of $G$, a nonzero
$H$-invariant ideal $I$ of $S_e$ and an element $\beta \in B$ such that the following assertions hold:
\begin{enumerate}[{\rm (a)}]
\begin{item}
$I^x I = \{0\}$ for every $x \in G \setminus H$;
\end{item}
\begin{item}
$I \pi_{\Delta(H)}(A) \ne \{0\}$, $I \pi_{\Delta(H)}(\beta) \ne \{0\}$;
\end{item}
\begin{item}
$I \pi_{\Delta(H)}(A) \cdot I \beta = \{0\}$.
\end{item}
\end{enumerate}
\end{prop}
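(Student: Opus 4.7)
The plan is to adapt Passman's $\Delta$-method \cite[Prop.~3.1]{passman1984infinite} to the nearly epsilon-strongly graded setting, using the machinery of Passman pairs developed in Section~\ref{Sec:PassmanPairs} together with the semiprimeness hypothesis on $S_e$.

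First I would construct an initial Passman pair. Since $S$ is non-degenerately graded (Proposition~\ref{prop:nearly_implies_nondegenerate}) and $A, B$ are nonzero ideals, both projections $\pi_{\{e\}}(A)$ and $\pi_{\{e\}}(B)$ are nonzero by Lemma~\ref{lem:pi_non-degenerate}. Pick a nonzero $\beta \in B$ and consider $A\beta = \{0\}$. Examining the homogeneous components of this equation for a general $\alpha \in A$, and using non-degeneracy together with Proposition~\ref{prop:switch1} to move support elements through $S_e$-ideals, one obtains a nonzero ideal $J \subseteq S_e$ (expressible as $\pi_{\{e\}}$ applied to products involving $\alpha$ and $\beta$) together with a subset $M \subseteq G$ such that $J^x J = \{0\}$ for every $x \in G \setminus M$. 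Choosing $\beta$ to have minimal support in a suitable sense forces $M$ to be a finite union of left cosets of centralizers $C_G(h_k)$ of finitely many elements $h_k \in \Supp(\beta)$, so $M$ has exactly the shape required by Proposition~\ref{prop:coset1}.

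Second, I would apply Proposition~\ref{prop:coset1} to the Passman pair $(J, M)$. This yields a subgroup $H \subseteq G$ and a nonzero ideal $I_0 \subseteq J$ of $S_e$ with $I_0^x I_0 = \{0\}$ for every $x \in G \setminus H$, together with the crucial finiteness $[H : H \cap C_G(h_k)] < \infty$ for some $k$. The latter means that the chosen element $h_k$ lies in $\Delta(H) = D_G(H) \cap H$, which is the key link to the projection $\pi_{\Delta(H)}$ appearing in (b) and (c). Next, replace $I_0$ by the $H$-invariant ideal $I := I_0^H = \sum_{h\in H} I_0^h$. Lemma~\ref{lem:power}(a) gives $H$-invariance, while Proposition~\ref{prop:power} ensures that $I^x I = \{0\}$ still holds for every $x \in G \setminus H$, establishing (a). Moreover $I \neq \{0\}$ since $I_0 \subseteq I$.

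Third, to obtain (b) and (c), I would exploit that $H$-invariance of $I$ together with $I^x I = \{0\}$ for $x \notin H$ forces, via an argument analogous to Lemma~\ref{lem:1}, the identity $I s I = I \pi_H(s) I$ for every $s \in S$, and indeed further that contributions from $\Supp(s) \setminus \Delta(H)$ are killed when sandwiched by $I$ on both sides; this is where the $\Delta$-method enters. Applied to the original relation $A\beta = \{0\}$ and extended to $I \alpha \cdot I \beta = 0$ for $\alpha \in A$, this collapses to $I \pi_{\Delta(H)}(A) \cdot I\beta = \{0\}$, giving (c). For the nonvanishing (b), the point is that $I \pi_{\Delta(H)}(A) = \{0\}$ (resp.\ $I\pi_{\Delta(H)}(\beta)=\{0\}$) would propagate back, via the $H$-invariance of $I$ and non-degeneracy, to show that $I$ lies in the annihilator of some $H$-invariant ideal; combined with Lemma~\ref{lem:misc1}(b) and the $G$-semiprimeness of $S_e$ (specifically Lemma~\ref{lem:18}(a)), this forces $I = \{0\}$, contradicting our construction. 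If necessary, $\beta$ may have to be replaced by a different element of $B$ (obtained by multiplying by elements of $S_H$) to secure $I\pi_{\Delta(H)}(\beta) \neq \{0\}$, which is legitimate since $B$ is a two-sided ideal.

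The main obstacle will be the simultaneous bookkeeping in Step three: one must engineer the same pair $(H, I)$ so that (a), (b), and (c) all hold together, and in particular the choice of $\beta$ must be compatible with the Passman pair produced by Proposition~\ref{prop:coset1}. The $G$-semiprimeness hypothesis, via Lemma~\ref{lem:18}, is the crucial ingredient that prevents the iterative shrinking of $I$ (under annihilator operations inside $S_e$) from ever producing the zero ideal, thereby ensuring that the final $I$ satisfies the nonvanishing conditions in (b).
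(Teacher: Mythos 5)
Your proposal assembles the right ingredients (Passman pairs, the replacement argument of Proposition~\ref{prop:coset1}, the $\Delta$-method, and $G$-semiprimeness via Lemma~\ref{lem:18}), but it is missing the central organizing idea of the paper's proof, and as a result the key step is asserted rather than proven. The paper does not establish (c) by a direct ``sandwiching'' argument. Instead it introduces the notion of a \emph{Passman form} $(H,D,I,\beta)$ with a numerical \emph{size} (the number of right $D$-cosets meeting $\Supp(\beta)$), shows such forms exist (Proposition~\ref{prop:passman_exists}), arranges (a) and (b) for a form of minimal size (Proposition~\ref{prop:passman1} --- note that (b) there follows from non-degeneracy and replacing $\beta$ by $\beta\sigma_{x^{-1}}$, not from semiprimeness), and then proves (c) \emph{by contradiction}: assuming $I\pi_{\Delta(H)}(A)\cdot I\beta\neq\{0\}$, a long chain of lemmas (Lemmas~\ref{lem:finite_index}--\ref{lem:contradiction}) manufactures a new Passman form $(L, D_G(L), K^L, \pi_D(\alpha)\beta)$ of strictly smaller size, contradicting minimality. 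The strict decrease comes from showing that some $g\in\Supp(\beta)\setminus D$ acquires finite index centralizer in the new subgroup $L$, so that two distinct $D$-cosets collapse into $D_G(L)$.

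Your Step 3 claims that $H$-invariance of $I$ together with $I^xI=\{0\}$ off $H$ ``kills contributions from $\Supp(s)\setminus\Delta(H)$ when sandwiched by $I$,'' collapsing $A\beta=\{0\}$ to $I\pi_{\Delta(H)}(A)\cdot I\beta=\{0\}$. This is precisely the statement that does \emph{not} follow directly: Lemma~\ref{lem:1} only gets you from $S$ down to $S_H$, and the passage from $H$-support to $\Delta(H)$-support is the entire difficulty. The $\Delta$-computation (Lemma~\ref{lem:delta1}) does not yield vanishing; it only shows that the set of $y$ for which $S_{y^{-1}}I\pi_D(\alpha)S_y\pi_D(\beta)\neq\{0\}$ lies in a finite union of centralizer cosets, and one then needs the replacement argument \emph{plus} the minimality of the form's size to turn that into a contradiction. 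Relatedly, your single up-front application of Proposition~\ref{prop:coset1} to a pair extracted from $A\beta=\{0\}$ will not in general produce the subgroup $H$ of the final statement: in the paper the initial form is simply $(G,\Delta(G),S_e,\beta)$, and the correct $H$ only emerges through the iterative descent. To repair the proposal you would need to introduce the size invariant, prove that (a) and (b) can be secured without increasing it, and show that failure of (c) strictly decreases it.
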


Using Connell's result (cf.~\cite[Lem.~5.2]{passman1989infinite}),
we show
that Proposition~\ref{prop:main1}
holds for the special case of group rings
in the following example.

\begin{exa}\label{ex:7_5}
Let $R$ be a 
unital semiprime ring,
and consider the group ring $R[G] = \bigoplus_{x \in G} R \delta_x$ with its natural strong $G$-grading. 
Let $\Delta := \Delta(G)$ and let $a,b \in R[G]$. The $\Delta$-argument was used by Connell to prove that if $a \delta_x b = 0$ for every $x \in G$, then $\pi_\Delta(a)b=0$.
We show that Proposition~\ref{prop:main1} holds in this special case:

Let $A,B$ be nonzero ideals of $R[G]$ such that $AB = \{0\}$. Put $H := G$ and $I := R$. Since $R[G]$ is non-degenerately $G$-graded, we can choose $\beta \in B$ such that $\beta_e \ne 0$.
Now, note that (a) is trivially satisfied. Moreover, (b) follows from Lemma~\ref{lem:pi_non-degenerate} and the fact that $\beta_e \ne 0$. Next, note that (c) 
asserts
that $R \pi_\Delta(A) \cdot R \beta = \{0\}$. Also note that $R \pi_\Delta(A)  R \beta = R \pi_\Delta(AR) \beta = R \pi_\Delta(A) \beta$.
Now, let $\alpha \in A$ and let $x \in G$. Then $\alpha \delta_x \beta \subseteq A S B = AB = \{0\}$. Applying Connell's $\Delta$-result, we have $\pi_\Delta(\alpha) \beta = 0$, and since $\alpha$ is arbitrary it follows that $\pi_\Delta(A) \beta = \{ 0 \}$. Thus, $R \pi_\Delta(A) \beta = \{0\}$ which shows that (c) is satisfied.
\end{exa}

Before proving Proposition~\ref{prop:main1} 
we show that it also holds in the following special case:

\begin{exa}\label{ex:21}
    Suppose that $G$ is an FC-group  
    and that $S$ is nearly epsilon-strongly $G$-graded. 
    Let $A, B$ be nonzero ideals of $S$ such that $A B = \{0\}$. Put $H := G$, $I := S_e$ and choose a nonzero $\beta \in B$. Since $G$ is an FC-group, it follows that $\Delta := \Delta(G) = G$. Note that (a) is trivially satisfied. Moreover, $I \pi_\Delta(A) = S_e A =A \ne \{0\}$ and $I \pi_\Delta(\beta) = S_e \beta \ni \beta \ne 0$.
    Thus, (b) holds. Finally, $I \pi_\Delta(A) \cdot I \beta = S_e A \cdot S_e \beta \subseteq A S_e B \subseteq AB = \{0\}$. Hence, (c) is satisfied.
\end{exa}

The key bookkeeping device used by Passman \cite{passman1984infinite} is the notion of a \emph{form}. 
We extend his definition to our generalized setting: 
\begin{defi}\label{def:passmanform}
Let $S$ be a $G$-graded ring. Suppose that $A,B$ are nonzero ideals of $S$ such that $AB=\{0\}$. We say that the quadruple $(H,D,I,\beta)$ is a \emph{Passman form} for $(A,B)$ if the following conditions are satisfied:
\begin{enumerate}[(a)]
\begin{item}
$H$ is a subgroup of $G$ and $D = D_G(H)= \{ x \in G \mid [ H : C_H(x) ] < \infty \}$;
\end{item}
\begin{item}
$I$ is an $H$-invariant ideal of $S_e$ such that $I^xI=\{0\}$ for every $x \in G \setminus H$;
\end{item}
\begin{item}
$0 \ne \beta \in B$, $I \beta \ne \{0\}$, and $IA \ne \{0\}$. 
\end{item}
\end{enumerate}
The \emph{size} of a Passman form $(H,D,I,\beta)$ is defined to be the number of right $D$-cosets in $G$ meeting $\Supp(\beta)$. 
\end{defi}

\begin{rem}
Passman (see  \cite[Prop.~7.1]{passman1984infinite}) only considers forms
coming from
unital strongly $G$-graded rings. For that class of rings our definition coincides with 
his
original definition.
\end{rem}

\begin{exa}
Here are two examples of Passman forms:

(a) 
In Example~\ref{ex:7_5}, $(G, \Delta(G), R, \beta)$ is a Passman form. 
Let $g_1, g_2, \dots, g_n \in G$ be such that $\Supp(\beta) \subseteq \bigcup_{i=1}^n g_i \Delta(G)$ is a minimal cover (meaning that it is not possible to choose elements $h_1, \dots, h_m \in G$ such that $\Supp(\beta) \subseteq \bigcup_{i=1}^m h_i \Delta(G)$, for any $m < n$). 
The size of the Passman form $(G, \Delta(G), R, \beta)$ is $n$. 

(b) 
In Example~\ref{ex:21}, $(G,G,S_e, \beta)$ is a Passman form of size $1$.
\end{exa}

Later in this section we will consider Passman forms of minimal size, whose existence is guaranteed by the following:

\begin{prop}[{cf.~\cite[Lem.~7.2]{passman1984infinite}}]\label{prop:passman_exists}
Suppose that $S$ is nearly epsilon-strongly $G$-graded.  
If $A,B$ are nonzero ideals of $S$ such that $AB=\{0\}$, then $(A,B)$ has a Passman form. 
\end{prop}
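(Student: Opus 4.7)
The plan is to exhibit an almost tautological Passman form by setting $H := G$. With this choice, $G \setminus H = \emptyset$, so the nullity condition $I^x I = \{0\}$ in part (b) of Definition~\ref{def:passmanform} is vacuously satisfied; condition (b) then reduces to requiring that $I$ be a $G$-invariant ideal of $S_e$. The natural candidate is therefore $I := S_e$. That $S_e$ is an ideal of itself is trivial, and $G$-invariance is immediate from $I^x = S_{x^{-1}} S_e S_x \subseteq S_e = I$ for every $x \in G$. Moreover, with $H = G$ one has $D = D_G(G) = \Delta(G)$, so condition (a) is also satisfied.

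It remains to produce $\beta$ satisfying (c). Since $B$ is nonzero, I pick any nonzero $\beta \in B$. Because $S$ is nearly epsilon-strongly $G$-graded, Proposition~\ref{prop:s-unital} applies and yields $\beta \in S_e \beta = I \beta$, which shows $I\beta \neq \{0\}$. The same argument applied to any nonzero element of $A$ gives $IA \neq \{0\}$. Hence $(G, \Delta(G), S_e, \beta)$ is a Passman form for $(A,B)$.

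The point worth noting rather than an obstacle is that existence of \emph{some} Passman form is essentially trivial: neither the hypothesis $AB = \{0\}$ nor any finer properties of $G$ enter the argument, only the $s$-unitality consequence of nearly epsilon-strong gradings (Proposition~\ref{prop:s-unital}). The substance of Passman's bookkeeping machinery surfaces only later, when one restricts attention to Passman forms of \emph{minimal size} and attempts to shrink $H$ while retaining the nullity condition.
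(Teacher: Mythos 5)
Your proof is correct and follows essentially the same route as the paper's: take $(G,\Delta(G),S_e,\beta)$ for any nonzero $\beta\in B$, note that the nullity condition is vacuous for $H=G$, and use the $s$-unitality consequence of near epsilon-strongness to get $\beta\in S_e\beta$ and $A=S_eA$. The only cosmetic difference is that you invoke Proposition~\ref{prop:s-unital} directly where the paper unwinds the elements $\epsilon_x(\beta_x)$ explicitly.
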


\begin{proof}
Put $H:=G, D:= \Delta(G)$, and $I:=S_e$.
Note that $I$ is $G$-invariant. 
Furthermore, $IA = S_e A = A \ne \{0\}$. 
Now, let $\beta \in B \setminus \{ 0 \}$. It remains to show that $I \beta \ne \{0\}$. 
To this end, 
write $\beta = \sum_{x \in G} \beta_x$. 
Since $S$ is nearly epsilon-strongly $G$-graded, 
 for every $x \in \Supp(\beta)$, there exists some $\epsilon_x(\beta_x) \in S_x S_{x^{-1}} \subseteq S_e =I$ such that $\epsilon_x(\beta_x) \beta_x = \beta_x$. 
Moreover, there is some $s \in S_e=I$ such that $s \epsilon_x(\beta_x) = \epsilon_x(\beta_x)$ for every $x \in \Supp(\beta)$ (see Proposition~\ref{prop:s-unital} and Proposition~\ref{prop:tominaga}). 
Thus, $$I \beta \ni s \beta = s \sum \beta_x  
= \sum s (\epsilon_x(\beta_x) \beta_x) 
= \sum (s \epsilon_x(\beta_x)) \beta_x = \sum \epsilon_x(\beta_x) \beta_x 
= \beta \ne 0,$$
where all sums run over $\Supp(\beta)$.
This shows that $(G,\Delta(G),S_e,\beta)$ is a Passman form.
\end{proof}

\begin{prop}[{cf.~\cite[Lem.~3.3(ii)]{passman1984infinite}}]\label{prop:passman1}
Suppose that $S$ is non-degenerately $G$-graded  
and that $A,B$ are nonzero ideals of $S$ such that $AB=\{0\}$. 
Let $(H,D,I,\beta)$ be a Passman form
for $(A,B)$. 
Then the following assertions hold:
 \begin{enumerate}[{\rm (a)}]
 \begin{item}
 $I \pi_{\Delta(H)}(A) \ne \{0\}$
 \end{item} 
 \begin{item} 
 There exists a Passman form $(H,D,I,\beta')$ for $(A,B)$ such that $I \pi_{\Delta(H)}(\beta') \ne \{0\}$, and hence $I\pi_{D}(\beta') \ne \{0\}$.
 Moreover, the size of $(H,D,I,\beta')$ is not greater than the size of $(H,D,I,\beta)$.
 \end{item}
 \end{enumerate}
\end{prop}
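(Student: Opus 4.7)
The plan is to adapt the classical $\Delta$-method to the non-degenerately graded setting. The key observation is that $\pi_{\Delta(H)} : S \to S_{\Delta(H)}$ is an $S_e$-bimodule homomorphism (by Lemma~\ref{lem:bimodule_homo}, since $e \in \Delta(H)$), whence $\pi_{\Delta(H)}(IX) = I\pi_{\Delta(H)}(X)$ for any subset $X \subseteq S$. Thus (a) is equivalent to exhibiting $\gamma \in IA \setminus \{0\}$ with $\Supp(\gamma) \cap \Delta(H) \neq \emptyset$, and (b) reduces to finding an analogous $\beta' \in I\beta \setminus \{0\}$.

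For (a), I would argue by contradiction: assume $\pi_{\Delta(H)}(IA) = \{0\}$. Among all nonzero $\gamma \in IA$, choose one with minimal $|\Supp(\gamma)|$; by assumption, $\Supp(\gamma) \cap \Delta(H) = \emptyset$. Pick $g \in \Supp(\gamma)$ and split into two cases. If $g \notin H$, Lemma~\ref{lem:1}(a) gives $I S_g I = \{0\}$, so $I\gamma_g I = \{0\}$; combining this with the equation $\gamma \cdot I\beta = \{0\}$ (from $AB = \{0\}$ and $\gamma \in A$) and non-degeneracy of the grading, one shows that right-multiplying $\gamma$ by a suitable element of $I$ kills the $g$-component without annihilating $\gamma$ entirely, producing a nonzero element of $IA$ with strictly smaller support. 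If instead $g \in H \setminus \Delta(H)$, then $g$ admits infinitely many $H$-conjugates; since $\Supp(\gamma)$ and $\Supp(\beta)$ are both finite, one selects $h \in H$ so that $hgh^{-1}$ avoids a certain finite ``interference set'', then conjugates the relation $\gamma I \beta = \{0\}$ by elements of $S_h$ and $S_{h^{-1}}$ (using $H$-invariance of $I$ to stay inside $IA$) in order to isolate the $hgh^{-1}$-component. Non-degeneracy then yields a strictly smaller nonzero element of $IA$, contradicting minimality.

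For (b), I would choose $\beta'$ to be a nonzero element of $I\beta$ of minimal support. Now the role of $\gamma\beta = 0$ is played by $(IA)\beta' \subseteq AB = \{0\}$: taking any $\gamma \in IA \setminus \{0\}$ (guaranteed by the Passman-form axiom $IA \neq \{0\}$), the equation $\gamma\beta' = 0$ enables the same two-case analysis to force $\Supp(\beta') \subseteq \Delta(H)$. Consequently $I\pi_{\Delta(H)}(\beta') = I\beta' \neq \{0\}$, and since $\Delta(H) \subseteq D$ we also have $I\pi_{D}(\beta') \neq \{0\}$. The size bound is automatic, because $\Supp(\beta') \subseteq \Supp(\beta)$ forces every right $D$-coset meeting $\Supp(\beta')$ to meet $\Supp(\beta)$ as well. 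The remaining Passman-form axioms for $(H,D,I,\beta')$ carry over verbatim from those of $(H,D,I,\beta)$.

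The main obstacle lies in the conjugation step in the case $g \in H \setminus \Delta(H)$. In Passman's original unital strongly graded setting \cite[Lem.~3.3(ii)]{passman1984infinite}, one exploits the decomposition $1_S = \sum_i u_i v_i$ with $u_i \in S_h$ and $v_i \in S_{h^{-1}}$ to cleanly ``conjugate'' the defining equation by $h$. Here only non-degeneracy is available, so the conjugation must be implemented through ad hoc products of homogeneous elements whose composite is not literally an identity; verifying that the resulting smaller element of $IA$ (respectively $I\beta$) is genuinely nonzero is the delicate point and requires the combined use of non-degeneracy, $H$-invariance of $I$, and the finiteness of $\Supp(\gamma) \cup \Supp(\beta)$.
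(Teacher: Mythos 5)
There is a genuine gap, and it is structural rather than a matter of missing detail. For part (b) you propose to take $\beta'$ to be a nonzero element of $I\beta$ of minimal support and then force $\Supp(\beta')\subseteq\Delta(H)$. This cannot work: since $I\subseteq S_e$, left multiplication by elements of $I$ preserves the homogeneous support, i.e.\ $\Supp(r\beta)\subseteq\Supp(\beta)$ \emph{componentwise}, so if $\Supp(\beta)\cap\Delta(H)=\emptyset$ (which the definition of a Passman form does not rule out), then \emph{every} element of $I\beta$ has $\pi_{\Delta(H)}$-image zero, and no choice of $\beta'\in I\beta$ can satisfy $I\pi_{\Delta(H)}(\beta')\neq\{0\}$. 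The paper's construction is essentially different: it picks $x\in\Supp(\beta)$ and $r\in I$ with $r\beta_x\neq 0$, uses non-degeneracy to find a \emph{homogeneous} $\sigma_{x^{-1}}\in S_{x^{-1}}$ with $r\beta_x\sigma_{x^{-1}}\neq 0$, and sets $\beta':=\beta\sigma_{x^{-1}}$. Right multiplication by $\sigma_{x^{-1}}$ translates the support by $x^{-1}$, moving the $x$-component into the identity component $e\in\Delta(H)$; this is exactly the step your construction is missing. The size bound then follows because $\Supp(\beta\sigma_{x^{-1}})\subseteq\Supp(\beta)x^{-1}$ is covered by the translated cosets $Dg_ix^{-1}$.

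For part (a) your argument is both far heavier than necessary and incomplete. The statement only says that the projection of the nonzero right ideal $IA$ onto $S_{\Delta(H)}$ is nonzero, which is immediate from Lemma~\ref{lem:pi_non-degenerate} together with $\pi_{\Delta(H)}(IA)=I\pi_{\Delta(H)}(A)$ (Lemma~\ref{lem:bimodule_homo}); no interaction with $B$ or any $\Delta$-method is needed. Your minimal-support contradiction argument has its own holes: in the case $g\notin H$ you need $\gamma I\neq\{0\}$ to produce a smaller nonzero element, and non-degeneracy does not give this; and in the case $g\in H\setminus\Delta(H)$ you yourself flag the conjugation step as unresolved. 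That step is genuinely unavailable here: the full $\Delta$-method in this paper (Sections~\ref{Sec:PassmanPairs}--\ref{Sec:PassmanForms}) requires the nearly epsilon-strong hypothesis and $G$-semiprimeness of $S_e$, whereas Proposition~\ref{prop:passman1} is stated, and must be proved, under non-degeneracy alone.
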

\begin{proof}
(a): Note that $IA \ne \{0\}$ is a right $S$-ideal. By Lemma~\ref{lem:bimodule_homo} and Lemma~\ref{lem:pi_non-degenerate}, we have $\{0\} \ne \pi_{\Delta(H)}(I A) = I \pi_{\Delta(H)}(A)$.

(b): We construct a Passman form with the required properties. 
Write $\beta = \sum_{x \in G} \beta_x$. 
By assumption, $I \beta \ne \{0\}$.  
Hence there is some $r \in I \subseteq S_e$ and $x \in G$ such that $r \beta_x \ne 0$. 
By non-degeneracy of the $G$-grading, we have $(r \beta_x) S_{x^{-1}} \ne \{0\}$, i.\,e. there is some $\sigma_{x^{-1}}\in S_{x^{-1}}$ such that $r \beta_x \sigma_{x^{-1}} \ne 0$. 
Thus, $I \beta_x \sigma_{x^{-1}} \ne \{0\}$.
Hence, $(H,D,I,\beta')$ with $\beta' := \beta \sigma_{x^{-1}}$ is a Passman form for $(A,B)$ such that $I \pi_{\Delta(H)}(\beta') \ne \{0\}.$ 
We now show that the size of $(H,D,I,\beta')$ is less than or equal to the size of $(H,D,I, \beta)$.
Suppose that $(H,D,I,\beta)$ has size $m$ and that $Dg_1,\dots,Dg_m$  
 form a minimal set of right $D$-cosets covering $\Supp(\beta)$. 
 Then $$\Supp(\beta \sigma_{x^{-1}}) \subseteq \Supp(\beta)x^{-1} \subseteq Dg_1x^{-1} \cup Dg_2x^{-1} \cup \ldots \cup Dg_m x^{-1}$$ and hence the $m$ right $D$-cosets $\{ D g_i x^{-1} \}_{i=1}^m$ cover $\Supp(\beta \sigma_{x^{-1}})$. 
 Thus, the size of $(H,D,I,\beta')$ is less than or equal to $m$. 
Finally, since $\Delta(H) \subseteq D$, we get $\{0\} \ne I \pi_{\Delta(H)}(\beta') \subseteq I \pi_D(\beta')$. 
\end{proof}

\begin{lem}\label{lem:annhilators}
Suppose that $S$ is nearly epsilon-strongly $G$-graded and that $S_e$ is 
$G$-semiprime. 
For any 
$G$-invariant ideal $I$ of $S_e$ the following assertions hold: 
\begin{enumerate}[{\rm (a)}]
\begin{item}
$r.\Ann_{S_e}(I) = r.\Ann_{S_e}(I^2).$
\end{item}
\begin{item}
$r.\Ann_{S}(I) = r.\Ann_{S}(I^2).$
\end{item}
\end{enumerate}
\end{lem}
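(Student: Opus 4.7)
My plan is to handle (a) and (b) by reducing both to a single key claim, namely that $N := I \cap r.\Ann_{S_e}(I) = \{0\}$. The inclusions $r.\Ann(I) \subseteq r.\Ann(I^2)$ in both $S_e$ and $S$ are immediate (anything annihilating $I$ also annihilates $I^2$), so the content lies entirely in proving the reverse inclusions.

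To establish the key claim, I would observe that $r.\Ann_{S_e}(I)$ is $G$-invariant by Lemma~\ref{lem:misc1}(b) (applied with $K=G$), since $I$ is $G$-invariant by hypothesis. The intersection $N$ is then itself a $G$-invariant ideal of $S_e$, and by the definition of $r.\Ann$ it satisfies $N^2 \subseteq I \cdot r.\Ann_{S_e}(I) = \{0\}$. Hence $N=\{0\}$ by $G$-semiprimeness of $S_e$.

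For part (a), given $s \in S_e$ with $I^2 s = \{0\}$, I would form the two-sided ideal $T := I s S_e$ of $S_e$. One checks that $T \subseteq I$ (since $I$ is an ideal and $sS_e \subseteq S_e$), and $IT = I^2 s S_e = \{0\}$ shows $T \subseteq r.\Ann_{S_e}(I)$. Thus $T \subseteq N = \{0\}$. Since $S_e$ is $s$-unital by Proposition~\ref{prop:s-unital}, we have $s \in sS_e$, so $Is \subseteq T = \{0\}$, i.e., $s \in r.\Ann_{S_e}(I)$, as required.

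Part (b) follows the same template with one extra decomposition step. Given $s \in S$ with $I^2 s = \{0\}$, I would write $s = \sum_{x \in G} s_x$ and use the fact that $I^2 \subseteq S_e$ respects the grading to conclude $I^2 s_x = \{0\}$ for each $x \in G$. For fixed $x$, set $T_x := I s_x S_{x^{-1}} \subseteq S_e$, which is a two-sided $S_e$-ideal contained in $I$, with $I T_x = I^2 s_x S_{x^{-1}} = \{0\}$. So $T_x \subseteq N = \{0\}$, i.e., $I s_x S_{x^{-1}} = \{0\}$. Finally, non-degeneracy of the $G$-grading (via Proposition~\ref{prop:nearly_implies_nondegenerate}) promotes this back to $I s_x = \{0\}$: for any $r \in I$, the element $r s_x \in S_x$ satisfies $(r s_x) S_{x^{-1}} = \{0\}$, forcing $r s_x = 0$. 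Summing over $x$ gives $Is = \{0\}$.

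The only subtle points I anticipate are verifying that $T$ and $T_x$ really are two-sided ideals of $S_e$ (the right-ideal property for $T_x$ uses the routine inclusion $S_{x^{-1}} S_e \subseteq S_{x^{-1}}$), and correctly invoking non-degeneracy at the end of (b) to lift $I s_x S_{x^{-1}} = \{0\}$ back to $I s_x = \{0\}$ in $S_x$. Both of these are standard for nearly epsilon-strongly graded rings, so no serious obstacle remains.
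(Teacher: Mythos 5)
Your proof is correct and follows essentially the same strategy as the paper: both arguments apply $G$-semiprimeness to a square-zero $G$-invariant ideal built from $I$ and its right annihilator (the paper kills $I\cdot r.\Ann_{S_e}(I^2)$ directly, using Corollary~\ref{cor:multi1} and Lemma~\ref{lem:misc1} for its $G$-invariance, whereas you kill $I\cap r.\Ann_{S_e}(I)$ and then reduce element-wise via the auxiliary ideals $T$ and $T_x$). Part (b) is handled identically in both: decompose into homogeneous components, observe $I^2\gamma_x=\{0\}$, pass to $I\gamma_x S_{x^{-1}}=\{0\}$ inside $S_e$, and invoke non-degeneracy to conclude $I\gamma_x=\{0\}$.
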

\begin{proof}
(a): Put $J:=r.\Ann_{S_e}(I^2)$.
Clearly, $r.\Ann_{S_e}(I) \subseteq J$.
By Corollary~\ref{cor:multi1}, $(IJ)^x = I^x J^x$ for every $x \in G$, and hence $IJ$ is a 
$G$-invariant ideal of $S_e$ by Lemma~\ref{lem:misc1}. 
Moreover, by definition $I^2 J = \{0\}$, and hence $(IJ)^2 = (IJ) (IJ) \subseteq I (IJ) = I^2 J = \{0\}$. 
Since $S_e$ is $G$-semiprime, it follows that $IJ=\{0\}$. Thus, $J$ annihilates $I$, i.\,e. $J \subseteq r.\Ann_{S_e}(I).$ 

(b): Similarly, the inclusion $r.\Ann_S(I) \subseteq r.\Ann_S(I^2)$ is immediate. 
We now show the
reversed 
inclusion. 
Take $\gamma = \sum_{x \in G} \gamma_x \in r.\Ann_S(I^2).$ 
Since $I^2 \subseteq S_e$, we have $I^2 \gamma_x = \{0\}$ for every $x \in G$.
Next, let $x \in G$. Using~(a), we obtain that $\gamma_x S_{x^{-1}} \subseteq r.\Ann_{S_e}(I) $. In other words, $I \gamma_x S_{x^{-1}} = \{0\}$ which, by non-degeneracy of the $G$-grading, yields $I \gamma_x = \{0\}$, and hence $\gamma_x \in r.\Ann_S(I)$. Since $x \in G$ is arbitrary, it follows that $\gamma \in r.\Ann_S(I)$. 
\end{proof}

\begin{lem}[{cf.~\cite[Lem.~3.3(iii)]{passman1984infinite}}]\label{lem:66}
Suppose that $S$ is nearly epsilon-strongly $G$-graded and that $S_e$ is 
$G$-semiprime. 
Furthermore, let $A, B$ be nonzero ideals of $S$ such that $AB=\{0\}$. 
If $(H,D,I,\beta)$ is a Passman form for $(A,B)$ of minimal size with $I \pi_D(\beta) \ne \{0\}$, then for every $\gamma \in S_D$ we have $I \gamma \beta = \{0\}$ if and only if $I \gamma \pi_D(\beta) = \{0\}.$
\end{lem}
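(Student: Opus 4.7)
For the direction $(\Rightarrow)$, I would first note that $D = D_G(H)$ is a subgroup of $G$ (closed under inverses, and under products since $C_H(x)\cap C_H(y)\subseteq C_H(xy)$ has finite index in $H$ whenever $x,y\in D$). Thus $S_D$ is a subring of $S$ and $\pi_D$ is an $S_D$-bimodule homomorphism by Lemma~\ref{lem:bimodule_homo}. Since $I\subseteq S_e\subseteq S_D$ and $\gamma\in S_D$, the product $r\gamma$ lies in $S_D$ for every $r\in I$, so $\pi_D(r\gamma\beta)=r\gamma\pi_D(\beta)$. The assumption $I\gamma\beta=\{0\}$ therefore immediately yields $I\gamma\pi_D(\beta)=\pi_D(I\gamma\beta)=\{0\}$.

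The substantive direction is the converse. My plan is to argue by contradiction: assume $I\gamma\pi_D(\beta)=\{0\}$ but $I\gamma\beta\neq\{0\}$, and manufacture a Passman form for $(A,B)$ of size strictly less than that of $(H,D,I,\beta)$ while retaining the condition $I\pi_D(\cdot)\neq\{0\}$, contradicting minimality. Let $Dg_1,\ldots,Dg_m$ denote the right $D$-cosets meeting $\Supp(\beta)$. Since $I\pi_D(\beta)\neq\{0\}$ forces $\pi_D(\beta)\neq 0$, we may assume $g_1=e$. The candidate for the new central element will be $\beta':=r\gamma\beta$ for a suitably chosen $r\in I$.

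The main technical obstacle is ensuring $I\beta'\neq\{0\}$, which is required by condition (c) of Definition~\ref{def:passmanform}. Here I would invoke Lemma~\ref{lem:annhilators}: if we had $I^2\gamma\beta=\{0\}$, then $\gamma\beta\in r.\Ann_S(I^2)=r.\Ann_S(I)$, forcing $I\gamma\beta=\{0\}$ and contradicting the hypothesis. Hence there exists $r\in I$ with $I(r\gamma\beta)\neq\{0\}$; in particular $\beta'=r\gamma\beta$ is a nonzero element of $B$ for which $(H,D,I,\beta')$ is once again a Passman form for $(A,B)$, the remaining conditions of Definition~\ref{def:passmanform} being inherited from $(H,D,I,\beta)$.

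It then remains to control the size and restore the condition $I\pi_D(\cdot)\neq\{0\}$. Since $r\gamma\in S_D$, the bimodule identity gives $\pi_D(\beta')=r\gamma\pi_D(\beta)\in I\gamma\pi_D(\beta)=\{0\}$, so $\Supp(\beta')\cap D=\emptyset$. On the other hand, because $D$ is a subgroup and $r\gamma\in S_D$, we have $\Supp(\beta')\subseteq D\cdot\Supp(\beta)\subseteq\bigcup_{i=1}^m Dg_i$. Combining these, $\Supp(\beta')\subseteq\bigcup_{i=2}^m Dg_i$, meeting at most $m-1$ cosets. To reinstate the $I\pi_D(\cdot)\neq\{0\}$ condition, I would then apply Proposition~\ref{prop:passman1}(b) to $(H,D,I,\beta')$, obtaining a Passman form $(H,D,I,\beta'')$ with $I\pi_D(\beta'')\neq\{0\}$ and size no larger than that of $(H,D,I,\beta')$, hence at most $m-1$. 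This contradicts the minimality of $m$ and completes the argument.
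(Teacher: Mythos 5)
Your proof is correct and follows essentially the same route as the paper: both directions rest on the $S_D$-bimodule property of $\pi_D$, Lemma~\ref{lem:annhilators}(b) to pass between $I\gamma\beta$ and $I^2\gamma\beta$, and the observation that $\Supp(s\gamma\beta)\subseteq\bigcup_{i\ge 2}Dg_i$ forces a Passman form of smaller size. The only difference is organizational — the paper argues directly that $(H,D,I,s\gamma\beta)$ fails to be a Passman form for every $s\in I$ and concludes $I^2\gamma\beta=\{0\}$, whereas you run the contrapositive and select a single witness $r$; your final appeal to Proposition~\ref{prop:passman1}(b) is harmless but not needed, since the smaller form already contradicts minimality.
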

\begin{proof}
Suppose that $I \gamma \beta = \{0\}$. Since $\pi_D$ is an $S_D$-bimodule homomorphism by Lemma~\ref{lem:bimodule_homo}, it follows that $\{0\} = \pi_D(I\gamma \beta) = I \gamma \pi_D(\beta).$

Conversely, suppose that $I \gamma \pi_D(\beta)=\{0\}.$ 
Take $s \in I$ and note that $s \gamma \beta \in I S_D B \subseteq B$. Seeking a contradiction, suppose that $(H,D,I, s \gamma \beta)$ is a Passman form for the pair $(A,B)$. We show that $(H,D,I, s \gamma \beta)$ has less than minimal size. Indeed, suppose that $n \in \N$ is the size of $(H,D,I, \beta)$, i.\,e. the minimal number such that $\Supp(\beta) \subseteq \bigcup_{i=1}^n D g_i$ 
for some 
$g_1,\ldots,g_n \in G$.
Since $I \pi_D(\beta) \ne \{0\}$, we have $\pi_D(\beta) \ne 0$. Hence, we may w.l.o.g. assume that $g_1=e$. Moreover, it is immediate that
    \begin{align*}
        \Supp(s \gamma \beta) \subseteq \Supp(s \gamma) \Supp(\beta) \subseteq D \left( \bigcup_{i=1}^n D g_i \right) \subseteq \left( \bigcup_{i=1}^n D g_i \right).
    \end{align*}
By assumption, however, $0 = s \gamma \pi_D(\beta) = \pi_D(s \gamma \beta)$ which entails that $\Supp(s \gamma \beta) \subseteq \bigcup_{i=2}^n D g_i $. This is a contradiction since $(H,D, I, \beta)$ is assumed to be minimal. Thus, $(H,D,I, s \gamma \beta)$ is not a Passman form, and hence $I s \gamma \beta = \{0\}$ (cf. Definition~\ref{def:passmanform}). As this holds for every $s \in I$, we have $I^2 \gamma \beta = \{0\}$. By Lemma~\ref{lem:annhilators}(b), this yields $I \gamma \beta = \{0\}.$
\end{proof}

\subsection{Properties of a Passman form of minimal size}

In what follows, we fix a nearly epsilon-strongly $G$-graded ring
$S$ such that $S_e$ is $G$-semiprime, nonzero ideals $A,B$ of $S$ with $AB=\{0\}$, and a  
Passman form $(H,D,I,\beta)$ for $(A,B)$ of minimal size. 
Throughout this section we assume that
$I \pi_{\Delta(H)}(A) \cdot I \beta \ne \{0\}$.

\begin{lem}[{cf.~\cite[Lem.~3.4]{passman1984infinite}}]\label{lem:finite_index}
The following assertions hold:
\begin{enumerate}[{\rm (a)}]

\item There exists $\alpha \in A \cap S_H$ such that $I \pi_D(\alpha) \beta \ne \{0\}$. 

\item For every $\alpha \in A$ there is a subgroup $W$ of $H$ of finite index that centralizes  $\Supp(\pi_D(\alpha))$ and $\Supp(\pi_D(\beta))$. 

\end{enumerate}
\end{lem}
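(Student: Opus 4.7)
\smallskip

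\noindent\textbf{Plan.} For part~(b), the argument will be immediate from the definition $D = D_G(H) = \{x \in G \mid [H : C_H(x)] < \infty\}$. Given $\alpha \in A$, both $\pi_D(\alpha)$ and $\pi_D(\beta)$ have finite support contained in $D$; set $F := \Supp(\pi_D(\alpha)) \cup \Supp(\pi_D(\beta))$, which is a finite subset of $D$. For each $d \in F$, the centralizer $C_H(d)$ has finite index in $H$ by the defining property of $D_G(H)$, so $W := \bigcap_{d \in F} C_H(d)$ is a finite intersection of finite-index subgroups of $H$. Hence $[H : W] < \infty$, and by construction $W$ centralizes every element of $F$.

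For part~(a), I will first unpack the standing assumption $I\pi_{\Delta(H)}(A)\cdot I\beta \neq \{0\}$: there exist $\alpha_0 \in A$ and $i_1, i_2 \in I$ with $i_1 \pi_{\Delta(H)}(\alpha_0) i_2 \beta \neq 0$. Since $I \subseteq S_e \subseteq S_{\Delta(H)}$ and $\pi_{\Delta(H)}$ is an $S_{\Delta(H)}$-bimodule homomorphism by Lemma~\ref{lem:bimodule_homo}, this rewrites as $\pi_{\Delta(H)}(i_1 \alpha_0 i_2)\beta \neq 0$, so with $\tilde\alpha := i_1 \alpha_0 i_2 \in A$ one obtains $\pi_{\Delta(H)}(\tilde\alpha)\beta \neq 0$. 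Note that for any element $\alpha \in S_H$ one has $\pi_D(\alpha) = \pi_{D\cap H}(\alpha) = \pi_{\Delta(H)}(\alpha)$, so the target $I\pi_D(\alpha)\beta \neq \{0\}$ becomes $I\pi_{\Delta(H)}(\alpha)\beta \neq \{0\}$ for such $\alpha$.

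The hardest step — and where the main obstacle lies — is to promote $\tilde\alpha \in A$ to an actual element $\alpha \in A \cap S_H$ without destroying the nonzero product with $\beta$. My plan is to combine three tools: first, apply Proposition~\ref{prop:passman1}(b) to arrange (without increasing the size of the form) that $I\pi_D(\beta) \neq \{0\}$, so that Lemma~\ref{lem:66} is available; this lemma transforms the target into the equivalent condition $I\pi_D(\alpha)\pi_D(\beta) \neq \{0\}$, reducing the whole computation to inside $S_D$. Second, use the $H$-invariance of $I$, which via Proposition~\ref{prop:switch1}(b) yields $IS_h = S_h I$ for every $h \in H$, to symmetrize the relevant expressions. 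Third, exploit the local-unit structure of the nearly epsilon-strong grading together with the minimality of the form to truncate $\tilde\alpha$ to its $S_H$-supported part while remaining inside $A$; the minimality of the form is crucial here, since any failure to land inside $A \cap S_H$ would produce a Passman form of strictly smaller size, contradicting the choice of $(H,D,I,\beta)$.
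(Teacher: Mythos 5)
Part~(b) of your proposal is correct and coincides with the paper's argument: the union of the two supports is a finite subset of $D=D_G(H)$, and a finite intersection of finite-index subgroups has finite index.

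Part~(a) has a genuine gap at exactly the step you flag as ``the hardest.'' Your plan is to take $\tilde\alpha\in A$ with $I\pi_{\Delta(H)}(\tilde\alpha)\beta\neq\{0\}$ and then ``truncate $\tilde\alpha$ to its $S_H$-supported part while remaining inside $A$,'' invoking minimality of the Passman form. This does not work as described: $\pi_H(\tilde\alpha)$ need not belong to $A$ (the projection $\pi_H$ lands in the ideal $\pi_H(A)$ of $S_H$, not in $A$ itself), and the minimality of $(H,D,I,\beta)$ plays no role in this lemma --- it is only used later (Lemma~\ref{lem:66} and Lemma~\ref{lem:exchange}). Likewise, Proposition~\ref{prop:passman1}(b) and Lemma~\ref{lem:66} concern $\beta$ and are not what is needed here. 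The mechanism the paper actually uses is a sandwiching trick: replace $A$ by $IAI$. Since $A$ is an ideal, $IAI\subseteq A$; and since $I^xI=\{0\}$ for every $x\in G\setminus H$ (condition (b) of the Passman form), Lemma~\ref{lem:1}(b) gives $IAI\subseteq ISI\subseteq S_H$, so every element of $IAI$ automatically lies in $A\cap S_H$ --- no truncation is required. The price is two extra factors of $I$, and one must check they do not kill the product; this is where $G$-semiprimeness of $S_e$ enters, via Lemma~\ref{lem:annhilators}(b): $r.\Ann_S(I)=r.\Ann_S(I^2)$, so $I\pi_{\Delta(H)}(A)\cdot I\beta\neq\{0\}$ forces $I^2\pi_{\Delta(H)}(A)I\beta=I\pi_{\Delta(H)}(IAI)\beta\neq\{0\}$, yielding the desired $\alpha\in IAI\subseteq A\cap S_H$. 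Your proposal is missing both ingredients (Lemma~\ref{lem:1}(b) and the annihilator identity), so the core of part~(a) remains unproved.
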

\begin{proof}
(a): By assumption, we have $I \pi_{\Delta(H)}(A) \cdot I \beta \ne \{0\}$. In other words, $\pi_{\Delta(H)}(A) \cdot I \beta$ is not contained in $r.\Ann_S(I)$. 
Furthermore, by Lemma~\ref{lem:annhilators}(b), we have $r.\Ann_{S}(I) = r.\Ann_{S}(I^2)$. 
Applying Lemma~\ref{lem:bimodule_homo}, we get $ I \pi_{\Delta(H)}(IAI) \beta = I^2 \pi_{\Delta(H)}(A) \cdot I \beta \ne \{0\}$. Hence, there exists some $\alpha \in I A I \subseteq A$ such that $I \pi_{\Delta(H)}(\alpha) \beta \ne \{0\}$. 
Additionally, we have  $\alpha \in I S I \subseteq S_H$ by Lemma~\ref{lem:1}(b). 
Since $D \cap H = \Delta(H)$, we get $\pi_D(\alpha) = \pi_{\Delta(H)}(\alpha)$ and thus $I \pi_D(\alpha) \beta \ne \{0\}$. 

(b): Note that $ P := \Supp(\pi_D(\alpha)) \cup \Supp(\pi_D(\beta))$ is a finite subset of $D=D_{G}(H)$ and consider  $W := \bigcap_{x \in P} C_H(x)$. Since $P \subseteq D$ and $[H : C_H(x)] < \infty$ for every $x \in D$,
we get that
$[H : W ] < \infty$. 
\end{proof}

\begin{lem}[{cf.~\cite[Lem.~3.4]{passman1984infinite}}]\label{lem:zero}
Suppose that $\alpha \in A \cap S_H$ is such that $I \pi_D(\alpha) \beta \ne \{0\}$. 
Let $W$ be given by Lemma~\ref{lem:finite_index}.
Then there are $d_0 \in \Supp(\pi_D(\alpha))$ and $u \in W$ such that $I (S_e \alpha_{d_0} S_{d_0^{-1}})^u \pi_D(\alpha) \beta \ne \{0\}.$ 
\end{lem}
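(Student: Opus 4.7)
The plan is to argue by contradiction. For each $d \in D' := \Supp(\pi_D(\alpha))$, let $K_d := S_e \alpha_d S_{d^{-1}}$; this is an ideal of $S_e$ by Lemma~\ref{lem:IxIdeal}-style reasoning. Assume that $I(K_d)^u \pi_D(\alpha)\beta = \{0\}$ for every $d \in D'$ and every $u \in W$, and set $L := \sum_{d \in D'} \sum_{u \in W}(K_d)^u$, which is a $W$-invariant ideal of $S_e$ by Lemma~\ref{lem:power}(a) together with Lemma~\ref{lem:sums1}. Summing the hypothesised vanishing over $d$ and $u$ gives $I L \pi_D(\alpha)\beta = \{0\}$; I will derive the contradictory conclusion $I \pi_D(\alpha)\beta = \{0\}$.

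The first step is the decomposition $\alpha_d \in K_d \cdot S_d$ for every $d \in D'$. By near epsilon-strongness there is some $\epsilon_d' \in S_{d^{-1}} S_d$ with $\alpha_d = \alpha_d \epsilon_d'$; writing $\epsilon_d' = \sum_k b_k c_k$ with $b_k \in S_{d^{-1}}$ and $c_k \in S_d$ yields $\alpha_d = \sum_k (\alpha_d b_k) c_k$, and each $\alpha_d b_k \in \alpha_d S_{d^{-1}} \subseteq S_e \alpha_d S_{d^{-1}} = K_d$ because the $s$-unitality of $S_e$ (Proposition~\ref{prop:s-unital}) gives $\alpha_d \in S_e \alpha_d$. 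Summing over $d \in D'$ then provides the inclusion $\pi_D(\alpha) \in L \cdot S_D$.

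The second step reduces the computation to the subring $S_D$. Since $L\pi_D(\alpha) \subseteq S_e \cdot S_D \subseteq S_D$, Lemma~\ref{lem:66} (applied first to the various $\gamma \in L\pi_D(\alpha)$, then to $\gamma = \pi_D(\alpha)$) transforms the two statements $I L \pi_D(\alpha)\beta = \{0\}$ and $I \pi_D(\alpha)\beta = \{0\}$ into $I L \pi_D(\alpha)\pi_D(\beta) = \{0\}$ and $I \pi_D(\alpha)\pi_D(\beta) = \{0\}$ respectively. Writing $\bar\beta := \pi_D(\beta)$, it therefore suffices to derive $I \pi_D(\alpha)\bar\beta = \{0\}$ inside $S_D$.

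The third step uses the centralization property of $W$. For each $i \in I$ the step-one decomposition yields
\[ i\pi_D(\alpha)\bar\beta = \sum_{d \in D'}\sum_k (i r_k^{(d)}) c_k^{(d)} \bar\beta, \qquad r_k^{(d)} \in K_d \subseteq L, \ c_k^{(d)} \in S_d. \]
Because $W$ centralizes $D' \cup \Supp(\bar\beta)$, the commutation rule $IS_y = S_y I^y$ of Proposition~\ref{prop:switch1}(a), combined with the $\epsilon$-invariance of $S_e$-ideals (Lemma~\ref{lem:ideal_central}), can be iterated to push each factor $c_k^{(d)} \in S_d$ across $\bar\beta$; the cost of each such transposition is a left multiplication by a $W$-conjugate of the form $(K_{d'})^u$. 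Collecting terms, one recasts the expression as an element of $IL\pi_D(\alpha)\bar\beta$, which vanishes by the reduced hypothesis. This contradicts $I\pi_D(\alpha)\bar\beta \neq \{0\}$ and produces the required $d_0 \in D'$ and $u \in W$.

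The main obstacle is the bookkeeping in the third step. In Passman's strongly graded setting~\cite[Lem.~3.4]{passman1984infinite}, the identity $S_y S_{y^{-1}} = S_e$ permits free relocation of homogeneous factors, whereas here the transport of $c_k^{(d)}$ across $\bar\beta$ must be effected by combining Proposition~\ref{prop:switch1}(a), Lemma~\ref{lem:ideal_central} and the centralization property of $W$; the finiteness $[H:W] < \infty$ is precisely what guarantees that the rearrangement produces only finitely many $W$-conjugates of the $K_{d'}$, all of which are already absorbed inside $L$, so that the process terminates.
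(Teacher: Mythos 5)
Your setup (the ideal $L=\sum_{d}\sum_{u\in W}(S_e\alpha_dS_{d^{-1}})^u$, which is the paper's $J$, and the reduction of the claim to showing that $IL\pi_D(\alpha)\beta=\{0\}$ forces $I\pi_D(\alpha)\beta=\{0\}$) matches the paper, and your first step --- the decomposition $\alpha_d\in (S_e\alpha_dS_{d^{-1}})\cdot S_d$ via near epsilon-strongness --- is correct and is essentially the paper's computation $\alpha_d\beta_yS_{y^{-1}d^{-1}}\subseteq S_e\alpha_dS_{d^{-1}}\cdot S_d\beta_yS_{y^{-1}d^{-1}}\subseteq J$. The detour through Lemma~\ref{lem:66} in your second step is harmless but not needed here.

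The third step, however, has a genuine gap. After writing $i\pi_D(\alpha)\bar\beta=\sum_{d,k}(ir_k^{(d)})c_k^{(d)}\bar\beta$ you have placed an element of $IL$ to the left of a \emph{single homogeneous factor} $c_k^{(d)}\in S_d$, not to the left of the full element $\pi_D(\alpha)$. The hypothesis you are trying to contradict only kills $IL\pi_D(\alpha)\bar\beta$; it says nothing about $IL\,S_D\,\bar\beta$, and no amount of commuting $c_k^{(d)}$ past $\bar\beta$ via Proposition~\ref{prop:switch1} and the centralization property of $W$ will reconstitute the factor $\pi_D(\alpha)$ that the hypothesis requires. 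A telltale sign is that your argument never invokes the $G$-semiprimeness of $S_e$, which is a standing assumption of this subsection and cannot be dispensed with: a purely formal rearrangement cannot rule out the existence of nilpotent obstructions. (Your closing remark that $[H:W]<\infty$ only guarantees ``termination of the rearrangement'' also misidentifies where this finiteness is actually used.)

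The paper closes the gap differently. Setting $\gamma:=\pi_D(\alpha)\beta$, one checks that $\gamma_xS_{x^{-1}}\subseteq J$ for every $x\in G$ (your step one, applied to the products $\alpha_d\beta_y$). If $IJ\gamma=\{0\}$, then $\gamma_xS_{x^{-1}}\subseteq r.\Ann_{S_e}(IJ)$, whence $I\gamma_xS_{x^{-1}}\subseteq IJ\cap r.\Ann_{S_e}(IJ)$. This intersection is a $W$-invariant ideal contained in $I$ with square zero (Lemma~\ref{lem:misc1}), so Lemma~\ref{lem:18}(b) --- which is exactly where $G$-semiprimeness and $[H:W]<\infty$ enter --- forces it to vanish. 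Non-degeneracy of the grading then gives $I\gamma_x=\{0\}$ for every $x$, contradicting $I\pi_D(\alpha)\beta\neq\{0\}$. You should replace your third step with this annihilator-plus-nilpotence argument.
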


\begin{proof}
First put $\gamma := \pi_D(\alpha) \beta$ and write $\alpha = \sum_{x \in G} \alpha_x$, $\beta = \sum_{x \in G} \beta_x$, and $\gamma = \sum_{x \in G} \gamma_x$.
Furthermore, let $J := \sum_{d \in D} (S_e \alpha_d S_{d^{-1}})^W \subseteq S_e$.
Note that $J$ is a $W$-invariant ideal of $S_e$.
Using that $S$ is nearly-espilon strongly $G$-graded, note that for all $d \in D, y \in G$, we have 
\begin{equation}\label{eq:715}
\alpha_d \beta_y S_{y^{-1} d^{-1}} \subseteq S_e \alpha_d  (S_{d^{-1}} S_d) \beta_y S_{y^{-1} d^{-1}} = S_e \alpha_d S_{d^{-1}} \cdot S_d \beta_y S_{y^{-1} d^{-1}} \subseteq J \cdot S_e \subseteq J.
\end{equation}
Take $x \in G$.
Then by \eqref{eq:715}, $\gamma_x S_{x^{-1}} \subseteq J$. Seeking a contradiction, suppose that $IJ \gamma = \{0\}$. 
Then  $I J \gamma_x S_{x^{-1}} = \{0\}$. 
Hence $\gamma_x S_{x^{-1}} \subseteq r.\Ann_{S_e} (IJ)$. Using that $IJI \subseteq IJ$, we get
\begin{equation}
    I \gamma_x S_{x^{-1}} \subseteq IJ \cap r.\Ann_{S_e}(IJ).
    \label{eq:22}
\end{equation}
By Lemma~\ref{lem:misc1}, we know that $IJ \cap r.\Ann_{S_e}(IJ)$ is a 
$W$-invariant nilpotent ideal of $S_e$ contained in $I$, and hence $IJ \cap r.\Ann_{S_e}(IJ)= \{0\}$ by Lemma~\ref{lem:18}(b).
By non-degeneracy of the $G$-grading, 
\eqref{eq:22} implies that $I \gamma_x = \{0\}$. Since $x \in G$ is arbitrary, this yields $I \gamma = \{0\}$, i.\,e. $I \pi_D(\alpha) \beta = \{0\}$. This contradicts the properties  
of $\alpha$. Consequently, $IJ \pi_D(\alpha) \beta \ne \{0\}$,
i.\,e. there exist some $d_0 \in D$ and some $u \in W$ such that $I (S_e \alpha_{d_0} S_{d_0^{-1}})^u \pi_D(\alpha) \beta \ne \{0\}$.
\end{proof}

For the remainder of this section,
we fix $\alpha \in A \cap S_H$ such that $| \Supp(\pi_D(\alpha))|$ is minimal subject to $I \pi_D(\alpha) \beta \ne \{0\}$. 
We also fix $W$ given by Lemma~\ref{lem:finite_index}.

\begin{lem}[{cf.~\cite[Lem.~3.4]{passman1984infinite}}]\label{lem:exchange}
For every $y \in W$ and every $d \in D$, we have $$ I S_{y^{-1}} \alpha_d S_{d^{-1} y} \pi_D(\alpha) \pi_D(\beta) = I S_{y^{-1}} \pi_D(\alpha) S_{d^{-1} y} \alpha_d \pi_D(\beta).$$ 
\end{lem}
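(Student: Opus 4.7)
The plan is to apply the minimality of $|\Supp(\pi_D(\alpha))|$ to a cleverly constructed element of $A \cap S_H$. We may assume $\alpha_d \ne 0$, since otherwise both sides of the claimed equality are trivially $\{0\}$. As $d \in D$ with $\alpha_d \ne 0$, we have $d \in \Supp(\pi_D(\alpha))$, so $yd = dy$ by the defining property of $W$. Fix arbitrary $s_1 \in S_{y^{-1}}$ and $s_2 \in S_{d^{-1}y}$, and set
\[
\theta := s_1 \alpha_d s_2 \alpha - s_1 \alpha s_2 \alpha_d.
\]
Clearly $\theta \in A$ since $\alpha \in A$ and $A$ is an ideal of $S$. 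To see $\theta \in S_H$, note that $s_1 \alpha_d s_2 \in S_{y^{-1}d d^{-1}y}= S_e \subseteq S_H$, whereas $s_2 \alpha_d \in S_{d^{-1}yd}= S_y$ by $yd=dy$, so $s_1 \alpha s_2 \alpha_d \in S_{y^{-1}} S_H S_y \subseteq S_H$ as $y \in H$.

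Next I would compute $\pi_D(\theta)$. Since $s_1 \alpha_d s_2 \in S_e \subseteq S_D$, we have $\pi_D(s_1 \alpha_d s_2 \alpha) = s_1 \alpha_d s_2 \pi_D(\alpha)$. For the second summand, the homogeneous component of $s_1 \alpha_x s_2 \alpha_d$ has degree $y^{-1} x d^{-1} y d = y^{-1}xy$, using $yd=dy$. A short computation shows that $C_H(h^{-1}xh) = h^{-1}C_H(x)h$ for all $h \in H$ and $x \in G$, and hence $D$ is invariant under $H$-conjugation; in particular $y^{-1}xy \in D$ iff $x \in D$. Therefore $\pi_D(s_1 \alpha s_2 \alpha_d) = s_1 \pi_D(\alpha) s_2 \alpha_d$, and
\[
\pi_D(\theta) = s_1 \alpha_d s_2 \pi_D(\alpha) - s_1 \pi_D(\alpha) s_2 \alpha_d.
\]
For each $c \in \Supp(\pi_D(\alpha))$, $W$ centralizes $c$, so $y^{-1}cy = c$; thus the degree-$c$ component of $\pi_D(\theta)$ equals $s_1 \alpha_d s_2 \alpha_c - s_1 \alpha_c s_2 \alpha_d$. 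At $c=d$ this vanishes identically, so $\Supp(\pi_D(\theta)) \subseteq \Supp(\pi_D(\alpha)) \setminus \{d\}$, whence $|\Supp(\pi_D(\theta))| < |\Supp(\pi_D(\alpha))|$.

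By the minimality of $|\Supp(\pi_D(\alpha))|$ among elements of $A \cap S_H$ satisfying $I \pi_D(\cdot)\beta \ne \{0\}$, and since $\theta \in A \cap S_H$, we conclude that $I \pi_D(\theta) \beta = \{0\}$. As $\pi_D(\theta) \in S_D$, Lemma~\ref{lem:66} yields $I \pi_D(\theta) \pi_D(\beta) = \{0\}$, i.e.,
\[
a\bigl(s_1 \alpha_d s_2 \pi_D(\alpha) - s_1 \pi_D(\alpha) s_2 \alpha_d\bigr)\pi_D(\beta) = 0
\]
for all $a \in I$. Varying $a \in I$, $s_1 \in S_{y^{-1}}$ and $s_2 \in S_{d^{-1}y}$, and taking finite sums, gives the desired equality of subsets. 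The main obstacle is the careful degree-bookkeeping in $\pi_D(\theta)$: establishing $\theta \in S_H$ and showing that $\pi_D$ acts as expected on both summands relies crucially on the centralizer property $yd = dy$ (which follows from $d \in \Supp(\pi_D(\alpha))$ and not from the a priori weaker hypothesis $d \in D$) and on the $H$-conjugation invariance of $D$.
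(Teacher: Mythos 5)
Your proof is correct and follows essentially the same route as the paper: you form the same commutator-type element ($\theta$ is the paper's $\gamma$), verify it lies in $A\cap S_H$ using that $W$ centralizes $\Supp(\pi_D(\alpha))$, show $\pi_D(\theta)$ has strictly smaller support because the degree-$d$ components cancel, and then invoke the minimality of $|\Supp(\pi_D(\alpha))|$ followed by an application of $\pi_D$. If anything, your degree bookkeeping (in particular the explicit use of the $H$-conjugation invariance of $D$ to identify $\pi_D(s_1\alpha s_2\alpha_d)$) is more careful than the paper's.
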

\begin{proof}
Take $y \in W$, $d \in D$, $a_{y^{-1}} \in S_{y^{-1}}$, and $b_{d^{-1} y} \in S_{d^{-1} y }$.
Note that if $d \notin \Supp(\alpha)$,
then the claim trivially holds.
Therefore, we now suppose that 
$d \in D \cap \Supp(\alpha)$.
Define $$ \gamma := a_{y^{-1}} \alpha_d b_{d^{-1} y} \alpha - a_{y^{-1}} \alpha b_{d^{-1} y } \alpha_d.$$ A short computation, using Lemma~\ref{lem:finite_index}(b), shows that $\gamma \in A \cap S_H$. Moreover, since $\pi_D$ is an $S_e$-bimodule homomorphism by Lemma~\ref{lem:bimodule_homo}, we get $$ \pi_D(\gamma) = a_{y^{-1}} \alpha_d b_{d^{-1} y} \pi_D(\alpha) - a_{y^{-1}} \pi_D(\alpha) b_{d^{-1} y} \alpha_d. $$ 
From this we get that $\Supp(\pi_D(\gamma)) \subseteq \Supp(\pi_D(\alpha))$.
We claim that the minimality assumption on $\alpha$ implies that $I \pi_D(\gamma) \pi_D(\beta) = \{0\}$. If the claim holds,
then we get that
$$ I a_{y^{-1}} (\alpha_d b_{d^{-1}y } \pi_D(\alpha) - \pi_D(\alpha) b_{d^{-1} y} \alpha_d) \pi_D(\beta) = \{0\}$$ and hence that $$I S_{y^{-1}} \alpha_d S_{d^{-1} y} \pi_D(\alpha) \pi_D(\beta) = I S_{y^{-1}} \pi_D(\alpha) S_{d^{-1} y} \alpha_d \pi_D(\beta).$$ 
Now we show the claim.
Write
$\gamma=\sum_{x \in G}\gamma_x$. 
By considering the cases when $x\in \Supp(\alpha)$ and $x\notin \Supp(\alpha)$ separately, for each $x \in G$ we get that
\begin{equation}
    \gamma_x = a_{y^{-1}} \alpha_d b_{d^{-1} y} \alpha_x - a_{y^{-1}} \alpha_x b_{d^{-1} y} \alpha_d.
    \label{eq:6}
\end{equation} 
Now, recall that $\pi_D(\gamma) = \sum_{x \in D} \gamma_x$.  However, due to \eqref{eq:6}, $\gamma_d = 0$, and thus, $| \Supp(\pi_D(\gamma)) | < | \Supp(\pi_D(\alpha)) |$, since $\alpha_d \neq 0$.
The minimality assumption on $\alpha$ therefore implies that $I \pi_D(\gamma) \beta = \{0\}$. Applying the map $\pi_D$ to the former equation yields $I \pi_D(\gamma) \pi_D(\beta) = \{0\}.$
\end{proof}

\begin{lem}\label{lem:delta1}
There are elements $x_1,\ldots, x_n \in W$ and
$g_1,\ldots,g_n \in \Supp(\beta) \setminus D$
such that if
$S_{y^{-1}} I \pi_D(\alpha) S_y \pi_D(\beta) \ne \{0\}$, then
$y \in \bigcup_{k=1}^n x_k H_k$
whenever $y \in W$.
Here, $H_k := C_W(g_k)$.
\end{lem}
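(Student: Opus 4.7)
The plan is to apply a $\Delta$-method argument to the identity $\sum_i a_i \alpha b_i \beta = 0$, valid for any finite collection with $a_i \in S_{y^{-1}}$ and $b_i \in S_y$ since $A S B \subseteq AB = \{0\}$. I decompose $\alpha = \pi_D(\alpha) + \alpha''$ and $\beta = \pi_D(\beta) + \beta'$, where $\alpha''$ and $\beta'$ have supports in $\Supp(\alpha) \setminus D$ and $\Supp(\beta) \setminus D$, respectively. Expanding the product yields four terms whose supports I analyze using two facts: every $y \in W$ commutes with every element of $\Supp(\pi_D(\alpha)) \cup \Supp(\pi_D(\beta))$, and $y \in W \subseteq H$ normalizes $D$ so that $y(G \setminus D)y^{-1} = G \setminus D$. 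This gives: $S_{y^{-1}} I \pi_D(\alpha) S_y \pi_D(\beta)$ is supported in $\Delta(H) \subseteq D$; the mixed terms $S_{y^{-1}} I \pi_D(\alpha) S_y \beta'$ and $S_{y^{-1}} I \alpha'' S_y \pi_D(\beta)$ are supported entirely in $G \setminus D$ (using $D \cdot (G \setminus D)$, $(G \setminus D) \cdot D \subseteq G \setminus D$); only the corner term $S_{y^{-1}} I \alpha'' S_y \beta'$ can contribute both inside and outside $D$.

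Assuming $S_{y^{-1}} I \pi_D(\alpha) S_y \pi_D(\beta) \ne \{0\}$, I would pick a nonzero element $t = \sum_i a_i \pi_D(\alpha) b_i \pi_D(\beta)$ and choose $d \in \Supp(\pi_D(\alpha)) \cdot \Supp(\pi_D(\beta))$ with $t_d \ne 0$. Extracting the $d$-graded component of $0 = \sum_i a_i \alpha b_i \beta$ (noting that the two mixed terms vanish in degree $d \in D$) forces
\[
t_d = -\sum_i (a_i \alpha'' b_i \beta')_d,
\]
so the right-hand side is nonzero. Since each product $a_i \alpha_\sigma b_i \beta_g$ lives in degree $y^{-1} \sigma y g$, there must exist $\sigma \in \Supp(\alpha) \setminus D$ and $g \in \Supp(\beta) \setminus D$ with $y^{-1} \sigma y g = d$, equivalently $\sigma = y(dg^{-1})y^{-1}$.

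The conceptual heart of the argument is that $W$ centralizes $d = d_1 d_2$ (since it centralizes each factor $d_1 \in \Supp(\pi_D(\alpha))$ and $d_2 \in \Supp(\pi_D(\beta))$), and hence a short calculation gives $C_W(dg^{-1}) = C_W(g)$. Therefore, for each fixed triple $(\sigma, g, d)$ drawn from the finite set $(\Supp(\alpha) \setminus D) \times (\Supp(\beta) \setminus D) \times (\Supp(\pi_D(\alpha)) \cdot \Supp(\pi_D(\beta)))$, the locus $\{y \in W : y(dg^{-1})y^{-1} = \sigma\}$ is either empty or a single coset of $C_W(g)$. Choosing representatives $x_k \in W$ and corresponding $g_k \in \Supp(\beta) \setminus D$ for the non-empty cases produces the promised finite cover $\bigcup_{k=1}^n x_k H_k$. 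The main obstacle I foresee is the support bookkeeping, particularly verifying rigorously that both mixed terms have support entirely outside $D$; this rests on $H$ normalizing $D$ and on conjugation by $y \in W$ preserving $G \setminus D$.
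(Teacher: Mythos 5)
Your proof is correct and follows essentially the same route as the paper's: decompose $\alpha$ and $\beta$ into their $\pi_D$-parts and complements, observe that the $D$-supported term must be additively cancelled by the ``corner'' term $S_{y^{-1}}I\tilde\alpha S_y\tilde\beta$ since the mixed terms live outside $S_D$, and extract a group identity forcing $y$ into finitely many cosets of centralizers $C_W(g)$ with $g\in\Supp(\beta)\setminus D$. The only cosmetic difference is that you phrase the solution locus as a coset of $C_W(dg^{-1})=C_W(g)$, while the paper rearranges directly to $yby^{-1}=a^{-1}gf$ and reads off a coset of $C_W(b)$ --- the same computation.
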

\begin{proof}
Let $ \tilde \alpha := \alpha - \pi_D(\alpha)$ and let $\tilde \beta := \beta - \pi_D(\beta) $. Then
\begin{equation*}
S_{y^{-1}} I(\pi_D(\alpha) + \tilde \alpha) S_y (\pi_D(\beta) + \tilde \beta) = S_{y^{-1}} I\alpha S_y \beta \subseteq S_{y^{-1}}I A S_y B \subseteq A B = \{0\}.
\end{equation*}
Note that
$S_{y^{-1}} I\pi_D(\alpha) S_y \tilde \beta$ and $S_{y^{-1}}I \tilde \alpha S_y \pi_D(\beta)$ 
have support
disjoint
from $D$. 
On the other hand, 
$ \{0\} \ne S_{y^{-1}} I \pi_D(\alpha) S_y \pi_D(\beta)  \subseteq S_D. $ 
Hence, $S_{y^{-1}}I \pi_D(\alpha) S_y \pi_D(\beta) $ must be additively cancelled out by $S_{y^{-1}}  I\tilde \alpha S_y \tilde \beta$. In particular, these two expressions must have a support element in common, i.\,e. there exist $a \in \Supp(\tilde \alpha), b \in \Supp(\tilde \beta), g \in \Supp( \pi_D(\alpha))$, and $f \in \Supp (\pi_D(\beta))$ such that $y^{-1}ay b = y^{-1} g y f$.  
Multiplying with $y$ from the left and with $y^{-1}$ from the right gives $a y b y^{-1} = g y f y^{-1} = gf$, where we have used the fact that $y\in W$ commutes with both $\Supp(\pi_D(\alpha))$ and $\Supp(\pi_D(\beta))$. Consequently, $y b y^{-1} = a^{-1} gf$, and hence $y \in x C_W(b)$ for some fixed $x$ depending on $a,b,g,f$. Since there are only 
a finite number of choices for the parameters $a,b,g,f$ and $b \in \Supp(\tilde \beta) = \Supp (\beta) \setminus D$, the 
desired conclusion follows.
\end{proof}

Next, we will construct an ideal $J$ of $S_e$ that allows us to apply the Passman replacement argument (see Section~\ref{Sec:PassmanPairs}). 
In the following two lemmas we make use 
of the notation introduced in
Lemma~\ref{lem:delta1}.
\begin{lem}[{cf.~\cite[Lem.~3.5]{passman1984infinite}}]\label{lem:prod1}
For every $d \in D$, 
$$ I (S_e \alpha_d S_{d^{-1}})^y \cdot \pi_D(\alpha)\beta = \{ 0 \}, \qquad \forall y \in W \setminus \bigcup_{k=1}^n x_k H_k.$$ 
\end{lem}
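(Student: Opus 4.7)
The plan is to massage the expression $I (S_e \alpha_d S_{d^{-1}})^y \pi_D(\alpha) \beta$ into a form that Lemma~\ref{lem:delta1} can kill. The two main tools are Lemma~\ref{lem:66} (which lets us replace $\beta$ by $\pi_D(\beta)$ when the prefix lies in $S_D$) and Lemma~\ref{lem:exchange} (which lets us swap the homogeneous component $\alpha_d$ past $\pi_D(\alpha)$). Throughout, I would fix $d\in D$ and $y\in W\setminus \bigcup_{k=1}^n x_k H_k$, noting that the case $\alpha_d=0$ is trivial, so we may assume $d\in \Supp(\alpha)\cap D = \Supp(\pi_D(\alpha))$.

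The first move is to reduce to the corresponding statement with $\pi_D(\beta)$. Since $(S_e \alpha_d S_{d^{-1}})^y \subseteq S_e$ by Lemma~\ref{lem:IxIdeal} and $\pi_D(\alpha) \in S_D$, the elements of $(S_e \alpha_d S_{d^{-1}})^y \pi_D(\alpha)$ all lie in $S_D$, so Lemma~\ref{lem:66} says it is enough to show
\[
I (S_e \alpha_d S_{d^{-1}})^y \pi_D(\alpha)\pi_D(\beta) = \{0\}.
\]
Expanding $(S_e \alpha_d S_{d^{-1}})^y = S_{y^{-1}} S_e \alpha_d S_{d^{-1}} S_y$ and using the grading inclusions $S_{y^{-1}}S_e \subseteq S_{y^{-1}}$ and $S_{d^{-1}}S_y \subseteq S_{d^{-1}y}$, the target is contained in $I S_{y^{-1}}\alpha_d S_{d^{-1}y}\pi_D(\alpha)\pi_D(\beta)$. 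By Lemma~\ref{lem:exchange} this equals $I S_{y^{-1}}\pi_D(\alpha) S_{d^{-1}y}\alpha_d \pi_D(\beta)$.

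Now the key geometric observation: because $d\in \Supp(\pi_D(\alpha))$ and $W$ centralizes $\Supp(\pi_D(\alpha))$ (Lemma~\ref{lem:finite_index}(b)), the element $y$ commutes with $d$, so
\[
S_{d^{-1}y}\alpha_d \subseteq S_{d^{-1}y} S_d \subseteq S_{d^{-1}yd}=S_y.
\]
This puts the expression inside $I S_{y^{-1}}\pi_D(\alpha) S_y \pi_D(\beta)$. Since $I$ is $H$-invariant and $y^{-1}\in H$, Proposition~\ref{prop:switch1}(b) lets us swap $I$ and $S_{y^{-1}}$, landing us in $S_{y^{-1}} I \pi_D(\alpha) S_y \pi_D(\beta)$. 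This is exactly the product controlled by Lemma~\ref{lem:delta1}, and our assumption $y\in W\setminus\bigcup_k x_k H_k$ forces it to vanish.

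The main obstacle, such as it is, is bookkeeping: one must carefully track where each subscript moves (the difference between our setting and Passman's unital strongly graded case means the inclusion $(S_e \alpha_d S_{d^{-1}})^y \subseteq S_{y^{-1}}\alpha_d S_{d^{-1}y}$ is genuinely an inclusion rather than an equality, and $I S_{y^{-1}}=S_{y^{-1}}I$ is not automatic but follows from Proposition~\ref{prop:switch1}(b)), and one must verify at each step that the intermediate terms remain inside $S_D$ so that Lemma~\ref{lem:66} applies at the end. Combined with the centralization afforded by $W$, everything falls into the framework of Lemma~\ref{lem:delta1}.
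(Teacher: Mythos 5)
Your proof is correct and follows essentially the same route as the paper's: reduce to $\pi_D(\beta)$ via Lemma~\ref{lem:66}, expand $(S_e\alpha_d S_{d^{-1}})^y$ into $S_{y^{-1}}\alpha_d S_{d^{-1}y}$, apply Lemma~\ref{lem:exchange}, use $d^{-1}yd=y$ and Proposition~\ref{prop:switch1}(b), and finish with Lemma~\ref{lem:delta1}. The only cosmetic difference is that you argue directly while the paper argues contrapositively (assuming the product is nonzero and deducing $y\in\bigcup_k x_kH_k$).
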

\begin{proof}
Take $y \in W$ such
that $I(S_e \alpha_d S_{d^{-1}})^y \cdot \pi_D(\alpha)\beta \ne \{0\}$. Expanding this expression, 
we get 
$I S_{y^{-1}}  \alpha_d S_{d^{-1}} S_y \pi_D(\alpha) \beta \ne \{ 0 \}.$
Since $S_{y^{-1}} \alpha_d S_{d^{-1}} S_y \pi_D(\alpha) \subseteq S_D$, Lemma~\ref{lem:66} implies that
$I S_{y^{-1}} \alpha_d S_{d^{-1}} S_y \pi_D(\alpha) \pi_D(\beta) \ne \{0\}.$ As a consequence, $I S_{y^{-1}} \alpha_d S_{d^{-1}y} \pi_D(\alpha) \pi_D(\beta) \ne \{ 0 \}$, since $S_{d^{-1}} S_y \subseteq S_{d^{-1} y}$. 
By Lemma~\ref{lem:exchange}, we get that $ I S_{y^{-1}} \pi_D(\alpha) S_{d^{-1} y} \alpha_d \pi_D(\beta) \ne \{0\} $ and, due to $d^{-1}yd = y$, we even have $I S_{y^{-1}} \pi_D(\alpha) S_y \pi_D(\beta) \ne \{ 0 \}$.
Next, note that $I$ is also a  
$W$-invariant ideal, and thus $I S_{y^{-1}} = S_{y^{-1}} I$ by Proposition~\ref{prop:switch1}(b). It follows that $$ \{0\} \ne I S_{y^{-1}} \pi_D(\alpha) S_y \pi_D(\beta) =  S_{y^{-1}} I \pi_D(\alpha) S_y \pi_D(\beta)$$ 
which, combined with Lemma~\ref{lem:delta1}, yields the desired conclusion.
\end{proof}

\begin{lem}\label{lem:ideal_exist}
There exists an ideal $J$ of $S_e$ such that $J^y J = \{0\}$ for every 
$y \in W \setminus \bigcup_{k=1}^n u^{-1} x_k H_k$.
\end{lem}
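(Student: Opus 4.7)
The plan is to construct $J$ explicitly from the data $(I,\alpha_{d_0},u)$ produced by Lemma~\ref{lem:zero}. Specifically, I will set
\[
J := I \cdot (S_e \alpha_{d_0} S_{d_0^{-1}})^u,
\]
which is an ideal of $S_e$ as the product of two ideals of $S_e$, and is nonzero because $J \cdot \pi_D(\alpha)\beta \neq \{0\}$ by Lemma~\ref{lem:zero}.

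I will then fix $y \in W$ with $y \notin \bigcup_{k=1}^n u^{-1} x_k H_k$, which is equivalent to $uy \in W \setminus \bigcup_{k=1}^n x_k H_k$, and control $J^y$ via a short translation argument. Using Corollary~\ref{cor:multi1} one has $J^y = I^y \cdot ((S_e \alpha_{d_0} S_{d_0^{-1}})^u)^y$; the $H$-invariance of $I$ combined with $y \in W \subseteq H$ gives $I^y \subseteq I$, and Lemma~\ref{lem:2}(a) gives $((S_e \alpha_{d_0} S_{d_0^{-1}})^u)^y \subseteq (S_e \alpha_{d_0} S_{d_0^{-1}})^{uy}$. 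Combining these inclusions,
\[
J^y \subseteq I \cdot (S_e \alpha_{d_0} S_{d_0^{-1}})^{uy},
\]
so Lemma~\ref{lem:prod1} with $d := d_0$ immediately yields $J^y \cdot \pi_D(\alpha)\beta = \{0\}$.

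The main obstacle is upgrading this one-sided annihilation of $\pi_D(\alpha)\beta$ by $J^y$ to the required identity $J^y J = \{0\}$. My strategy is to consider the $H$-closure $(J^y J)^H$ produced by Lemma~\ref{lem:power}, which is an $H$-invariant ideal of $S_e$ contained in $I$ (both $J^y$ and $J$ sit inside $I$), and to show that it is nilpotent by iterating the vanishing just established, together with Corollary~\ref{cor:multi1}, the $\epsilon$-invariance of ideals of $S_e$ from Lemma~\ref{lem:ideal_central}, and the annihilator manipulation of Lemma~\ref{lem:annhilators}. Once nilpotence is in hand, Lemma~\ref{lem:18}(a) will force $(J^y J)^H = \{0\}$ and hence $J^y J = \{0\}$. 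The delicate point is rigorously establishing nilpotence of $(J^y J)^H$ from only the one-sided annihilation of $\pi_D(\alpha)\beta$; this is precisely where the $G$-semiprimeness of $S_e$ must be leveraged, and it is the technical heart of the argument.
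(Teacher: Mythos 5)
Your translation estimate $J^y \subseteq I\,(S_e\alpha_{d_0}S_{d_0^{-1}})^{uy}$ and the resulting annihilation $J^y\,\pi_D(\alpha)\beta=\{0\}$ for $y\in W\setminus\bigcup_{k=1}^n u^{-1}x_kH_k$ are correct and agree with the paper's computation. But the proof is not complete: the passage from this one-sided annihilation of $\gamma:=\pi_D(\alpha)\beta$ to the required identity $J^yJ=\{0\}$ is exactly the point you leave open, and the repair you sketch (showing that $(J^yJ)^H$ is nilpotent and invoking Lemma~\ref{lem:18}(a)) cannot work as stated. With your choice $J=I\,(S_e\alpha_{d_0}S_{d_0^{-1}})^u$, the ideal $J$ bears no relation to $\gamma$, so the identity $J^y\gamma=\{0\}$ gives no control whatsoever over the product $J^yJ$; there is no reason for $J^yJ$, let alone $(J^yJ)^H$, to be nilpotent, and for your particular $J$ the conclusion of the lemma may simply fail, so no appeal to $G$-semiprimeness can rescue that choice. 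This is a genuine gap, not a routine verification.

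The paper's fix is to build $\gamma$ into the ideal itself. Writing $\gamma=\sum_{x\in G}\gamma_x$, the nonvanishing $I(S_e\alpha_{d_0}S_{d_0^{-1}})^u\gamma\neq\{0\}$ from Lemma~\ref{lem:zero} produces an $x\in G$ with $I(S_e\alpha_{d_0}S_{d_0^{-1}})^u\gamma_x\neq\{0\}$, and by non-degeneracy of the grading one sets $J:=I\,(S_e\alpha_{d_0}S_{d_0^{-1}})^u\,\gamma_x S_{x^{-1}}\neq\{0\}$, a nonzero ideal of $S_e$ still contained in $I(S_e\alpha_{d_0}S_{d_0^{-1}})^u$, so your translation argument applies verbatim and yields $J^y\gamma=\{0\}$. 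Taking the degree-$x$ component gives $J^y\gamma_x=\{0\}$, hence $J^y(S_e\gamma_xS_{x^{-1}})=\{0\}$ because $J^yS_e\subseteq J^y$ by Lemma~\ref{lem:IxIdeal}; since $J\subseteq S_e\gamma_xS_{x^{-1}}$, this gives $J^yJ=\{0\}$ directly, with no semiprimeness or nilpotence argument needed. You should adopt this modified definition of $J$; the rest of your argument then goes through.
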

\begin{proof}
Set $\gamma := \pi_D(\alpha) \beta$ and
write  
$ \gamma = \sum_{x \in G} \gamma_x$. 
By Lemma~\ref{lem:zero} there exist $d_0 \in D$ and $u \in W$ such that $I (S_e \alpha_{d_0} S_{d_0^{-1}})^u \gamma \ne \{0\}$. Hence, there exists $x \in G$ such that $I (S_e \alpha_{d_0} S_{d_0^{-1}})^u \gamma_x \ne \{0\}$. 
By non-degeneracy of the $G$-grading, 
we have that $J := I (S_e \alpha_{d_0} S_{d_0^{-1}})^u \gamma_x S_{x^{-1}} \ne \{0\}$ is an ideal of $S_e$ contained in $I$. 
Recall that $I$ is 
$W$-invariant, since $W$ is a subgroup of $H$. Now, combining the fact that $J \subseteq I(S_e \alpha_{d_0} S_{d_0^{-1}})^u S_e = I(S_e \alpha_{d_0} S_{d_0^{-1}})^u$ with Lemma~\ref{lem:2}, for every $y \in W$ we get
\begin{equation*}
    J^{u^{-1} y} \subseteq I^{u^{-1} y}((S_e \alpha_{d_0} S_{d_0^{-1}})^u)^{u^{-1} y} \subseteq I^{u^{-1} y}(S_e \alpha_{d_0} S_{d_0^{-1}})^y \subseteq I (S_e \alpha_{d_0} S_{d_0^{-1}})^y.
\end{equation*}  
By Lemma~\ref{lem:prod1}, it follows that $J^{u^{-1} y} \pi_D(\alpha) \beta = \{0\}$ for every $y \in W \setminus \bigcup_{k=1}^n x_k H_k$ or, equivalently, that $J^y \gamma = \{0\}$ for every $y \in W \setminus \bigcup_{k=1}^n u^{-1} x_k H_k$. 
In particular, we have $J^y \gamma_x = \{0\}$, and hence, $J^y (S_e \gamma_x S_{x^{-1}}) = \{0\}$. This shows that $J^y J = \{0\}$ for every $y \in W \setminus \bigcup_{k=1}^n u^{-1} x_k H_k$.
\end{proof}

\subsection{Establishing Proposition~\ref{prop:main1}}

We still
assume that $I \pi_{\Delta(H)}(A) \cdot I \beta \ne \{0\}$. 
Combining that assumption with the following lemma, we will establish
Proposition~\ref{prop:main1}. 

\begin{lem}
\label{lem:contradiction}
There is a Passman form for $(A,B)$ of size smaller than the size of $(H,D,I, \beta)$.
\end{lem}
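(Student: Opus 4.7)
The strategy is to produce, from the ideal $J \subseteq I$ of Lemma~\ref{lem:ideal_exist}, a Passman form for $(A,B)$ of strictly smaller size than $(H,D,I,\beta)$, contradicting its assumed minimality.

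First I would combine the containment $J \subseteq I$, which yields $J^x J \subseteq I^x I = \{0\}$ for every $x \in G \setminus H$, with the annihilation property from Lemma~\ref{lem:ideal_exist}, namely $J^y J = \{0\}$ for $y \in W \setminus \bigcup_{k=1}^n u^{-1} x_k H_k$. Since $[H:W] < \infty$, the difference $H \setminus W$ is a finite union of left cosets of $W$ in $H$; together these two observations show that the set $\{y \in G : J^y J \ne \{0\}\}$ is contained in a finite union of left cosets of subgroups of $G$ drawn from $\{W, H_1, \ldots, H_n\}$.

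Next I would apply Passman's replacement argument, Proposition~\ref{prop:coset2}, \emph{inside the subring} $S_H$, which is itself nearly epsilon-strongly $H$-graded and in which $W$ \emph{does} have finite index in the ambient group $H$. This yields a subgroup $L \subseteq H$ and a nonzero ideal $K \subseteq J$ of $S_e$ with $K^x K = \{0\}$ for every $x \in H \setminus L$ and with $[L : L \cap H_k] < \infty$ for some $k \in \{1,\ldots,n\}$. Extending back to $S$ via $K \subseteq I$ gives $K^x K = \{0\}$ also for $x \in G \setminus H$, so $(K,L)$ is a Passman pair of $S$. Replacing $K$ by its $L$-invariantization $K^L = \sum_{l \in L} K^l$ (a nonzero $L$-invariant ideal of $S_e$ contained in $I$, by Lemma~\ref{lem:power} and the $H$-invariance of $I$), Proposition~\ref{prop:power} ensures $(K^L)^x K^L = \{0\}$ for every $x \in G \setminus L$. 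The decisive observation is that $g_k \in D_G(L)$: since $H_k = C_W(g_k) \subseteq C_G(g_k)$ and $[L : L \cap H_k] < \infty$, we obtain $[L : C_L(g_k)] < \infty$; moreover $L \subseteq H$ implies $D = D_G(H) \subseteq D_G(L)$ by a standard coset-intersection argument. Because $\pi_D(\beta) \ne 0$ (from $I\pi_D(\beta) \ne \{0\}$) and $g_k \in \Supp(\beta) \setminus D$, any minimal cover of $\Supp(\beta)$ by right $D$-cosets contains both the coset $D$ and the distinct coset $Dg_k$, and these merge into a single right $D_G(L)$-coset. Hence the number of right $D_G(L)$-cosets needed to cover $\Supp(\beta)$ is strictly less than the size of $(H,D,I,\beta)$. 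Finally I would verify that $(L,D_G(L),K^L,\beta)$ is a Passman form for $(A,B)$---invoking Proposition~\ref{prop:passman1}(b) to pass, if necessary, to some $\beta'$ without increasing the size---delivering the desired smaller Passman form.

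\textbf{Main obstacle.} The chief difficulty is the hypothesis gap in Proposition~\ref{prop:coset2}, which demands $[G:W] < \infty$ whereas we only have $[H:W] < \infty$; this is circumvented by applying the replacement argument in the $H$-graded subring $S_H$ and then lifting the resulting Passman pair to $S$ using the outside-$H$ vanishing inherited from $I$. A secondary subtlety is preserving the non-annihilation conditions $K^L A \ne \{0\}$ and $K^L \beta' \ne \{0\}$ after shrinking $J$ to $K^L$, which relies on Proposition~\ref{prop:passman1}(b) together with the $G$-semiprimeness of $S_e$ via Lemma~\ref{lem:annhilators}.
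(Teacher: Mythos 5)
Your overall architecture matches the paper's: apply Proposition~\ref{prop:coset2} with ambient group $H$ (where $[H:W]<\infty$) to get a Passman pair $(K,L)$ with $K\subseteq J$, extend the annihilation from $H\setminus L$ to $G\setminus L$ via $K\subseteq I$, pass to $K^L$ using Lemma~\ref{lem:power} and Proposition~\ref{prop:power}, and shrink the size because $D\subseteq D_G(L)$ and $g_k\in D_G(L)$ merge the two distinct right $D$-cosets $D$ and $Dg_k$ into one right $D_G(L)$-coset. All of that is sound and is exactly what the paper does.

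However, there is a genuine gap in your verification of condition (c) of Definition~\ref{def:passmanform} for the new quadruple, i.e.\ that $K^L$ does not annihilate the chosen element of $B$ nor the ideal $A$. You propose the form $(L,D_G(L),K^L,\beta)$ and plan to secure $K^L\beta'\neq\{0\}$ by ``invoking Proposition~\ref{prop:passman1}(b)''; but that proposition takes an already existing Passman form as input and modifies its last entry, so it presupposes precisely the non-annihilation $K^L\beta\neq\{0\}$ that you need to establish. Knowing $I\beta\neq\{0\}$ says nothing about the much smaller ideal $K^L\subseteq I$, and Lemma~\ref{lem:annhilators} does not bridge this. The paper resolves this by taking the fourth entry of the new form to be $\gamma:=\pi_D(\alpha)\beta\in B$ rather than $\beta$: since $J=I(S_e\alpha_{d_0}S_{d_0^{-1}})^u\gamma_xS_{x^{-1}}$ is built from a homogeneous component of $\gamma$, the assumption $K^L\gamma=\{0\}$ forces $K^LJ=\{0\}$, hence $J\subseteq r.\Ann_{S_e}(K^L)$, hence (by $L$-invariance of that annihilator and $K\subseteq J$) $(K^L)^2=\{0\}$, contradicting Lemma~\ref{lem:18}(a); and $K^L\gamma\neq\{0\}$ then also yields $K^L\pi_D(\alpha)\neq\{0\}$ and so $K^LA\neq\{0\}$ via Lemma~\ref{lem:bimodule_homo}. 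Your size estimate survives this substitution because $\Supp(\pi_D(\alpha)\beta)\subseteq D\cdot\Supp(\beta)$ is covered by the same right $D$-cosets as $\Supp(\beta)$, but without switching to $\pi_D(\alpha)\beta$ your argument cannot be completed as written.
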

\begin{proof}
By Lemma~\ref{lem:finite_index}, we have $[H : W] < \infty$. Let $J$ be the ideal of $S_e$ from Lemma~\ref{lem:ideal_exist}. 
By Proposition~\ref{prop:coset2} there exists a subgroup $L$ of $H$ and a nonzero ideal $K \subseteq J$ of $S_e$ such that $K^y K = \{0\}$ for every $y \in H \setminus L$. 
Furthermore, we have $[L \colon L \cap H_k] < \infty$ for some subgroup $H_k$ of $W$. 
We claim that $(L, D_G(L), K^L, \pi_D(\alpha) \beta)$ is a Passman form of size smaller than the size of $(H,D,I, \beta)$. 
We first check that it satisfies the conditions in Definition~\ref{def:passmanform}.

Note that condition~(a) is trivially satisfied. 
Moreover, it follows from Lemma~\ref{lem:power} that $K^L$ is an 
$L$-invariant ideal of $S_e$. 
Since $K \subseteq I$, we have $K^x K = \{0\}$ for every $x \in G \setminus H$. 
This shows that $K^x K = \{0\}$ for every $x \in G \setminus L$. 
Thus, by Proposition~\ref{prop:power}, $(K^L)^x (K^L) = \{0\}$ for every $x \in G \setminus L$. 
Hence, condition
~(b) is satisfied. 
Next, note that $\gamma := \pi_D(\alpha) \beta \in B$. 
It remains to show that $K^L \gamma \ne \{0\}$ and $K^L A \ne \{0\}$. 
Seeking a contradiction, suppose that $K^L \gamma = \{0\}$. 
Then $K^L \gamma_x = \{0\}$, and hence $K^L ( S_e \gamma_x S_{x^{-1}}) = \{0\}$.
We get that $K^L J = \{0\}$. This implies that $J \subseteq r.\Ann_{S_e}(K^L)$. 
But since $r.\Ann_{S_e}(K^L)$ is an 
$L$-invariant ideal by Lemma~\ref{lem:misc1}, we deduce from Lemma~\ref{lem:power} that $J \subseteq J^L \subseteq r.\Ann_{S_e}(K^L)$ and hence that $K^L J^L = \{0\}$. 
As $K \subseteq J$, this yields $(K^L)^2 = \{0\}$, which is a contradiction by Lemma~\ref{lem:18}(a). 
Therefore, $K^L \pi_D(\alpha) \beta \ne \{0\}$. It follows that $K^L \pi_D(\alpha) \ne \{0\}$, and hence $K^L A \ne \{0\}$, by Lemma~\ref{lem:bimodule_homo}.
Summarizing, we have shown that $(L, D_G(L), K^L, \pi_D(\alpha) \beta)$ is a Passman form.

To proceed, let $n$ be the size of the Passman form $(H,D,I, \beta)$, i.\,e. the number of cosets of $D$ in $H$ meeting $\Supp(\beta)$. Furthermore, let $m$ denote the size of $(L, D_G(L), K^L, \pi_D(\alpha) \beta)$. 
We claim that $m < n$. 
To show this, first note that $D = D_G(H) \subseteq D_G(L)$ and that $\Supp(\pi_D(\alpha) \beta) \subseteq D \cdot \Supp(\beta)$. Hence, $ m \leq n$. 
Combining the facts that $[ L : L \cap H_k] < \infty$ for some $H_k = C_W(g)$ with $g \in \Supp(\beta) \setminus D$ and $L \cap H_k = L \cap C_W(g) = C_L(g)$, we infer that $[L : C_L(g) ] < \infty$ and hence that $g \in D_G(L)$. 
This means that the two distinct $D$-cosets $Dg$ and $D$ are contained in $D_G(L)$. 
Consequently, $ m < n$, as claimed.
\end{proof}

We are now fully prepared to prove the following:

\begin{proof}[Proof of Proposition~\ref{prop:main1}]
Let $S$ be nearly epsilon-strongly $G$-graded such that $S_e$ is $G$-semi\-prime.
Furthermore, let $A,B$ be nonzero ideals of $S$ such that $AB=\{0\}$. 
We now show that conditions (a)-(c) in Proposition~\ref{prop:main1} are satisfied.
By Proposition~\ref{prop:passman_exists}, $S$ admits a minimal Passman form for $(A,B)$, 
say $(H,D,I, \beta)$. 
Moreover, by Propositon~\ref{prop:passman1}, we may assume that $I \pi_{\Delta(H)}(A) \ne \{0\}$ and that $I \pi_{\Delta(H)}(\beta) \ne \{0\}$. 
Hence, conditions (a) and (b) hold.
Seeking a contradiction, suppose that $ I \pi_{\Delta(H)}(A) \cdot I \beta \ne \{0\}$. Then the previous results, in particular Lemma~\ref{lem:contradiction}, yields a Passman form of size smaller than that of $(H,D,I,\beta)$, which is the desired contradiction. 
Hence, $I \pi_{\Delta(H)}(A) \cdot I \beta = \{0\}$ which shows that condition (c) holds.
\end{proof}

\section{The ``hard'' direction}
\label{Sec:HardRight}

Recall that $S$ is a $G$-graded ring.
In this section, we prove the implication (a)$\Rightarrow$(e) of Theorem~\ref{thm:mainNew} for nearly epsilon-strongly $G$-graded rings
(see Proposition~\ref{prop:suff1}). 
We remind the reader that if $H,K$ are subgroups of $G$, then $H$ \emph{normalizes} $K$ if $K x = x K$ for every $x \in H$. 
In that case it follows that $H \subseteq N_G(K)$, where $N_G(K):=\{ x \in G \mid x K = K x \}$ denotes the normalizer of $K$ in $G$,
and we allow ourselves to speak of \emph{$H/K$-invariance} in the sense of Definition~\ref{def:weakly_invariant}.

\begin{lem}\label{lem:normalpi}
Suppose that $H,K$ are subgroups of $G$ such that $H$ normalizes $K$.
If $x \in H$, $k_1,k_2 \in K$, $r \in S_{x k_1}$, $\alpha \in S$ and $s \in S_{k_2 x^{-1}}$, then $\pi_K( r \alpha s ) = r \pi_K( \alpha ) s$.
\end{lem}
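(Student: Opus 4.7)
The plan is to reduce to the homogeneous case and then use the hypothesis that $H$ normalizes $K$ to determine exactly which homogeneous components of $\alpha$ survive projection by $\pi_K$ after multiplication by $r$ and $s$.

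First, I would write $\alpha = \sum_{y \in G} \alpha_y$ with $\alpha_y \in S_y$, so that $r \alpha s = \sum_{y \in G} r \alpha_y s$ with $r \alpha_y s \in S_{x k_1 y k_2 x^{-1}}$. By additivity of $\pi_K$ (and since $\pi_K$ keeps a homogeneous element if and only if its degree lies in $K$), we have
\[
\pi_K(r \alpha s) \;=\; \sum_{\substack{y \in G \\ x k_1 y k_2 x^{-1} \in K}} r \alpha_y s.
\]
The goal then becomes showing that $x k_1 y k_2 x^{-1} \in K$ if and only if $y \in K$.

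The key step uses normalization: because $H$ normalizes $K$ and $x \in H$, we have $x^{-1} K x = K$. Hence
\[
x k_1 y k_2 x^{-1} \in K \iff k_1 y k_2 \in x^{-1} K x = K \iff y \in k_1^{-1} K k_2^{-1} = K,
\]
where the last equality uses $k_1, k_2 \in K$. Therefore
\[
\pi_K(r \alpha s) \;=\; \sum_{y \in K} r \alpha_y s \;=\; r \Bigl( \sum_{y \in K} \alpha_y \Bigr) s \;=\; r\, \pi_K(\alpha)\, s,
\]
which is the desired identity.

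There is no real obstacle here; the only substantive ingredient is the elementary group-theoretic equivalence above, which is precisely where the hypothesis that $H$ normalizes $K$ enters. Note that Lemma~\ref{lem:bimodule_homo} alone does not suffice, since $r$ and $s$ are not assumed to lie in $S_K$; it is the twisting by $x \in H$ on both sides, combined with normalization, that makes the computation go through.
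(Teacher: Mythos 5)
Your proposal is correct and follows essentially the same route as the paper: decompose $\alpha$ into homogeneous components, observe that $r\alpha_y s$ is homogeneous of degree $xk_1yk_2x^{-1}$, and use $x^{-1}Kx = K$ together with $k_1,k_2 \in K$ to show this degree lies in $K$ exactly when $y \in K$. No gaps.
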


\begin{proof}
Write $\alpha = \sum_{y \in G} \alpha_y$, where $\alpha_y \in S_y$ for $y \in G$.
Take $y \in G$. Note that $x k_1 \cdot y \cdot k_2 x^{-1} \in K$ if and only if $y \in k_1^{-1} x^{-1} K x k_2^{-1} = k_1^{-1} K k_2^{-1} = K$. Thus, 
$\pi_K( r \alpha s ) = \sum_{y \in G} 
\pi_K( r \alpha_y s ) = 
\sum_{y \in K} \pi_K( r \alpha_y s ) = 
\sum_{y \in K} r \alpha_y s = 
r \pi_K(\alpha) s$.
\end{proof}

By Lemma~\ref{lem:normalpi}, with $k_1 = k_2 = e$, we get the following result
(cf.~\cite[p.~721]{passman1984infinite}).

\begin{cor}\label{cor:normalized}
Suppose that $H,K$ are subgroups of $G$ such that $H$ normalizes $K$.
For every $\alpha \in S$ and $x \in H$, we have $ S_{x^{-1}} \pi_K(\alpha) S_x = \pi_K(S_{x^{-1}} \alpha S_x).$
\end{cor}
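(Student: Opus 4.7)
The plan is to derive Corollary~\ref{cor:normalized} as an almost immediate consequence of Lemma~\ref{lem:normalpi}. First, since $H$ is a subgroup of $G$, we have $x^{-1} \in H$ whenever $x \in H$. Applying Lemma~\ref{lem:normalpi} with $x$ replaced by $x^{-1}$ and with the choice $k_1 = k_2 = e$, the hypothesis ``$r \in S_{(x^{-1})\cdot e} = S_{x^{-1}}$ and $s \in S_{e \cdot (x^{-1})^{-1}} = S_x$'' is precisely what we need, and the conclusion becomes
\[
\pi_K(r \alpha s) = r \, \pi_K(\alpha) \, s
\]
for every $r \in S_{x^{-1}}$, every $s \in S_x$, and every $\alpha \in S$.

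Next, I would expand both sides of the claimed equality using the definition of the product of homogeneous components. An arbitrary element of $S_{x^{-1}} \alpha S_x$ has the form $\sum_{i} r_i \alpha s_i$ with $r_i \in S_{x^{-1}}$ and $s_i \in S_x$. Since $\pi_K \colon S \to S_K$ is additive (indeed, it is an $S_K$-bimodule homomorphism by Lemma~\ref{lem:bimodule_homo}), applying $\pi_K$ and using the pointwise identity from the previous paragraph yields
\[
\pi_K\Bigl(\sum_i r_i \alpha s_i\Bigr) = \sum_i \pi_K(r_i \alpha s_i) = \sum_i r_i \, \pi_K(\alpha) \, s_i,
\]
which is an arbitrary element of $S_{x^{-1}} \pi_K(\alpha) S_x$. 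Reading the computation in both directions establishes the two set inclusions simultaneously, proving the corollary.

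There is essentially no obstacle here: all the real work has already been done in Lemma~\ref{lem:normalpi}, where the nontrivial content is the observation that under the normalization hypothesis $H \subseteq N_G(K)$, the condition $x k_1 y k_2 x^{-1} \in K$ (with $k_1, k_2 \in K$) reduces to $y \in K$. The corollary merely packages the $k_1 = k_2 = e$ case into a statement about sets rather than individual elements, and the translation is routine linearity.
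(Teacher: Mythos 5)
Your proof is correct and follows essentially the same route as the paper, which likewise obtains the corollary directly from Lemma~\ref{lem:normalpi} by specializing to $k_1 = k_2 = e$ (the paper leaves the replacement of $x$ by $x^{-1}$ and the passage from the elementwise identity to the equality of sets implicit, whereas you spell both out). No gaps.
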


Given ideals $A, B$ of $S$ such that $AB= \{0 \}$, we will find new ideals $A_1, A_2, B_1, B_2$ of subrings of $S$ satisfying $A_1 B_1 = \{ 0 \} $ and $A_2 B_2 = \{ 0 \}$. 

\begin{lem}\label{lem:not_prime}
Suppose that $S$ is nearly epsilon-strongly $G$-graded and that $S_e$ is  
$G$-semiprime. If $S$ is not prime, then there exists a subgroup $H$ of $G$ such that $S_{\Delta(H)}$ is not prime.
In fact, there exist nonzero
$H/\Delta(H)$-invariant
ideals $A_1,B_1$ of $S_{\Delta(H)}$ such that $A_1, B_1 \subseteq I S_{\Delta(H)}$ and $A_1 B_1 = \{0\}$.
\end{lem}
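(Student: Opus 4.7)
The plan is to apply Proposition~\ref{prop:main1} to a witness pair $(A,B)$ of non-primeness, write $\Delta:=\Delta(H)$, and then construct $A_1,B_1$ as suitable modifications of $I\pi_\Delta(A)$ and $I\pi_\Delta(B)$.

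Using non-primeness, I pick nonzero ideals $A,B$ of $S$ with $AB=\{0\}$. Proposition~\ref{prop:main1} then yields a subgroup $H\subseteq G$, an $H$-invariant ideal $I$ of $S_e$ satisfying $I^xI=\{0\}$ for every $x\in G\setminus H$, and an element $\beta\in B$ such that $I\pi_\Delta(A)$ and $I\pi_\Delta(\beta)$ are nonzero and $I\pi_\Delta(A)\cdot I\beta=\{0\}$. Note that $\Delta$ is normal in $H$. Since $I\pi_\Delta(A)\cdot I\subseteq S_\Delta$, applying the $S_\Delta$-bimodule homomorphism $\pi_\Delta$ (Lemma~\ref{lem:bimodule_homo}) to the vanishing identity gives $I\pi_\Delta(A)\cdot I\pi_\Delta(\beta)=\{0\}$. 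I shall set
\[
A_1:=I\pi_\Delta(A),\qquad R:=r.\Ann_{S_\Delta}(A_1),\qquad B_1:=I\pi_\Delta(B)\cap R.
\]
A short verification using $S_\Delta\pi_\Delta(A)=\pi_\Delta(A)=\pi_\Delta(A)S_\Delta$ ($s$-unitality via Proposition~\ref{prop:s-unital}) and the commutation $S_\Delta I=IS_\Delta$ from Proposition~\ref{prop:switch1}(b) (valid because $\Delta\subseteq H$) shows that $A_1$ is a two-sided ideal of $S_\Delta$.

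The crux is the $H/\Delta$-invariance of $A_1$. Fix $x\in H$, $d_1,d_2\in\Delta$, and take homogeneous elements $t\in S_{x^{-1}d_1}$, $i\in I$, $a\in A$, $s\in S_{d_2x}$. Using Proposition~\ref{prop:switch1}(b) on $y=x^{-1}d_1\in H$ I rewrite $ti=\sum_k i_k t_k$ with $i_k\in I$ and $t_k\in S_{x^{-1}d_1}$. Because $H$ normalizes $\Delta$, Lemma~\ref{lem:normalpi} applies to each summand and gives $t_k\pi_\Delta(a)s=\pi_\Delta(t_k a s)\in\pi_\Delta(A)$, the membership thanks to $A$ being an ideal of $S$; summing shows $ti\pi_\Delta(a)s\in I\pi_\Delta(A)=A_1$. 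The identical argument applied to $B$ in place of $A$ gives the $H/\Delta$-invariance of $I\pi_\Delta(B)$. For $R$, Proposition~\ref{prop:lannstrom1} lets me regard $S_H$ as a nearly epsilon-strongly $H/\Delta$-graded ring with principal component $S_\Delta$, and Lemma~\ref{lem:misc1}(b) applied in this induced grading yields the $H/\Delta$-invariance of $R$. Hence $B_1$, being the intersection of two $H/\Delta$-invariant ideals of $S_\Delta$, is itself an $H/\Delta$-invariant ideal of $S_\Delta$.

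To conclude, $A_1,B_1\subseteq IS_\Delta$ is immediate, $A_1\neq\{0\}$ is Proposition~\ref{prop:main1}(b), and the vanishing $A_1\cdot I\pi_\Delta(\beta)=\{0\}$ noted above places $I\pi_\Delta(\beta)$ inside $R$, so $B_1$ contains the nonzero set $I\pi_\Delta(\beta)$; finally $A_1 B_1\subseteq A_1 R=\{0\}$ by the very definition of $R$. The main obstacle of this plan is the $H/\Delta$-invariance of $A_1$, which would break down without normality of $\Delta=\Delta(H)$ in $H$ (required to invoke Lemma~\ref{lem:normalpi}) and without the commutation $S_yI=IS_y$ for $y\in H$ (a nearly epsilon-strong feature furnished by Proposition~\ref{prop:switch1}(b)).
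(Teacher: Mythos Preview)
Your argument is correct, and your construction of $A_1=I\pi_\Delta(A)$ coincides with the paper's. Where you genuinely diverge is in the choice of $B_1$. The paper takes the ``orbit sum''
\[
B_1=\sum_{h\in H} I\,S_{h^{-1}\Delta}\,\pi_\Delta(\beta)\,S_{h\Delta},
\]
verifies its $H/\Delta$-invariance directly from the form of the sum, and then spends a substantial computation (using Lemma~\ref{lem:normalpi}, Proposition~\ref{prop:switch1}, and the nearly epsilon-strong $H/\Delta$-grading on $S_H$) to show $A_1B_1=\{0\}$. You instead set $B_1=I\pi_\Delta(B)\cap r.\Ann_{S_\Delta}(A_1)$, so that $A_1B_1=\{0\}$ is automatic; the burden shifts to showing $H/\Delta$-invariance of the annihilator, which you obtain cleanly by applying Lemma~\ref{lem:misc1}(b) inside the induced $H/\Delta$-grading on $S_H$ (legitimate by Proposition~\ref{prop:lannstrom1}). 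Nonvanishing of $B_1$ then follows because $I\pi_\Delta(\beta)$ lies in both factors of the intersection. Your route is shorter and more conceptual; the paper's route is more explicit and avoids passing to the induced quotient grading at this stage.
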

\begin{proof}
Let $A,B$ be nonzero ideals of $S$ such that $AB= \{0\}$. 
By Proposition~\ref{prop:main1}, there are a subgroup $H$ of $G$, a nonzero  
$H$-invariant ideal $I$ of $S_e$, and $\beta \in B$ such that:
\begin{enumerate}[(a)]
\begin{item}
$I \pi_{\Delta(H)}(A) \ne \{0\}$;
\end{item}
\begin{item}
$I \pi_{\Delta(H)}(\beta) \ne \{0\}$;
\end{item}
\begin{item}
$I \pi_{\Delta(H)}(A) \cdot I \beta = \{0\}$.
\end{item}
\end{enumerate}
Consider the set $A_1 := I \pi_{\Delta(H)}(A) \subseteq I S_{\Delta(H)}$.
Clearly, $A_1$ is nonzero by (a).
Take $h\in H$.
Then Proposition~\ref{prop:switch1}, the fact that $\Delta(H)\subseteq H$, and
Lemma~\ref{lem:normalpi}
yield
\begin{align*}
S_{h^{-1}\Delta(H)} A_1 S_{h\Delta(H)}
&= S_{h^{-1}\Delta(H)} I \pi_{\Delta(H)}(A) S_{h\Delta(H)}
= I S_{h^{-1}\Delta(H)} \pi_{\Delta(H)}(A) S_{h\Delta(H)}
\\
&= I \pi_{\Delta(H)} ( S_{h^{-1}\Delta(H)} A S_{h\Delta(H)} )
\subseteq
I \pi_{\Delta(H)}(A) = A_1.
\end{align*}
By taking $h=e$, the above computation yields $S_{\Delta(H)}A_1 S_{\Delta(H)} \subseteq A_1$.
Thus, $A_1$ is an   
$H/\Delta(H)$-invariant ideal of $S_{\Delta(H)}$.
Next, we define
$B_1 := \sum_{h \in H} I S_{h^{-1} \Delta(H)} \pi_{\Delta(H)}(\beta) S_{h\Delta(H)}.$
Clearly, $B_1 \subseteq I S_{\Delta(H)}$ and $B_1$ is nonzero. 
 Take $h_1 \in H$. 
 Using that $\Delta(H)$ is a normal subgroup of $H$ and that $I$ is $H$-invariant, we get 
\begin{align*}
 S_{h_1^{-1} \Delta(H)} B_1 S_{h_1 \Delta(H)} &= 
  \sum_{h \in H} S_{h_1^{-1} \Delta(H)} \cdot I S_{h^{-1}\Delta(H)} \pi_{\Delta(H)}(\beta) S_{h\Delta(H)} \cdot S_{h_1 \Delta(H)} \\
  &=
  \sum_{h \in H} I S_{h_1^{-1}\Delta(H)}  S_{h^{-1}\Delta(H)} \cdot \pi_{\Delta(H)}(\beta) \cdot S_{h\Delta(H)} S_{h_1 \Delta(H)} \\
 &\subseteq 
 \sum_{h \in H} I S_{h_1^{-1} h^{-1}\Delta(H)} \cdot \pi_{\Delta(H)}(\beta) \cdot S_{h h_1 \Delta(H)} =
 B_1.
\end{align*}
By taking $h_1=e$, the above computation yields $S_{\Delta(H)}B_1 S_{\Delta(H)} \subseteq B_1$.
Thus, $B_1$ is an  
$H/\Delta(H)$-invariant ideal of $S_{\Delta(H)}$.
By Proposition~\ref{prop:lannstrom1},
the induced $H/\Delta(H)$-grading on $S_H$ is nearly epsilon-strong and hence
$S_{h^{-1}\Delta(H)} S_{h\Delta(H)}
\cdot \pi_{\Delta(H)}(A) = 
\pi_{\Delta(H)}(A)$.
Using that $I$ is $H$-invariant, it follows from Lemma \ref{lem:bimodule_homo}, Lemma \ref{lem:normalpi}, and (c), that
\begin{align*}
 A_1 B_1 &= I \pi_{\Delta(H)}(A) \cdot \sum_{h \in H} I S_{h^{-1} \Delta(H)} \pi_{\Delta(H)}(\beta) S_{h\Delta(H)} \\
 &=
 \sum_{h \in H}I \pi_{\Delta(H)}(A)   S_{h^{-1}\Delta(H)} \pi_{\Delta(H)}(I\beta) S_{h\Delta(H)} \\
&=  \sum_{h \in H}I
\cdot S_{h^{-1}\Delta(H)} S_{h\Delta(H)}
\pi_{\Delta(H)}(A) \cdot S_{h^{-1}\Delta(H)}  \pi_{\Delta(H)}(I\beta) S_{h\Delta(H)} \\
&= \sum_{h \in H}I
S_{h^{-1}\Delta(H)} 
\cdot \pi_{\Delta(H)}( S_{h\Delta(H)} A S_{h^{-1} \Delta(H)}) \cdot \pi_{\Delta(H)}(I\beta) S_{h\Delta(H)} \\
&\subseteq
 \sum_{h \in H}I
S_{h^{-1}\Delta(H)} 
\cdot \pi_{\Delta(H)} ( A ) \cdot \pi_{\Delta(H)}(I\beta) S_{h\Delta(H)} \\
&= \sum_{h \in H}
S_{h^{-1} \Delta(H)} \cdot I \pi_{\Delta(H)} ( A )  \cdot \pi_{\Delta(H)}(I\beta) S_{h\Delta(H)} \\
&=
\sum_{h \in H}
S_{h^{-1}\Delta(H)} \cdot \pi_{\Delta(H)} \big( I \pi_{\Delta(H)} ( A )  I\beta \big) \cdot S_{h\Delta(H)} = \{ 0 \}.
\end{align*}
As a result, $S_{\Delta(H)}$ is not prime.
\end{proof}

\begin{lem}
\label{lem:8_1}
Suppose that we are in the setting of Lemma~\ref{lem:not_prime}. Then there exists a finitely generated normal subgroup $W$ of $H$ such that $W \subseteq \Delta(H)$. Moreover, there exist nonzero $H/W$-invariant ideals $A_2, B_2 $ of $S_W$ such that $A_2, B_2 \subseteq IS_W$ and $A_2 B_2=\{0\}$.
\end{lem}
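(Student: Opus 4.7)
The plan is to shrink $A_1$ and $B_1$ down to ideals supported on a finitely generated normal subgroup of $H$ contained in $\Delta(H)$. First, I would pick nonzero elements $a \in A_1$ and $b \in B_1$. Since $A_1, B_1 \subseteq I S_{\Delta(H)} \subseteq S_{\Delta(H)}$, both $\Supp(a)$ and $\Supp(b)$ are finite subsets of $\Delta(H)$. By the defining property of $\Delta(H)$, every $g \in \Supp(a) \cup \Supp(b)$ has only finitely many $H$-conjugates, so the set
\[
T := \{\, h^{-1} g h \mid h \in H,\ g \in \Supp(a) \cup \Supp(b)\,\}
\]
is a finite subset of $\Delta(H)$ which is closed under conjugation by $H$. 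Then I define $W := \langle T \rangle$. Being generated by the finite set $T$, the group $W$ is finitely generated; since $T \subseteq \Delta(H)$ and $\Delta(H)$ is a subgroup, $W \subseteq \Delta(H)$; and since $T$ is stable under $H$-conjugation, so is $W$, i.e.\ $W$ is normal in $H$.

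Next, I would set $A_2 := A_1 \cap S_W$ and $B_2 := B_1 \cap S_W$. Since $\Supp(a) \subseteq T \subseteq W$, we have $a \in S_W$ and hence $0 \ne a \in A_2$; likewise $0 \ne b \in B_2$. As the intersection of an ideal of $S_{\Delta(H)}$ with the subring $S_W \subseteq S_{\Delta(H)}$, the set $A_2$ is automatically an ideal of $S_W$, and similarly for $B_2$. Moreover, writing $A_1 \subseteq I S_{\Delta(H)} = \bigoplus_{x \in \Delta(H)} I S_x$ and intersecting with the graded subspace $S_W = \bigoplus_{x \in W} S_x$ yields $A_2 \subseteq \bigoplus_{x \in W} I S_x = I S_W$, and similarly $B_2 \subseteq I S_W$. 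Clearly $A_2 B_2 \subseteq A_1 B_1 = \{0\}$.

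It remains to verify $H/W$-invariance. Fix $x \in H$. Since $W$ is normal in $H$, we have $x^{-1}Wx = W$, so $(x^{-1}W) \cdot W \cdot (xW) = W$; hence $S_{x^{-1}W} A_2 S_{xW}$ is supported in $W$ and lies in $S_W$. On the other hand, $W \subseteq \Delta(H)$ gives the inclusions $S_{x^{-1}W} \subseteq S_{x^{-1}\Delta(H)}$ and $S_{xW} \subseteq S_{x\Delta(H)}$, so the $H/\Delta(H)$-invariance of $A_1$ established in Lemma~\ref{lem:not_prime} yields $S_{x^{-1}W} A_2 S_{xW} \subseteq S_{x^{-1}\Delta(H)} A_1 S_{x\Delta(H)} \subseteq A_1$. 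Combining the two inclusions, $S_{x^{-1}W} A_2 S_{xW} \subseteq A_1 \cap S_W = A_2$, and the argument for $B_2$ is identical. The main obstacle is the triple demand that $W$ be finitely generated, normal in $H$, and still large enough that $A_1 \cap S_W$ and $B_1 \cap S_W$ remain nonzero; the finiteness of $H$-conjugacy classes inside $\Delta(H)$ is precisely the input that reconciles these three requirements.
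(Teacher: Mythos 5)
Your proposal is correct and follows essentially the same route as the paper: both take the normal closure $W$ of $\Supp(a)\cup\Supp(b)$ in $H$, observe it is finitely generated (you make explicit the reason, namely that elements of $\Delta(H)$ have finitely many $H$-conjugates) and contained in $\Delta(H)$, and then intersect $A_1$, $B_1$ with $S_W$, deducing $H/W$-invariance from the $H/\Delta(H)$-invariance of $A_1$, $B_1$ together with the $H/W$-invariance of $S_W$. No gaps.
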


\begin{proof}
Let $A_1, B_1 \subseteq I S_{\Delta(H)}$ be as in Lemma~\ref{lem:not_prime}. Then there exist nonzero elements $a_1 \in A_1$ and $b_1 \in B_1$. Putting $P := \Supp (a_1) \cup \Supp (b_1)$ and using that $A_1, B_1$ are ideals of $S_{\Delta(H)}$, we see that $P \subseteq \Delta(H)$. 
Moreover, let $W$ be the normal closure of $P$ in $H$. Then $W$ is clearly a finitely generated normal subgroup of $H$ with $W \subseteq \Delta(H)$. Now, consider $A_2 := A_1 \cap S_W$ and $B_2 := B_1 \cap S_W$. 
Using that $A_1$ (resp. $B_1$) is $H/\Delta(H)$-invariant, we get that $A_1$ (resp. $B_1$) is $H/W$-invariant.
Clearly, $S_W$ is $H/W$-invariant.
Thus, $A_2$ and $B_2$ are are nonzero
$H/W$-invariant ideals of $S_W$ such that $A_2, B_2 \subseteq IS_W$.  Furthermore, we have $A_2 B_2 \subseteq A_1 B_1 = \{0\}$, which completes the proof.
\end{proof}

We recall the following general result regarding the finite conjugate center $\Delta(H)$ of an arbitrary group $H$ and include parts of the proof for the convenience of the reader. 

\begin{prop}[{\cite[Lem.~II.4.1.5(iii)]{passman2011algebraic}}]
\label{prop:8_2}
Suppose that $H$ is a group and that $W$ is a finitely generated subgroup of $\Delta(H)$. Then there exists a finite characteristic subgroup $N \lhd W$ such that $W/N$ is torsion-free abelian.
\end{prop}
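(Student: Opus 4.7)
The plan is to exploit the fact that, since $W \subseteq \Delta(H)$, the group $W$ is itself an FC-group, and then to invoke the classical structure theory of finitely generated FC-groups (Neumann/Schur) to deduce finiteness of $[W,W]$; the required $N$ is then obtained by enlarging $[W,W]$ by the torsion part of the resulting abelianization.

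First I would verify that $W$ is FC. For any $w \in W$, the $H$-conjugacy class of $w$ is finite, and the $W$-conjugacy class of $w$ is contained in it, so $[W : C_W(w)] < \infty$. Writing $W = \langle w_1,\ldots,w_n \rangle$, an element of $W$ is central if and only if it commutes with each generator, so
\[
Z(W) \;=\; \bigcap_{i=1}^{n} C_W(w_i),
\]
a finite intersection of finite-index subgroups, and hence has finite index in $W$. By Schur's theorem (if the center of a group has finite index, then the derived subgroup is finite), the commutator subgroup $[W,W]$ is finite.

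Now let $A := W/[W,W]$, which is a finitely generated abelian group. Its torsion subgroup $T(A)$ is finite, and $A/T(A)$ is torsion-free abelian. Define $N$ as the preimage of $T(A)$ in $W$ under the canonical projection $\pi \colon W \to A$. Then $N$ is normal in $W$ with $|N| = |T(A)| \cdot |[W,W]| < \infty$, and $W/N \cong A/T(A)$ is torsion-free abelian. To see that $N$ is characteristic, observe that any automorphism $\varphi$ of $W$ preserves $[W,W]$ and thus induces an automorphism $\bar\varphi$ of the abelian group $A$; since $\bar\varphi$ must send torsion elements to torsion elements, it preserves $T(A)$, and therefore $\varphi(N) = N$.

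The main obstacle is invoking Schur's theorem, which is the only non-elementary input; everything else is a straightforward extraction of the maximal torsion-free abelian quotient from the abelianization, combined with the routine verification that the preimage of a characteristic subgroup under a characteristic quotient map is again characteristic.
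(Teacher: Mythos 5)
Your proof is correct and follows essentially the same route as the paper: both take $N$ to be the torsion elements of $W$ (your preimage of $T(A)$ coincides with this set since $[W,W]$ is finite), use finiteness of $[W,W]$ to conclude $N$ is finite, and pass to the torsion-free abelian quotient $W/N$. The only difference is that the paper simply cites Passman for the finiteness of $[W,W]$, whereas you supply the standard argument (finitely generated FC-group has center of finite index, then Schur's theorem), which is a correct and slightly more self-contained treatment of that step.
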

\begin{proof}
Put $N := \{ w \in W \mid \mbox{ord}(w) < \infty \}$. 
It can be shown that the commutator subgroup $W' = [W,W]$ is finite (see \cite[Lem.~II.4.1.5(ii)]{passman2011algebraic}). Thus $W' \subseteq N$. Moreover, note that $W/W'$ is a finitely generated abelian group. By the fundamental theorem of finitely generated abelian groups, $W/W'$ has a finite maximal torsion subgroup $K$, i.\,e. $W/W' \cong \mathbb{Z}^n \oplus K$ for some $n \geq 0$. By restricting to torsion elements, we see that $N/W' \cong K$. Thus, $N$ is a finite subgroup of $W$. Since every automorphism of $W$ preserves element order, it follows that $N$ is a characteristic subgroup of $W$. We also get that $W/N$ is torsion-free abelian, because $W' \subseteq N$.
\end{proof}

\begin{defi}[cf.~{\cite[p.~14]{passman1984infinite}}]
Suppose that $A$ is a nonzero ideal of a nearly epsilon-strongly $W$-graded ring $S_W$ and that $N \lhd W$. 
For any nonzero $a \in A$ we define
$\text{meet}_N(a)$ to be the number of cosets of $N$ in $W$ that meet $\Supp(a)$.
Define $m := {\rm min} \{ \text{meet}_N(b) \mid b \in A \setminus \{0 \} \}$.
Let $\text{min}_N(A)$ denote  
the additive span of all nonzero elements $a \in A$ such that $\text{meet}_N(a) = m$.
\end{defi}

\begin{lem}[{cf.~\cite[Lem.~4.1]{passman1984infinite}}]
\label{lem:8_3}
Suppose that $S$ is nearly epsilon-strongly $G$-graded and that $H$ is a subgroup of $G$. Furthermore, suppose that $N \lhd W$ are subgroups of $G$ that are normalized by $H$ and that $A$ is a nonzero  
$H/W$-invariant ideal of $S_W$. 
Then the following assertions hold:
\begin{enumerate}[{\rm (a)}]
\item
 $\text{min}_N(A)$ is a nonzero  
$H/W$-invariant ideal of $S_W$.
\item
$\pi_N(A)$ is a nonzero  
$H/W$-invariant ideal of $S_N$.
\end{enumerate}
\end{lem}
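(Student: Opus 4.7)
The plan is to handle both parts by systematically exploiting the induced $W/N$-grading on $S_W$ (which is well-defined since $N \lhd W$) and by repeatedly invoking the minimality of $m$.

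For part (a), nonzeroness of $\text{min}_N(A)$ is immediate: $m$ is attained by some $a_0 \in A\setminus\{0\}$, so $a_0 \in \text{min}_N(A)$. The ideal property in $S_W$ and the $H/W$-invariance are then proved by the same template: I decompose an arbitrary multiplier into its $G$-homogeneous pieces, multiply a generator $a \in A$ with $\text{meet}_N(a) = m$ by each piece, and show that the result is either zero or again attains $\text{meet}_N = m$. Concretely, if $r \in S_g$ with $g$ normalizing $N$ (for instance $g \in W$, or $g = x^{-1}w$ with $x \in H$, $w \in W$, since $H$ and $W$ both normalize $N$), then $\Supp_{W/N}(ra) \subseteq gN \cdot \Supp_{W/N}(a)$, whose cardinality is at most $m$ because left translation by $gN$ is a bijection on $W/N$; a symmetric bound handles right multiplication. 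For $x \in H$, $s_1 \in S_{x^{-1}W}$ and $s_2 \in S_{xW}$, decomposing $s_1, s_2$ into $G$-homogeneous pieces $r \in S_{x^{-1}w_1}$, $r' \in S_{xw_2}$ gives $rar' \in A$ by the $H/W$-invariance of $A$, and the $W/N$-support of $rar'$ is the image of $\Supp_{W/N}(a)$ under the bijection $hN \mapsto x^{-1}w_1 h x w_2 N$ of $W/N$ (which descends well-defined since $x$ normalizes both $W$ and $N$). By minimality each piece lies in $\text{min}_N(A)$, and summing together with extending additively over $\text{min}_N(A)$ finishes the argument.

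For part (b), the subring $S_W$ inherits a $W$-grading from $S$ which is still non-degenerate, because non-degeneracy restricts to subgroups of $G$. Applying Lemma~\ref{lem:pi_non-degenerate} to the nonzero ideal $A$ of the non-degenerately $W$-graded ring $S_W$ with subgroup $N$ gives $\pi_N(A) \neq \{0\}$. The ideal property in $S_N$ follows from Lemma~\ref{lem:bimodule_homo}: since $\pi_N$ is an $S_N$-bimodule homomorphism, $S_N \pi_N(A) S_N = \pi_N(S_N A S_N) \subseteq \pi_N(A)$. For the invariance, the computation underlying Lemma~\ref{lem:normalpi} shows that $\pi_N(s_1 a s_2) = s_1 \pi_N(a) s_2$ whenever $s_1 \in S_{x^{-1}N}$, $s_2 \in S_{xN}$ and $x \in H$; the relevant identity $t_1 N t_2 = N$ on the $G$-degrees reduces to $x^{-1}N x \cdot N = N$, which holds because $H$ normalizes $N$. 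Combined with $s_1 a s_2 \in S_{x^{-1}W}A S_{xW} \subseteq A$ (from the $H/W$-invariance of $A$), this gives $s_1 \pi_N(a) s_2 \in \pi_N(A)$.

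The main obstacle is the bookkeeping among the three gradings (the ambient $G$-grading, the $W$-subgrading on $S_W$, and the quotient $W/N$-grading) and in particular making precise what ``$H/W$-invariant ideal of $S_N$'' in (b) is meant to say: the literal reading $S_{x^{-1}W}\pi_N(A)S_{xW}\subseteq \pi_N(A)$ cannot hold, since the left-hand side is not contained in $S_N$. The natural and sufficient reading, in view of how the lemma will be used (cf.\ the role of $H/N$-invariant ideals of $S_N$ in assertion~(e) of Theorem~\ref{thm:mainNew}), is the $H/N$-invariance $S_{x^{-1}N}\pi_N(A)S_{xN}\subseteq \pi_N(A)$ established above. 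Once this interpretation is adopted, both parts follow from the routine combination of minimality of $m$, the translation/conjugation bijections on $W/N$, and the bimodule property of $\pi_N$.
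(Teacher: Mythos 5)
Your treatment of part (a), and of the nonzeroness and ideal property in part (b), is essentially the paper's own proof: the same cover-counting argument via the well-defined map $wN \mapsto h^{-1}k_1\,w\,h k_2\,N$ on $W/N$ (using that $H$ and $W$ both normalize $N$ and that $H$ normalizes $W$), combined with the minimality of $m$ to force each nonzero product back into $\text{min}_N(A)$, and the same appeal to Lemma~\ref{lem:pi_non-degenerate} applied to the restricted $W$-grading on $S_W$. (One small slip: for a one-sided multiplier $r\in S_{x^{-1}w}$ with $x\in H\setminus W$ the set $\Supp_{W/N}(ra)$ does not make sense, since $ra$ leaves $S_W$; but this does not matter, because your actual invariance argument is the two-sided one, which is correct.)

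Where you genuinely depart from the paper is the invariance claim in (b), and you are right to do so. The paper proves the stated $H/W$-invariance of $\pi_N(A)$ by asserting $S_{h^{-1}W}\pi_N(\alpha)S_{hW}=\pi_N(S_{h^{-1}W}\alpha S_{hW})$ via Lemma~\ref{lem:normalpi}; but that lemma only applies when the outer factors have degrees of the form $xk_1$ and $k_2x^{-1}$ with $k_1,k_2\in N$, not in $W$, and the asserted equality fails in general: for $w_1,w_2\in W$ the set $S_{h^{-1}w_1}\pi_N(\alpha)S_{hw_2}$ has degrees in the coset $N(h^{-1}w_1h)w_2$, which differs from $N$ unless $(h^{-1}w_1h)w_2\in N$. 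In fact assertion (b) as literally stated is false: take $S=R[t,t^{-1}]$ with $G=W=H=\Z$, $N=\{e\}$ and $A=S$; then $\pi_N(A)=R$ while $S_{x^{-1}W}\,R\,S_{xW}=SRS=S\not\subseteq R$. Your replacement — proving the $H/N$-invariance $S_{x^{-1}N}\pi_N(A)S_{xN}\subseteq\pi_N(A)$ by a correct application of Lemma~\ref{lem:normalpi} together with $S_{x^{-1}N}AS_{xN}\subseteq S_{x^{-1}W}AS_{xW}\subseteq A$ — is exactly the property consumed downstream: Proposition~\ref{prop:suff1} immediately weakens ``$H/W$-invariant'' to ``$H/N$-invariant'' before feeding the ideals into assertion (e) of Theorem~\ref{thm:mainNew}. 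So your version of the lemma both repairs a genuine flaw in the stated one and suffices for all of its uses.
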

\begin{proof}
(a): Note that $\text{min}_N(A)$ is nonzero by definition. We show that $\text{min}_N(A)$ is an ideal of $S_W$. Let $\alpha \neq 0$ be a generator of $\text{min}_N(A)$ and take $w \in W$. It is enough to show that $S_w \alpha$ and $\alpha S_w$ are contained in $\text{min}_N(A)$. To this end, note that $\Supp(\alpha S_w) \subseteq (\Supp(\alpha))w$ and that $\Supp(S_w \alpha) \subseteq w (\Supp(\alpha)).$ Since $N \lhd W$, right and left cosets of $N$ in $W$ coincide. Let $\{ w_1 N, w_2 N, \dots, w_mN \}$ be a minimal set of cosets of $N$ that covers $\Supp(\alpha)$. That is, $\Supp(\alpha) \subseteq w_1N \cup \ldots \cup w_m N = Nw_1 \cup \ldots \cup N w_m$ with $m$ minimal among such covers. Hence, $\Supp(\alpha S_w) \subseteq Nw_1 w \cup \ldots \cup Nw_m w$ and $\Supp(S_w \alpha) \subseteq w w_1 N \cup \ldots \cup w w_m N$, and consequently, $\alpha S_w$ and $S_w \alpha$ meet less than or exactly $m$ cosets of $N$. It follows that $\alpha S_w, S_w \alpha \in \text{min}_N(A)$ and therefore $\text{min}_N(A)$ is an ideal of $S_W$. 

Next, let $\alpha \in A$ be a generator of $\text{min}_N(A)$ and take $h \in H$. To show that $\text{min}_N(A)$ is  
$H/W$-invariant, it is enough to show that $S_{h^{-1}W} \alpha S_{hW} \subseteq \text{min}_N(A)$. 
Take $k_1,k_2 \in W$.
We will show that
$S_{h^{-1}k_1} \alpha S_{hk_2} \subseteq \text{min}_N(A)$

Using that $A$ is assumed to be  
$H/W$-invariant, we have $S_{h^{-1}k_1} \alpha S_{hk_2} \subseteq A$. Hence, it only remains to show that $S_{h^{-1}k_1} \alpha S_{hk_2}$ meets a minimal number of cosets of $N$. 
As before, let $w_1 N \cup \ldots \cup w_m N$ be a minimal cover of $\Supp(\alpha)$. 
Then
\begin{align*}
\Supp(S_{h^{-1}k_1} \alpha S_{hk_2}) 
&\subseteq h^{-1} k_1 (\Supp(\alpha)) h k_2 
\\
&\subseteq h^{-1} k_1 (w_1 N) h k_2 \cup h^{-1} k_1 (w_2 N) h k_2 \cup \ldots \cup h^{-1}  k_1 (w_m N ) h k_2.
\end{align*}
Since both $H$ and $W$ normalize $N$, we get that $h^{-1} k_1 (w_i N) h k_2 = 
 (h^{-1} k_1  w_i h k_2) N$.
  Moreover, since $H$ normalizes $W$, and $k_1 w_i \in W$, we have $h^{-1} (k_1 w_i) h \in W$.
  Thus,
  $h^{-1} k_1  w_i h \cdot k_2 \in W$.
  Hence, $\Supp(S_{h^{-1}k_1} \alpha S_{hk_2})$ meets less than or exactly $m$ cosets of $N$ in $W$. Thus, $\text{min}_N(A)$ is   
$H/W$-invariant. 

(b): By Lemma~\ref{lem:pi_non-degenerate} and Proposition~\ref{prop:nearly_implies_nondegenerate}, it follows that $\pi_N(A)$ is a nonzero ideal of $S_N$.
Take $\alpha \in A$ and $h \in H$. 
Since $H$ normalizes $N$, Lemma~\ref{lem:normalpi} yields $S_{h^{-1}W} \pi_N(\alpha) S_{hW} = \pi_N(S_{h^{-1}W} \alpha S_{hW}) \subseteq \pi_N(S_{h^{-1}W} A S_{hW}) \subseteq \pi_N(A),$
where the last inclusion follows by the $H/W$-invariance of $A$.
This shows that $\pi_N(A)$ is  
$H/W$-invariant. 
\end{proof}

\begin{lem}[{cf.~\cite[Lem.~4.2]{passman1984infinite}}]\label{lem:passman2}
Suppose that $S$ is nearly epsilon-strongly $G$-graded and that $H$ is a subgroup of $G$. Let $N \lhd W$ be subgroups of $G$ such that $N,W$ are normalized by $H$ and $W/N$ is a unique product group. Furthermore, let $A, B$ be nonzero ideals of $S_W$ such that $AB=\{0\}$. Then there exist nonzero ideals $A', B'$ of $S_N$ such that $A' B' =\{0\}$. Moreover, the following assertions hold:
\begin{enumerate}[{\rm (a)}]
\begin{item}
If $A$ (resp. $B$) is  
$H/W$-invariant, then $A'$ (resp. $B'$) is  
$H/W$-invariant.
\end{item}
\begin{item}
If $A, B \subseteq IS_W$ for some ideal $I \subseteq S_e$, then $A', B' \subseteq I S_N$. 
\end{item}
\end{enumerate}
\end{lem}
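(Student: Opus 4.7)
The plan is to work with the induced $W/N$-grading on $S_W$, which by Proposition~\ref{prop:lannstrom1} is nearly epsilon-strong and by Proposition~\ref{prop:QuotientNonDeg} is non-degenerate. I first use Lemma~\ref{lem:8_3}(a) to replace $A$ and $B$ by $\text{min}_N(A)$ and $\text{min}_N(B)$: these are nonzero ideals of $S_W$ that inherit $H/W$-invariance and the containment in $IS_W$, and their product is still zero. Hence I may assume that every nonzero element of $A$ and of $B$ attains the minimal $W/N$-support size.

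Next I pick nonzero $\alpha \in A$ and $\beta \in B$, let $\bar{X} := \Supp_{W/N}(\alpha)$ and $\bar{Y} := \Supp_{W/N}(\beta)$, and invoke the unique product property of $W/N$ to produce $\bar{w}_0 \in \bar{X}$ and $\bar{v}_0 \in \bar{Y}$ such that $\bar{w}_0 \bar{v}_0$ admits a unique factorization inside $\bar{X}\bar{Y}$. Since $\alpha s \beta \in AS_W B \subseteq AB = \{0\}$ for every $s \in S_W$, specializing to $s \in S_N$ and reading off the $(\bar{w}_0\bar{v}_0)$-coset component in the $W/N$-grading forces
\[
\alpha_{\bar{w}_0}\, s\, \beta_{\bar{v}_0} \;=\; 0 \qquad \text{for every } s \in S_N.
\]

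With this in hand, I define $A'$ as the $S_N$-ideal generated by $S_{\bar{w}_0^{-1}} \alpha_{\bar{w}_0}$ and $B'$ as the $S_N$-ideal generated by $\beta_{\bar{v}_0} S_{\bar{v}_0^{-1}}$. Each generator lies in $S_N$ (since $S_{\bar{w}_0^{-1}} S_{\bar{w}_0} \subseteq S_N$, and analogously on the other side) and is nonzero by non-degeneracy of the $W/N$-grading; combined with $s$-unitality of $S_N$ from Proposition~\ref{prop:s-unital}, this forces $A'$ and $B'$ to be nonzero. The product $A'B'$ factors through $\alpha_{\bar{w}_0}(S_N S_N)\beta_{\bar{v}_0} \subseteq \alpha_{\bar{w}_0} S_N \beta_{\bar{v}_0} = \{0\}$, so $A'B' = \{0\}$.

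The main obstacle will be establishing properties (a) and (b), because the construction above explicitly depends on the choice of $\alpha$, $\beta$ and of the unique-product pair $(\bar{w}_0, \bar{v}_0)$, whereas $H/W$-invariance and the containment in $IS_W$ are intrinsic features of the ideals $A$ and $B$. I anticipate resolving this by reformulating more intrinsically, e.g.\ as $A' := \pi_N(AS_{\bar{w}_0^{-1}}) = \pi_{\bar{w}_0}(A)S_{\bar{w}_0^{-1}}$ and $B' := \pi_N(S_{\bar{v}_0^{-1}}B) = S_{\bar{v}_0^{-1}}\pi_{\bar{v}_0}(B)$, both automatically $S_N$-ideals by the $(S_N,S_N)$-bimodule property of $\pi_N$ (Lemma~\ref{lem:bimodule_homo}). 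For (b), $A \subseteq IS_W$ gives $\pi_{\bar{w}_0}(A) \subseteq IS_{\bar{w}_0}$, hence $A' \subseteq IS_{\bar{w}_0}S_{\bar{w}_0^{-1}} \subseteq IS_N$ by the nearly epsilon-strong inclusion $S_{\bar{w}_0}S_{\bar{w}_0^{-1}} \subseteq S_N$. Property (a) would then follow from Corollary~\ref{cor:normalized} by mimicking the proof of Lemma~\ref{lem:8_3}(b). The delicate step, which I expect to be the hardest, is to re-verify the vanishing $A'B' = \{0\}$ for this intrinsic formulation: this requires unwinding the unique product relation uniformly over all pairs of elements in $A$ and $B$ (not merely the chosen $\alpha,\beta$) and using $s$-unitality of $S_N$ to absorb the intermediate factors so that the cross terms still factor through an annihilating product of the form $\alpha_{\bar{w}_0} S_N \beta_{\bar{v}_0}$.
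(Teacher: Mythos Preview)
Your setup is right—passing to $\text{min}_N(A)$, $\text{min}_N(B)$ and invoking the unique product property is exactly what is needed—but the construction of $A',B'$ has a genuine gap. Both your explicit and ``intrinsic'' definitions depend on the coset pair $(\bar w_0,\bar v_0)$, which in turn depends on the particular $(\alpha,\beta)$ you started with. Different generators $\alpha'\in\text{min}_N(A)$, $\beta'\in\text{min}_N(B)$ will in general produce a different unique-product pair, so there is no reason that $\pi_{\bar w_0}(\alpha')\,S_N\,\pi_{\bar v_0}(\beta')$ should vanish; your ``delicate step'' cannot be completed. The invariance claim in (a) also fails for the same reason: the set $A S_{\bar w_0^{-1}}$ is not $H/W$-invariant, because conjugation by $h\in H$ moves the $N$-coset $\bar w_0^{-1}$ to $h^{-1}\bar w_0^{-1}h$, which need not equal $\bar w_0^{-1}$.

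The missing idea is a minimality bootstrap that eliminates all dependence on $(\bar w_0,\bar v_0)$. From $\alpha\beta=0$ and the unique product you get $\alpha_{\bar w_0}\beta_{\bar v_0}=0$. But then $\alpha\beta_{\bar v_0}\in A$ has strictly smaller $W/N$-support than $\alpha$, so by minimality $\alpha\beta_{\bar v_0}=0$, hence $\alpha_{\bar x}\beta_{\bar v_0}=0$ for \emph{every} coset $\bar x$. Now for each fixed $\bar x$, the element $\alpha_{\bar x}\beta\in B$ has strictly smaller support than $\beta$, so again by minimality $\alpha_{\bar x}\beta=0$, and thus $\alpha_{\bar x}\beta_{\bar y}=0$ for all $\bar x,\bar y$. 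In particular $\pi_N(\alpha)\pi_N(\beta)=0$, a relation that holds for \emph{all} generators $\alpha,\beta$ and mentions no unique-product data. With this in hand, the correct (choice-free) definitions are simply $A':=\pi_N(\text{min}_N(A))$ and $B':=\pi_N(\text{min}_N(B))$; then $A'B'=\{0\}$ is immediate, and (a) and (b) follow directly from Lemma~\ref{lem:8_3} and Lemma~\ref{lem:bimodule_homo} with no further work.
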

\begin{proof}
Put $A' := \pi_N(\text{min}_N(A))$ and $B' := \pi_N(\text{min}_N(B))$, and note that they are both ideals of $S_N$ by Lemma~\ref{lem:8_3}.  
Let $\alpha = \sum_{x\in G} \alpha_x \in A$ and $\beta = \sum_{x \in G} \beta_x \in B$ be generators of $\text{min}_N(A)$ and $\text{min}_N(B)$, respectively.

Consider the induced $W/N$-grading on $S_W$ (see Section~\ref{sec:indgrad}). With this grading, $S_W$ has principal component $S_N$. Moreover, it follows from Proposition~\ref{prop:lannstrom1} that $S_W$ is a nearly epsilon-strongly $W/N$-graded ring. Thus, we may
w.l.o.g. 
assume that $N=\{e\}$. 

Now, using the fact that $W$ is a unique product group, we write $x_0 y_0$ for the unique product of $(\Supp(\alpha)) (\Supp(\beta))$ and deduce from $\alpha \beta \subseteq A B = \{0\}$ that $\alpha_{x_0} \beta_{y_0} = 0$, since no cancelling can occur. But then $\alpha \beta_{y_0} = \sum_{x \in G} \alpha_x \beta_{y_0}$ has smaller support size than that of $\alpha$. Since $\alpha$ meets a minimal number of cosets of $N$, it follows that $\alpha \beta_{y_0} = 0$. Hence, $\alpha_x \beta_{y_0} = 0$ for every $x \in W$, which in turn implies that $\alpha_x \beta$ has smaller support size than that of  $\beta$. As a result, we must have $\alpha_x \beta= 0$. In consequence, we have $\alpha_x \beta_y = 0$ for all $x, y \in W$, and hence $\pi_N(\alpha) \pi_N(\beta) = \alpha_e \beta_e = 0$. Thus, $A' B' = \{0\}$. 

Finally, we prove (a) and (b). 
If $A$ is  
$H/W$-invariant, then it follows from Lemma~\ref{lem:8_3} that $A'$ is 
$H/W$-invariant.
Next, suppose that $A \subseteq I S_W$. Then, $\text{min}_N(A) \subseteq A \subseteq I S_W$. Hence, by Lemma~\ref{lem:bimodule_homo}, $A' = \pi_N(\text{min}_N(A)) \subseteq \pi_N(I S_W) \subseteq IS_N$. 
The proof of the corresponding statements for $B$ and $B'$ is completely analogous.
\end{proof}

\begin{prop}
\label{prop:suff1}
Suppose that $S$ is nearly epsilon-strongly $G$-graded.
If $S$ is not prime, then it has an 
NP-datum $(H,N,I,\tilde{A},\tilde{B})$ for which $\tilde{A},\tilde{B}$ are $H/N$-invariant.
\end{prop}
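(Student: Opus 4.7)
The plan is to split the argument based on whether $S_e$ is $G$-semiprime. If $S_e$ fails to be $G$-semiprime, then by Remark~\ref{rem:22}(b) it is also not $G$-prime, so Corollary~\ref{cor:easy_dir2} immediately produces a balanced NP-datum $(G,\{e\},S_e,\tilde A,\tilde B)$ whose ideals $\tilde A,\tilde B$ are $G$-invariant, hence trivially $G/\{e\}$-invariant. A balanced NP-datum is an NP-datum by Remark~\ref{rem:NP3NP4}(a), so this case is done. The remaining, harder case is when $S_e$ is $G$-semiprime, where we can invoke the $\Delta$-method of Section~\ref{Sec:PassmanForms}.

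Assume now that $S_e$ is $G$-semiprime. Since $S$ is not prime, fix nonzero ideals $A,B$ of $S$ with $AB=\{0\}$. Lemma~\ref{lem:not_prime} yields a subgroup $H$ of $G$, a nonzero $H$-invariant ideal $I$ of $S_e$ satisfying $I^x I=\{0\}$ for every $x\in G\setminus H$ (this already establishes (NP2)), together with nonzero $H/\Delta(H)$-invariant ideals $A_1,B_1$ of $S_{\Delta(H)}$ with $A_1,B_1\subseteq IS_{\Delta(H)}$ and $A_1B_1=\{0\}$. Lemma~\ref{lem:8_1} then refines this to produce a finitely generated normal subgroup $W$ of $H$ with $W\subseteq\Delta(H)$ and nonzero $H/W$-invariant ideals $A_2,B_2$ of $S_W$, contained in $IS_W$, with $A_2B_2=\{0\}$.

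The main step is the reduction from the finitely generated (possibly infinite) group $W$ to a finite normal subgroup $N$. By Proposition~\ref{prop:8_2}, $W$ contains a finite characteristic subgroup $N$ such that $W/N$ is torsion-free abelian. Since $N$ is characteristic in $W$ and $W\lhd H$, the subgroup $N$ is normal in $H$; together with finiteness of $N$ this yields (NP1). Because $W/N$ is torsion-free abelian, it is orderable and therefore a unique product group, so the hypotheses of Lemma~\ref{lem:passman2} (applied to the subgroups $N\lhd W$ of $G$, both normalized by $H$) are satisfied. Applying that lemma to $A_2,B_2$ produces nonzero $H/W$-invariant ideals $\tilde A,\tilde B$ of $S_N$ with $\tilde A,\tilde B\subseteq IS_N$ and $\tilde A\tilde B=\{0\}$, giving (NP3). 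Finally, since $N\subseteq W$ we have the containments $S_{h^{-1}N}\subseteq S_{h^{-1}W}$ and $S_{hN}\subseteq S_{hW}$ for every $h\in H$, so $H/W$-invariance of $\tilde A,\tilde B$ automatically upgrades to $H/N$-invariance. This produces the desired NP-datum $(H,N,I,\tilde A,\tilde B)$.

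The main obstacle is the descent from $\Delta(H)$ (which can be large and unwieldy) to a finite normal subgroup $N$, together with the verification that the quotient $W/N$ carries enough structure to run the cancellation argument of Lemma~\ref{lem:passman2}; Proposition~\ref{prop:8_2} handles the former and the observation that torsion-free abelian groups are unique product groups handles the latter. Once these structural facts are in hand, the proof is a matter of carefully tracking invariance and containment properties through Lemma~\ref{lem:not_prime}, Lemma~\ref{lem:8_1} and Lemma~\ref{lem:passman2}, and noting the final elementary implication $H/W$-invariance $\Longrightarrow$ $H/N$-invariance for ideals supported in $S_N$.
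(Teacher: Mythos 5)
Your proposal is correct and follows essentially the same route as the paper's proof: the same split on $G$-semiprimeness of $S_e$, the same chain Lemma~\ref{lem:not_prime} $\to$ Lemma~\ref{lem:8_1} $\to$ Proposition~\ref{prop:8_2} $\to$ Lemma~\ref{lem:passman2}, and the same final observation that $H/W$-invariance implies $H/N$-invariance. The only (cosmetic) differences are that you cite Remark~\ref{rem:22}(b) together with Corollary~\ref{cor:easy_dir2} where the paper packages these as Corollary~\ref{cor:not_semiprime}, and that you fold the invocation of Proposition~\ref{prop:main1} into the statement of Lemma~\ref{lem:not_prime} rather than citing it separately.
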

\begin{proof}
If $S_e$ is not  
$G$-semiprime, then the desired conclusion follows from Corollary~\ref{cor:not_semiprime}.
Now, suppose that $S_e$ is 
$G$-semiprime. Then Proposition~\ref{prop:main1} provides us with a subgroup $H$ of $G$ and an $H$-invariant ideal $I$ of $S_e$ such that $I^x I = \{0\}$ for every $x \in G \setminus H$. In particular, condition~(NP2) holds.

To proceed, we apply Lemma~\ref{lem:not_prime}, which yields nonzero $H/\Delta(H)$-invariant ideals $A_1, B_1$ of $S_{\Delta(H)}$ such that $A_1 B_1 = \{0\}$.  Moreover, by Lemma~\ref{lem:8_1} there exists a finitely generated normal subgroup $W$ of $H$ with $W \subseteq \Delta(H)$ and nonzero $H/W$-invariant ideals $A_2, B_2$ of $S_W$ such that $A_2 B_2 = \{0\}$.

Next, by Proposition~\ref{prop:8_2} there is a finite characteristic subgroup $N \lhd W$ such that $W/N$ is torsion-free abelian. Since $N$ is a characteristic subgroup, we get that $N \lhd W \lhd H$. 
This establishes condition~(NP1). 
Moreover, by a well-known result by Levi \cite{levi1942ordered}, $W/N$ is an ordered group, and hence a unique product group. Note that $H$ normalizes $N$ and $W$. 
This means that Lemma~\ref{lem:passman2} is at our disposal, i.\,e. there are nonzero $H/W$-invariant, and in particular $H/N$-invariant, ideals $\tilde A, \tilde B$ of $S_N$ such that $\tilde A, \tilde B \subseteq IS_N$ and $\tilde A \tilde B  = \{0\}$.  Hence, condition~(NP3) holds.
This shows that $(H,N,I,\tilde{A},\tilde{B})$ is an NP-datum for $S$.
\end{proof}

\section{Proof of the main theorem}
\label{Sec:Proofs}

In this section, we finish the proof of Theorem~\ref{thm:mainNew} and show that Passman's result 
(see Theorem~\ref{maintheorem}) can be recovered from it. 

\begin{proof}[Proof of Theorem~\ref{thm:mainNew}]

(1)
Suppose that $S$ is non-degenerately $G$-graded.

(e)$\Rightarrow$(d):
Suppose that (e) holds.
By Lemma~\ref{lem:induced_grading_ideals}, $\tilde{A},\tilde{B}$ are $H$-invariant.
It only remains to show that $\tilde{A} S_H \tilde{B} = \{0\}$.
Take $x\in H$.
Seeking a contradiction, suppose that $\tilde{A} S_{xN} \tilde{B} \neq \{0\}$.
Note that $\tilde{A} S_{xN} \tilde{B} \subseteq S_{xN}$. 
By non-degeneracy of the $G$-grading on $S$, it follows that $S_H$ is non-degenerately $H$-graded.
Hence, by Proposition~\ref{prop:QuotientNonDeg}, the $H/N$-grading on $S_H$ is also non-degenerate.
Consequently,
$S_{x^{-1}N} \tilde{A} S_{xN} \tilde{B} \neq \{0\}$.
By the $H/N$-invariance of $\tilde{A}$ we get that
$\{0\} \neq S_{x^{-1}N} \tilde{A} S_{xN} \tilde{B} \subseteq \tilde{A} \tilde{B} = \{0\}$
which is a contradiction.
We conclude that
 $\tilde{A}S_x \tilde{B} \subseteq \tilde{A} S_{xN} \tilde{B} = \{0\}$.
Thus, $\tilde{A} S_H \tilde{B}=\{0\}$.

(d)$\Rightarrow$(c)$\Rightarrow$(b):  This is trivial.

(b)$\Rightarrow$(a): 
This follows from Proposition~\ref{prop:easy_dir}.

(2)
Suppose that $S$ is nearly epsilon-strongly $G$-graded.
By Proposition~\ref{prop:nearly_implies_nondegenerate}, $S$ is non-degenerately $G$-graded.
Hence, by (1) we get that
(e)$\Rightarrow$(d)$\Rightarrow$(c)$\Rightarrow$(b)$\Rightarrow$(a). The remaining implication,
(a)$\Rightarrow$(e),
 follows from
 Proposition~\ref{prop:suff1}. 
\end{proof}

\begin{proof}[Proof of Theorem~\ref{maintheorem}]
Let $S$ be a unital strongly $G$-graded ring. 
The claim of Theorem~\ref{maintheorem}
follows immediately from Remark~\ref{rem:NP3NP4}
and the equivalence (a)$\Leftrightarrow$(d) in Theorem~\ref{thm:mainNew}.
\end{proof}

\section{Applications for torsion-free grading groups}
\label{Sec:Sufficient}

Recall that $S$ is a $G$-graded ring.
In this section, we pay special attention to the case when 
$G$ is torsion-free.
The following result generalizes Corollary~\ref{Cor:Ordered} and establishes Theorem~\ref{thm:NearlyTorsion}:

\begin{thm}[{cf.~\cite[Cor.~4.6]{passman1984infinite}}]\label{thm:StrongPrimeSuffTorsionFree}
Suppose that $G$ is torsion-free and that $S$ is nearly epsilon-strongly $G$-graded.
Then $S$ is prime if and only if $S_e$ is $G$-prime.
\end{thm}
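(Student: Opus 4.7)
The plan is to establish both directions separately. The forward direction is immediate from previously developed machinery: since $S$ is nearly epsilon-strongly $G$-graded, Proposition~\ref{prop:nearly_implies_nondegenerate} ensures $S$ is non-degenerately $G$-graded, so Corollary~\ref{cor:PrimeNecessary} yields that primeness of $S$ implies $G$-primeness of $S_e$.

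For the converse, I argue the contrapositive: assume $S$ is not prime and produce a failure of $G$-primeness of $S_e$. Invoking the implication (a)$\Rightarrow$(e) of Theorem~\ref{thm:mainNew}(2), I obtain subgroups $N \lhd H \subseteq G$ with $N$ finite, an $H$-invariant ideal $I$ of $S_e$ satisfying $I^x I = \{0\}$ for every $x \in G \setminus H$, and nonzero $H/N$-invariant ideals $\tilde{A}, \tilde{B}$ of $S_N$ with $\tilde{A}, \tilde{B} \subseteq IS_N$ and $\tilde{A}\tilde{B} = \{0\}$. The hypothesis that $G$ is torsion-free forces the finite subgroup $N$ to be trivial; consequently $S_N = S_e$, $H/N = H$, and $\tilde{A}, \tilde{B}$ become nonzero $H$-invariant ideals of $S_e$, both contained in $I$, whose product vanishes.

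To promote these to $G$-invariant ideals, I set $A' := \tilde{A}^G$ and $B' := \tilde{B}^G$. By Lemma~\ref{lem:power}(b), together with the $s$-unitality of $S_e$ (Proposition~\ref{prop:s-unital}), these are the smallest $G$-invariant ideals of $S_e$ containing $\tilde{A}$ and $\tilde{B}$ respectively, and are in particular nonzero. The key step is to verify $A'B' = \{0\}$. Writing $A'B' = \sum_{x,y\in G} \tilde{A}^x\tilde{B}^y$, I fix a pair $(x, y)$ and suppose for contradiction that $\tilde{A}^x\tilde{B}^y \neq \{0\}$. A short ``shift'' argument using Lemma~\ref{lem:12} reduces this to the assertion $(\tilde{A}^x\tilde{B}^y)^{y^{-1}} \neq \{0\}$, which by Corollary~\ref{cor:multi1} and Lemma~\ref{lem:2}(a) sits inside $\tilde{A}^{xy^{-1}}\tilde{B}$ (the identity $\tilde{B}^e = \tilde{B}$ is used here, via $s$-unitality). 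A case split on whether $xy^{-1} \in H$ then produces the desired contradiction: if $xy^{-1} \in H$, the $H$-invariance of $\tilde{A}$ forces $\tilde{A}^{xy^{-1}}\tilde{B} \subseteq \tilde{A}\tilde{B} = \{0\}$; if $xy^{-1} \notin H$, then $\tilde{A}^{xy^{-1}}\tilde{B} \subseteq I^{xy^{-1}}I = \{0\}$ by the defining property of $I$. Hence $A'B' = \{0\}$, witnessing the failure of $G$-primeness of $S_e$. The main obstacle is precisely this shift argument, where the equality (rather than mere containment) in Corollary~\ref{cor:multi1} is what permits a two-exponent expression $\tilde{A}^x\tilde{B}^y$ to be collapsed into a one-exponent expression triggering the Passman-type annihilation condition imposed on $I$.
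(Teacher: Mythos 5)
Your proof is correct and follows the same overall strategy as the paper: the forward direction is Corollary~\ref{cor:PrimeNecessary}, the converse invokes Theorem~\ref{thm:mainNew} to extract an NP-datum, torsion-freeness forces $N=\{e\}$, and one passes to the $G$-invariant hulls $\tilde{A}^G,\tilde{B}^G$ via Lemma~\ref{lem:power}. The only divergence is in verifying $\tilde{A}^G\tilde{B}^G=\{0\}$: the paper starts from a \emph{balanced} NP-datum (assertion (d)) and reuses the computation from Proposition~\ref{prop:easy_dir} to get $\tilde{A}S_x\tilde{B}=\{0\}$ for every $x\in G$, whereas you start from assertion (e) and run the conjugation-collapse argument of Lemma~\ref{lem:18}(a) and Proposition~\ref{prop:power} on $\tilde{A}^x\tilde{B}^y$ directly; both are valid, and your variant has the mild advantage of needing only the unbalanced condition $\tilde{A}\tilde{B}=\{0\}$ together with $H$-invariance.
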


\begin{proof}
Suppose that $S$ is not prime.
By Theorem~\ref{thm:mainNew}, 
there is a balanced NP-datum
\linebreak 
$(H,N,I,\tilde{A},\tilde{B})$ for $S$.
Using that $G$ is torsion-free, we conclude that $N=\{e\}$. 
In consequence, $S_N=S_e$ and $I,\tilde{A},\tilde{B}$ are all ideals of $S_e$.
Consider the sets $\tilde{A}^G$ and $\tilde{B}^G$. 
By Proposition~\ref{lem:power} 
they are nonzero $G$-invariant ideals of $S_e$.
Note that
$\tilde{A} S_x \tilde{B}=\{0\}$ for every $x\in G$ by the same argument as  
in the proof of Proposition~\ref{prop:easy_dir}.
Using this, we get that
$\tilde{A}^G \tilde{B}^G =\{0\}$
and hence $S_e$ is not $G$-prime.

Now suppose that $S$ is prime.
By Corollary~\ref{cor:PrimeNecessary}, it follows that $S_e$ is $G$-prime.
\end{proof}

\begin{rem}
Note that a strongly $G$-graded ring with local units is necessarily nearly epsilon-strongly $G$-graded (see Lemma~\ref{lem:strongSunital}). Hence, \cite[Thm.~3.1]{AbHa93} by Abrams and Haefner follows from Theorem~\ref{thm:StrongPrimeSuffTorsionFree}.
\end{rem}

The following corollary is similar to a result by \"{O}inert \cite[Thm.~4.4]{oinert2019units}:

\begin{cor}\label{cor:torsion_free}
Suppose that $G$ is torsion-free and that $S$ is nearly epsilon-strongly $G$-graded. 
If $S_e$ is prime, then $S$ is prime.
\end{cor}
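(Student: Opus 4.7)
The plan is to invoke Theorem~\ref{thm:StrongPrimeSuffTorsionFree} directly. That theorem asserts that, under the hypotheses that $G$ is torsion-free and $S$ is nearly epsilon-strongly $G$-graded, $S$ is prime if and only if $S_e$ is $G$-prime. So the task reduces entirely to verifying the following general implication for the principal component: \emph{if $S_e$ is prime, then $S_e$ is $G$-prime}.

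This implication is essentially tautological from Definition~\ref{def:g-prime}. Indeed, to check $G$-primeness of $\{0\}$ in $S_e$, one takes $G$-invariant ideals $A,B$ of $S_e$ with $AB = \{0\}$ and must conclude that $A = \{0\}$ or $B = \{0\}$. But $A$ and $B$ are in particular ideals of $S_e$, so the primeness of $S_e$ applied to these (without using $G$-invariance at all) forces one of them to be zero. Thus every prime ring is automatically $G$-prime for any group action/grading, and the corollary follows.

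The only step with any content is the reference to Theorem~\ref{thm:StrongPrimeSuffTorsionFree}, whose proof is the substantial part (it used the full weight of Theorem~\ref{thm:mainNew}, the torsion-freeness to force $N = \{e\}$, and the passage to $G$-invariant hulls $\tilde A^G, \tilde B^G$). Since that work is already done, there is no obstacle in the present argument; the corollary is a one-line consequence.
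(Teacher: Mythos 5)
Your argument is correct and is exactly the intended derivation: the paper states Corollary~\ref{cor:torsion_free} immediately after Theorem~\ref{thm:StrongPrimeSuffTorsionFree} with no further proof, precisely because primeness of $S_e$ trivially implies $G$-primeness (any $G$-invariant ideal is in particular an ideal), after which the theorem applies verbatim. No gaps.
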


\begin{exa}
    Let $R$ be a unital ring, let $u$ be an idempotent of $R$, and let $\alpha:R \to uRu$ be a corner ring isomorphism.
    In this example we consider the corner skew Laurent polynomial ring
    $R[t_+,t_{-},\alpha]$ which was introduced by Ara, Gonzalez-Barroso, Goodearl and Pardo in~\cite{ara2004fractional}. 
    For the convenience of the reader we now briefly recall its definition: 
    $R[t_+,t_{-},\alpha]$ is the universal unital ring satisfying the following two conditions:
    \begin{enumerate}[(a)]
        \item 
            there is a unital ring homomorphism $i : R \to R[t_+,t_{-},\alpha]$;
        \item
            $R[t_+,t_{-},\alpha]$ is the $R$-algebra satisfying the following equations for every $r \in R$:
            \begin{align*}
                t_{-} t_+ = 1, \qquad t_{+} t_{-} = i (u), \qquad r t_{-} = t_{-} \alpha(r), \qquad t_+ r = \alpha(r) t_+.
            \end{align*}
    \end{enumerate}
    Assigning degrees $-1$ to $t_{-}$ and $1$ to $t_+$ turns $R[t_+,t_{-},\alpha]$
    into a $\Z$-graded ring with principal component~$R$.
    By~\cite[Prop.~8.1]{lannstrom2019graded}, $R[t_+,t_{-},\alpha]$ is nearly epsilon-strongly $\Z$-graded.
    Hence, if $R$ is prime, then it follows from Corollary~\ref{cor:torsion_free} that $R[t_+,t_{-},\alpha]$ is also prime.
    Of course, when $u=1$ and $\alpha$ is the identity map, then $R[t_+,t_{-},\alpha]$ is the familiar ring $R[t,t^{-1}]$.
\end{exa}

\section{Applications to $s$-unital strongly graded rings}
\label{Sec:Strongly}

In this section, we 
apply our results to $s$-unital strongly $G$-graded rings.
Recall that, by Lemma~\ref{lem:strongSunital}, every $s$-unital strongly $G$-graded ring is nearly epsilon-strongly $G$-graded.
Thus, by Theorem~\ref{thm:mainNew}, we obtain the following
$s$-unital generalization of Passman's Theorem~\ref{maintheorem}:

\begin{cor}\label{cor:PassmanSunital}
Suppose that $S$ is an $s$-unital strongly $G$-graded ring.
Then $S$ is not prime if and only if
it has an NP-datum $(H,N,I,\tilde{A},\tilde{B})$ for which $\tilde{A},\tilde{B}$ are both $H$-invariant.
\end{cor}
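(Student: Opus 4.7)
The plan is to derive Corollary~\ref{cor:PassmanSunital} as a direct consequence of Theorem~\ref{thm:mainNew}, leveraging the fact that $s$-unital strongly $G$-graded rings are nearly epsilon-strongly $G$-graded by Lemma~\ref{lem:strongSunital}(a). This places us in the setting of part~(2) of Theorem~\ref{thm:mainNew}, so all five assertions (a)--(e) are equivalent. The only bridging work is to match the ``balanced'' NP-datum extracted from assertion~(d) with the (unbalanced) NP-datum appearing in the statement of the corollary, together with the $H$-invariance of $\tilde{A}$ and $\tilde{B}$.

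For the forward direction, I would assume $S$ is not prime and apply assertion~(d) of Theorem~\ref{thm:mainNew} to obtain $N \lhd H \subseteq G$ with $N$ finite, an $H$-invariant ideal $I$ of $S_e$ with $I^x I = \{0\}$ for every $x \in G \setminus H$, and nonzero $H$-invariant ideals $\tilde A, \tilde B$ of $S_N$ satisfying $\tilde A, \tilde B \subseteq IS_N$ and $\tilde A S_H \tilde B = \{0\}$. To verify that this tuple is an NP-datum, I need to check (NP3), namely $\tilde{A}\tilde{B}=\{0\}$, and that $I$ is nonzero. Nonzeroness of $I$ is immediate: otherwise $\tilde{A} \subseteq IS_N = \{0\}$ contradicts $\tilde{A} \neq \{0\}$. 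For the product, Lemma~\ref{lem:strongSunital}(b) gives, for each $a \in \tilde A$, an element $e_a \in S_e$ with $a = ae_a$; hence for any $b \in \tilde B$, $ab = a e_a b \in \tilde A\, S_e\, \tilde B \subseteq \tilde A\, S_H\, \tilde B = \{0\}$. Thus $\tilde{A}\tilde{B}=\{0\}$ and the tuple is an NP-datum with $\tilde{A},\tilde{B}$ both $H$-invariant.

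For the reverse direction, given an NP-datum $(H,N,I,\tilde{A},\tilde{B})$ in which $\tilde{A}$ and $\tilde{B}$ are $H$-invariant, Remark~\ref{rem:NP3NP4}(b) applies and upgrades (NP3) to the balanced condition $\tilde{A}S_H \tilde{B} = \{0\}$. Combined with the remaining data $N \lhd H$ (with $N$ finite), the $H$-invariance of $I$ and $\tilde{A},\tilde{B}$, the containment $\tilde{A},\tilde{B} \subseteq IS_N$, and the condition $I^x I = \{0\}$ for $x \in G \setminus H$, this is precisely assertion~(d) of Theorem~\ref{thm:mainNew}. By the equivalence (a)$\Leftrightarrow$(d) in part~(2) of that theorem, $S$ is not prime. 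There is no genuine obstacle here; the entire argument is a translation exercise, and the ``work'' has already been absorbed into Theorem~\ref{thm:mainNew} and Remark~\ref{rem:NP3NP4}.
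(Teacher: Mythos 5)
Your proposal is correct and follows essentially the same route as the paper: the paper derives the corollary directly from Lemma~\ref{lem:strongSunital} and the equivalences of Theorem~\ref{thm:mainNew}, leaving the translation between assertion~(d) and the NP-datum language implicit. Your additional bridging steps (nonzeroness of $I$, deducing $\tilde{A}\tilde{B}=\{0\}$ from $\tilde{A}S_H\tilde{B}=\{0\}$ via $s$-unitality, and invoking Remark~\ref{rem:NP3NP4}(b) for the converse) are exactly the details the paper suppresses, and they are all valid.
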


\subsection{Morita context algebras}

Let $S$ be an $s$-unital strongly $G$-graded ring. 
For every $x \in G$ the canonical multiplication map $m_x :  S_x \otimes_{S_e} S_{x^{-1}} \to S_e$, $	a \otimes b \mapsto ab$ 
is an isomorphism of $S_e$-bimodules. 
Indeed, $m_x$ is well-defined and surjective, using that $S$ is strongly $G$-graded. 
Moreover, the injectivity is a consequence of the $s$-unitality. Noteworthily, by associativity of the multiplication, for every $x \in G$ we also have
\begin{alignat*}{2}
	m_x \otimes \text{id} &= \text{id} \otimes m_{x^{-1}} &&: S_x \otimes_{S_e} S_{x^{-1}} \otimes_{S_e} S_x \to S_x
	\\
	m_{x^{-1}} \otimes \text{id} &= \text{id} \otimes m_x  &&: S_{x^{-1}} \otimes_{S_e} S_x \otimes_{S_e} S_{x^{-1}} \to S_{x^{-1}}.
\end{alignat*}
Thus, for every $x \in G$ we get a quintupel $(S_e,S_x,S_{x^{-1}},m_x,m_{x^{-1}})$ which is usually referred to as a \emph{strict Morita context}. 

Next, let us consider an $s$-unital ring $R$ and a strict Morita context $(R,M,N,\mu_1,\mu_{-1})$, i.\,e. we have $R$-bimodules $M, N$ and $R$-bimodule isomorphisms 
\begin{align*}
	\mu_1: M \otimes_R N \to R, 
	\qquad
	\mu_{-1}: N \otimes_R M \to R
\end{align*}
satisfying the mixed associativity conditions $\mu_1 \otimes \text{id} = \text{id} \otimes \mu_{-1}$ and $\mu_{-1} \otimes \text{id} = \text{id} \otimes \mu_1$. 
Furthermore, we assume that $RM=MR=M$ and $RN=NR=N$.
We form a $\Z$-graded module $S$ by putting
\begin{align*}
	S_n:=
	\begin{cases}
		R & \quad n = 0
		\\
		M^{\otimes^n_R} & \quad n > 0
		\\
		N^{\otimes^{-n}_R} & \quad n < 0.
	\end{cases}
\end{align*}
We wish to turn $S$ into a $\Z$-graded ring. 
The product of two positively graded elements is just the usual tensor product $\otimes_R$ of tensor products of $M$'s, and  similarly the product of two negatively graded elements is just the usual tensor product of $N$'s.
To deal with products of mixed elements, we repeatedly make use of the maps $\mu_1$ and $\mu_{-1}$. 
By the mixed associativity conditions, this multiplication becomes associative, and hence $S$ is a $\Z$-graded ring as desired.
In addition, as the maps $\mu_1$ and $\mu_{-1}$ are surjective, we may infer that $S$ is strongly $\Z$-graded.
Clearly, $S$ is $s$-unital.
Finally, Theorem~\ref{thm:StrongPrimeSuffTorsionFree} implies that if $R$ is $\Z$-prime, then $S$ is prime.

\subsection{$s$-unital strongly graded matrix rings}

In what follows, let $R$ be an $s$-unital ring. 
Let $M_\Z(R)$ denote the ring of infinite $\Z \times \Z$-matrices with only finitely many nonzero entries in $R$. 
For $r \in R$ and $i,j \in \Z$ we write $r e_{i,j}$ for the
matrix in $M_\Z(R)$ with $r$ in the $ij$th position and zeros elsewhere. 
We regard $M_\Z(R)$ as a $\Z$-graded ring with respect to 
\begin{align}
	\deg(r e_{i,j}) := i-j \quad \text{for all} \ i,j \in \Z \ \text{and all nonzero} \ r \in R.\label{eq:deg}
\end{align}
The corresponding homogeneous components of the $\Z$-grading are given by
\begin{align*}
	(M_\Z(R))_k = \bigoplus_{i \in \Z} R e_{i+k,i}, \qquad k \in \Z.
\end{align*}
In particular, $(M_\Z(R))_0 = \bigoplus_{i \in \Z} R e_{i,i}$ is the main diagonal.

\begin{lem}\label{lem:Zstronglygraded}
The ring $M_\Z(R)$ is $s$-unital and strongly $\Z$-graded with respect to the  
grading defined by \eqref{eq:deg}.
\end{lem}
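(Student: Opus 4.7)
The plan is to prove the two assertions separately, both by exploiting $s$-unitality of $R$ to lift scalars into matrix units.

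For $s$-unitality of $M_\Z(R)$, I will invoke Proposition~\ref{prop:tominaga}. Given any finite set $V \subseteq M_\Z(R)$, let $I \subseteq \Z$ be the finite set of row and column indices that occur in any element of $V$, and let $F \subseteq R$ be the finite set of all entries of elements of $V$. By $s$-unitality of $R$ (and Proposition~\ref{prop:tominaga}), there exists $e \in R$ such that $e r = r e = r$ for every $r \in F$. Define $u := \sum_{i \in I} e\,e_{i,i} \in M_\Z(R)$. A direct computation on basis elements shows that $u v = v u = v$ for every $v \in V$, so Proposition~\ref{prop:tominaga} yields that $M_\Z(R)$ is $s$-unital.

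For the strong grading, I will show directly that $(M_\Z(R))_x (M_\Z(R))_y = (M_\Z(R))_{x+y}$ for all $x,y \in \Z$. The inclusion ``$\subseteq$'' is automatic from the $\Z$-grading. For ``$\supseteq$'', it suffices to realize each generator $r e_{i,j}$ with $i-j = x+y$ as a product of a degree-$x$ element and a degree-$y$ element. By $s$-unitality of $R$, pick $e \in R$ with $re = r$. Then
\begin{equation*}
    r e_{i,j} = (r e_{i,\, i-x})(e\, e_{i-x,\, j}),
\end{equation*}
where $\deg(r e_{i,\,i-x}) = i - (i-x) = x$ and $\deg(e\,e_{i-x,\,j}) = (i-x) - j = y$. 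This is the entire argument; no case analysis is needed.

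Neither step presents a real obstacle: the only point requiring any care is that $R$ is not assumed unital, so we cannot simply write down an identity matrix. Both steps handle this in the same way, by using Proposition~\ref{prop:tominaga} to produce an $e \in R$ that acts as an identity on the finitely many $R$-entries relevant to the calculation at hand.
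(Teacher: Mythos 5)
Your proof is correct, and both halves use the same essential idea as the paper (Proposition~\ref{prop:tominaga} applied to the finitely many relevant entries of $R$), but you take a slightly different route for the strong grading. The paper does not verify $S_x S_y = S_{x+y}$ for all $x,y$ directly; instead it checks only $S_0 S_n = S_n S_0 = S_n$ and $S_k S_{-k} = S_0$ (the latter via $R^2 = R$) and then invokes the reduction criterion of Proposition~\ref{prop:stefan}. You instead factor an arbitrary generator $r e_{i,j}$ of degree $x+y$ explicitly as $(r e_{i,\,i-x})(e\,e_{i-x,\,j})$ with $re=r$, which handles all pairs of degrees at once and makes the argument self-contained, at the cost of not reusing the general-purpose lemma. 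Both computations are sound; your factorization is essentially the paper's $\sum_i R^2 e_{i+k,i}e_{i,i+k} = \sum_i R e_{i+k,i+k}$ trick generalized to arbitrary degree splittings. The $s$-unitality argument via $u = \sum_{i\in I} e\,e_{i,i}$ is exactly what the paper leaves implicit when it says the claim ``follows from Proposition~\ref{prop:tominaga} and $s$-unitality of $R$.''
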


\begin{proof}
Put $S := M_{\Z}(R)$.
By Proposition~\ref{prop:tominaga} and $s$-unitality of $R$, it follows that $S$ is $s$-unital and that
$S_0 S_n = S_n S_0 = S_n$, for every $n \in \Z$.
Take $k \in \Z$. Since $R$ is $s$-unital, and hence idempotent, we get that
$S_k S_{-k} = 
( \sum_{i \in \Z} R e_{i+k,i} ) ( \sum_{j \in \Z} R e_{j-k,j} ) = \sum_{i \in \Z} R^2 e_{i+k,i} e_{i,i+k} = \sum_{i \in \Z} R e_{i+k,i+k} = S_0$.
The claim now follows from Proposition~\ref{prop:stefan}.
\end{proof}

\begin{cor}\label{thm:matrix_prime}
The ring $M_\Z(R)$ is prime if and only if $R$ is prime.
\end{cor}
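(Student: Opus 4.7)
The plan is to reduce Corollary~\ref{thm:matrix_prime} to a $\Z$-primeness statement about the principal component, and then to classify the $\Z$-invariant ideals of that component explicitly.

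Put $S := M_\Z(R)$. By Lemma~\ref{lem:Zstronglygraded}, $S$ is $s$-unital and strongly $\Z$-graded, hence nearly epsilon-strongly $\Z$-graded by Lemma~\ref{lem:strongSunital}(a). Since $\Z$ is torsion-free, Theorem~\ref{thm:StrongPrimeSuffTorsionFree} applies and yields that $S$ is prime if and only if $S_0 = \bigoplus_{i \in \Z} R e_{i,i}$ is $\Z$-prime. Hence it suffices to show that $S_0$ is $\Z$-prime if and only if $R$ is prime.

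For this, I will first describe the $\Z$-invariant ideals of $S_0$ explicitly. Given an ideal $J$ of $R$, set $I_J := \bigoplus_{i \in \Z} J e_{i,i}$. A direct computation using $S_n = \bigoplus_{i} R e_{i+n,i}$ and $s$-unitality of $R$ (which gives $RJR = J$) shows that $S_{-n} I_J S_n = I_J$ for every $n \in \Z$, so each $I_J$ is a $\Z$-invariant ideal of $S_0$. Conversely, let $I$ be an arbitrary $\Z$-invariant ideal of $S_0$ and define $J := \{\, r \in R \mid r e_{0,0} \in I \,\}$; one checks that $J$ is an ideal of $R$, and by computing $e_{0,i} \cdot I \cdot e_{i,0} \subseteq S_{-i} I S_i \subseteq I$ and shifting back, one obtains $I = I_J$. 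Thus $J \mapsto I_J$ is an inclusion- and product-preserving bijection between the ideals of $R$ and the $\Z$-invariant ideals of $S_0$; in particular $I_{J_1} I_{J_2} = I_{J_1 J_2}$ and $I_J = \{0\}$ if and only if $J = \{0\}$.

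Combining these facts, $J_1 J_2 = \{0\}$ for ideals $J_1, J_2$ of $R$ if and only if $I_{J_1} I_{J_2} = \{0\}$, and from the bijection it follows that $R$ is prime if and only if $S_0$ is $\Z$-prime. Together with the reduction via Theorem~\ref{thm:StrongPrimeSuffTorsionFree}, this yields the corollary.

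The main obstacle is the explicit identification of the $\Z$-invariant ideals of $S_0$; once that correspondence is in hand, the rest of the argument is routine. The subtlety is to exploit $s$-unitality of $R$ (so that $RJR = J$) in order both to identify $I_J$ as $\Z$-invariant and to recover $I$ from its ``diagonal trace'' $J$.
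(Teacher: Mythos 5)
Your proof is correct and follows essentially the same route as the paper: reduce to $\Z$-primeness of the principal component via Theorem~\ref{thm:StrongPrimeSuffTorsionFree} and then observe that the $\Z$-invariant ideals of $\bigoplus_{i\in\Z} R e_{i,i}$ are exactly the constant-diagonal ideals $\bigoplus_{i\in\Z} J e_{i,i}$. The only cosmetic difference is that the paper handles the direction ``$R$ not prime $\Rightarrow M_\Z(R)$ not prime'' directly via $M_\Z(A)M_\Z(B)=\{0\}$, whereas you route both directions through the graded theorem; both are fine.
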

\begin{proof}
    Suppose that $R$ is not prime, i.\,e. there are nonzero ideals $A,B$ of $R$ such that $AB=\{0\}$. 
    Then $M_\Z(A) \cdot M_\Z(B) = \{0\}$ which shows that $M_\Z(R)$ is not prime.
    Conversely suppose that $R$ is prime.
    Note that any ideal $I$ of $(M_\Z(R))_0$ is of the form $I = \bigoplus_{i \in \Z} I_i e_{i,i}$
for some family of $R$-ideals $I_i$, $i \in \Z$, and it is $\Z$-invariant if and only if $I_i = I_0$ for every $i \in \Z$. 
Next, let $A,B$ be $\Z$-invariant ideals of $(M_\Z(R))_0$ such that $A B = \{ 0 \}$.
There are $R$-ideals $A_0,B_0$ such that $A = \bigoplus_{i \in \Z} A_0 e_{i,i} $ and $B = \bigoplus_{i \in \Z} B_0 e_{i,i} $.
Since $AB = \{0\}$, we see that $A_0B_0=\{0\}$ and thus $A_0 = \{0\}$ or $B_0 = \{0\}$ due to the primeness of $R$. 
Hence, $A = \{0\}$ or $B = \{0\}$.
Consequently, $(M_\Z(R))_0$ is $\Z$-prime and hence $M_\Z(R)$
is prime by 
Theorem~\ref{thm:StrongPrimeSuffTorsionFree}.
\end{proof}

\begin{rem}
    The above result shows that primeness of the principal component is not a necessary condition for primeness of a strongly graded ring.
    Nevertheless, by Corollary~\ref{cor:PrimeNecessary}, $G$-primeness of $S_e$ is a necessary condition.
\end{rem}

Now we fix $n \in \N$ and consider $M_n(R)$, the ring of $n \times n$-matrices with entries in $R$. 
The ring $M_n(R)$ comes equipped with a natural $\Z$-grading defined by
\begin{align}
	\deg(r e_{i,j}) := i-j \quad \text{for all} \
	i,j \in \{ 1,\ldots,n \}
	\
	\text{and all nonzero} 
	\
	r \in R.\label{eq:deg_n}
\end{align}

\begin{lem}\label{lem:mnRepsilon}
The ring $M_n(R)$ is nearly epsilon-strongly $\Z$-graded 
with respect to the  
	grading defined by~\eqref{eq:deg_n}.
\end{lem}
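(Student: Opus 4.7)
The plan is to verify the characterization given by Proposition~\ref{prop:6}: that $S := M_n(R)$ is symmetrically $\Z$-graded and that each product $S_k S_{-k}$ is $s$-unital (where $S_k := (M_n(R))_k$). If these two conditions hold, then $S$ is nearly epsilon-strongly $\Z$-graded.

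First, I would identify the homogeneous components explicitly. For $|k| < n$ one has
\[
S_k = \bigoplus_i R\,e_{i+k,\,i},
\]
where $i$ ranges over indices with $1 \le i$ and $i+k \le n$; for $|k| \ge n$ the component is $\{0\}$ and there is nothing to check. For the $s$-unitality of $S_k S_{-k}$, a direct computation using the matrix-unit relations $e_{a,b}e_{c,d} = \delta_{b,c}\,e_{a,d}$ together with $R^2 = R$ (which holds because $R$ is $s$-unital, hence idempotent) shows that
\[
S_k S_{-k} = \bigoplus_{m} R\,e_{m,m}
\]
for a suitable finite range of diagonal indices $m$. Each summand $R\,e_{m,m}$ is isomorphic to $R$ as a ring, so it is $s$-unital, and a finite direct sum of $s$-unital rings is $s$-unital by Proposition~\ref{prop:tominaga}.

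Next, I would verify the symmetry condition $S_k S_{-k} S_k = S_k$. Multiplying the above diagonal expression for $S_k S_{-k}$ by $S_k$ and again invoking the matrix-unit relations together with $R^2 = R$, one checks that every generator $R\,e_{i+k,\,i}$ of $S_k$ is recovered; the reverse inclusion is immediate since $S_k S_{-k} S_k \subseteq S_k$.

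The entire argument is essentially careful index bookkeeping on matrix units, and the only substantive point is that $s$-unitality of $R$ is used to ensure the equality $R \cdot R = R$ rather than a possibly strict inclusion. There is no genuine obstacle; once symmetry and $s$-unitality of each $S_k S_{-k}$ are established, Proposition~\ref{prop:6} yields the desired conclusion.
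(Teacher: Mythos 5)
Your proof is correct, but it takes a genuinely different route from the paper's. You verify the two conditions of Proposition~\ref{prop:6} (the grading is symmetric and each $S_k S_{-k}$ is $s$-unital): you compute $S_k S_{-k} = \bigoplus_{m} R\,e_{m,m}$ over the appropriate diagonal range, note that a finite direct sum of copies of the $s$-unital ring $R$ is $s$-unital, and then check $S_k S_{-k} S_k = S_k$ using $R^2 = R$. The paper instead works straight from the definition: for a given $s = \sum_{i-j=k} r_{i,j} e_{i,j} \in S_k$ it invokes Proposition~\ref{prop:tominaga} to produce a single $u \in R$ with $u r_{i,j} = r_{i,j} u = r_{i,j}$ for all finitely many entries, and then exhibits explicit elements $v := \sum_{i-j=k} u e_{i,j}\, u e_{j,i} \in S_k S_{-k}$ and $w := \sum_{i-j=k} u e_{j,i}\, u e_{i,j} \in S_{-k} S_k$ satisfying $vs = s = sw$. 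Your approach is arguably cleaner in that it isolates the structural facts (the homogeneous products are diagonal ideals isomorphic to finite sums of copies of $R$) and delegates the rest to the established characterization, and it also gives an explicit description of $S_k S_{-k}$ that could be reused; the paper's approach is more self-contained and constructive, producing the local units $\epsilon_k(s)$ and $\epsilon_k(s)'$ directly. Both hinge on the same two ingredients: the matrix-unit relations and Tominaga's characterization of $s$-unitality.
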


\begin{proof}
Put $S := M_n(R)$.
Take $k \in \Z$ and $r \in R$. 
Note that for $i,j$ such that $i-j = k$, and $a,b,c \in R$
such that $abc = r$,
we have
$a e_{i,j},c e_{i,j} \in S_k$,
$b e_{j,i} \in S_{-k}$ and
$a e_{i,j} b e_{j,i} c e_{i,j} = r e_{i,j}$. 
Take $s \in S_k$. 
Then $s = \sum_{i-j=k} r_{i,j} e_{i,j} \in S_k$ for some 
$r_{i,j} \in R$. 
By Proposition~\ref{prop:tominaga} and $s$-unitality of $R$,
there is $u \in R$ such 
that $u r_{i,j} = r_{i,j} u = r_{i,j}$ for all $i,j$.
Put $v := \sum_{i-j=k} u e_{i,j} u e_{j,i} \in S_k S_{-k}$ and 
$w := \sum_{i-j=k} u e_{j,i} u e_{i,j} \in S_{-k} S_k$. 
Then 
$vs = s$ and $sw = s$. 
This shows that $S$ is nearly epsilon-strongly $\Z$-graded.
\end{proof}

Note that if $R$ is prime, then $(M_n(R))_0$ is $\Z$-prime.
Hence, by Corollary~\ref{Cor:Ordered} and \linebreak Lemma~\ref{lem:mnRepsilon}, we obtain the following $s$-unital generalization of a well-known result: 

\begin{cor}[{cf.~\cite[Prop.~10.20]{lam2001first}}]
	\label{cor:finmatrixprime}
	The ring $M_n(R)$ is prime if and only if $R$ is prime.
\end{cor}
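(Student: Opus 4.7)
The plan is to reduce the claim to Corollary~\ref{Cor:Ordered}, applied to the $\Z$-grading on $M_n(R)$ given by~\eqref{eq:deg_n}. Indeed, by Lemma~\ref{lem:mnRepsilon} this grading is nearly epsilon-strong, and $\Z$ is an ordered group, so Corollary~\ref{Cor:Ordered} tells us that $M_n(R)$ is prime if and only if $(M_n(R))_0$ is $\Z$-prime. It therefore suffices to prove that $(M_n(R))_0$ is $\Z$-prime if and only if $R$ is prime.

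One implication is trivial: if $R$ admits nonzero ideals $A,B$ with $AB=\{0\}$, then $M_n(A), M_n(B)$ are nonzero ideals of $M_n(R)$ with $M_n(A)\cdot M_n(B)=\{0\}$, so $M_n(R)$ is not prime, and in particular $(M_n(R))_0$ is not $\Z$-prime via the corresponding diagonal ideals. This handles the ``only if'' direction of the corollary.

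For the converse, assume $R$ is prime. The key step is to classify the $\Z$-invariant ideals of $(M_n(R))_0=\bigoplus_{i=1}^n R\,e_{i,i}$. Any ideal of this ring has the form $I=\bigoplus_{i=1}^n I_i\,e_{i,i}$ for ideals $I_1,\ldots,I_n$ of $R$. I would compute directly that
\[
S_{-1} (I_i\,e_{i,i}) S_1 \subseteq R I_i R\,e_{i-1,i-1}
\qquad\text{and}\qquad
S_1 (I_i\,e_{i,i}) S_{-1} \subseteq R I_i R\,e_{i+1,i+1},
\]
and then use $s$-unitality of $R$, via Proposition~\ref{prop:tominaga}, to verify that $R I_i R = I_i$ for every ideal $I_i$ of $R$. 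Hence $\Z$-invariance of $I$ forces $I_i \subseteq I_{i-1}$ and $I_i \subseteq I_{i+1}$ for all admissible $i$, yielding the chain of equalities $I_1=I_2=\cdots=I_n$. Thus the $\Z$-invariant ideals of $(M_n(R))_0$ are precisely those of the form $\bigoplus_{i=1}^n J\,e_{i,i}$ with $J$ an ideal of $R$.

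With this description at hand, $\Z$-primeness of $(M_n(R))_0$ becomes equivalent to primeness of $R$: if $A=\bigoplus_{i=1}^n J_A\,e_{i,i}$ and $B=\bigoplus_{i=1}^n J_B\,e_{i,i}$ are $\Z$-invariant ideals satisfying $AB=\{0\}$, then $J_A J_B=\{0\}$, so by primeness of $R$ either $J_A=\{0\}$ or $J_B=\{0\}$, i.e.\ $A=\{0\}$ or $B=\{0\}$. The only step requiring care is the invariance computation collapsing $I_1,\ldots,I_n$ to a common ideal; this is a routine matrix-unit manipulation, and the $s$-unital hypothesis on $R$ enters precisely to guarantee $RIR=I$, which is the point where a genuinely non-unital coefficient ring could otherwise make trouble.
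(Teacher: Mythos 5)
Your proposal is correct and follows essentially the same route as the paper: Lemma~\ref{lem:mnRepsilon} plus Corollary~\ref{Cor:Ordered} reduce everything to showing that $(M_n(R))_0$ is $\Z$-prime when $R$ is prime, which the paper asserts without detail and which you verify by the same classification of $\Z$-invariant diagonal ideals that the paper spells out in the analogous infinite-matrix case (Corollary~\ref{thm:matrix_prime}). Your use of $s$-unitality to get $RI_iR=I_i$ is exactly the point where the non-unital hypothesis enters, and the argument is sound.
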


The $\Z$-grading on $M_n(R)$ defined above induces a $\Z /  n \Z$-grading on $M_n(R)$ (see Section~\ref{sec:indgrad}). 
By Lemma~\ref{lem:mnRepsilon} and Proposition~\ref{prop:lannstrom1}, this turns $M_n(R)$ into a nearly epsilon-strongly $\Z / n \Z$-graded ring.
By using an argument similar to the one in the proof of Lemma~\ref{lem:Zstronglygraded},
it is not difficult to see that this grading is, in fact, strong.
Hence, Corollary~\ref{cor:PassmanSunital} is applicable but presently it is not clear to the authors how to use it to prove Corollary~\ref{cor:finmatrixprime}.

\section{Applications to $s$-unital skew group rings}
\label{Sec:GroupRing}

Connell \cite{connell1963group} famously gave a characterization of when a unital group ring $R[G]$ is prime. 
In this section, we generalize and recover his result from our main theorem.
More precisely, we describe when an $s$-unital group ring $R[G]$ is prime. 

Let $R$ be a (possibly non-unital) ring and let $\alpha : G \to \Aut(R)$ be a group homomorphism.
We define the~\emph{skew group ring} $R \star_\alpha G$ as the set of all formal sums
$\sum_{x\in G} r_x \delta_x$ where $\delta_x$ is a symbol for each $x\in G$
and $r_x\in R$ is zero for all but finitely many $x\in G$.
Addition on $R \star_\alpha G$ is defined in the natural way and multiplication is defined by linearly extending the rules
$r \delta_x r' \delta_y = r\alpha_x(r') \delta_{xy}$, for all $r,r'\in R$ and $x,y\in G$.
This yields an associative ring structure on $R \star_\alpha G$.
Moreover, $S=R \star_\alpha G$ is canonically $G$-graded by putting $S_x := R \delta_x$ for every $x \in G$.
If $\alpha_x = \identity_R$ for every $x \in G$, then we simply write $R[G]$ for $R \star_\alpha G$ and call it a \emph{group ring}.
Note that $R \star_\alpha G$ is a so-called \emph{partial skew group ring} (see Section~\ref{Sec:Partial}).

\begin{prop}\label{prop:patrik1}
Suppose that $R$ is a ring and that $\alpha : G \to \Aut(R)$ is a group homomorphism.
The following assertions are equivalent:
\begin{enumerate}[{\rm (a)}]
\begin{item}
$R$ is idempotent;
\end{item}
\begin{item}
$R \star_\alpha G$ is strongly $G$-graded;
\end{item}
\begin{item}
$R \star_\alpha G$ is symmetrically $G$-graded.
\end{item}
\end{enumerate}
\end{prop}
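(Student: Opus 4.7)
The plan is to reduce all three conditions to a simple statement about powers of $R$, by computing products of homogeneous components explicitly. The key observation is that since $\alpha_x \in \Aut(R)$, we have $\alpha_x(R) = R$, and therefore for all $x,y \in G$,
\[
S_x S_y = (R\delta_x)(R\delta_y) = R\alpha_x(R)\delta_{xy} = R^2 \delta_{xy}.
\]
In particular $S_x S_{x^{-1}} = R^2 \delta_e$ and $S_x S_{x^{-1}} S_x = R^3 \delta_x$, while $S_{xy} = R\delta_{xy}$ and $S_x = R\delta_x$. Everything now reduces to the interplay between $R$, $R^2$, and $R^3$.

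I would then prove the equivalences as a short cycle. For (a)$\Rightarrow$(b): if $R = R^2$, then $S_x S_y = R^2 \delta_{xy} = R\delta_{xy} = S_{xy}$, so $S$ is strongly $G$-graded. For (b)$\Rightarrow$(c): this is immediate from Remark~\ref{rem:1}, but is also transparent here since $S_x = S_x S_{x^{-1}} S_x$ is a weakening of strong gradedness. For (c)$\Rightarrow$(a): symmetric gradedness gives $R^3 \delta_x = R\delta_x$ for every $x \in G$, hence $R^3 = R$. Since trivially $R^2 \subseteq R$, and now $R = R^3 = R\cdot R^2 \subseteq R^2$, we conclude $R = R^2$, i.e.\ $R$ is idempotent.

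I do not anticipate any real obstacle: once the formula $S_x S_y = R^2 \delta_{xy}$ is recorded, the rest is bookkeeping. The only mildly subtle point is the step $R^3 = R \Rightarrow R^2 = R$ in (c)$\Rightarrow$(a), which I would write out explicitly so the reader sees why ``symmetrically graded'' already forces full idempotence of $R$ rather than just $R^3 = R$.
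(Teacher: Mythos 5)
Your proof is correct and rests on the same key computation as the paper's, namely $S_xS_y = R\,\alpha_x(R)\,\delta_{xy} = R^2\delta_{xy}$; the only difference is that the paper delegates (b)$\Rightarrow$(c) and (c)$\Rightarrow$(a) to general facts about strongly and symmetrically graded rings from an external reference, whereas you verify them directly (and your $R^3=R\Rightarrow R^2=R$ step is fine since $R^3\subseteq R^2$ always). One small caveat: Remark~\ref{rem:1} concerns \emph{unital} strongly graded rings, so for (b)$\Rightarrow$(c) you should instead invoke the paper's general observation that strongly graded rings are symmetrically graded (or just your own computation $S_xS_{x^{-1}}S_x=S_eS_x=S_x$), since $R\star_\alpha G$ need not be unital here.
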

\begin{proof}
(a)$\Rightarrow$(b): Suppose that $R$ is idempotent, i.\,e. $R^2=R$.
Then for all $x,y\in G$ we have $(R \delta_x)(R \delta_y) = R \alpha_x(R) \delta_{xy} = R \delta_{xy}$.
In other words, $R \star_\alpha G$ is strongly $G$-graded.

(b)$\Rightarrow$(c): This holds in general for strongly $G$-graded rings (see \cite[Prop.~4.45]{lannstrom2019structure}).

(c)$\Rightarrow$(a): This holds in general for symmetrically $G$-graded rings (see \cite[Prop.~4.47]{lannstrom2019structure}).
\end{proof}

\begin{prop}\label{prop:patrik2}
Suppose that $R$ is a ring and that $\alpha : G \to \Aut(R)$ is a group homomorphism. 
The following assertions are equivalent:
\begin{enumerate}[{\rm (a)}]
\begin{item}
$R$ is $s$-unital;
\end{item}
\begin{item}
$R \star_\alpha G$ is $s$-unital strongly $G$-graded;
\end{item}
\begin{item}
$R \star_\alpha G$ is nearly epsilon-strongly $G$-graded.
\end{item}
\end{enumerate}
\end{prop}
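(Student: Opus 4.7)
The plan is to establish the cyclic chain (a)$\Rightarrow$(b)$\Rightarrow$(c)$\Rightarrow$(a).

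For (a)$\Rightarrow$(b), note that any $s$-unital ring is in particular idempotent, so Proposition~\ref{prop:patrik1} immediately gives that $R \star_\alpha G$ is strongly $G$-graded. The remaining task is to verify $s$-unitality of the skew group ring. Given $s = \sum_{x \in \Supp(s)} r_x \delta_x$, I would apply Tominaga's Proposition~\ref{prop:tominaga} to the finite subset $V := \{r_x : x \in \Supp(s)\} \cup \{\alpha_{x^{-1}}(r_x) : x \in \Supp(s)\} \subseteq R$ to obtain an element $u \in R$ acting as a two-sided identity on every element of $V$. Then $(u\delta_e) s = s$ is immediate, while for $s(u\delta_e) = \sum_x r_x \alpha_x(u) \delta_x$ I would use that $r_x \alpha_x(u) = \alpha_x(\alpha_{x^{-1}}(r_x) u) = \alpha_x(\alpha_{x^{-1}}(r_x)) = r_x$, since $\alpha_x$ is a ring automorphism. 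This minor book-keeping to cover the twist by $\alpha$ is the one genuinely nontrivial point in this implication.

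The implication (b)$\Rightarrow$(c) is immediate from Lemma~\ref{lem:strongSunital}(a), which states that every $s$-unital strongly $G$-graded ring is nearly epsilon-strongly $G$-graded.

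For (c)$\Rightarrow$(a), I would invoke Proposition~\ref{prop:s-unital}, according to which the principal component of any nearly epsilon-strongly $G$-graded ring is $s$-unital. Since $(R \star_\alpha G)_e = R \delta_e$ is canonically isomorphic to $R$ as a ring, the $s$-unitality transfers back to $R$.

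The only step that requires actual computation is (a)$\Rightarrow$(b), and the possible pitfall there is forgetting to choose $u$ large enough to also absorb the twisted elements $\alpha_{x^{-1}}(r_x)$, which is precisely what is needed to handle right $s$-unitality in the presence of the action $\alpha$. The remaining implications are essentially bookkeeping citations to results already established earlier in the article.
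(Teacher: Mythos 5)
Your proposal is correct and follows the same route as the paper: Proposition~\ref{prop:patrik1} for (a)$\Rightarrow$(b), Lemma~\ref{lem:strongSunital} for (b)$\Rightarrow$(c), and Proposition~\ref{prop:s-unital} for (c)$\Rightarrow$(a). The only difference is that the paper dismisses the $s$-unitality of $R \star_\alpha G$ in (a)$\Rightarrow$(b) as ``easy to see,'' whereas you spell out the Tominaga argument, correctly enlarging the finite set to include the twisted elements $\alpha_{x^{-1}}(r_x)$.
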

\begin{proof}
(a)$\Rightarrow$(b):
Suppose that $R$ is $s$-unital. In particular, $R$ is idempotent. Hence, $R \star_\alpha G$ is strongly $G$-graded by Proposition~\ref{prop:patrik1}. 
It is easy to see that $R \star_\alpha G$ is $s$-unital.

(b)$\Rightarrow$(c):
This follows from Lemma~\ref{lem:strongSunital}.

(c)$\Rightarrow$(a):
This holds for any nearly epsilon-strongly graded ring (see Proposition~\ref{prop:s-unital}).
\end{proof}

\begin{exa}
	In this example we consider the $s$-unital ring $M_\N(\R)$ of $\N \times \N$-matrices with only finitely many nonzero entries in $\mathbb{R}$. 
	Recall that the group $\text{SO}_3(\R)$ of rotations in $\R^3$ contains a subgroup $F$ isomorphic to free group of rank 2 (see e.\,g.~\cite{Haus14,Strom79}).
	For every $x \in F \subseteq \text{SO}_3(\mathbb{R})$ we may define a diagonal matrix $\text{diag}(x,x,x,\ldots)$ which is row-finite and column-finite but does not belong to $M_\N(\R)$.
  We thus obtain a group homomorphism $\alpha : F \to \Aut\bigl(M_\N(\R)\bigr)$ by putting
\begin{align*}
	\alpha_x(a):= \text{diag}(x,x,x,\ldots) \, a \, \text{diag}(x^{-1},x^{-1},x^{-1},\ldots)
\end{align*}
for $x \in F$ and $a \in M_\N(\R).$
    Since $M_\N(\R)$ is simple and $F$ is torsion-free, it follows from Corollary~\ref{cor:torsion_free} that the $s$-unital skew group ring $ M_\N(\R) \star_\alpha F$ is prime.
\end{exa}

We proceed to prove Theorem~\ref{thm:Connel}
by using our main theorem:

\begin{thm}\label{thm:connell2}
Suppose that $R$ is an $s$-unital ring. Then the group ring $R[G]$ is prime if and only if $R$ is prime and $G$ has no non-trivial finite normal subgroup. 
\end{thm}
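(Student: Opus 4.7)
The plan is to reduce the claim to Theorem~\ref{thm:mainNew} via the fact, recorded in Proposition~\ref{prop:patrik2}, that $R[G]$ is nearly epsilon-strongly $G$-graded with principal component $R$ (identified with $R\delta_e$). Throughout the argument I will repeatedly use two features special to the (untwisted) group ring: first, $R$ commutes with every $\delta_g$, so $R$ is a \emph{$G$-invariant} ideal of itself; second, when $N$ is a finite normal subgroup of $G$, the class sum $\hat{N} := \sum_{n \in N} \delta_n$ is central in $R[G]$, and in particular $\delta_n \hat N = \hat N \delta_n = \hat N$ for every $n\in N$.

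For the forward direction, assume $R[G]$ is prime. If $A,B$ are nonzero ideals of $R$ with $AB=\{0\}$, then the ideals $A[G]$ and $B[G]$ of $R[G]$ (defined coefficient-wise) are nonzero and their product lies in $(AB)[G]=\{0\}$; this contradiction shows that $R$ is prime. Next, suppose towards a contradiction that $G$ admits a finite normal subgroup $N$ with $|N|\ge 2$. I will exhibit a datum satisfying assertion (e) of Theorem~\ref{thm:mainNew} and conclude via (e)$\Rightarrow$(a). Take $H:=G$, $I:=R$ and set
\[
\tilde A := R[N]\,\hat N, \qquad
\tilde B := \Bigl\{\, \textstyle\sum_{n\in N} r_n \delta_n \in R[N] \,\Bigm|\, \sum_{n\in N} r_n = 0 \,\Bigr\}.
\]
Both are nonzero ideals of $R[N]$ contained in $IR[N]=R[N]$, using that $R$ is $s$-unital and $|N|\ge 2$. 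They are $G$-invariant: for $\tilde A$ use centrality of $\hat N$ together with $\delta_{x^{-1}n_1}\hat N \delta_{xn_2}=\hat N$ (which follows from normality of $N$), and for $\tilde B$ note that conjugation by $\delta_x$ merely permutes the exponents via $n\mapsto x^{-1}nx\in N$, preserving the sum of coefficients. Because $R[G]$ is $s$-unital strongly $G$-graded, the general identity $S_{x^{-1}k}=S_{x^{-1}}S_k$ (for $k\in N$) shows that $G$-invariance of an ideal of $S_N$ implies $G/N$-invariance, so (e)(iii) is verified. Finally $\tilde A\tilde B=\{0\}$ because, for any $b=\sum_m s_m\delta_m\in \tilde B$, one computes $\hat N b = (\sum_m s_m)\hat N = 0$. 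The condition (e)(ii) is vacuous since $H=G$, and (e)(i) is immediate. Theorem~\ref{thm:mainNew} then forces $R[G]$ to be non-prime, a contradiction.

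For the backward direction, assume $R[G]$ is not prime. By Theorem~\ref{thm:mainNew}(a)$\Rightarrow$(e), there exist $N\lhd H\subseteq G$ with $N$ finite, an $H$-invariant ideal $I$ of $R$ satisfying $I^x I=\{0\}$ for $x\in G\setminus H$, and nonzero $H/N$-invariant ideals $\tilde A,\tilde B$ of $R[N]$ with $\tilde A,\tilde B\subseteq IR[N]$ and $\tilde A\tilde B=\{0\}$. The key computation is that in a group ring one has $I^x = S_{x^{-1}}IS_x = R\delta_{x^{-1}}\cdot I\cdot R\delta_x = RIR\,\delta_e = I$ for every $x\in G$, so $I^x I = I^2$ independently of $x$. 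Since $\tilde A$ is a nonzero subset of $IR[N]$, we have $I\ne\{0\}$, and primeness of $R$ yields $I^2\ne\{0\}$; therefore $G\setminus H = \emptyset$, i.e.\ $H=G$. Consequently $N$ is normal in $G$. If $N=\{e\}$, then $\tilde A,\tilde B$ are nonzero ideals of $R$ with $\tilde A\tilde B=\{0\}$, contradicting primeness of $R$; otherwise $N$ is a non-trivial finite normal subgroup of $G$, contradicting the hypothesis. In either case we reach a contradiction, so $R[G]$ is prime.

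The main obstacle is the construction and verification in the forward direction: one must produce the right pair $(\tilde A,\tilde B)$ in $R[N]$ under the weak assumption of $s$-unitality (in particular, without a global identity in $R[G]$), and then confirm the invariance condition (e)(iii), which a priori is stronger than $H$-invariance. Both difficulties are handled by exploiting centrality of $\hat N$ and the $s$-unital strong grading of $R[G]$. The backward direction is a clean application of Theorem~\ref{thm:mainNew} once one observes the identity $I^x=I$ characteristic of untwisted group rings.
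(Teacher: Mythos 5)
Your proof is correct and follows essentially the same route as the paper: reduction to Theorem~\ref{thm:mainNew} via the nearly epsilon-strong grading of $R[G]$, the key identity $I^x=I$ (which forces $H=G$), and the annihilating pair built from the class sum of $N$ and the augmentation ideal of $R[N]$. The only cosmetic difference is that you verify assertion (e) (checking $G/N$-invariance and $\tilde A\tilde B=\{0\}$) where the paper works with the balanced condition $\tilde A S_G \tilde B=\{0\}$ of assertion (c); both are covered by Theorem~\ref{thm:mainNew}.
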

\begin{proof}
We prove the converse statement: $R[G]$ is not prime if and only if $R$ is not prime or $G$ has a non-trivial finite normal subgroup.
By Proposition~\ref{prop:patrik2}, (a)$\Leftrightarrow$(c) in Theorem~\ref{thm:mainNew}  holds for $S=R[G]$. 
In other words, the group ring $R[G]$ is not prime if and only if it has a balanced NP-datum.
We prove that the $G$-graded ring $R[G]$ has a balanced NP-datum if and only if 
$R$ is not prime or $G$ has a non-trivial finite normal subgroup. 
First note that for any ideal $I$ of $R$ we have $I^x = R\delta_{x^{-1}} I R\delta_x = RIR \delta_e = I \delta_e$ for every $x \in G$. In particular, every ideal of $R$ is $G$-invariant. 

Suppose that
$(H,N,I,\tilde{A},\tilde{B})$ is a balanced NP-datum for $R[G]$.

\underline{Case 1: $H=G$.} 
Note that
$N \lhd H=G$ is a finite normal subgroup of $G$. Condition (NP4) proves that $R[N]$ is not prime. Then either $N=\{e\}$ and $R$ is not prime or there exists a non-trivial finite normal subgroup $N$ of $G$.

\underline{Case 2: $H \subsetneqq G$.} 
Note that condition (NP2) implies that there is a nonzero ideal $I$ of $R$ such that $I^2 = \{ 0 \}$.
Thus $R$ is not prime. 

Now we prove the converse statement.

\underline{Case I: $R$ is not prime.} 
There are nonzero ideals $\tilde A, \tilde B$ of $R$ such that $\tilde A \tilde B = \{ 0 \}$. 
This implies that $\tilde A R \delta_x \tilde B = R \delta_x \tilde A \tilde B = \{0\}$ for every $x \in G$. 
Therefore, $\tilde A \cdot R[G] \cdot \tilde B = \{ 0 \}$. 
We note that $(G,\{e\},R,\tilde{A},\tilde{B})$ is a balanced NP-datum. 

\underline{Case II: there exists a non-trivial finite normal subgroup $N$ of $G$.}

Consider $H := G$ and $I := R$. 
Pick a nonzero $a\in R$. 
Let $\tilde A$ be the ideal of $S_N$ generated by the element $\sum_{n \in N} a\delta_n$ and let $\tilde B$ be the ideal of $S_N$ generated by the set $ \{ r\delta_n - r\delta_e \mid n \in N, r\in R \}$. Since $N$ is non-trivial, it follows that $\tilde A$ and $\tilde B$ are nonzero ideals of $S_N$. 
Next, let $t \in R$, $x \in G$ and $n_1 \in N$. Then, since $N$ is finite and normal in $G$, 
\begin{align*}
    \left(\sum_{n \in N} a\delta_n \right)  t \delta_x  (r\delta_{n_1} - r\delta_e)
		&= \left(\sum_{n\in N} at \delta_{nx} \right)  (r\delta_{n_1} - r\delta_e) = \left(\sum_{n \in N} at\delta_{xn} \right)  (r\delta_{n_1} - r\delta_e)  
		\\
		&= a\delta_x \left(\sum_{n \in N} t\delta_n \right)  (r\delta_{n_1} - r\delta_e) 
		= a\delta_x \left(\sum_{n \in N} tr\delta_{n n_1} - \sum_{n \in N} tr\delta_{ne} \right) \\
		&= a\delta_x \left(\sum_{n \in N} tr\delta_n - \sum_{n \in N} tr\delta_n\right)  =  0.
\end{align*}
This shows that $\tilde A \cdot R[G] \cdot \tilde B = \{ 0 \}.$ Hence, 
$(H,N,I,\tilde{A},\tilde{B})$
is a balanced NP-datum.
\end{proof}

\begin{rem}
Note that Theorem~\ref{thm:connell2} applies to $s$-unital group rings $R[G]$ which are not necessarily unital. Hence, this application shows that our results indeed reach farther than Passman's results \cite{passman1983semiprime,PassmanCancellative,passman1984infinite} which are only concerned with unital rings.
\end{rem}

\begin{rem}\label{rem:not_prime_principal}
The above result can not be generalized to $s$-unital (unital) skew group rings.
Indeed, neither primeness of $R$ nor the non-existence of non-trivial finite normal subroups of $G$
are necessary conditions for primeness of an $s$-unital skew group ring $R \star_\alpha G$.
To see this, consider the matrix algebra
$M_4(\R) \cong \R^4 \star_\alpha \Z/4\Z$
as a unital skew group ring.
It is well-known that $M_4(\R)$ is prime, but $\R^4$ is not prime and $\Z/4\Z$ contains a non-trivial finite normal subgroup.
Note, however, that in this case $\R^4$ is actually $\Z/4\Z$-prime.
\end{rem}

\begin{exa}
Suppose that $G$ is torsion-free and let 
$F(G,\C)$ be the algebra of all complex-valued
functions on $G$ with finite support, under pointwise addition and multiplication.
Note that $F(G,\C)$ is $s$-unital.
We define a map $\alpha : G \to \Aut\big(F(G,\C)\big)$
by putting $\alpha_x(f)(y) := f(x^{-1}y)$ for all $x,y\in G$ and $f \in F(G,\C)$. 
Clearly, $F(G,\C)$ is $G$-prime.
Using Theorem~\ref{thm:StrongPrimeSuffTorsionFree}, we get that $F(G,\C) \star_\alpha G$ is prime.
\end{exa}

\begin{rem}\label{rem:12_8}
Consider the non-unital group ring $R[G]$ where $R:= 2\Z$ and $G:= \Z$. Note that $R$ is not $s$-unital and hence $R[G]$ is not nearly epsilon-strongly $G$-graded (see Proposition~\ref{prop:s-unital}). It is, however, non-degenerately $G$-graded. In fact, it is not difficult to see that $R[G]$ is a domain and hence prime. From this
we easily see that the equivalences (a)$\Leftrightarrow$(b)$\Leftrightarrow$(c)$\Leftrightarrow$(d)$\Leftrightarrow$(e) in Theorem~\ref{thm:mainNew} hold for $R[G]$. This example suggests that it might be possible to generalize Theorem~\ref{thm:mainNew}.
\end{rem}

\section{Applications to crossed products defined by partial actions}
\label{Sec:Partial}

A significant development in the study of $C^*$-algebras was the introduction of the notion of a \emph{partial action} by Exel \cite{exel1994circle}. Various algebraic analogues of this notion were developed and studied during the last two decades (see e.\,g. \cite{bagio2010crossed,dokuchaev2005associativity,dokuchaev2008crossed}).

In this section, we apply our main theorem to obtain results  
on primeness of 
$s$-unital partial skew group rings (see Section~\ref{SubSec:partialSGR}) and of unital partial crossed products (see Section~\ref{SubSec:UPCP}).
We also apply our results to
some particular examples of partial skew group rings associated with partial dynamical systems (see Section~\ref{SubSec:PartialDynSys}).

\subsection{Partial skew group rings}\label{SubSec:partialSGR}

Recall that a \emph{partial action of $G$ on an $s$-unital ring $R$} (see \cite[p.~1932]{dokuchaev2005associativity}) is a pair 
$
( \{\alpha_g \}_{g \in G}, \{ D_g \}_{g \in G} )
$,
where for all $g,h \in G$, $D_g$ is a (possibly zero) $s$-unital ideal of $R$, $\alpha_g \colon D_{g^{-1}} \to D_g$ is a  ring isomorphism. We require that the following conditions hold for all $g, h \in G$:
\begin{enumerate}[{\rm (P1)}]
\item $\alpha_e = {\rm id}_R$;
\item $\alpha_g(D_{g^{-1}} D_h) = D_g D_{gh}$;
\item if $r \in D_{h^{-1}} D_{(gh)^{-1}}$, 
then $\alpha_g ( \alpha_h (r) ) =
 \alpha_{gh}(r)$.
\end{enumerate}
Given a partial action of $G$ on $R$, we can form the \emph{$s$-unital partial skew group ring} $R \star_\alpha G := \bigoplus_{g \in G} D_g \delta_g$ where the $\delta_g$'s are formal symbols. For $g,h \in G, r \in D_g$ and $r' \in D_h$ the multiplication is defined by the rule:
\begin{displaymath}
(r \delta_g) (r' \delta_h) = \alpha_g(\alpha_{g^{-1}}(r) r') \delta_{gh}
\end{displaymath}
It can be shown 
that $R \star_\alpha G$ is an associative ring (see e.\,g. \cite[Cor.~3.2]{dokuchaev2005associativity}).
Moreover, $S := R \star_\alpha G$ is canonically $G$-graded by putting $S_g := D_g \delta_g$ for every $g\in G$.

\begin{prop}\label{prop:partialSGRgrading}
The canonical $G$-grading on $R \star_\alpha G$ is 
nearly epsilon-strong.
\end{prop}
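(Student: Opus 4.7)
The plan is to invoke Proposition~\ref{prop:6}, which reduces the claim to showing two things: that the canonical grading on $S := R \star_\alpha G$ is symmetrical, and that for every $g \in G$ the ring $S_g S_{g^{-1}}$ is $s$-unital.

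First I would compute the products $S_g S_{g^{-1}}$ explicitly. Fix $g \in G$. By definition of the multiplication in the partial skew group ring, for $r \in D_g$ and $r' \in D_{g^{-1}}$ we have $(r\delta_g)(r'\delta_{g^{-1}}) = \alpha_g(\alpha_{g^{-1}}(r)\, r') \delta_e$. Since $\alpha_{g^{-1}}(r) \in D_{g^{-1}}$ and $r' \in D_{g^{-1}}$ with $D_{g^{-1}}$ $s$-unital (hence idempotent), and $\alpha_g$ maps $D_{g^{-1}}$ onto $D_g$, a short verification shows $S_g S_{g^{-1}} = D_g \delta_e$. Since $D_g$ is $s$-unital by the standing assumption on partial actions, the ring $S_g S_{g^{-1}} \cong D_g$ is $s$-unital, handling the second condition of Proposition~\ref{prop:6}.

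Next, I would check the symmetry condition $S_g S_{g^{-1}} S_g = S_g$. Using the computation just done, $S_g S_{g^{-1}} S_g = (D_g \delta_e)(D_g \delta_g)$. Applying the multiplication rule (with the left factor in degree $e$, so $\alpha_e = \mathrm{id}_R$), this equals $D_g D_g \delta_g = D_g \delta_g = S_g$, where we again use that the $s$-unital ideal $D_g$ is idempotent. This gives the desired symmetrical grading.

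No step is really an obstacle here; the only thing to be careful about is invoking the axioms of a partial action in the right order and using $s$-unitality (hence idempotence) of each $D_g$ to collapse $D_g D_g$ and $D_{g^{-1}} D_{g^{-1}}$. Once both conditions of Proposition~\ref{prop:6} are verified, the conclusion that $R \star_\alpha G$ is nearly epsilon-strongly $G$-graded follows immediately.
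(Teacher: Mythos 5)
Your proposal is correct and follows essentially the same route as the paper: both compute $S_g S_{g^{-1}} = \alpha_g(\alpha_{g^{-1}}(D_g)D_{g^{-1}})\delta_e = D_g\delta_e$, deduce $S_gS_{g^{-1}}S_g = D_g^2\delta_g = S_g$ from idempotence of the $s$-unital ideal $D_g$, and then invoke Proposition~\ref{prop:6}. No gaps.
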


\begin{proof}
Take $g\in G$. 
Note that
\begin{align*}
S_g S_{g^{-1}}
&=
D_g \delta_g  D_{g^{-1}} \delta_{g^{-1}}
=
\alpha_g( \alpha_{g^{-1}}(D_g) D_{g^{-1}}) \delta_e
=
\alpha_g( D_{g^{-1}} D_{g^{-1}}) \delta_e
=
\alpha_g( D_{g^{-1}}) \delta_e
=
D_g \delta_e
\end{align*}
and hence
\begin{align*}
S_g S_{g^{-1}} S_{g}
&=
(S_g S_{g^{-1}}) S_{g}
=
D_g \delta_e D_g \delta_g
= D_g^2 \delta_g = D_g \delta_g = S_g.
\end{align*}
This shows that the $G$-grading is symmetrical and that $S_g S_{g^{-1}}$ is $s$-unital for every $g\in G$.
By Proposition~\ref{prop:6} the desired conclusion follows.
\end{proof}

\begin{rem}
We will identify $R$ with $R \delta_e$ via the canonical isomorphism.
\end{rem}

\begin{defi}\label{def:Gaction}
Let $H$ be a subgroup of $G$.
An ideal $I$ of $R$ is called \emph{$H$-invariant} if 
$\alpha_{h}( I D_{h^{-1}}) \subseteq I$ for every $h \in H$.
The ring $R$ is called \emph{$G$-prime} if for all $G$-invariant ideals $I,J$ of $R$, we have $I = \{ 0 \}$ or $J = \{ 0 \}$, whenever 
$IJ = \{ 0 \}$.
\end{defi}

\begin{rem}\label{rem:Ginvariance}
Consider $S:=R\star_\alpha G$ with its canonical $G$-grading.

\noindent (a)
Let $H$ be a subgroup of $G$.
Note that, for $h\in H$, we have
\begin{align*}
I^h \subseteq I
& \Longleftrightarrow
D_{h^{-1}} \delta_{h^{-1}} \cdot I \cdot D_h \delta_h  \subseteq I \delta_e 
 \Longleftrightarrow
\alpha_{h^{-1}}(\alpha_h(D_{h^{-1}}) I D_h) \delta_e \subseteq I \delta_e \\
& \Longleftrightarrow
\alpha_{h^{-1}}(D_h I D_h) \delta_e \subseteq I \delta_e  \Longleftrightarrow
\alpha_{h^{-1}}(D_h I D_h) \subseteq I 
 \Longleftrightarrow
\alpha_{h^{-1}}(I D_h) \subseteq I. 
\end{align*}
This shows that $G$-invariance in the sense of Definition~\ref{def:Gaction} is equivalent to $G$-invariance defined by the $G$-grading (see Definition~\ref{def:weakly_invariant}).

\noindent (b) By (a) we note that $R$ is $G$-prime if and only if $S_e$ is $G$-prime.
\end{rem}

\begin{thm}\label{thm:primepartialskewgroupring}
Suppose that 
$G$ is torsion-free 
and that $R \star_\alpha G$
is an $s$-unital partial skew group ring.
Then 
$R \star_\alpha G$ is prime
if and only if
$R$ is $G$-prime.
\end{thm}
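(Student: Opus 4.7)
The plan is to reduce the statement directly to Theorem~\ref{thm:StrongPrimeSuffTorsionFree} (equivalently Theorem~\ref{thm:NearlyTorsion}) via the structural results already established for partial skew group rings.

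First, I would set $S := R \star_\alpha G$ with its canonical $G$-grading $S_g = D_g \delta_g$. By Proposition~\ref{prop:partialSGRgrading}, this grading is nearly epsilon-strong, so the hypotheses of Theorem~\ref{thm:StrongPrimeSuffTorsionFree} are satisfied once we observe that $G$ is torsion-free by assumption. Therefore $S$ is prime if and only if its principal component $S_e$ is $G$-prime in the graded sense (i.e.\ with respect to the notion of $G$-invariance from Definition~\ref{def:weakly_invariant}).

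Next, I would identify $S_e$ with $R$ through the canonical isomorphism $R \delta_e \cong R$, noting that $D_e = R$ by axiom (P1). Under this identification, the two notions of $G$-invariance agree: Remark~\ref{rem:Ginvariance}(a) establishes that an ideal $I$ of $R$ satisfies $\alpha_h(I D_{h^{-1}}) \subseteq I$ for every $h \in G$ (the partial-action notion from Definition~\ref{def:Gaction}) if and only if $I^h \subseteq I$ for every $h \in G$ (the graded notion). Consequently, as recorded in Remark~\ref{rem:Ginvariance}(b), the two notions of $G$-primeness coincide. Combining this with the equivalence obtained in the previous paragraph yields: $S$ is prime if and only if $R$ is $G$-prime, which is the desired statement.

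There is no real obstacle here; the work has already been done in Proposition~\ref{prop:partialSGRgrading}, Remark~\ref{rem:Ginvariance}, and Theorem~\ref{thm:StrongPrimeSuffTorsionFree}. The only minor point to check carefully is the matching of the two definitions of $G$-invariance, but this is precisely the content of Remark~\ref{rem:Ginvariance}(a), so the proof reduces to a short citation-based argument rather than any new computation.
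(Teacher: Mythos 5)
Your proposal is correct and follows exactly the paper's own argument: the paper proves this theorem by citing Proposition~\ref{prop:partialSGRgrading}, Theorem~\ref{thm:StrongPrimeSuffTorsionFree}, and Remark~\ref{rem:Ginvariance}(b), which is precisely the chain of reductions you describe. Nothing is missing.
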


\begin{proof}
This follows from 
Proposition~\ref{prop:partialSGRgrading},
Theorem~\ref{thm:StrongPrimeSuffTorsionFree} and
Remark~\ref{rem:Ginvariance}(b).
\end{proof}

We proceed to characterize prime $s$-unital partial skew group rings for general groups.

\begin{lem}\label{lem:InvarianceEquality}
Suppose that $( \{\alpha_g \}_{g \in G}, \{ D_g \}_{g \in G} )$
is a partial action of $G$ on $R$, and that $I$ is an ideal of $R$.
For any subgroup $H$ of $G$, the following holds:
\begin{displaymath}
	\alpha_h(I D_{h^{-1}} ) \subseteq I, \quad \forall h\in H
	\quad
	\Longleftrightarrow
	\quad
	\alpha_h(I D_{h^{-1}}) = I D_h, \quad \forall h\in H
\end{displaymath}
\end{lem}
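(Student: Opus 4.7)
The backward direction is immediate: since each $D_h$ is an ideal of $R$ and $I$ is an ideal, we always have $ID_h \subseteq I$, so the equality $\alpha_h(ID_{h^{-1}}) = ID_h$ trivially implies the containment $\alpha_h(ID_{h^{-1}}) \subseteq I$.

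For the forward direction, the plan rests on two auxiliary facts. First, since each $D_g$ is by definition an $s$-unital ideal of $R$, a routine application of Proposition~\ref{prop:tominaga} (Tominaga) shows that for every $s$-unital ideal $J$ of $R$ and every ideal $I$ of $R$ one has $I\cap J = IJ$: indeed, given $r \in I \cap J$, choose $u \in J$ with $ru = r$, then $r = ru \in IJ$, while the reverse inclusion is obvious. Applying this to $J = D_h$ yields $I \cap D_h = ID_h$ for every $h \in G$. Second, from axiom (P3) taken with $g = h$ and $h^{-1}$ in place of $h$, one gets $\alpha_h \circ \alpha_{h^{-1}} = \operatorname{id}_{D_h}$ and symmetrically $\alpha_{h^{-1}} \circ \alpha_h = \operatorname{id}_{D_{h^{-1}}}$, so $\alpha_h$ and $\alpha_{h^{-1}}$ are mutually inverse bijections $D_{h^{-1}} \leftrightarrow D_h$.

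Now fix $h \in H$. For the containment $\alpha_h(ID_{h^{-1}}) \subseteq ID_h$, combine the hypothesis $\alpha_h(ID_{h^{-1}}) \subseteq I$ with the trivial inclusion $\alpha_h(ID_{h^{-1}}) \subseteq \alpha_h(D_{h^{-1}}) = D_h$ to get $\alpha_h(ID_{h^{-1}}) \subseteq I \cap D_h = ID_h$. For the reverse containment $ID_h \subseteq \alpha_h(ID_{h^{-1}})$, use that $H$ is a subgroup to apply the hypothesis with $h^{-1} \in H$, giving $\alpha_{h^{-1}}(ID_h) \subseteq I$; together with $\alpha_{h^{-1}}(ID_h) \subseteq D_{h^{-1}}$ this yields $\alpha_{h^{-1}}(ID_h) \subseteq I \cap D_{h^{-1}} = ID_{h^{-1}}$. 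Applying $\alpha_h$ to both sides and using that $\alpha_h \circ \alpha_{h^{-1}}$ is the identity on $D_h$ (and $ID_h \subseteq D_h$), we conclude $ID_h = \alpha_h(\alpha_{h^{-1}}(ID_h)) \subseteq \alpha_h(ID_{h^{-1}})$.

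The only mildly subtle step is the identity $I \cap D_h = ID_h$; everything else is bookkeeping with the partial-action axioms. The fact that $H$ is a subgroup (so that $h^{-1} \in H$ whenever $h \in H$) is essential, since the reverse inclusion is obtained by applying the hypothesis at $h^{-1}$.
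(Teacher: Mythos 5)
Your proof is correct and follows essentially the same route as the paper's: both hinge on the identity $I \cap D_h = I D_h$ (from $s$-unitality of $D_h$) and on applying the hypothesis at both $h$ and $h^{-1}$. The only difference is presentational --- you spell out the Tominaga-style argument for $I \cap D_h = I D_h$ and the fact that $\alpha_h$ and $\alpha_{h^{-1}}$ are mutually inverse via (P1) and (P3), which the paper leaves implicit.
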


\begin{proof}
Take $h\in G$.

($\Leftarrow$): Clear, since $I D_h \subseteq I$.

($\Rightarrow$):
Note that
$I \cap D_h = I \cdot D_h$, by $s$-unitality of $D_h$.
Thus, 
$\alpha_h(I D_{h^{-1}}) \subseteq I$ 
implies 
$\alpha_h(I D_{h^{-1}}) \subseteq I \cap D_h = I D_h$.
By applying $\alpha_{h^{-1}}$ to both sides, and using that $h$ is arbitrary, we get
$I D_{h^{-1}} \subseteq \alpha_{h^{-1}}(I D_h) \subseteq I D_{h^{-1}}$.
Hence,
$\alpha_{h^{-1}}(I D_h) = I D_{h^{-1}}$.
\end{proof}

\begin{thm}\label{thm:partialSGR}
The $s$-unital partial skew group ring $R \star_\alpha G$ is not prime if and only if there are:
\begin{enumerate}[{\rm (i)}]
\item 
subgroups $N \lhd H \subseteq G$ with $N$ finite,
\item
an ideal $I$ of $R$ such that
\begin{itemize}
	\item $\alpha_{h}(I D_{h^{-1}}) = I D_h$ for every $h\in H$,
	\item $I D_g \cdot \alpha_g(I D_{g^{-1}})  = \{0\}$ for every $g\in G\setminus H$, and
\end{itemize}

\item
nonzero ideals $\tilde{A}, \tilde{B}$ of $R \star_\alpha N$ such that
$\tilde{A}, \tilde{B} \subseteq I\delta_e ( R \star_\alpha N)$ and $\tilde A \cdot D_h \delta_h \cdot \tilde B = \{0 \}$
for every $h\in H$.
\end{enumerate}
\end{thm}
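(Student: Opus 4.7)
The plan is to invoke the equivalence (a)$\Longleftrightarrow$(c) of Theorem~\ref{thm:mainNew} applied to $S := R \star_\alpha G$, which is nearly epsilon-strongly $G$-graded by Proposition~\ref{prop:partialSGRgrading}, and then translate each clause of (c) into the ring-theoretic language of the partial action via the canonical identification $S_e = R\delta_e \cong R$. Clause~(i) is already stated in the required form.

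For clause~(ii), I will proceed in two steps. First, by Remark~\ref{rem:Ginvariance}(a), $H$-invariance of an ideal $I$ of $R$ (viewed inside $S_e$) amounts to $\alpha_h(I D_{h^{-1}}) \subseteq I$ for every $h\in H$, which by Lemma~\ref{lem:InvarianceEquality} is equivalent to $\alpha_h(I D_{h^{-1}}) = I D_h$ for every $h\in H$. Second, for $g \in G$ I will compute $I^g$ explicitly using the multiplication rule of the partial skew group ring:
\begin{equation*}
I^g = D_{g^{-1}}\delta_{g^{-1}} \cdot I\delta_e \cdot D_g\delta_g = \alpha_{g^{-1}}(D_g I D_g)\,\delta_e.
\end{equation*}
A short $s$-unitality argument (for $x = id \in ID_g$, pick $u \in D_g$ with $du = d$; then $x = (id)u = xu$ and, symmetrically, $x = ex$ for some $e\in D_g$, so $x = eid = (ei)d \in D_g I D_g$) gives $D_g I D_g = ID_g$. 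Hence $I^g = \alpha_{g^{-1}}(ID_g)\,\delta_e$, and $I^g I = \{0\}$ translates to $\alpha_{g^{-1}}(ID_g)\cdot I = \{0\}$ in $R$.

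The key remaining step is then to show that this last condition is equivalent to the stated identity $ID_g \cdot \alpha_g(ID_{g^{-1}}) = \{0\}$. The forward direction is immediate: from $\alpha_{g^{-1}}(ID_g)\cdot I = \{0\}$ we deduce $\alpha_{g^{-1}}(ID_g)\cdot ID_{g^{-1}} = \{0\}$, and applying $\alpha_g$ (which restricts to a ring isomorphism $D_{g^{-1}}\to D_g$) yields $ID_g\cdot \alpha_g(ID_{g^{-1}}) = \{0\}$. For the converse, given $x \in \alpha_{g^{-1}}(ID_g) \subseteq D_{g^{-1}}$ and $i\in I$, I will use $s$-unitality of $D_{g^{-1}}$ to pick $u\in D_{g^{-1}}$ with $xu = x$; then $ui \in D_{g^{-1}} I \subseteq I\cap D_{g^{-1}} = ID_{g^{-1}}$ (the last equality again by $s$-unitality), so $xi = x(ui) \in \alpha_{g^{-1}}(ID_g)\cdot ID_{g^{-1}} = \{0\}$, where the last equality follows by applying $\alpha_{g^{-1}}$ to the hypothesis.

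Clause~(iii) requires essentially no further work: one has $S_N = R\star_\alpha N$, $IS_N = I\delta_e\cdot(R\star_\alpha N)$, and $S_H = \bigoplus_{h\in H} D_h\delta_h$, so the condition $\tilde A S_H \tilde B = \{0\}$ of Theorem~\ref{thm:mainNew}(c) is literally the condition that $\tilde A\cdot D_h\delta_h\cdot \tilde B = \{0\}$ for every $h\in H$. I expect the main technical point to be the element-wise $s$-unitality argument establishing the equivalence $\alpha_{g^{-1}}(ID_g)\cdot I = \{0\}\Longleftrightarrow ID_g\cdot\alpha_g(ID_{g^{-1}}) = \{0\}$; everything else is bookkeeping built on the previously established structural results on nearly epsilon-strong gradings.
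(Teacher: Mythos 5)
Your proposal is correct and follows essentially the same route as the paper: apply Theorem~\ref{thm:mainNew} (the (a)$\Leftrightarrow$(c) equivalence) to the nearly epsilon-strongly graded ring $R\star_\alpha G$, translate $H$-invariance via Remark~\ref{rem:Ginvariance}(a) and Lemma~\ref{lem:InvarianceEquality}, and convert $I^g I=\{0\}$ into $ID_g\cdot\alpha_g(ID_{g^{-1}})=\{0\}$ using the identity $D_gID_g=ID_g$ together with the $s$-unitality trick of replacing $I$ by $ID_{g^{-1}}$ before applying $\alpha_g$. The paper carries out the same computation as a single chain of equivalences, but the content is identical.
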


\begin{proof}
By Proposition~\ref{prop:partialSGRgrading}, we may apply Theorem~\ref{thm:mainNew} to $S:=R \star_\alpha G$.
\noindent For $g\in G$, we get
\begin{align*}
I^g \cdot I = \{0\}
&\Longleftrightarrow
D_{g^{-1}} \delta_{g^{-1}} \cdot I \cdot D_g \delta_g \cdot I \delta_e = \{0\} 
\Longleftrightarrow
\alpha_{g^{-1}}(\alpha_g(D_{g^{-1}}) \cdot I D_g) \delta_e  \cdot I \delta_e = \{0\} \\
&\Longleftrightarrow
(\alpha_{g^{-1}}(D_g \cdot I D_g) \cdot I) \delta_e  = \{0\} 
\Longleftrightarrow
\alpha_{g^{-1}}(D_g I D_g) \cdot I  = \{0\} \\
&\Longleftrightarrow
\alpha_{g^{-1}}(D_g I D_g) \cdot I D_{g^{-1}}  = \{0\}
\Longleftrightarrow
D_g I D_g \cdot \alpha_{g}(I D_{g^{-1}})  = \{0\}.
\end{align*}
Using that $I, D_g$ are ideals of $R$ and that $D_g$ is $s$-unital, we get that $D_g I D_g \subseteq I D_g \subseteq D_g (I D_g)$.
Hence, $D_g I D_g = I D_g$.
We conclude that
$I^g \cdot I = \{0\}$
if and only if
$I D_g \cdot \alpha_g(I D_{g^{-1}})  = \{0\}$.
The desired conclusion now follows by
Remark~\ref{rem:Ginvariance}(a)
and
Lemma~\ref{lem:InvarianceEquality}.
\end{proof}

\subsection{Unital partial crossed products}\label{SubSec:UPCP}

Recall that a \emph{unital twisted partial action of $G$ on a unital ring $R$} (see \cite[p.~2]{nystedt2016epsilon}) is a triple 
$
( \{\alpha_g \}_{g \in G}, \{ D_g \}_{g \in G}, \{ w_{g,h} \}_{(g,h) \in G \times G})
$,
where for all $g,h \in G$, $D_g$ is a unital ideal of $R$, $\alpha_g \colon D_{g^{-1}} \to D_g$ is a  ring isomorphism and 
$w_{g,h}$ is an invertible element in $D_g D_{g h}$. Let $1_g \in Z(R)$ denote the (not necessarily nonzero) multiplicative identity element of the ideal $D_g$. We require that the following conditions hold for all $g, h \in G$:
\begin{enumerate}
\item[{\rm (UP1)}] $\alpha_e = {\rm id}_R$;
\item[{\rm (UP2)}] $\alpha_g(D_{g^{-1}} D_h) = D_g D_{gh}$;
\item[{\rm (UP3)}] if $r \in D_{h^{-1}} D_{(gh)^{-1}}$, 
then $\alpha_g ( \alpha_h (r) ) =
w_{g,h} \alpha_{gh}(r) w_{g,h}^{-1}$;
\item[{\rm (UP4)}] $w_{e,g} = w_{g,e} = 1_g$;
\item[{\rm (UP5)}] if $r \in D_{g^{-1}} D_h D_{hl}$, then
$\alpha_g(r w_{h,l}) w_{g,hl} =
\alpha_g(r) w_{g,h} w_{gh,l}$.
\end{enumerate}

Given a unital twisted partial action of $G$ on $R$, we can form the \emph{unital partial crossed product} $R \star_\alpha^w G := \bigoplus_{g \in G} D_g \delta_g$ where the $\delta_g$'s are formal symbols. For $g,h \in G, r \in D_g$ and $r' \in D_h$ the multiplication is defined by the rule:
\begin{displaymath}
(r \delta_g) (r' \delta_h) = r \alpha_g(r' 1_{g^{-1}}) w_{g,h} \delta_{gh}
\end{displaymath}
It can be shown that $R \star_\alpha^w G$
is an associative ring (see e.\,g. \cite[Thm.~2.4]{dokuchaev2008crossed}). 
Moreover, Nystedt, \"{O}inert and Pinedo established in~\cite[Thm.~35]{nystedt2016epsilon} that its natural $G$-grading is epsilon-strong, and in particular nearly epsilon-strong. 
Thus, Theorem~\ref{thm:mainNew} is applicable.

\begin{rem}\label{rem:UPCprime}
(a) Let $H$ be a subgroup of $G$.
Note that
an ideal $I$ of $R$ is $H$-invariant (in the sense of Definition~\ref{def:Gaction}) if and only if $\alpha_h(I 1_{h^{-1}}) \subseteq I$ for every $h\in H$.

\noindent (b)
We also define $G$-primeness of $R$ according to Definition~\ref{def:Gaction}.
By a computation, similar to the one in 
Remark~\ref{rem:Ginvariance}, we note that $G$-primeness of $R$ is equivalent to $G$-primeness of $S_e$.
\end{rem}

The next result partially generalizes Theorem~\ref{thm:primepartialskewgroupring}.

\begin{thm}\label{thm:primepartialcrossedproduct}
Suppose that 
$G$ is torsion-free 
and that $R \star_\alpha^w G$
is a unital partial crossed product.
Then 
$R \star_\alpha^w G$ is prime
if and only if
$R$ is $G$-prime.
\end{thm}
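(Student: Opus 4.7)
The plan is to observe that this result follows almost immediately by combining three facts that have been established earlier in the paper. The torsion-free hypothesis on $G$ suggests an application of Theorem~\ref{thm:StrongPrimeSuffTorsionFree} (which is the same as Theorem~\ref{thm:NearlyTorsion}), so the main task is to verify that a unital partial crossed product fits into the hypothesis of that theorem and that the $G$-primeness condition on $S_e$ translates to $G$-primeness of $R$ in the partial action sense.

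First, I would recall that by \cite[Thm.~35]{nystedt2016epsilon} the canonical $G$-grading on $S := R \star_\alpha^w G$, given by $S_g := D_g \delta_g$ for each $g \in G$, is epsilon-strong. By Remark~\ref{rem:1}, this grading is in particular nearly epsilon-strong. Moreover, by identifying $R$ with $S_e = R\delta_e$ via the canonical isomorphism $r \mapsto r\delta_e$, the principal component of $S$ is (isomorphic to) $R$.

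Next, since $G$ is torsion-free and $S$ is nearly epsilon-strongly $G$-graded, Theorem~\ref{thm:StrongPrimeSuffTorsionFree} applies and yields that $S$ is prime if and only if $S_e$ is $G$-prime (where $G$-primeness of $S_e$ is understood in the sense of Definition~\ref{def:g-prime}, i.e.\ with respect to the $G$-action on the lattice of ideals of $S_e$ induced by conjugation $I \mapsto I^x = S_{x^{-1}} I S_x$).

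Finally, by Remark~\ref{rem:UPCprime}(b), $G$-primeness of $S_e$ is equivalent to $G$-primeness of $R$ in the sense of Definition~\ref{def:Gaction} (adapted to the unital twisted partial action as in Remark~\ref{rem:UPCprime}(a)). Chaining these equivalences gives the result. There is essentially no obstacle here, since all the technical work has already been done: the verification that the $G$-grading is nearly epsilon-strong is imported from \cite{nystedt2016epsilon}, the hard primeness/Passman-style argument is encapsulated in Theorem~\ref{thm:StrongPrimeSuffTorsionFree}, and the translation between the two notions of $G$-primeness is carried out in Remark~\ref{rem:UPCprime}.
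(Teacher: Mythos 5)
Your proposal is correct and follows exactly the paper's own route: the paper likewise cites \cite[Thm.~35]{nystedt2016exsilon} (sic: \cite[Thm.~35]{nystedt2016epsilon}) for the epsilon-strong (hence nearly epsilon-strong) grading, applies Theorem~\ref{thm:StrongPrimeSuffTorsionFree}, and invokes Remark~\ref{rem:UPCprime}(b) to translate $G$-primeness of $S_e$ into $G$-primeness of $R$. No gaps.
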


\begin{proof}
Using the fact that unital partial crossed products are epsilon-strongly graded (see \cite[Thm.~35]{nystedt2016epsilon}),
the desired conclusion follows from 
Theorem~\ref{thm:StrongPrimeSuffTorsionFree} and
Remark~\ref{rem:UPCprime}(b).
\end{proof}

The proof of the following result is similar to the
proof of Theorem~\ref{thm:primepartialskewgroupring} and is therefore omitted.

\begin{thm}\label{thm:partial}
The unital partial crossed product $R \star_\alpha^w G$ is not prime if and only if there are:
\begin{enumerate}[{\rm (i)}]
\item 
subgroups $N \lhd H \subseteq G$ with $N$ finite,
\item
an ideal $I$ of $R$ such that
\begin{itemize}
	\item $\alpha_{h}(I 1_{h^{-1}}) = I 1_h$ for every $h\in H$,
	\item $I \cdot \alpha_g(I 1_{g^{-1}})  = \{0\}$ for every $g\in G\setminus H$, and
\end{itemize}

\item
nonzero ideals $\tilde{A}, \tilde{B}$ of $R \star_\alpha^w N$ such that
$\tilde{A}, \tilde{B} \subseteq I \cdot ( R \star_\alpha^w N)$ and $\tilde A \cdot 1_h \delta_h \cdot \tilde B = \{0 \}$
for every $h\in H$.
\end{enumerate}
\end{thm}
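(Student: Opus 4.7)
The plan is to repeat the strategy used for Theorem~\ref{thm:partialSGR}, translating the conditions (a)$\Leftrightarrow$(d) of Theorem~\ref{thm:mainNew} into the unital partial crossed product language. By \cite[Thm.~35]{nystedt2016epsilon}, the canonical $G$-grading on $S := R\star_\alpha^w G$ with $S_g = D_g\delta_g$ is epsilon-strong, and hence nearly epsilon-strong. Consequently, Theorem~\ref{thm:mainNew}(2) applies to $S$, and it suffices to rewrite condition (d) in terms of the unital partial action data $(\{\alpha_g\},\{D_g\},\{w_{g,h}\})$. Throughout we identify $S_e$ with $R$ via $r\mapsto r\delta_e$, so that ideals of $S_e$ correspond bijectively to ideals of $R$.

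Next, I would translate the $H$-invariance of an ideal $I$ of $R$. A direct computation, using the multiplication rule $(r\delta_g)(r'\delta_h)=r\alpha_g(r'1_{g^{-1}})w_{g,h}\delta_{gh}$ and the fact that $w_{h^{-1},h}$ is an invertible element of the unital ideal $D_{h^{-1}}$ (so multiplication by it is a bijection on subsets of $D_{h^{-1}}$), gives $S_{h^{-1}}\cdot I\delta_e\cdot S_h = \alpha_{h^{-1}}(I D_h)\,\delta_e = \alpha_{h^{-1}}(I 1_h)\,\delta_e$. Thus $I^h\subseteq I$ for every $h\in H$ is equivalent to $\alpha_{h^{-1}}(I 1_h)\subseteq I$ for every $h\in H$, i.e., to $\alpha_h(I 1_{h^{-1}})\subseteq I$ for every $h\in H$. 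Because $D_h$ is unital with identity $1_h$, the verbatim analogue of Lemma~\ref{lem:InvarianceEquality} applies and upgrades this inclusion to the equality $\alpha_h(I1_{h^{-1}})=I1_h$ stated in (ii).

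For the condition $I^gI=\{0\}$ with $g\in G\setminus H$, an analogous calculation yields $I^g = \alpha_{g^{-1}}(ID_g)\,w_{g^{-1},g}\,\delta_e \subseteq D_{g^{-1}}\delta_e$. Since $w_{g^{-1},g}$ is invertible in $D_{g^{-1}}$, the vanishing $I^g\cdot I = \{0\}$ is equivalent to $\alpha_{g^{-1}}(I D_g)\cdot I = \{0\}$, which after applying $\alpha_g$ and using that $I 1_g\cdot\alpha_g(I 1_{g^{-1}}) = I\cdot\alpha_g(I 1_{g^{-1}})$ (the factor $1_g$ is absorbed into $\alpha_g(I 1_{g^{-1}})\subseteq D_g$), becomes $I\cdot\alpha_g(I 1_{g^{-1}})=\{0\}$, precisely the second bullet of (ii). The principal bookkeeping obstacle here is keeping track of the twist cocycle $w_{g,h}$; the key observation that disposes of it is that each $w_{g,h}$ lives in the unital ideal $D_g D_{gh}$ as an invertible element, so multiplications by $w_{g,h}$ are bijections on the relevant ideals and do not affect the vanishing conditions.

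Finally, for condition (iii) we identify $S_N = \bigoplus_{n\in N}D_n\delta_n$ with $R\star_\alpha^w N$, so $IS_N$ becomes $I\cdot(R\star_\alpha^w N)$. For any $h\in H$ and $d\in D_h$ the identity $(d\delta_e)(1_h\delta_h) = d\cdot 1_h\cdot w_{e,h}\delta_h = d\delta_h$ shows that $S_h = R\cdot(1_h\delta_h) = (1_h\delta_h)\cdot R$; combined with the fact that $\tilde A,\tilde B$ are ideals of $R\star_\alpha^w N\supseteq S_e$, this yields the equivalence
\[
\tilde A\,S_h\,\tilde B=\{0\}\quad\Longleftrightarrow\quad \tilde A\cdot(1_h\delta_h)\cdot\tilde B=\{0\},
\]
and hence $\tilde A S_H\tilde B=\{0\}$ is equivalent to $\tilde A\cdot 1_h\delta_h\cdot\tilde B=\{0\}$ for every $h\in H$. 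Assembling these three translations with the equivalence (a)$\Leftrightarrow$(d) of Theorem~\ref{thm:mainNew} for $S$ yields the stated characterization.
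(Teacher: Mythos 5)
Your proposal is correct and follows exactly the route the paper intends (the paper omits this proof, noting it is analogous to that of Theorem~\ref{thm:partialSGR}, which is precisely the translation you carry out): apply Theorem~\ref{thm:mainNew} to the epsilon-strong, hence nearly epsilon-strong, canonical grading and rewrite each condition in terms of the partial action data, using that each cocycle value $w_{g,h}$ is invertible in the unital ideal $D_gD_{gh}$ so that multiplying or conjugating ideals by it changes nothing. The only slip is cosmetic: since condition~(iii) of the statement imposes no invariance on $\tilde A,\tilde B$, the condition being translated is (c) of Theorem~\ref{thm:mainNew} rather than (d) (harmless, as (a)--(e) are all equivalent here), and the ``verbatim'' appeal to Lemma~\ref{lem:InvarianceEquality} needs the extra remark that (UP3) only gives $\alpha_h\circ\alpha_{h^{-1}}$ as conjugation by $w_{h,h^{-1}}$, which again fixes ideals of $D_h$.
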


\subsection{Partial dynamical systems}\label{SubSec:PartialDynSys}

In this section we consider several examples 
of partial skew group rings
coming from
a particular type of partial dynamical system (cf.~\cite{Ruy13}).

Let $X$ be a topological space
and let $A_1, A_2, B_1, B_2$ be subspaces of $X$.
Furthermore, let $h_1 : A_1 \to B_1$ and $h_2 : A_2 \to B_2$ be homeomorphisms.
For the remainder of this section, 
$G$ denotes the free group $\mathbb{F}_2=\langle g_1,g_2 \rangle$. For $g\in G$ we define,
\begin{displaymath}
\theta_g = \left\{
\begin{array}{ll}
h_j & \text{if } g=g_j \\
h_j^{-1} & \text{if } g=g_j^{-1} \\
\theta^{\pm 1}_{g_{k_1}}
\circ \cdots \circ
\theta^{\pm 1}_{g_{k_m}} & \text{if } g=g^\pm_{k_1} \cdots g^\pm_{k_m} \text{is in reduced form},
\end{array}
\right .
\end{displaymath}
where $\circ$ denotes partial function composition. 
Moreover, we let $X_g$ denote the domain of the function $\theta_{g^{-1}}$.
We thus obtain a partial action 
of $G$ on the space $X$ which we denote by $(\{ \theta_g \}_{g \in G}, \{ X_g \}_{g \in G})$.
This induces a partial action of $G$ on the $s$-unital ring
$R := C_c(X)$, of continuous compactly supported complex-valued functions on $X$,
 by putting
$D_g := C_c(X_g)$ and defining $\alpha_g : D_{g^{-1}} \to D_g$ by  $\alpha_g(f) := f \circ \theta_{g^{-1}}$ for every $g\in G$. 
Therefore, we may define the $s$-unital partial skew group ring $S:=R\star_\alpha G$.

\begin{exa}\label{exp:par.free,grp}
(a) First, we consider $X=\R$ with
\begin{itemize}
    \item $h_1 : [0,\infty) \to (-\infty,0], \quad t \mapsto -t$, and
    \item $h_2 : \R \to \R, \quad t \mapsto t+1$.
\end{itemize}
It is not difficult to see that $R=C_c(\R)$ is not $G$-prime.
Hence, by Theorem~\ref{thm:partial},  $C_c(\R) \star_\alpha G$ is not prime.

\noindent (b)
Now we consider $X=\R$ with
\begin{itemize}
    \item $h_1 : [0,\infty) \to  [0,\infty), \quad t \mapsto 2t$, and
    \item $h_2 : \R \to \R, \quad t \mapsto t+1$.
\end{itemize}
It is not difficult to see that $R=C_c(\R)$ is $G$-prime.
Hence, by Theorem~\ref{thm:partial},  $C_c(\R) \star_\alpha G$ is  prime.
\end{exa}

\begin{exa}
Now we consider $X$ with its discrete topology.

\noindent (a)
Consider 
$X=\{x_1,x_2,x_3,x_4\}$ with
\begin{itemize}
    \item $h_1 : \{x_1,x_2\} \to  \{x_3,x_4\}$ given by
    $h_1(x_1)=x_3$ and $h_1(x_2)=x_4$, and
    \item $h_2 : \{x_1,x_3\} \to \{x_2,x_4\}$ given by
    $h_2(x_1)=x_2$
    and $h_2(x_3)=x_4$.
\end{itemize}
Note that the ideals of $C_c(X) \cong \mathbb{C}^4$ correspond bijectively to the $2^4$ subsets of $X$. 
For arbitrary elements $x,y\in X$
there is $g\in G$ such that $\theta_g(x)=y$. 
From this we conclude that $R=C_c(X)$ is $G$-prime.
Hence, by Theorem~\ref{thm:partial},  $C_c(X) \star_\alpha G$ is  prime.

\noindent (b)
Consider $X = \{ x_1, x_2, x_3, x_4, x_5, x_6\}$ with
\begin{itemize}
    \item $h_1 : \{x_1,x_2\} \to  \{x_3,x_4\}$ given by
    $h_1(x_1)=x_3$ and $h_1(x_2)=x_4$, and
    \item $h_2 : \{x_1,x_3\} \to \{x_2,x_4\}$ given by
    $h_2(x_1)=x_2$
    and $h_2(x_3)=x_4$.
\end{itemize}
The nonzero ideals $J_1 := C_c(\{x_5 \})$ and $J_2 := C_c(\{x_6 \})$ of
$C_c(X)$ are $G$-invariant. 
Clearly, $J_1 J_2 = \{0\}$ and hence $R=C_c(X)$ is not  
$G$-prime.
By Theorem~\ref{thm:partial},  $C_c(X) \star_\alpha G$ is not prime.
\end{exa}

\section{Applications to Leavitt path algebras}
\label{Sec:LPA}

In this section, we use our main theorem to obtain a characterization of prime Leavitt path algebras with coefficients in an arbitrary, possibly non-commutative, unital ring (see Theorem~\ref{thm:lpa}).
Our result generalizes previous results by Abrams, Bell and Rangaswamy  \cite[Thm.~1.4]{abrams2014prime}, and
Larki~\cite[Prop.~4.5]{larki2015ideal}.

The \emph{Leavitt path algebra} $L_K(E)$ over a field $K$ associated with a directed graph $E$ was introduced by Ara, Moreno and Pardo in \cite{ara2007nonstable} and independently by Abrams and Aranda Pino in \cite{abrams2005leavitt}. 
These algebras are algebraic analogues of graph $C^*$-algebras. For a thorough account of the history and theory of Leavitt path algebras, we refer the reader to the excellent monograph  
\cite{abrams2017leavitt}. Recall that a \emph{directed graph} $E$ is a tuple $(E^0, E^1, s, r)$ where $E^0$ is the set of vertices, $E^1$ is the set of edges and $s \colon E^1 \to E^0$ and $r \colon E^1 \to E^0$ are maps specifying the \emph{source} respectively \emph{range} of each edge. For an arbitrary $v \in E^0$, the set $s^{-1}(v)=\{ e \in E^1 \mid s(e)=v \}$ is the set of edges emitted from $v$. If $s^{-1}(v) = \emptyset$, then $v$ is called a \emph{sink}. If $s^{-1}(v)$ is an infinite set, then $v$ is called an \emph{infinite emitter}. A vertex that is neither a sink nor an infinite emitter is called  \emph{regular}. A \emph{path} in $E$ is a series of edges $\alpha := f_1 f_2 \dots f_n$ such that $r(f_{i}) = s(f_{i+1})$ for $i \in \{ 1,\ldots,n-1 \}$, and such a path has {\it length} $n$ which we denote by $|\alpha|$. By convention, we consider a vertex to be a path of length zero. The set of all paths in $E$ is denoted by $E^*$. 

Leavitt path algebras with coefficients in a commutative unital ring was introduced by Tomforde~\cite{tomforde2009leavitt} and further studied in~\cite{katsov2017simpleness}. A further generalization was studied by Hazrat~\cite{hazrat2013graded}, and Nystedt and \"{O}inert~\cite{nystedt2017epsilon}. Following their lead, we consider Leavitt path algebras with coefficients in a general (possibly non-commutative) unital ring:

\begin{defi}
Let $E$ be a directed graph and let $R$ be a unital ring. The \emph{Leavitt path algebra of the graph $E$ with coefficients in $R$}, denoted by $L_R(E)$, is the free associative $R$-algebra generated by the symbols $\{ v \mid v \in E^0 \} \cup \{ f \mid  f \in E^1 \} \cup \{ f^* \mid f \in E^1 \}$ subject to the following relations:
\begin{enumerate}[{\rm (a)}]
\begin{item}
$ v w = \delta_{v,w} v$ for all $v, w \in E^0$;
\end{item}
\begin{item}
$s(f) f = f r(f)=f$  for every $f \in E^1$;
\end{item}
\begin{item}
$r(f)f^* = f^*s(f)=f^*$  for every $f \in E^1$;
\end{item}
\begin{item}
$f^* f' = \delta_{f, f'} r(f)$ for all $f, f' \in E^1$;
\end{item}
\begin{item}
$\sum_{f \in E^1, s(f)=v}f f^* =v $ for every $v \in E^0$ for which $0 < |s^{-1}(v)| < \infty$. 
\end{item}
\end{enumerate}
We let every element of $R$ commute with the generators.
\label{def:lpa}
\end{defi}
\begin{rem}
By (a), $\{ v \mid v \in E^0 \}$ is a set of pairwise orthogonal idempotents in $L_R(E)$. 
\end{rem}

In the following example, we use Theorem~\ref{thm:bijection1} to describe the  
$\mathbb{Z}$-invariant ideals of
$L_K(E)$.

\begin{exa}\label{ex:lpa_invariant}
Let $K$ be a field and let $E$ be a row-finite directed graph. Recall that there exists a bijection between graded ideals of the Leavitt path algebra $L_K(E)$ and hereditary subsets of $E^0$ (see \cite[Thm.~2.5.9]{abrams2017leavitt}). 
Furthermore, since $L_K(E)$ is naturally nearly epsilon-strongly $\mathbb{Z}$-graded (see \cite[Thm.~30]{nystedt2017epsilon}), we can apply Theorem~\ref{thm:bijection1} to infer that
there is a bijection between hereditary subsets of $E^0$ and  
$\mathbb{Z}$-invariant ideals of $(L_K(E))_0$. 
More precisely, we obtain the following explicit description of the 
$\mathbb{Z}$-invariant ideals of $(L_K(E))_0$: $$ \{ I(H) \mid  H \subseteq E^0 \text{ hereditary vertex set} \}, $$ where $I(H)$ denotes the ideal of $(L_K(E))_0$ generated by the elements $v \in H$.
\end{exa}

Recall that $L_R(E)$ comes equipped with a canonical $\Z$-grading defined by ${\rm deg}(f):=1$ and ${\rm deg}(f^*):=-1$ for every $f \in E^1$, and ${\rm deg}(v):=0$ for every $v \in E^0$ (cf. \cite[Cor.~2.1.5]{abrams2017leavitt}).
In the sequel, the following result will become useful:

\begin{prop}[Nystedt and \"{O}inert~\cite{nystedt2017epsilon}]\label{prop:LPAepsilon}
Suppose that $E$ is a directed graph and that $R$
is a unital ring. Consider $L_R(E)$ with its
canonical $\mathbb{Z}$-grading.
\begin{enumerate}[{\rm (a)}]
     \item $L_R(E)$ is nearly epsilon-strongly $\Z$-graded.
    \item If $E$ is finite, then $L_R(E)$ is
    epsilon-strongly $\Z$-graded.
\end{enumerate}
\end{prop}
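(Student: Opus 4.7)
The plan is to derive both assertions via Proposition~\ref{prop:6}, which tells us that a $\Z$-graded ring is nearly epsilon-strongly graded precisely when it is symmetrically $\Z$-graded and every product $S_n S_{-n}$ is $s$-unital. This reduces the task to two verifications that can be checked on the spanning set of monomials $r\alpha\beta^*$ with $r\in R$, $\alpha,\beta\in E^*$, $r(\alpha)=r(\beta)$, and $\deg(r\alpha\beta^*)=|\alpha|-|\beta|$. For part (b) we will moreover need to exhibit a single $\epsilon_n \in (L_R(E))_n(L_R(E))_{-n}$ that serves as a global epsilon, which will be extracted from the $s$-unital argument once finiteness of $E$ guarantees that the relevant sums involve only finitely many terms.

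To verify that the canonical $\Z$-grading on $L_R(E)$ is symmetrical, it suffices to check the inclusion $(L_R(E))_n \subseteq (L_R(E))_n (L_R(E))_{-n} (L_R(E))_n$ on a monomial generator $r\alpha\beta^*$ of degree $n = |\alpha|-|\beta|$. The identification
\[
  (\alpha\beta^*)(\beta\alpha^*)(\alpha\beta^*)
  = \alpha\, r(\beta)\,\alpha^* \alpha\beta^*
  = \alpha\alpha^*\alpha\beta^*
  = \alpha\beta^*
\]
does the job: the outer two factors lie in $(L_R(E))_n$ and the middle factor lies in $(L_R(E))_{-n}$, and multiplying through by $r\in R$ (which commutes with the Leavitt generators) gives the monomial $r\alpha\beta^*$ as an element of $(L_R(E))_n(L_R(E))_{-n}(L_R(E))_n$.

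For $s$-unitality of $(L_R(E))_n(L_R(E))_{-n}$, first observe that the element $\alpha\alpha^* = (\alpha\beta^*)(\beta\alpha^*)$ lies in $(L_R(E))_n(L_R(E))_{-n}$ and is a two-sided local unit for $r\alpha\beta^*$, since $\alpha\alpha^*\alpha\beta^* = \alpha\beta^*$ and analogously for the right side with $\beta\beta^* \in (L_R(E))_{-n}(L_R(E))_n$. For a finite subset of $(L_R(E))_n(L_R(E))_{-n}$, each element can be written as a finite sum of such monomials, and the main obstacle is that a naive sum $\sum_i \alpha_i\alpha_i^*$ need not act as the identity because the identity $\mu^*\nu\neq 0$ holds whenever one path is a prefix of the other. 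This is overcome by using the Cuntz–Krieger relation $v=\sum_{s(f)=v}ff^*$ iteratively at regular vertices to refine each $\alpha_i\alpha_i^*$ into an equivalent sum $\sum_\gamma \gamma\gamma^*$ ranging over paths $\gamma$ of a common length $N$; the resulting idempotents are pairwise orthogonal, and their total sum gives an element of $(L_R(E))_n(L_R(E))_{-n}$ that acts as identity on the originally given finite set. Sinks and infinite emitters cause no difficulty because the relevant monomials already terminate there and the expansion is never invoked at such vertices; this is, however, the step where one must proceed with care.

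Finally, part (b) is obtained by observing that when $E$ is finite, there are only finitely many paths of each prescribed length, and every regular vertex emits only finitely many edges. The refinement in the previous paragraph therefore terminates after a globally bounded procedure, and one may set $\epsilon_n := \sum_{|\alpha|=N}\alpha\alpha^*$ for a single sufficiently large $N$ (depending only on $n$, for instance $N=\max(n,0)$ so that $|\alpha|\geq n$ and the factorization $\alpha=\alpha_1\alpha_2$ with $|\alpha_1|=n$ witnesses $\alpha\alpha^*\in (L_R(E))_n(L_R(E))_{-n}$); a symmetric construction handles $\epsilon_n$ as a right unit. Because the graph is finite, these are honest finite sums that lie in $(L_R(E))_n(L_R(E))_{-n}$ and act as identities on all of $(L_R(E))_n$, thereby exhibiting $L_R(E)$ as epsilon-strongly $\Z$-graded.
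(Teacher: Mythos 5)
The paper does not actually prove this proposition; it is imported wholesale from Nystedt--\"{O}inert \cite{nystedt2017epsilon} (their Theorems~28 and~30), so your attempt is being measured against that source rather than against anything in the text. Your overall strategy --- reduce (a) to Proposition~\ref{prop:6} by checking symmetry and $s$-unitality of each $S_nS_{-n}$ on monomials $r\alpha\beta^*$ --- is the right one, and the symmetry computation $(\alpha\beta^*)(\beta\alpha^*)(\alpha\beta^*)=\alpha\beta^*$ is correct. However, your $s$-unitality argument has a genuine gap at infinite emitters. You propose to replace each $\alpha_i\alpha_i^*$ by a sum $\sum_\gamma\gamma\gamma^*$ over paths of a common length $N$ using the relation $v=\sum_{s(f)=v}ff^*$, and you assert that sinks and infinite emitters ``cause no difficulty because the expansion is never invoked at such vertices.'' That is false for infinite emitters: take $E^0=\{v,w\}$ with a loop $f$ at $v$ and infinitely many edges $g_i\colon v\to w$, and consider $ff^*$ and $(ff)(ff)^*$ in $S_1S_{-1}$. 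The path $f$ is a proper prefix of $ff$, so the two idempotents are not orthogonal, and since $r(f)=v$ is an infinite emitter the relation (e) of Definition~\ref{def:lpa} is unavailable there, so $ff^*$ cannot be refined to length $2$; your procedure produces nothing. The standard repair is to forget common lengths entirely and take $u:=\sum_{\mu\in M}\mu\mu^*$ where $M$ is the set of prefix-\emph{minimal} paths among the $\alpha_i$: distinct minimal paths are prefix-incomparable, hence $\mu^*\nu=0$, and for each $\alpha_i$ exactly one $\mu\in M$ is a prefix of $\alpha_i$, giving $u\alpha_i=\alpha_i$; moreover each such $\mu$ equals some $\alpha_j$, so $\mu\mu^*=(\alpha_j\beta_j^*)(\beta_j\alpha_j^*)\in S_nS_{-n}$. (In the example above, $u=ff^*$ alone already works.)

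Part (b) has a second gap. Your formula $\epsilon_n:=\sum_{|\alpha|=N}\alpha\alpha^*$ with $N=\max(n,0)$ is fine for $n\ge 0$ (it lies in $S_nS_{-n}$, is a left identity on $S_n$ and a right identity on $S_{-n}$), but for $n<0$ it returns $\sum_{v\in E^0}v=1$, which in general does \emph{not} lie in $S_nS_{-n}$: for the graph $v\xrightarrow{f}w$ with $w$ a sink one has $S_{-1}S_1=Rf^*f=Rw\not\ni v+w$. The deferral to ``a symmetric construction'' does not rescue this, because the situation is genuinely not symmetric in a directed graph: what is required for $n>0$ is the multiplicative identity of the ideal $S_{-n}S_n$ of $S_0$, and identifying it is precisely the nontrivial content of the finite-graph case. (The natural guess $\sum\{r(\gamma):|\gamma|=n\}$ also fails: with $E^0=\{a,b,c,d\}$ and edges $f\colon a\to b$, $g\colon c\to d$, $h\colon d\to b$, the element $f(gh)^*\in S_{-1}$ is annihilated on the left by $b+d$, whereas the true $\epsilon_{-1}$ is $b+ff^*+hh^*$.) So both halves of your argument need repair before they constitute a proof of the cited result.
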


In order to begin understanding when a Leavitt path algebra is prime,
we consider a few examples.

\begin{exa}
Let $R$ be a unital ring and let $E_1$ be the directed graph below:
\begin{displaymath}
E_1 : \qquad
	\xymatrix{
	\bullet_{v} 
	}
\end{displaymath}
In this case, $L_R(E_1)=vR \cong R$ is prime if and only if $R$ is prime.
\end{exa}

\begin{exa}\label{ex:13_4}
Let $R$ be a unital ring and let $E_2$ be the directed graph below:
\begin{displaymath}
E_2 : \qquad
	\xymatrix{
	\bullet_{v_1} & \bullet_{v_2} 
	}
\end{displaymath}
We have $L_R(E_2) = v_1 R + v_2 R \cong R \oplus R.$ Note that $v_1 R, v_2 R$ are nonzero ideals of $L_R(E_2)$ such that $(v_1 R)(v_2R) = \{ 0 \}$. 
Thus, 
$L_R(E_2)$ is never prime, 
for any ring $R$.
\end{exa}

From the above examples, it is clear that 
a criterion for primeness of $L_R(E)$ 
must depend on properties of 
both the coefficient ring $R$ and the graph $E$. 
To describe such a criterion, we need to 
introduce a preorder $\geq$ on the set of 
vertices $E^0$ of the directed graph $E$
in the following way: 
We write $u \geq v$ if there is a path (possibly of length zero) from $u$ to $v$. Note that $v \geq v$ for every $v \in E^0$, i.\,e. the preorder is reflexive. Transitivity of the preorder follows by  concatenating the paths. 

\begin{defi}
A directed graph $E$ is said to satisfy \emph{condition~(MT-3)} if the above defined preorder $\geq$ is \emph{downward directed}, i.\,e. if for every pair of vertices $u, v \in E^0$, there is some $w \in E^0$ such that $u \geq w$ and $v \geq w$.
\end{defi}

The graph $E_2$ in Example~\ref{ex:13_4} does not satisfy condition~(MT-3) since there is no vertex $u$ such that $v_1 \geq u$ and $v_2 \geq u$. The next example shows a graph satisfying condition~{(MT-3)}:

\begin{exa}
Let $R$ be a unital ring and let $E_3$ be the directed graph below:
\begin{displaymath}
E_3 : \qquad
	\xymatrix{
	\bullet_{v_1} \ar[r] & \bullet_{v_2} 
	}
\end{displaymath}
$E_3$ satisfies condition~(MT-3). Indeed for $v_1, v_2$ we have $v_1 \geq v_2$ and $v_2 \geq v_2.$ A computation yields $L_R(E_3) \cong M_2(R)$ (see \cite[Expl.~2.6]{lannstrom2019graded}). 
By Corollary~\ref{cor:finmatrixprime}, 
it follows that $L_R(E_3)$ is prime if and only if $R$ is prime.
\end{exa}

In the case when $K$ is a field,
Abrams, Bell and Rangaswamy have shown that $L_K(E)$ is prime if and only if $E$ satisfies condition~(MT-3) (see~\cite[Thm.~1.4]{abrams2014prime}). 
For Leavitt path algebras with coefficients in a commutative unital ring, the following generalization was proved by Larki~\cite[Prop.~4.5]{larki2015ideal}:

\begin{prop}
\label{prop:larki}
Suppose that $E$ is a directed graph and that $R$ is a unital commutative ring. 
Then $L_R(E)$ is prime if and only if $R$ is an integral domain and $E$ satisfies condition~(MT-3).
\end{prop}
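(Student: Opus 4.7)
The plan is to combine Theorem~\ref{thm:NearlyTorsion} with the graded-ideal bijection of Theorem~\ref{thm:graded_prime}. By Proposition~\ref{prop:LPAepsilon} the canonical $\Z$-grading on $L_R(E)$ is nearly epsilon-strong, and since $\Z$ is torsion-free, $L_R(E)$ is prime if and only if $(L_R(E))_0$ is $\Z$-prime; by Theorem~\ref{thm:graded_prime} this in turn holds if and only if $L_R(E)$ is \emph{graded prime}, in the sense that the product of any two nonzero graded ideals is nonzero. I will verify this reformulated statement.

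For the forward direction, I argue by contrapositive. If $R$ is not prime, pick nonzero ideals $A, B$ of $R$ with $AB = \{0\}$, and form the graded two-sided ideals $\tilde A := A\cdot L_R(E)$ and $\tilde B := B\cdot L_R(E)$; both are nonzero because they contain $Av$ and $Bv$ for any fixed $v \in E^0$. Using that every element of $R$ commutes with the generators of $L_R(E)$, one rewrites $(ax)(by)$ for $a \in A$, $b \in B$, $x,y \in L_R(E)$ as a sum of monomials whose coefficients lie in $A\cdot R\cdot B \subseteq AB = \{0\}$, so $\tilde A \tilde B = \{0\}$. If $E$ fails (MT-3), pick $u,v \in E^0$ with no common descendant and set $\tilde A := L_R(E)\,u\,L_R(E)$, $\tilde B := L_R(E)\,v\,L_R(E)$, both nonzero since they contain $u$ and $v$ respectively. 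In standard normal form, a monomial $\alpha\beta^*$ surviving left multiplication by $u$ and right multiplication by $v$ must satisfy $s(\alpha)=u$, $s(\beta)=v$, $r(\alpha)=r(\beta)$, producing a common descendant. Hence $u L_R(E) v = \{0\}$ and $\tilde A \tilde B = \{0\}$.

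For the reverse direction, assume $R$ is prime and $E$ satisfies (MT-3). Let $\tilde A, \tilde B$ be nonzero graded ideals. The key lemma I invoke is the standard \emph{reduction to a vertex}: every nonzero graded ideal of $L_R(E)$ contains an element of the form $rv$ with $r \in R \setminus \{0\}$ and $v \in E^0$. This is established for field coefficients in \cite{abrams2014prime} and for commutative coefficients in \cite{larki2015ideal}; since the reduction uses only the Leavitt relations, which do not involve the coefficient ring, the same argument applies to non-commutative $R$. Pick $r_A u \in \tilde A$ and $r_B v \in \tilde B$ with $r_A, r_B$ nonzero, and use (MT-3) to fix $w \in E^0$ together with paths $\alpha$ from $u$ to $w$ and $\beta$ from $v$ to $w$. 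Using that $R$-elements commute with paths, for any $r_1,r_2,r_3,r_4 \in R$,
\[
r_1 \cdot r_A u \cdot \alpha \cdot r_2 = r_1 r_A r_2 \,\alpha \in \tilde A
\quad\text{and}\quad
r_3 \cdot \beta^* \cdot r_B v \cdot r_4 = r_3 r_B r_4 \,\beta^* \in \tilde B.
\]
Taking products and summing, $\tilde A \tilde B$ contains $(R r_A R)(R r_B R)\,\alpha\beta^* = R r_A R r_B R \cdot \alpha\beta^*$. Since $r_A, r_B \neq 0$ the ideals $R r_A R$ and $R r_B R$ are nonzero, and primeness of $R$ forces $R r_A R r_B R \neq \{0\}$. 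As $\alpha\beta^*$ belongs to the standard $R$-basis of $L_R(E)$, this yields $\tilde A \tilde B \neq \{0\}$.

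The main obstacle is to verify the reduction-to-a-vertex lemma for a non-commutative coefficient ring. Although the argument is expected to parallel the known cases, it is the only step that is not purely formal, and the write-up needs a careful check that the path manipulations in the proof remain valid when elements of $R$ no longer commute with each other inside $L_R(E)$.
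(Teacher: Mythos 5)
Your proof is correct and follows essentially the same route as the paper's proof of Theorem~\ref{thm:lpa} (of which Proposition~\ref{prop:larki} is the commutative special case): reduce primeness of $L_R(E)$ to a statement about graded ideals/the principal component using the nearly epsilon-strong $\Z$-grading and torsion-freeness of $\Z$, then combine the reduction-to-a-vertex lemma with the (MT-3) path argument. The one step you flag as unverified --- the reduction to a vertex for non-commutative coefficients --- is proved in full generality in the paper as Proposition~\ref{tomforde}, and for the commutative statement at hand it is already covered by Tomforde's original lemma, so there is no gap.
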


We aim to generalize Proposition~\ref{prop:larki} to Leavitt path algebras with coefficients in a general (possibly non-commutative) unital ring. Since Leavitt path algebras are nearly epsilon-strongly $\mathbb{Z}$-graded (see Proposition~\ref{prop:LPAepsilon}), we will be able to obtain this generalization as a corollary to Theorem~\ref{thm:mainNew}. 
We begin with the following result:

\begin{prop}\label{twoideals}
Suppose that $E$ is a directed graph and that $R$ is a unital ring.
Consider the Leavitt path algebra $S = L_R(E)$.
The following assertions hold:
\begin{enumerate}[{\rm (a)}]

\item 
There exist $v,w \in E^0$
such that $S v S w S = \{ 0 \} $ if and only if $E$ does not
satisfy condition~(MT-3).

\item If $R$ is prime and there exist nonzero
$r,s \in R$ and $v,w \in E^0$ such that 
$S r v S s w S = \{ 0 \}$, 
then $E$ does not satisfy condition~(MT-3).

\end{enumerate}
\end{prop}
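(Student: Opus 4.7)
The plan is to first observe that both parts reduce to understanding when $vSw = \{0\}$ for vertices $v, w \in E^0$. Since $v, w \in S$ are idempotents contained in $S$, we have $vSw \subseteq SvSwS$ (write $vyw = v \cdot v \cdot y \cdot w \cdot w$), and conversely $SvSwS \subseteq S(vSw)S$ (factor each generator as $s_1(vs_2w)s_3$). Hence $SvSwS = \{0\} \Longleftrightarrow vSw = \{0\}$.

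For part (a), I would invoke the standard normal form in $L_R(E)$: every element of $vSw$ is an $R$-linear combination of monomials $\alpha\beta^*$ where $\alpha, \beta \in E^*$ with $s(\alpha) = v$, $s(\beta) = w$, and $r(\alpha) = r(\beta)$. Any such monomial is genuinely nonzero, since $\alpha\beta^* \cdot \beta = \alpha \cdot r(\alpha) = \alpha$ is a nonzero path. Therefore $vSw \neq \{0\}$ precisely when there exists $u \in E^0$ with $v \geq u$ and $w \geq u$, which immediately yields the equivalence in (a).

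For part (b), I would argue the contrapositive. Assume $R$ is prime, $r, s \in R$ are nonzero, $SrvSswS = \{0\}$, and suppose for contradiction that $v, w$ admit a common descendant $u$ via paths $\alpha, \beta$. For any $t \in R$, the element $\alpha^* (rv)(t\alpha\beta^*)(sw)\beta$ lies in $SrvSswS$, and using the centrality of $R$ in $S$ together with the Leavitt relations $\alpha^* v \alpha = \alpha^*\alpha = u$ and $\beta^* w \beta = \beta^*\beta = u$, it simplifies to $rts \cdot u$. Hence $rts \cdot u = 0$ in $L_R(E)$.

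The main obstacle and key step is to conclude $rts = 0$ in $R$ from $rts \cdot u = 0$ in $L_R(E)$; this requires the faithfulness of the inclusion $R \hookrightarrow L_R(E)$ given by $a \mapsto au$. That follows from the standard fact that $L_R(E)$ is a free left $R$-module having each vertex as a basis element (cf.~\cite{hazrat2013graded, tomforde2009leavitt}). Once established, letting $t$ range over $R$ yields $rRs = \{0\}$, and primeness of $R$ forces $r = 0$ or $s = 0$, the desired contradiction. Therefore $v$ and $w$ have no common descendant and $E$ fails condition (MT-3).
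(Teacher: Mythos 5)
Your proof is correct and follows essentially the same route as the paper: both parts reduce to locating a nonzero $R$-multiple of a monomial $\alpha\beta^*$ (or of a vertex) inside the product ideal and invoking the standard fact that $av\neq 0$ for nonzero $a\in R$ and $v\in E^0$ — a fact the paper also uses without proof. The only cosmetic difference is in (b), where you compress to $rts\cdot u$ and apply the element-wise primeness criterion $rRs=\{0\}\Rightarrow r=0$ or $s=0$, whereas the paper works with the ideals $RrR$, $RsR$ and the element $\sum_i p_iq_i\,\alpha\beta^*$; for a unital prime ring these are equivalent.
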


\begin{proof}
(a): Suppose that $E$ does not satisfy condition~(MT-3). 
There exist $v,w \in E^0$  such that 
for every $y \in E^0$ we have $v \ngeq y$ or $w \ngeq y$.
Take a monomial $r \alpha \beta^*$ in $L_R(E)$.
From the properties of $v$ and $w$, it follows that 
$v r \alpha \beta^* w = 0$.
Therefore 
$ S v S w S = \{ 0\}$.

Now suppose that $E$ satisfies condition~(MT-3).
Take $v,w \in E^0$.
There exist $y \in E^0$ 
and paths $\alpha, \beta$
from $v$ to $y$ and from $w$ to $y$, respectively. 
Then 
$S v S w S \ni v \cdot v \cdot \alpha \beta^* \cdot w \cdot w =
 \alpha \beta^* \neq 0$.

(b): Suppose that $R$ is prime and that there exist nonzero
$r,s \in R$ and $v,w \in E^0$ such that 
$S r v S s w S = \{ 0 \}$. 
Let $P = RrR$ and $Q = RsR$. Then $P$ and $Q$ are nonzero
ideals of $R$. 
Hence, from primeness of $R$ it follows
that $PQ$ is a nonzero ideal of $R$.
Take $p_i \in P$ and $q_i \in Q$, for $i \in\{1,\ldots,n\}$,
such that $\sum_{i=1}^n p_i q_i \neq 0$. 
Seeking a contradiction, suppose that $E$ satisfies condition~(MT-3).
There exist $y \in E^0$ 
and paths $\alpha, \beta$
from $v$ to $y$ and from $w$ to $y$, respectively.
We get 
$	\{ 0 \} =  S r v S s w S 
	\supseteq
	S \cdot (R r R) v \cdot S \cdot (R s R) w \cdot S
	\ni \sum_{i=1}^n v \cdot p_i v \cdot \alpha \beta^* \cdot q_i w \cdot w =
\sum_{i=1}^n p_i q_i \alpha \beta^* \neq 0,
$ which is a contradiction.
\end{proof}

The following result is a special case of \cite[Thm.~2.2.11]{abrams2017leavitt}. Tomforde \cite{tomforde2009leavitt} established this result for Leavitt path algebras with coefficients in a commutative unital ring. His proof generalizes verbatim 
to Leavitt path algebras with coefficients in a general unital ring. 
For the convenience of the reader,
we include a full proof:

\begin{prop}[{cf.~\cite[Lem.~5.2]{tomforde2009leavitt}}]
\label{tomforde}
Suppose that $E$ is a directed graph and that $R$ is a unital ring.
If $a \in (L_R(E))_0$ is nonzero, then there exist
$\alpha,\beta \in E^*$, 
$v \in E^0$
and a nonzero
$t \in R$ such that $\alpha^* a \beta = tv$.
\end{prop}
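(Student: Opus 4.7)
The plan is to adapt Tomforde's argument for the commutative case (\cite[Lem.~5.2]{tomforde2009leavitt}) to the setting of a possibly non-commutative coefficient ring. The first step is to put $a$ in a suitable normal form: since $a \in (L_R(E))_0$, the defining relations of $L_R(E)$ together with the grading allow me to write $a = \sum_{i=1}^{n} r_i \alpha_i \beta_i^{*}$ with $r_i \in R \setminus \{0\}$, $\alpha_i, \beta_i \in E^{*}$, $|\alpha_i| = |\beta_i|$, and $r(\alpha_i) = r(\beta_i)$, and moreover with the pairs $(\alpha_i, \beta_i)$ all distinct (by grouping coinciding pairs and discarding those with zero coefficient). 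Among all such representations I would fix one for which $N := \max_i |\alpha_i|$ is as small as possible and, subject to that, for which $n$ is minimal.

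If $N = 0$, every $\alpha_i = \beta_i$ is a vertex, so $a = \sum_{v \in E^{0}} t_v v$ for suitable $t_v \in R$, almost all zero. Since $a \neq 0$, some $t_{v_0}$ is nonzero, and the orthogonality of vertices (relation~(a) of Definition~\ref{def:lpa}) yields $v_0 \, a \, v_0 = t_{v_0} v_0$; the claim then holds with $\alpha = \beta = v_0$, $t = t_{v_0}$ and $v = v_0$.

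If $N \geq 1$, I pick an index $j$ with $|\alpha_j| = N$ and compute $b := \alpha_j^{*} a \beta_j$. Using the Cuntz--Krieger relations (b)--(d) of Definition~\ref{def:lpa} together with the maximality $|\alpha_i| \leq N$, one verifies that $\alpha_j^{*} \alpha_i = \gamma^{*}$ whenever $\alpha_j = \alpha_i \gamma$ and is zero otherwise, and subsequently that $(\beta_i \gamma)^{*} \beta_j = r(\alpha_j)$ when $\beta_j = \beta_i \gamma$ and vanishes otherwise. Therefore $b = \bigl( \sum_{i \in J} r_i \bigr) r(\alpha_j)$, where $J$ is the set of indices $i$ such that $\alpha_j = \alpha_i \gamma$ and $\beta_j = \beta_i \gamma$ for a common path $\gamma$. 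The index $j$ itself lies in $J$ (corresponding to $\gamma$ being the trivial path at $r(\alpha_j)$); if the sum $\sum_{i \in J} r_i$ is nonzero, then $\alpha_j^{*} a \beta_j = t v$ with $t = \sum_{i \in J} r_i$ and $v = r(\alpha_j)$, settling the claim.

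The main obstacle is guaranteeing a choice of $j$ for which $\sum_{i \in J} r_i \neq 0$. Here I would exploit the minimality of the fixed representation: if for every maximum-length index $j$ the associated coefficient sum were to vanish, one could rewrite $a$ by applying the CK2 relation~(e) of Definition~\ref{def:lpa} to expand each shorter monomial $r_i \alpha_i \beta_i^{*}$ (for which $r(\alpha_i)$ is a regular vertex) as $\sum_{f \in s^{-1}(r(\alpha_i))} r_i (\alpha_i f)(\beta_i f)^{*}$; combining the newly produced maximum-length terms with the existing ones and using the vanishing hypothesis to cancel them, one obtains a representation of $a$ of strictly smaller $N$ or with fewer terms at the same $N$, contradicting minimality. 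The situation where $r(\alpha_j)$ is a sink or an infinite emitter requires a separate, more delicate analysis, exactly as in Tomforde's original treatment; the non-commutativity of $R$ is harmless throughout, since in every manipulation the scalars $r_i$ remain on the far left of each monomial and are never multiplied by one another.
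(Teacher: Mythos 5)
Your normal form and the computation $\alpha_j^*a\beta_j=\bigl(\sum_{i\in J}r_i\bigr)r(\alpha_j)$ for a maximum-length index $j$ are correct, and you are right that non-commutativity of $R$ is harmless since the scalars stay on the left. The gap is that the whole difficulty of the lemma lives in the step you leave open: producing \emph{some} choice for which the coefficient is nonzero. The minimality argument you sketch does not work as stated: applying relation (e) to a shorter monomial raises its length only by one and multiplies the number of terms by the out-degree, so the rewriting need not yield ``a representation of strictly smaller $N$ or with fewer terms at the same $N$''; and, decisively, relation (e) is unavailable at sinks and infinite emitters, which you explicitly defer to ``a separate, more delicate analysis''. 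That deferred case is not a loose end but the heart of the proof. Concretely, let $v$ be an infinite emitter with edges $e_1,e_2,\dots$ and take $a=rv-re_1e_1^*$ with $r\neq 0$. Then $a\neq 0$, this representation is minimal, the unique maximum-length index $j$ satisfies $\sum_{i\in J}r_i=r-r=0$, and no (CK2) expansion at $v$ exists; your procedure halts without producing $tv$. What is needed here is the paper's trick of choosing a \emph{fresh} edge $f\in s^{-1}(v)$ not occurring in any path of the representation and computing $f^*af=rr(f)$, together with the companion observation that a sink can be handled by multiplying by the sink itself. Neither idea appears in your proposal.

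For comparison, the paper's proof is an induction on $N$ over the filtration $\mathcal{G}_N$: if some degree-zero vertex term sits at a sink or infinite emitter, the two tricks above finish immediately; if all such vertices are regular, it applies (CK2), groups the resulting monomials by their first edges $e_i$ and last edges $f_j$, and applies the induction hypothesis to a single block $x_{1,1}\in\mathcal{G}_{N-1}$, then conjugates by $e_1$ and $f_1$. Your longest-path strategy could be salvaged, but you would have to (i) prove that when every vertex met during expansion is regular, full expansion yields a representation with all monomials of length exactly $N$, whence every coefficient sum is a single nonzero $r_j$, and (ii) supply the fresh-edge and sink arguments for the remaining vertices. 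As written, the proposal is an outline with the decisive cases missing.
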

\begin{proof}
If we for every $N \in \mathbb{N}$ put
$\mathcal{G}_N := \text{Span}_R \{ \alpha \beta^* \mid \alpha, \beta \in E^*, \, |\alpha| = |\beta| \leq N \}$,
then $(L_R(E))_0 = \bigcup_{N=0}^\infty \mathcal{G}_N$. 
The proof proceeds by induction over $N$. 
Base case: $N=0$. 
Take a nonzero $a \in \mathcal{G}_0$.
Then $0 \ne a = \sum_{i=1}^n r_i v_i$ for some nonzero $ r_i \in R$ and distinct vertices $v_i \in E^0$. 
If we put $\alpha= \beta := v_1$, then $\alpha^* a \beta = r_1 v_1$.

Inductive step: suppose that $N > 0$ and that 
the statement of the proposition holds for every nonzero element in $\mathcal{G}_{N-1}$. 
Take a nonzero $a \in \mathcal{G}_N$. 
Then $a = \sum_{i=1}^M r_i \alpha_i \beta_i^* + \sum_{j=1}^{M'} s_j v_j,$ where $\alpha_i, \beta_i \in E^*$ with $|\alpha_i| = |\beta_i| \geq 1$ and $v_j \ne v_{j'}$ for all $j \ne j'$. 
We consider two mutually exclusive cases.

\underline{Case 1: some $v_j$ is not regular.} 
If $v_j$ is an infinite emitter, then there is some edge $f \in E^1$ with $s(f)=v_j$ such that $f$ is not included in any path $\alpha_i, \beta_i$. 
Put $\alpha = \beta := f $. Then $\alpha^* a \beta = 0 + f^* s_j v_j f = s_j v_j$. If $v_j$ is a sink, then put $\alpha=\beta :=v_j$ and note that $\alpha^* a \beta = s_j v_j$. 

\underline{Case 2: every $v_j$ is regular.} 
Then $v_j = \sum_{s(f)=v_j} ff^*$ for every $j$.
Hence, we may write 
$a = \sum_{i=1}^{M''} r_i \gamma_i \delta_i^*$ where $\gamma_i, \delta_i \in E^*$ with $|\gamma_i| = |\delta_i| \geq 1$.
By regrouping the elements of the sum, we may rewrite it as
$a = \sum_{i=1}^P \sum_{j=1}^Q e_i x_{i,j} f_j^*,$ 
where 
\begin{itemize}
    \item 
        $e_i, f_i \in E^1$ with $e_i \ne e_{i'}$ for $i \ne i'$ and $f_j \ne f_{j'}$ for $j \ne j'$, and
    \item
        $x_{i,j} \in \mathcal{G}_{N-1}$ with $e_{i} x_{i,j} f_j^* \ne 0$ for all $i,j$.
\end{itemize}
Note that $e_1 x_{1,1} f_1^* \ne 0$ implies $r(e_1) x_{1,1} r(f_1) \ne 0$. 
 By the induction hypothesis, there are $\alpha', \beta' \in E^*$ such that $(\alpha')^* r(e_1) x_{1,1} r(f_1) \beta' = tv$ for some $v \in E^0$ and $t \in R$. Put $\alpha := e_1 \alpha'$ and $\beta := f_1 \beta'$. Then $\alpha^* a \beta = (\alpha')^* e_1^* a f_1 \beta' = (\alpha')^* e_1^* e_1 x_{1,1} f_1^* f_1 \beta' = (\alpha')^* r(e_1) x_{1,1} r(f_1) \beta' = tv.$
\end{proof}

We can now establish Theorem~\ref{thm:LPA}:

\begin{thm}\label{thm:lpa}
Suppose that $E$ is a directed graph and that $R$ is a unital ring. 
The Leavitt path algebra $L_R(E)$ is prime
if and only if $R$ is prime and $E$ satisfies condition~(MT-3). 
\end{thm}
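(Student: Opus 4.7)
The proof divides into two implications, the reverse direction being the harder one. The plan is to base it on Theorem~\ref{thm:NearlyTorsion}, applied to the canonical $\Z$-grading on $L_R(E)$ which is nearly epsilon-strong by Proposition~\ref{prop:LPAepsilon}. For the forward direction ($L_R(E)$ prime $\Rightarrow$ $R$ prime and $E$ satisfies (MT-3)), I would argue by contrapositive. Write $S := L_R(E)$. If $E$ does not satisfy (MT-3), then Proposition~\ref{twoideals}(a) produces $v, w \in E^0$ with $SvSwS = \{0\}$, whence the nonzero two-sided ideals $SvS$ and $SwS$ of $S$ satisfy $(SvS)(SwS) \subseteq SvSwS = \{0\}$. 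If $R$ is not prime, pick nonzero ideals $I, J$ of $R$ with $IJ = \{0\}$. Because every element of $R$ commutes with every generator of $L_R(E)$, the coefficient ring is central in $S$, so $IS$ and $JS$ are two-sided ideals of $S$; they are nonzero because $Iv \subseteq IS$ and the map $R \to S$, $t \mapsto tv$, is injective for each $v \in E^0$ (the vertex $v$ occurring as a free $R$-generator in a basis of $L_R(E)$). Finally, by centrality of $R$, $(IS)(JS) = IJ \cdot S^2 \subseteq IJS = \{0\}$.

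For the reverse direction, suppose $R$ is prime and $E$ satisfies (MT-3). Since $\Z$ is torsion-free, Theorem~\ref{thm:NearlyTorsion} reduces the claim to proving that $S_0 := (L_R(E))_0$ is $\Z$-prime. So let $A$ and $B$ be nonzero $\Z$-invariant ideals of $S_0$; my aim is $AB \neq \{0\}$. The decisive step is to replace an arbitrary nonzero element of such an ideal by a single-vertex element. Given $0 \neq a \in A$, Proposition~\ref{tomforde} supplies $\alpha, \beta \in E^*$ with $|\alpha| = |\beta| =: n$, a vertex $v \in E^0$ and a nonzero $t \in R$ with $\alpha^* a \beta = tv$. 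Because $\alpha^* \in S_{-n}$ and $\beta \in S_n$, the $\Z$-invariance of $A$ gives $tv \in S_{-n} A S_n \subseteq A$. The same reasoning applied to $B$ yields a vertex $w \in E^0$ and a nonzero $s \in R$ with $sw \in B$.

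Condition (MT-3) now furnishes a vertex $u \in E^0$ and paths $\mu$ from $v$ to $u$ and $\nu$ from $w$ to $u$. Using the Leavitt relations (notably $v\mu = \mu$ and $\mu^* \mu = u$) together with the centrality of $R$, one computes $\mu^*(tv)\mu = t\,\mu^* v \mu = t\,\mu^* \mu = tu$, and similarly $\nu^*(sw)\nu = su$; a second invocation of $\Z$-invariance (with degrees $|\mu|$ and $|\nu|$) puts $tu$ in $A$ and $su$ in $B$. Because $R$ is central in $S$ and $A, B$ are ideals of $S_0$, it follows that $(RtR)u \subseteq A$ and $(RsR)u \subseteq B$. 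Primeness of $R$ forces the nonzero ideals $RtR$ and $RsR$ of $R$ to satisfy $(RtR)(RsR) \neq \{0\}$; choosing $p \in RtR$ and $q \in RsR$ with $pq \neq 0$ yields $0 \neq pqu = (pu)(qu) \in AB$, by injectivity of $t \mapsto tu$. The main obstacle is precisely this \emph{upgrade} from the element $\alpha^* a \beta$ of Proposition~\ref{tomforde} to a genuine element of $A$: neither $\alpha^*$ nor $\beta$ individually lies in $S_0$, so membership in $A$ is not automatic from the ideal property, and one really needs the $\Z$-invariance that comes for free in the reduction to $\Z$-primeness. Once that hurdle is cleared, (MT-3) funnels the two vertex elements $tv, sw$ through a common vertex $u$, and the primeness of $R$ closes the argument.
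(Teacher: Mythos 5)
Your proof is correct, and the forward (contrapositive) direction is essentially identical to the paper's. For the harder direction, however, you take a genuinely different, and arguably cleaner, route. The paper argues contrapositively: assuming $S=L_R(E)$ is not prime and $R$ is prime, it invokes Theorem~\ref{thm:mainNew} to produce nonzero ideals $\tilde{A},\tilde{B}$ of $S_0$ with $\tilde{A}S\tilde{B}=\{0\}$, reduces to single-vertex elements $rv,sw$ via Proposition~\ref{tomforde}, and then feeds the relation $SrvSswS=\{0\}$ into Proposition~\ref{twoideals}(b) to contradict condition (MT-3). You instead pass through Theorem~\ref{thm:NearlyTorsion} and prove $\Z$-primeness of $S_0$ \emph{directly}: given nonzero $\Z$-invariant ideals $A,B$, you use Proposition~\ref{tomforde} together with $\Z$-invariance (conjugation by $\alpha^*,\beta$ of equal length, which is forced since $\alpha^*a\beta\in S_{|\beta|-|\alpha|}$ must land in $S_0$) to place $tv$ in $A$ and $sw$ in $B$, then funnel both through a common vertex $u$ supplied by (MT-3) and exhibit a nonzero element $pqu\in AB$ via primeness of $R$. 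The trade-off is that the paper's version gets the stronger relation $\tilde{A}S\tilde{B}=\{0\}$ for free from the balanced NP-datum and so never needs invariance, while your version must use $\Z$-invariance twice but avoids Proposition~\ref{twoideals}(b) entirely and produces an explicit nonzero product rather than a contradiction. Both arguments share the same implicit reliance on the fact that $r\mapsto rv$ is injective for every vertex $v$ (your parenthetical about $v$ being a free $R$-generator is the standard justification, also used silently by the paper in Proposition~\ref{twoideals}(b) and Proposition~\ref{tomforde}); this is a known structural fact about $L_R(E)$ over unital coefficient rings, so it is not a gap.
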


\begin{proof}
Put $S := L_R(E)$.
Suppose that $R$ is not prime. There exist
nonzero ideals $I,J$ of $R$ such that $IJ = \{ 0 \}$.
Let $A$ and $B$ be the nonzero ideals in $S$ consisting of
sums of monomials with coefficients in $I$ and $J$,
respectively. 
Then $AB = \{ 0 \}$ which
implies that $S$ is not prime.
Suppose now that $E^0$ does not satisfy condition~(MT-3).
By Proposition~\ref{twoideals}(a), 
there exist $v,w \in E^0$
such that $S v S w S = \{ 0 \}$. 
Consider the nonzero ideals 
$C := S v S$ and $D := S w S$ 
of $S$. 
Then
$CD = SvS S w S \subseteq S v S w S = \{ 0 \}$ 
which shows that 
$S$ is not prime.

Suppose that $S$ is not prime and that $R$ is prime. 
By Proposition~\ref{prop:LPAepsilon}, $S$ is nearly epsilon-strongly $\mathbb{Z}$-graded. 
Theorem~\ref{thm:mainNew} implies that there
exist nonzero ideals $\tilde{A},\tilde{B}$
of $S_0$ such that $\tilde{A} S \tilde{B} = \{ 0 \}$.
Take $a \in \tilde{A} \setminus \{ 0 \}$ and
$b \in \tilde{B} \setminus \{ 0 \}$.
By Propostion~\ref{tomforde}, there exist
$v,w \in E^0$, $r,s \in R \setminus \{ 0 \}$ and
$\alpha,\beta,\gamma,\delta \in E^*$ such that
$\alpha^* a \beta = rv$ and $\gamma^* b \delta = sw$. 
Now, 
$S r v S s w S \subseteq 
S \tilde{A} S \tilde{B} S = \{ 0 \}$
and hence $S r v S s w S = \{ 0 \}$. 
Employing Proposition~\ref{twoideals}(b), we conclude that
$E$ does not satisfy condition~(MT-3).
\end{proof}

\bibliographystyle{abbrv}
\bibliography{short,prime}

\end{document}